\definecolor{darkgreen}{rgb}{0.0,0.5,0.0}
\definecolor{darkblue}{rgb}{0.0,0.0,0.3}
\definecolor{nicosred}{rgb}{0.65,0.1,0.1}
\definecolor{light-gray}{gray}{0.6}
\definecolor{really-light-gray}{gray}{0.8}
\newtheorem{theorem}{Theorem}[section]
\newtheorem{lemma}[theorem]{Lemma}
\newtheorem{proposition}[theorem]{Proposition}
\newtheorem{corollary}[theorem]{Corollary}
\theoremstyle{definition}
\newtheorem{remark}[theorem]{Remark}
\newtheorem{definition}[theorem]{Definition}
\numberwithin{equation}{section}
\def\Xint#1{\mathchoice
{\XXint\displaystyle\textstyle{#1}}%
{\XXint\textstyle\scriptstyle{#1}}%
{\XXint\scriptstyle\scriptscriptstyle{#1}}%
{\XXint\scriptscriptstyle\scriptscriptstyle{#1}}%
\!\int}
\def\XXint#1#2#3{{\setbox0=\hbox{$#1{#2#3}{\int}$ }
\vcenter{\hbox{$#2#3$ }}\kern-.6\wd0}}
\def\dashint{\Xint-}
\newcommand{\rg}{\operatorname{ran}}
\newcommand{\R}{{\mathbb R}}
\newcommand{\Esp}{{\mathcal{E}_h}}
\newcommand{\Ee}{{\mathcal{E}}}
\newcommand{\xx}{{\textbf{x}}}
\mathchardef\emptyset="001F
\DeclareMathOperator*{\argmin}{argmin}
\title[Linearly constrained evolutions of critical points]{Linearly constrained evolutions of critical points and an application to cohesive fractures}
\author[M. Artina, F. Cagnetti, M. Fornasier, F. Solombrino]
{M. Artina, F. Cagnetti, M. Fornasier, F. Solombrino}
\address[M. Artina]{Faculty of Mathematics, Technische Universit\"at M\"unchen, Boltzmannstrasse 3, 85748, Garching, Germany}
\email{marco.artina@ma.tum.de}
\address[F. Cagnetti]{University of Sussex, Pevensey 2, Department of Mathematics, 
BN1 9QH, Brighton, United Kingdom}
\email{f.cagnetti@sussex.ac.uk}
\address[M. Fornasier]{Faculty of Mathematics, Technische Universit\"at M\"unchen, Boltzmannstrasse 3, 85748, Garching, Germany}\email{massimo.fornasier@ma.tum.de}
\address[F. Solombrino]{Faculty of Mathematics, Technische Universit\"at M\"unchen, Boltzmannstrasse 3, 85748, Garching, Germany}
\email{francesco.solombrino@ma.tum.de}
\date{\today}
\begin{document}


\begin{abstract}
We introduce a novel constructive approach to define time evolution of critical points
of an energy functional. Our procedure, which is different from other more established approaches based on viscosity approximations in infinite dimension,  is prone to efficient and consistent numerical implementations, 
and allows for an existence proof under very general assumptions. 
We consider in particular rather nonsmooth and nonconvex energy functionals, 
provided the domain of the energy is finite dimensional. 
Nevertheless, in the infinite dimensional case study of a cohesive fracture model, 
we prove a consistency theorem of a discrete-to-continuum limit. 
We show that a quasistatic evolution can be indeed recovered as a limit of evolutions 
of critical points of finite dimensional discretizations of the energy, constructed according to our scheme. 
To illustrate the results, we provide several numerical experiments both in one and two dimensions.
These agree with the crack initiation criterion, 
which states that a fracture appears only when the stress overcomes 
a certain threshold, depending on the material.
\end{abstract}

\maketitle

\vspace{6pt}
\noindent {\bf Keywords:} 
quasistatic evolution, cohesive fracture, numerical approximation.

\vspace{6pt}
\noindent {\bf 2000 Mathematics Subject Classification:} 49J27, 74H10, 74R99, 74S20, 58E30.


\bigskip

\begin{section}{Introduction}

In this paper we introduce a novel model of time evolution of physical systems 
through linearly constrained critical points of the energy functional.
In order to include all the applications we have in mind,
we consider both dissipative and nondissipative systems. 
 
Our approach is \textit{constructive}, and it can be numerically implemented, 
as we show with an application to cohesive fracture evolution. 
Since we are eventually interested in being able to perform reliable numerical simulations, 
we consider at first \textit{finite dimensional} systems. We then also 
give a concrete case study showing how our results can be adapted to 
describe infinite dimensional systems. 

Below we recall the general framework, to which we intend to contribute. Then we present and comment our results, also in comparison with related contributions appeared in recent literature.

\subsection{Critical points evolutions in the literature}

When describing the behaviour of a physical system, 
one can try may want to describe it through the time evolution of absolute minimizers of the energy. 
This modeling has been pursued, for instance, in \cite{CT, DalFrToa, DT02, DMZ, FG, Francfort-Larsen:2003, FrMa98}. From an abstract point of view, this amounts to requiring that a global stability 
condition is satisfied at every time. 
In this case, the notion of solution fits into the general scheme 
of energetic solutions to rate-independent systems (see \cite{Mielke}).
However, it is not always realistic to expect the energy to be actually minimized at every fixed time. 
In fact, global minimization may lead the system to change 
instantaneously in a very drastic way, and this is something which is 
not very often observed in nature.
For this reason, several authors recently introduced time evolutions 
only satisfying a  \textit{local stability} condition for the energy functional
\cite{A, C, DDMM, DalToa02, LT, KMZ, MRoS, Neg-Ort}.

More precisely, given a time-dependent functional $F: Y \times [0,T] \to \mathbb{R}$, 
where $Y$ is a Banach space, an evolution of critical points of $F$ 
is a function $u:[0,T] \to Y$ which satisfies
\begin{equation}\label{eq:evcrptsB}
0 \in \partial_u F(u(t),t), \quad \mbox{for a.e. } t \in [0,T],
\end{equation}
where $\partial_u F$ denotes the subdifferential of $F$ with respect to $u$. 
Typically, the existence of such an evolution is proven by a singular perturbation method. 
More precisely one first considers, for every $\varepsilon>0$,  the $\varepsilon$-gradient flow
\begin{equation}\label{eq:evcrptsBp}
-\varepsilon \dot u^\varepsilon \in  \partial_u F(u^\varepsilon(t),t)
\end{equation}
with an initial datum $ u^\varepsilon(0) = u_0$, where $u_0$ is a critical point of  $F(\cdot,0)$. 
Then, passing to the limit as $\varepsilon \to 0^+$, $u^\varepsilon$ 
converges to a function $u$ satisfying \eqref{eq:evcrptsB}. \\

We now give a detailed description of our results and we comment on them.

\subsection{Setting of the problem and main results.}

In this paper we consider the evolution of a system which is driven by a linear external constraint. 
This can model several of situations of interest, such as prescribed boundary data, integral constraints, or the coupling with a linear (partial) differential equation. We will state and discuss the problem in a discrete (finite-dimensional) setting, where our main results are obtained. Later on we will also comment on how to possibly recover solutions to problems defined in infinite dimension.

Let $\mathcal E$ and $\mathcal F$ be two Euclidean spaces
of dimension $n$ and $m$, respectively, with $n > m$.
We want to study the evolution in a time interval $[0,T]$ 
of a physical system, whose states are described by vectors $v \in \mathcal E$, 
and whose energy is given by a function $J: \mathcal E \to [0, + \infty)$.
We assume that a time dependent constraint $f: [0,T] \to \mathcal F$ is imposed.
More precisely, the only admissible states at each time $t \in [0,T]$ 
satisfy $A v = f (t)$, where $A: \mathcal E \to \mathcal F$
is a surjective linear operator. 
In addition, we consider a convex positively $1$-homogeneous
dissipation functional $\psi: \mathcal E \to [0, \infty)$.
In order to consider at the same time dissipative 
and nondissipative systems, 
we introduce a switching parameter $\alpha \in \{ 0, 1 \}$.

Before stating the main results of the paper, let us give introduce some notions, which are useful in the rest of the paper.
At any time $t \in [0,T]$ and for a fixed $\overline{v} \in \mathcal{E}$, 
one can consider the problem 
\begin{equation} \label{minimization}
\min_{Av = f (t)} \{ J (v) + \alpha \psi (v - \overline{v}) \}.
\end{equation}
If $u \in \mathcal{E}$ is a minimizer for \eqref{minimization}, then 
\begin{equation} \label{minvinc3}
A u = f (t) \quad \textnormal{ and } \quad 
\left( \partial J(u) + \partial (\alpha \psi) (u - \overline{v}) \right) \cap \rg(A^*)\neq~\emptyset,
\end{equation}
where $A^*: \mathcal{F} \to \mathcal{E}$ is the adjoint of $A$, 
$\text{ran} (A^*)$ denotes the range of $A^*$, 
and $\partial J (u)$ is the Fr\'echet subdifferential of $J$ at $u$.
A \textit{critical point of $v \mapsto J (v) + \alpha \psi (v - \overline{v})$ 
on the affine space $\mathbf{A}(f (t))$}
is any vector $u \in \mathcal{E}$ satisfying \eqref{minvinc3} where, 
for every $f \in \mathcal{F}$, we set
$$
\mathbf{A}(f):=\{v \in \mathcal E : A v = f \}.
$$
A \emph{discrete quasistatic evolution} 
with time step $\delta \in (0,1)$, initial condition $v_0 \in \mathcal{E}$,
and constraint $f$, is a right-continuous function $v_\delta:[-\delta, T]\to \Ee$ such that 
\begin{itemize}
 \item $v_\delta(t)=v_0$ for every $t \in [ -\delta, \delta)$;
 \item $v_\delta$ is constant in $[0,T] \cap [i \delta, (i + 1) \delta)$ 
 for all $i \in \mathbb{N}_0$ with $i \delta \leq T$;
 
 \item $v_\delta ( i \delta )$ is a critical point 
 of $v \mapsto J + \alpha \psi (v - v_\delta ( (i-1) \delta ))$ on the affine space $\mathbf{A} (f (i \delta))$ for every 
 $i \in \mathbb{N}_0$ with $i \delta \leq T$.
\end{itemize}

Moreover, we say that a measurable function $v:[0, T]\to \Ee$ 
is an \emph{approximable quasistatic evolution} 
with initial condition $v_0$ and constraint $f$, 
if for every $t \in [0,T]$ there exists a sequence $\delta_k \to 0^+$ 
and a sequence $( v_{\delta_k})_{k \in \mathbb{N}}$
of discrete quasistatic evolutions with time step $\delta_k$, 
initial condition $v_0$, and constraint $f$, such that
\begin{equation*} 
\lim\inf_{k\to +\infty}|v_{\delta_k}(t)-v(t)|_{\mathcal{E}} =0. 
\end{equation*}
We are now ready to state the main results of the paper (see Theorem~\ref{psiTeorexist}
and Theorem~\ref{Teorexist}).

\textbf{Dissipative systems.}
Let $v_0$ be a critical point of $v \mapsto J(v) + \psi (v - v_0)$ on the affine space $\mathbf{A} (f (0))$.
Under suitable assumptions on $J$, $\psi$, $A$, and $f$ 
(see Theorem~\ref{psiTeorexist}) we prove that there exist $v \in BV ([0,T];\mathcal{E})$ of bounded variation
and $q \in L^{\infty} ([0,T];\mathcal{F}')$ such that:

\begin{itemize}

\item[(A)] $v (\cdot)$ is an approximable quasistatic evolution 
with initial condition $v_0$ and constraint $f$;

\vspace{.2cm}

\item[(B)] 
Local stability: $A^* q (t) \in \partial J ( v (t)) + \partial \psi(0) \qquad \text{ for $\mathcal{L}^1$-a.e. } t \in [0,T]$ \footnote{More precisely, by constructing suitable representatives of $q(t)$, one can even ensure that local stability holds at all continuity points of $t\mapsto v(t)$. We do however not deal in detail with this technical aspect.};

\vspace{.2cm}

\item[(C)] Energy inequality: the function $s \mapsto \langle q (s) ,  \dot{f} (s) \rangle_{\mathcal{F}}$ 
belongs to $L^1 (0,T)$ and 
$$
J ( v (t_2) ) + \mathrm{Var}_{\psi} (v; [t_1, t_2]) 
\leq J ( v (t_1) ) +\int_{t_1}^{t_2} \langle q  (s) ,  \dot{f} (s) \rangle_{\mathcal{F}} \, \mathrm{d}s,
$$
for every $0 \leq t_1 < t_2 \leq T$, where the $\psi$-variation $\mathrm{Var}_{\psi} (v; [t_1, t_2])$ is defined by \eqref{psi-var} and  $\langle \cdot ,  \cdot \rangle_{\mathcal{F}}$ denotes the duality product in $\mathcal F$.

\end{itemize}
Any function $v \in BV ([0,T];\mathcal{E})$ satisfying (A), (B), and (C) is a
\textit{rate independent evolution}. We also remark that, as a consequence of the local stability (B), 
the energy inequality becomes an equality in all the nontrivial intervals where the solution $v(t)$ is absolutely continuous (see Theorem \ref{balance}, where also the nondissipative case is treated).
In addition, in such intervals the doubly nonlinear inclusion $A^* q (t) \in \partial J (v(t)) + \partial \psi (\dot{v}(t))$
is satisfied.
It is however a well-known fact that, due to nonconvexity, our solutions can in general develop time discontinuities, where additional dissipation appears (see \cite{MRS2}).

\noindent
\textbf{Non dissipative systems.}
When there is no dissipation we can still prove an existence result, 
although the evolution obtained is in general expected to be less regular in time.
This is due to the fact that the absence of dissipation causes loss of compactness, 
since simple estimates of the total variation of the approximating solutions are now missing. This is undoubtedly a point of great interest in the analysis of the degenerate case 
(see also \cite{ARS} for an abstract approach in the unconstrained setting).
To compensate the loss of compactness, we need to make an additional assumption on the energy functional $J$
(see condition (J4) in Section \ref{setting}).
If $v_0$ is a critical point of $J$ on the affine space $\mathbf{A} (f (0))$, 
we prove (see Theorem~\ref{Teorexist}) that 
there exists bounded and measurable functions $v: [0,T] \to \mathcal E$ and $q: [0,T] \to \mathcal F'$ such that:

\begin{itemize}

\item[(a)] $v (\cdot)$ is an approximable quasistatic evolution 
with initial condition $v_0$ and constraint $f$;

\vspace{.2cm}

\item[(b)] Local stability: $A^* q (t) \in \partial J (v (t))$ for every $t \in [0,T]$;

\vspace{.2cm}

\item[(c)] Energy inequality: the function $s \mapsto \langle q (s) ,  \dot{f} (s) \rangle_{\mathcal{F}}$ 
belongs to $L^1 (0,T)$ and 
\begin{equation}\label{en-ineq}
J (v (t)) \leq J (v_0) + \int_{0}^{t}  \langle q (s) ,  \dot{f} (s) \rangle_{\mathcal{F}} \, \mathrm{d}s,
\end{equation}
for every $t \in [0,T]$.
\vspace{.2cm}

\end{itemize}
We call any measurable and bounded function $v: [0,T] \to \mathcal E$ 
satisfying (a), (b), and (c) a
\textit{weak potential type evolution}. Evolutions of this kind (although without constraints) 
have been widely considered in literature, as limits for $\varepsilon\to 0$ of gradient flows of the type \eqref{eq:evcrptsBp} as in \cite{zanini}, or of systems with vanishing inertia (see \cite{Agostiniani, Nardini}). 
We observe that the term  $ \langle q  (t) ,  \dot{f} (t) \rangle_{\mathcal{F}}$ in (c) 
physically corresponds to the virtual power due to the external constraint.
In the case where $J$ were smooth, thanks to (b) this term could indeed be rewritten as
$\langle D J (u (t) ) ,  \dot{w}   (t) \rangle_{\mathcal{F}}$, 
for any smooth curve $w (t)$ with $A w(t) = f (t)$.

Note also that in our definition the precise value of $v$ at every point $t$ matters. 
In particular, the initial condition $v(0)=v_0$ has a meaning, and the energy inequalities
(C) and (c) need to be satisfied at every time.
Thus, we are in general not identifying functions differing on null sets, as it is usual in $L^p$ spaces.

We point out that the main novelty in our approach is not in the 
existence results per se, but rather in the constructive algorithmic procedure 
that we provide, see \eqref{algobis}.
Notice that, differently from the vanishing viscosity approach, 
the parameter $\eta$ appearing in \eqref{algobis} remains fixed, throughout all the algorithm.   
Heuristically, our inner loop aims at finding the nearest critical point through 
a sort of discretized instantaneous generalized gradient flow, see Figure \ref{GraphJ} and Figure \ref{GraphAlgorithm}.
This can be obtained by looking at the long time behavior 
of the minimizing movements of the functional, for a fixed time step $\delta$.
More details about this point are given in the next subsection. 

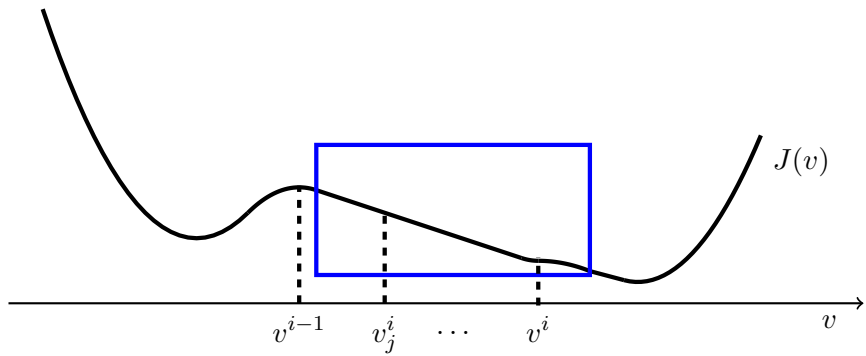
\begin{figure}[h]
\begin{tikzpicture}[domain=-10:15,xscale=0.45,yscale=0.15] 


\draw[thick, ->] (-10,-4)--(15,-4);


\draw[ultra thick, domain=-9:-3] plot (\x, {3*(\x+3) + (\x+3)^2+4}); 


\draw[ultra thick, domain=-3:-1] plot (\x, {-(\x+1) - (\x+1)^2 + 6}); 


\draw[ultra thick, domain=-1:5] plot (\x, {5 - \x}); 


\draw[ultra thick, domain=5:6-1.04/2.04] 
plot (\x, {(1.02)*(\x-6)^2+(1.04)*(\x-6)+0.02}) ;


\draw[ultra thick, domain=6-1.04/2.04:7] 
plot (\x, {(1.02)*(1.04/2.04)^2-(1.04)*(1.04/2.04)+0.02-0.4*(\x - 6+1.04/2.04)^2});


\draw[ultra thick, domain=7:8] plot (\x, {-0.8*(\x-7)
+  (1.02)*(1.04/2.04)*(1.04/2.04)-(1.04)*(1.04/2.04)+0.02-0.4*(7 - 6+1.04/2.04)*(7 - 6+1.04/2.04)}) ;


\draw[ultra thick, domain=8:12] plot (\x, {-0.8*(\x-8) + (\x-8)^2
-0.8 +  (1.02)*(1.04/2.04)*(1.04/2.04)-(1.04)*(1.04/2.04)+0.02-0.4*(7 - 6+1.04/2.04)*(7 - 6+1.04/2.04)})
node[below right] {$J (v)$} ;

\draw(14,-4.2)node[below]{$v$};


\draw[ultra thick, blue]  (-1,-1.5) rectangle (7,2*0.2*5*5);


\draw[dashed, ultra thick] (1,-4) -- (1,4);
 \draw(1,-4.2)node[below]{$v^i_j$};
 
  \draw(3,-5.5)node[below]{$\ldots$};

\draw[dashed, ultra thick] (-3/2,-4) -- (-3/2,6.4);
 \draw(-3/2,-4.2)node[below]{$v^{i-1}$};

\draw[dashed, ultra thick] (6-1.04/2.04,-4.2) -- (6-1.04/2.04,0.05);
 \draw(6-1.04/2.04,-4.2)node[below]{$v^{i}$};

\end{tikzpicture}
\caption{A pictorial idea of the algorithm when $\alpha = 0$.
Let $i \in \mathbb{N}$ be fixed, and suppose that $v^{i-1}$
is a critical point of $J$ on the affine space $\mathbf{A}(f ((i-1)\delta))$.
The algorithm allows to pass from $v^{i-1}$ to a vector $v^i$, 
which is a critical point of $J$ on the affine space $\mathbf{A}(f (i \delta))$.
With abuse of graphical notation, constrained critical points are identified
in the picture with those points where the slope of the functional $J$ is zero.
A magnified version of the area delimited by the blue rectangle can be seen in Figure \ref{GraphAlgorithm}.}
\label{GraphJ}
\end{figure}

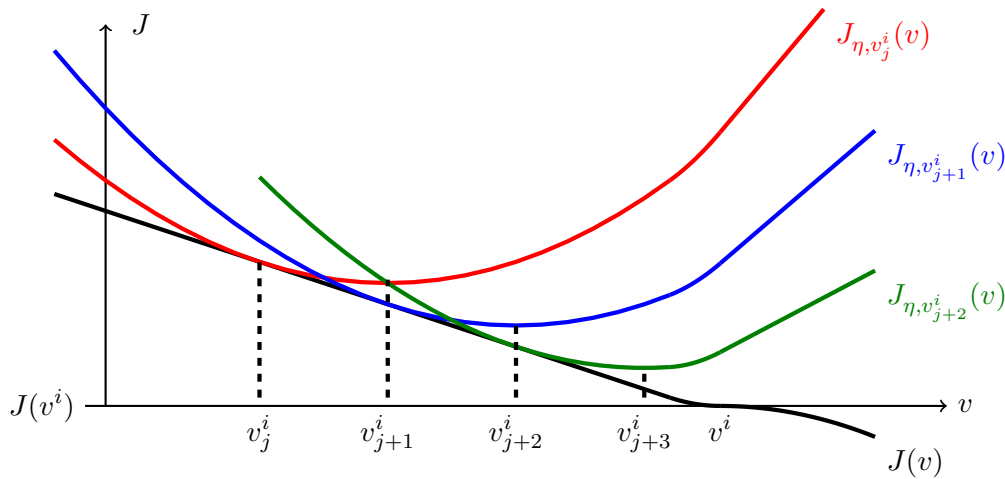
\begin{figure}[h]
\begin{tikzpicture}[domain=-1:5,xscale=1.5,yscale=0.5, scale=0.9] 


\draw[thick, ->] (-0.5,1.02*1.04^2/2.04^2-1.04*1.04/2.04+0.02) -- (-0.5,11) node[right]{\, $J$};


\draw[thick, ->] (-0.7, 1.02*1.04^2/2.04^2-1.04*1.04/2.04+0.02) -- 
(7.7, 1.02*1.04^2/2.04^2-1.04*1.04/2.04+0.02) node[right]{$v$};

 \draw(-0.7, 1.02*1.04^2/2.04^2-1.04*1.04/2.04+0.02)node[left]{$J (v^{i})$};


\draw[ultra thick, domain=-1:5] plot (\x, {5 - \x}); 
\draw[ultra thick, domain=5:6-1.04/2.04] 
plot (\x, {(1.02)*(\x-6)^2+(1.04)*(\x-6)+0.02}) ;
\draw[ultra thick, domain=6-1.04/2.04:7] 
plot (\x, {(1.02)*(1.04/2.04)^2-(1.04)*(1.04/2.04)+0.02-0.4*(\x - 6+1.04/2.04)^2}) node[below right] {$J (v)$};


\draw[red, ultra thick, domain=-1:5] plot (\x, {5 - \x + 2*0.2*(\x-1)^2}) ; 
\draw[red, ultra thick, domain=5:6-1.04/2.04] 
plot (\x, {(1.02)*(\x-6)^2+(1.04)*(\x-6)+0.02+ 2*0.2*(\x-1)^2}) ;
\draw[red, ultra thick, domain=6-1.04/2.04:6.5] 
plot (\x, {(1.02)*(1.04/2.04)^2-(1.04)*(1.04/2.04)+0.02-0.4*(\x - 6+1.04/2.04)^2+ 2*0.2*(\x-1)^2})
node[below right] {$J_{\eta, v^i_{j}} (v)$};


\draw[blue, ultra thick, domain=-1:5] plot (\x, {5 - \x + 2*0.2*(\x-1-1/(4*0.2))^2}) ;

\draw[blue, ultra thick, domain=5:6-1.04/2.04] 
plot (\x, {(1.02)*(\x-6)^2+(1.04)*(\x-6)+0.02+ 2*0.2*(\x-1-1/(4*0.2))^2}) ;

\draw[blue, ultra thick, domain=6-1.04/2.04:7] 
plot (\x, {(1.02)*(1.04/2.04)^2-(1.04)*(1.04/2.04)
+0.02-0.4*(\x - 6+1.04/2.04)^2+ 2*0.2*(\x-1-1/(4*0.2))^2})
node[below right] {$J_{\eta, v^i_{j+1}} (v)$}; 


\draw[darkgreen, ultra thick, domain=1:5] plot (\x, {5 - \x + 2*0.2*(\x-1-2/(4*0.2))^2}) ;

\draw[darkgreen, ultra thick, domain=5:6-1.04/2.04] plot (\x, {(1.02)*(\x-6)^2+(1.04)*(\x-6)+0.02+ 2*0.2*(\x-1-2/(4*0.2))^2}) ;

\draw[darkgreen, ultra thick, domain=6-1.04/2.04:7] plot (\x, {(1.02)*(1.04/2.04)^2-(1.04)*(1.04/2.04)+0.02-0.4*(\x - 6+1.04/2.04)^2 + 2*0.2*(\x-1-2/(4*0.2))^2}) node[below right] {$J_{\eta, v^i_{j+2}} (v)$}; 


\draw[dashed, ultra thick] (1,0) -- (1,4);
 \draw(1,-0.2)node[below]{$v^i_j$};


\draw[dashed, ultra thick] (1+1/0.8,0) -- (1+1/0.8,5 - 1-1/0.8 + 2*0.2/0.8^2 + 0.1) ;
\draw(1+1/0.8,-0.2)node[below]{$v^{i}_{j+1}$};


 \draw[dashed, ultra thick] (1+2/0.8,0) -- (1+2/0.8,5-1-2/0.8+0.72) ;
 \draw(1+2/0.8,-0.2)node[below]{$v^{i}_{j+2}$};
 

 \draw[dashed, ultra thick] (1+3/0.8,0) -- (1+3/0.8,0.92) ;
 \draw(1+3/0.8,-0.2)node[below]{$v^{i}_{j+3}$};


 \draw(6-1.04/2.04,-0.2)node[below]{$v^{i}$};

\end{tikzpicture}
\caption{A pictorial description of the sequence $\{ v^i_j \}_{j \in \mathbb{N}}$
provided by the algorithm in \eqref{algobis}.
For each $j \in \mathbb{N}$, the vector $v^i_j$ is the unique minimizer of the
strongly convex function $J_{\eta, v_{j-1}} (v):= J (v) + \eta |v - v^i_{j-1}|_{\mathcal{E}}^2$.
When $j \to \infty$, up to subsequences, we have that $v^i_j \to v^i$, 
where $v^i$ is a critical point of $J$ in the affine space $\mathbf{A}(f (i \delta))$.}
\label{GraphAlgorithm}
\end{figure}

As explained below, we also show how it is possible to apply our results to a model of cohesive fracture evolution.
In this specific application, we decided to work in the nondissipative setting, for several reasons.
First of all, many results are available in the literature in this case (see \cite{C}), 
and this allows us to test our methods. 
Secondly, the notions of fracture evolution which are used
to model the cohesive dissipative case are not easy to use in applications,  
since they require rather delicate tools of functional analysis (a formulation is spaces of Young measures, see \cite{CT}). 
Finally, this is a relevant  application for the degenerate case,   
where $BV$ estimates are not available.

We are aware that a more realistic model should take account of the monotonicity
of the crack growth, as done for instance in \cite{CT, Kruz,Vodicka2014}.
This issue can be dealt with in the case of brittle fracture, 
thanks to the \textit{Jump Transfer Lemma}.
Unfortunately, to date, this tool is not available in the cohesive case 
(see \cite{Francfort-Larsen:2003} for details), and even if it were existing, it would not help with the lack of $BV$ estimates.

\subsection{Comments on our result}
We would like to emphasize a few relevant and defining aspects of our results.

\begin{itemize}

\item[(i)] We prove the existence of an approximable quasistatic evolution 
for a large class of energy functions $J$ and linear operators $A$
(see \eqref{gamma} and conditions (J1)--(J4) in Section~\ref{setting}).
In particular, these include nonsmooth and nonconvex energy functionals. 

\vspace{.2cm}

\item[(ii)] We stress the fact that $v(t)$ is supposed to visit at different times 
$t$ {\it critical points} of the functional $v \mapsto J (v) + \alpha \psi (v - v (t))$ 
over the affine space $\mathbf A(f(t))$. 
This condition is rather general, if compared to the usual requirement
of focusing on global minimizers of $J$ over $\mathbf A(f(t))$. 
An evolution along critical points is in general more realistic and physically sound. Moreover, it is important to notice that the viscous approximation  usually does not provide an evolution along critical points, unless one lets the viscosity to vanish. Hence, in contrast to this well-established approximation method, our approach starts with a truly consistent approximation from the very beginning.
However, the analysis of such an evolution is usually very involved
and, in absence of dissipation, may allow for solutions $v$ that are just measurable in time.
While we shall be content with the generality of our results, we have to live with the fact
that our solutions may not be regular in the nondissipative case.

\vspace{.2cm}

\item[(iii)] Our approach is {\it constructive}. 
This is an important fact since, as we shall emphasize later, the functional 
$v\mapsto J(v) + \alpha \psi (v - v(t))$ may have multiple feasible 
critical points at the same time $t$. Hence, in order to promote uniqueness of evolutions, or even just their measurability, we need to design a proper selection principle. 
Accordingly, we shall design our selection {\it algorithm} in such a way that the selected critical point
is the closest - in terms of Euclidean distance -  to the one chosen at the previous instant of time, unless it is energetically convenient to perform a ``jump'' to another significantly different phase of the system (see formula \eqref{algobis}).
This corresponds to a rather common and well-established 
behavior of several physical (and non physical) systems (see, for instance, \cite[Section 9]{C}).  
Moreover, this method can be easily implemented 
by means of a corresponding numerical method \cite{AFS}, as we will show with the above mentioned application
to cohesive fracture evolution, see Section \ref{sec:exp}.
Thus, our evolution is the result of a constructive machinery (an algorithm), 
which is designed to emulate physical principles, according to which a critical point is selected
in terms of a balance between neighborliness (accounting the Euclidean path length between critical points) and energy convenience. 
In our view, this feature is of great relevance as we provide a black box, 
whose outputs are solutions.

\item[(iv)] As already mentioned, the proof of the main results (Theorems \ref{psiTeorexist} and \ref{Teorexist}) 
is given in a finite dimensional setting. This is due to the fact that, in the infinite dimensional case, 
the subdifferential is in general not closed with respect to the weak convergence in the domain of the energy. 
Such a difficulty could be overcome by requiring that the energy functional has compact sublevels, 
an assumption which is quite common in literature, provided the domain of the energy is suitably chosen.
On the other hand, the choice of a weaker topology for the domain may not always comply well with other conditions on the energy, that we need to prove the existence results. This is in particular the 
case of the key condition (J3) (see Section~\ref{setting}), which allows to control the virtual power, due to the external constraint, in terms of the energy. While not representing a major hurdle in the uncostrained case with time-dependent energy functionals, this issue seems particularly relevant for the problem we study (see Remark \ref{no-infinitedim} for further details).
This motivates our choice of first dealing with a finite dimensional setting, 
and then extending the results to our model case (see Theorem \ref{quasist}) with a problem-specific technique. 

\vspace{.2cm}

\item[(v)] As an important remark, we stress that in general all of the constants 
appearing in the technical assumptions in Section \ref{setting} could depend on the dimension 
of the considered Euclidean spaces.
Thus, our results can be applied to physical systems that can assume 
(a discrete or a continuum of) infinitely many states, 
provided all the relevant estimates obtained are dimension free.
For this reason, we state very clearly which are the parameters 
affecting the constants that come into play in the crucial proofs
(see Remark~\ref{bounds constants} and Remark~\ref{dependence}).
We give an important application to cohesive fracture evolution
(see Section \ref{appl} and Section \ref{hto0}), 
showing how also infinite dimensional systems can be approached with our method. 
In particular, we eventually provide an alternative proof of the existence of  evolutions of critical points 
for the cohesive fracture model firstly proven in \cite{C}.

\vspace{.2cm}

\vspace{.2cm}

\item[(vi)] The numerical simulations that we provide in Section~\ref{sec:exp} 
for the cohesive fracture evolutions agree with physically relevant requirements,
such as the \textit{crack initiation criterion} (see \cite[Theorem 4.6]{C}), 
which states that a crack appears only when the maximum sustainable stress of the material is reached.
We also mention that numerical simulations, obtained instead with the vanishing viscosity approach, have already appeared in \cite{Kne-Schr}.

\end{itemize}

To the best of our knowledge, this is the first time that 
an algorithm providing critical points evolution is introduced in such a generality, especially to treat consistently and stably also nondissipative models.

However, it is worth mentioning that our approach, although derived independently, resembles similar methods 
which have appeared recently in the literature. 
In \cite{negri13}, a related scheme has been for instance investigated in order to obtain 
a general existence result in a nonconvex but smooth setting.
The author also takes into account viscous dissipation effects,
and provides a constructive time rescaling, where the evolutions have a continuous dependence on time. 
This idea, in particular, generalises previous approaches for systems driven by nonconvex energy functionals (\cite{DM-Des-Sol, EM, MRS2, Stefanelli}). 
Moreover, the author shows an approximation result that in spirit is close to our Theorem \ref{quasist}, 
and to previous results in \cite{MRS, Serfaty}. 
However, the results in \cite{negri13} are obtained under the assumption of $C^1$ regularity of the energy functional, 
and in an \textit{unconstrained} setting. 
In particular, stability of critical points after passing to the limit is recovered through a very strong assumption 
(\cite[(8), Theorem 2.3]{negri13}), which would seem quite unnatural for a constrained nonsmooth problem.

Concerning other contributions, we also mention that 
a very general incremental minimization scheme, involving a quadratic correction with a fixed parameter $\mu>0$, has been just proposed
in \cite{Min-Sav}, even in connection with abstract dissipation distances in metric spaces. 

Another algorithm, showing some analogies to \eqref{algobis}, has furthermore been recently considered for a case study of phase field fracture coupled with damage in \cite{Neg-Knees}. In this case, the energy is nonconvex, but separately convex in the two state variables. Therefore, instead of adding a regularization, the authors define the discretized evolution through fixed points of an alternate minimization scheme. Also in this case a time reparametrization, where a full energy-dissipation balance holds, is provided.
The exploitation of similar techniques in connection with our problem is another interesting issue that we plan to pursue in the future.

\vspace{.2cm}

The plan of the paper is the following.
In Section \ref{setting} we state the main results of the paper, 
Theorem~\ref{psiTeorexist} and Theorem~\ref{Teorexist}, 
whose proofs are given in Section~\ref{proofmain1} 
and Section~\ref{secproof}, respectively.
Section \ref{appl} is devoted to the description of the cohesive fracture model
introduced in \cite{C}.
In the same section we introduce a space mesh and spatially discretize the problem. 
In Section \ref{hto0} we pass to the limit as the size of the mesh tends to $0$, 
thus obtaining a new proof of the result in \cite[Theorem 4.4]{C}.
Finally, numerical simulations are given in Section \ref{sec:exp}.
 
\end{section}

\begin{section}{Setting of the problem and main result}  \label{setting}

\subsection{Basic notation}

Throughout all the paper, we use the notation $\mathbb{N}_0:= \mathbb{N} \cup \{ 0 \}$, 
and we denote by $\mathcal{L}^1$ the standard Lebesgue $1$-dimensional measure in $\mathbb{R}$ .
Let $\mathcal E \simeq \mathbb{R}^n$ and $\mathcal F \simeq \mathbb{R}^m$ 
be two Euclidean spaces with dimension $n$ and $m$,
respectively, with $m < n$.
We consider an energy function $J: \mathcal E \to [0, + \infty)$, 
a linear operator $A: \mathcal E \to \mathcal F$, 
and a time dependent constraint $f: [0,T] \to \mathcal F$.
We will assume that $A$ is surjective.
Equivalently, we will suppose that that there exists $\gamma > 0$ such that
the adjoint operator $A^*: \mathcal F'  \to \mathcal E' $ satisfies 
\begin{equation} \label{gamma}
| A^* q |_{\mathcal{E}'} \geq \gamma | q |_{\mathcal{F}'} \quad \text{ for every }q \in \mathcal F'.
\end{equation}

We will frequently use the space $BV([a,b], X)$ of functions of bounded variation from a time interval $[a,b]$ to a Banach space $X$.

Before proceeding, let us recall some basic notions of differential calculus
that are used in the sequel.
Given $u \in \mathcal{E}$ and $S: \mathcal{E} \to [0, \infty)$, 
we recall that the Fr\'echet subdifferential 
$\partial S (u) \subset \mathcal{E}'$ of $S$ at a point $u \in \mathcal{E}$ is defined in the following way: 
$$
\xi \in \partial S (u) \quad \Longleftrightarrow \quad 
0 \leq \liminf_{v \to u} \frac{S (v) - \left( S (u) 
+ \langle \xi , v - u \rangle_{\mathcal{E}}  \right)}{|v-u|_{\mathcal{E}}},
$$
where $\langle \cdot , \cdot \rangle_{\mathcal{E}}$ denotes the 
standard scalar product in $\mathcal{E}$.
For every $f \in \mathcal{F}$, we set
$$
\mathbf{A}(f):=\{v \in \mathcal E : A v = f \}.
$$
We can now give a precise definition of critical point in our setting.
\begin{definition}
Let $f \in \mathcal F$ and $S: \mathcal{E} \to [0, \infty)$. Assume that $S$ has nonempty subdifferential at every point.
We say that $u \in \mathcal E$ is a \textit{critical point of $S$ 
on the affine space} $\mathbf{A}(f)$~if
\begin{equation}\label{minvinc1}
A u = f \quad \textnormal{ and } \quad \partial S (u) \cap \text{ran} (A^*) \neq \emptyset,
\end{equation}
where $\text{ran} (A^*)$ denotes the range of $A^*$.
\end{definition} 

\begin{remark} \label{liminf}
One can check that condition \eqref{minvinc1} implies, in turn, that
\begin{equation}\label{minvinc2}
0 \leq \liminf_{\varepsilon \to 0^+} 
\frac{S(u +\varepsilon w) - S(u)}{\varepsilon} \qquad \mbox{for every } w \in \ker(A),
\end{equation}
where $\ker(A)$ denotes the kernel of $A$.
\end{remark}

\begin{remark}
The definition above is motivated by the fact that if $u \in \mathcal{E}$ satisfies
$$
\min_{Av = f} S (v) = S (u),
$$
then \eqref{minvinc1} holds true.
\end{remark}
We also recall that a function $S: \mathcal{E} \to [0, \infty)$ 
is said to be \textit{strongly convex} if there exists $\nu > 0$ such that
whenever $v_1, v_2 \in \mathcal{E}$ we have
$$
\nu | v_1 -v_2 |_{\mathcal{E}}^2 \leq \langle \xi_1 - \xi_2 , v_1 -v_2 \rangle_{ \mathcal{E}}
\qquad  \mbox{for every } \, \xi_1 \in \partial S (v_1) \text{ and } \xi_2 \in \partial S (v_2).
$$

\subsection{The energy functional}

We can now state our assumptions on the energy functional $J$. We suppose that:
\begin{itemize}

\item[(J1)] the functional $v \longmapsto J (v) + |A v|^2_{\mathcal{F}}$ is coercive;

\vspace{.1cm}

\item[(J2)] there exists $\eta > 0$ such that $v \longmapsto J_{\eta} (v) 
:= J (v) + \eta | v|^2_{\mathcal{E}}$ is strongly convex;

\vspace{.1cm}

\item[(J3)] there exists $L > 0$ such that, for every $v \in \mathcal E$,
$\xi \in \partial J (v) \Longrightarrow | \xi |_{\mathcal{E}'} \leq L\left(J (v) + 1\right)$.

\end{itemize}
Before proceeding with the setting of the problem, we make some remarks
on the assumptions above. 
\begin{remark}\label{J-prop}
Condition (J2) implies that 
$J$ is a smooth perturbation of a convex function and, therefore, it is locally Lipschitz continuous. 
From this, it follows that $J$ is almost everywhere differentiable 
and its Fr\'echet subdifferential is nonempty at every point.
\end{remark}

\begin{remark} \label{vbar}
Let $\eta > 0$ be such that (J2) holds true, and let $\overline{v} \in \mathcal{E}$.
Then, the functional $J_{\eta, \overline{v}}: \mathcal{E} \to [0, \infty)$ defined as 
\begin{equation}\label{Jetavbar}
J_{\eta, \overline{v}} (v) := J (v) + \eta | v - \overline{v}|^2_{\mathcal{E}} \qquad 
\text{ for every }v \in \mathcal{E}, 
\end{equation}
is also strongly convex, with the same constant $\nu$. 
\end{remark}

\begin{remark} \label{sumsubdiff}
If $J$ is globally Lipschitz continuous with Lipschitz constant $L$, 
then
\begin{equation} \label{numero}
\xi \in \partial J (v) \Longrightarrow |\xi|_{\mathcal{E}'} \leq L, 
\end{equation}
for every $v \in \mathcal{E}$.
Moreover, if $J = J_1 + J_2$ where $J_1$ is lower semicontinuous 
and $J_2$ is of class $C^1$, then the decomposition  
\begin{equation} \label{numero2}
\partial J (v) = \partial J_1 (v) + D J_2 (v)
\end{equation}
holds true at every point $v \in \mathcal{E}$ such that $\partial J (v) \neq \emptyset$, 
where $D J_2 (v)$ denotes the  Fr\'echet derivative of $J_2$ at $v$.
\end{remark}

\begin{remark}
As shown in \cite[Remark 2.5]{AFS}, it suffices to check condition (J3) 
only at differentiability points of $J$, to ensure that it is satisfied at every point.
\end{remark}

\subsection{Dissipation Functional}

We introduce now the \textit{dissipation functional} $\psi: \mathcal E \to [0, \infty)$,
which measures the energy, which is lost when passing from one state to another one.
We assume that $\psi$ satisfies the following:

\begin{itemize}

\vspace{.2cm}

\item[($\Psi 1$)] $\psi (v) = 0 \mbox{ if and only if } v =0$;

\vspace{.2cm}

\item[($\Psi 2$)] $\psi (v_1 + v_2) \leq \psi (v_1) + \psi(v_2)$; 

\vspace{.2cm}

\item[($\Psi 3$)] $\psi (\lambda v) = \lambda \psi (v)$ for every $\lambda \geq 0$ 
and $v \in \mathcal E$;

\vspace{.2cm}

\end{itemize}

\begin{remark}
Note that assumptions ($\Psi 2$) and ($\Psi 3$) imply that $\psi$ is convex.
Note also that in general $\psi$ is not  a norm, unless symmetry holds
(i.e., $\psi (-v) = \psi (v)$ for every $v \in \mathcal E$).
\end{remark}

From the assumptions above it follows that there exists a constant $c > 0$ such that
\begin{equation}\label{boundsPsi}
\frac{1}{c} |v|_{\mathcal E} \leq \psi (v) \leq c |v|_{\mathcal E} \quad \text{ for every } v \in \mathcal E.
\end{equation}
Under these assumptions, it is well known 
(see, for instance, \cite{MRoS} and the references therein) that
\begin{equation} \label{sublimitato}
\emptyset \neq \partial \psi (v) \subset K^* \quad \text{ for every } v \in \mathcal E,
\end{equation}
where we set 
$$
K^*:= \partial \psi (0) = \{ \xi \in \mathcal E' :  \langle \xi, w \rangle_{\mathcal E} 
\leq \psi (w) \, \text{ for every } w \in \mathcal E \}.
$$
One can also check that, for every $v \in \mathcal E$, 
the subdifferential $\partial \psi (v)$ of $\psi$ at $v$ is characterised by
$$
\partial \psi (v) = K^* \cap \{ \xi \in \mathcal E' : \langle \xi, v \rangle_{\mathcal E} = \psi (v) \}.
$$

\begin{remark} \label{limir}
The set $K^*$ is convex (as it is a subdifferential) and bounded; indeed, the inclusion $K^*\subset B(0,c)$ (this one being the ball with radius $c$ in $\mathcal E'$) simply follows from ($\Psi 1$) and \eqref{boundsPsi}. 
\end{remark}

\begin{remark} 
Since $\Psi$ is proper convex, from (J2), the convexity of  \eqref{numero2}, and the Moreau-Rockafellar Theorem (\cite[Theorem 23.8]{Roc}), the decomposition
\[
\partial (J+\psi)(v)=\partial J(v)+\partial \psi(v)
\]
holds at every $v \in \mathcal E$.
\end{remark}

For $\psi$ as above and $u\in BV([a, b], \mathcal E)$, the $\psi$-variation of $u$ is defined as
\begin{equation}\label{psi-var}
\mathrm{Var}_{\psi}(u; [a,b])=\sup\left\{\sum_{i=0}^k \left(\psi(u(t_i))-\psi(u(t_{i-1}))\right): a=t_0<t_1<\dots<t_k=b,\, k\in \mathbb{N}\right\}\,.
\end{equation}
If one takes $\psi=|\cdot|$ in the above definition, one retrieves the usual definition for the pointwise variation of a function. For all $a<c<b$ the equality
\begin{equation}\label{additivity}
\mathrm{Var}_{\psi}(u; [a,b])=\mathrm{Var}_{\psi}(u; [a,c])+\mathrm{Var}_{\psi}(u; [c,b])
\end{equation}
immediately follows from the definition and the subadditivity of $\psi$.
If $u$ is additionally absolutely continuous, it is well known that
\begin{equation}\label{varuac}
\mathrm{Var}_{\psi}(u; [a,b])=\int_a^b \psi(\dot u(s))\,\mathrm{d}s\,. 
\end{equation}

\subsection{Main results}

In the following, together with rate independent evolutions, 
in which dissipation is present, we will also consider
\textit{weak potential type evolutions}, where there is no dissipation.
From the technical point of view, the absence of dissipation 
translates into a lack of compactness.
For this reason, we need an additional assumption to treat this case.
We shall assume

\begin{itemize}

\vspace{.1cm}

\item[(J4)] There exists a positive constant $C_{J,\eta} > 0$ such that
for every $\overline{v} \in \mathcal{E}$
\begin{equation} \label{a3}
A^* q_i \in \partial J_{\eta, \overline{v}} (v_i), \quad i = 1, 2
\quad \Longrightarrow \quad
\langle q_1 - q_2 , A v_1 - A v_2 \rangle_{\mathcal{F}} 
\leq C_{J,\eta} |v_1 - v_2 |_{\mathcal{E}} \, | A v_1 - A v_2 |_{\mathcal{F}}. 
\end{equation}

\end{itemize}

\begin{remark}
Although condition (J4) above might seem quite technical, 
it is automatically satisfied when $J \in C^{1,1}$.
Indeed, in this case $\partial J (v)$ is single valued at every $v \in \mathcal{E}$, 
and coincides with the differential $DJ (v)$.
Then, denoting by $M$ the Lipschitz constant of $D J (\cdot)$ and using \eqref{gamma}, one has
$$
| q_1 - q_2 |_{\mathcal{F}'} \leq \frac{1}{\gamma} |A^* q_1 - A^* q_2 |_{\mathcal{E}'}
= \frac{1}{\gamma} | D J(v_1) - D J(v_2) |_{\mathcal{E}'}
\leq \frac{M}{\gamma} |v_1 - v_2 |_{\mathcal{E}}.
$$
At this point, \eqref{a3} simply follows by the Cauchy-Schwarz inequality.
\end{remark}

\begin{remark}
We will show in a concrete example that condition (J4) can also be satisfied when $J \notin C^{1,1}$
(see Section~\ref{conditions a0-a3}). 
\end{remark}
Before stating our main results, we give again and in more detail the notion of discrete and 
approximable quasistatic evolution, respectively.
When this is possible, in the following we treat at the same time the cases
with and without dissipation.
To this aim, we introduce a switching parameter $\alpha \in \{ 0, 1 \}$, in such a way 
that $\alpha =0$ corresponds to the situation without dissipation, 
while in the case $\alpha = 1$ dissipation is present.

\begin{definition}\label{defdiscr}
Let $\alpha \in \{ 0, 1\}$, let $v_0 \in \Ee$ be a critical point of $J + \alpha \psi (v - v_0)$ 
on the affine space $\mathbf{A} (f (0))$, and let $\delta > 0$. 
A \emph{discrete quasistatic evolution} with time step $\delta$, initial condition $v_0$,
and constraint $f$ is a right-continuous function $v_\delta:[-\delta, T]\to \Ee$ such that 
\begin{itemize}
 \item $v_\delta(t)=v_0$ for every $t \in [ -\delta, \delta)$;
 \item $v_\delta$ is constant in $[0,T] \cap [i \delta, (i + 1) \delta)$ 
 for all $i \in \mathbb{N}_0$ with $i \delta \leq T$;
 
 \item $v_\delta ( i \delta )$ is a critical point 
 of $v \mapsto J + \alpha \psi (v - v_\delta ( (i-1) \delta ))$ on the affine space $\mathbf{A} (f (i \delta))$ for every 
 $i \in \mathbb{N}_0$ with $i \delta \leq T$.
\end{itemize}
\end{definition}

\begin{definition}\label{evolution}
Let $\alpha \in \{ 0, 1 \}$ and let $v_0 \in \Ee$ be a critical point of 
$v \mapsto J(v) + \alpha \psi (v - v_0)$ on the affine space $\mathbf{A} (f (0))$.
A bounded measurable function $v:[0, T]\to \Ee$ is said to be an
\emph{approximable quasistatic evolution} with initial condition $v_0$ 
and constraint $f$, if  there exists a sequence $\delta_k \to 0^+$ 
and a sequence $(v_{\delta_k}(t))_{k \in \mathbb{N}}$
of discrete quasistatic evolutions with time step $\delta_k$, 
initial condition $v_0$, and constraint $f$, such that, for every $t \in [0,T]$,
\begin{equation} \label{defi}
\lim\inf_{k\to +\infty}|v_{\delta_k}(t)-v(t)|_{\mathcal{E}} =0. 
\end{equation}
\end{definition}

We are now ready to state our main results.
The first one is an existence result for rate independent evolutions.
\begin{theorem}[Existence of rate independent evolutions] \label{psiTeorexist}
Let $\alpha = 1$, and suppose that \eqref{gamma}, (J1), (J2), and (J3) are fulfilled, 
and that ($\Psi$1), ($\Psi$2), and ($\Psi$3) hold true.
Let $f \in W^{1,2} ( [0,T] ; \mathcal F)$, and let 
$v_0$ be a critical point of $v \mapsto J + \psi (v - v_0)$ in the affine space $\mathbf{A} (f (0))$.
Then, there exist $v \in BV ([0,T];\mathcal{E})$ and $q \in L^{\infty} ([0,T];\mathcal{F}')$ such that:

\begin{itemize}

\item[\textit{(A)}] $v (\cdot)$ is an approximable quasistatic evolution 
with initial condition $v_0$ and constraint $f$;

\vspace{.2cm}

\item[\textit{(B)}] $A^* q (t) \in \partial J ( v (t)) + K^* \qquad \text{ for $\mathcal{L}^1$-a.e. } t \in [0,T]$;

\vspace{.2cm}

\item[\textit{(C)}] The function $s \mapsto \langle q (s) ,  \dot{f} (s) \rangle_{\mathcal{F}}$ 
belongs to $L^1 (0,T)$, and for every $0 \leq t_1 < t_2 \leq T$
$$
J ( v (t_2) ) + \mathrm{Var}_{\psi} (v; [t_1, t_2]) 
\leq J ( v (t_1) ) +\int_{t_1}^{t_2} \langle q  (s) ,  \dot{f} (s) \rangle_{\mathcal{F}} \, \mathrm{d}s.
$$

\end{itemize}

\end{theorem}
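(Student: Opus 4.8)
The plan is to construct the discrete quasistatic evolutions via the inner-loop algorithm \eqref{algobis}, establish uniform (time-step-independent) a priori bounds, and then extract a limit by a Helly-type selection argument. First I would show that the inner loop is well-posed: for fixed $i$ and fixed $v^{i-1}$, one iterates $v^i_{j} := \argmin_{Av = f(i\delta)} J_{\eta, v^i_{j-1}}(v) + \alpha\psi(v-v^{i-1})$, which is a strongly convex constrained problem (by (J2) and Remark~\ref{vbar}) hence uniquely solvable, and each step decreases the functional $J(\cdot) + \alpha\psi(\cdot - v^{i-1})$; summing the decrements along $j$ controls $\sum_j |v^i_j - v^i_{j-1}|_{\mathcal E}^2$, so $v^i_j$ is Cauchy-in-square-summable and, up to a subsequence, converges to some $v^i$. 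Passing to the limit in the optimality condition $A^* q^i_j \in \partial J_{\eta, v^i_{j-1}}(v^i_j) + \partial(\alpha\psi)(v^i_j - v^{i-1})$, and using closedness of the subdifferential together with $\eta|v^i_j - v^i_{j-1}| \to 0$, yields $A^* q^i \in \partial J(v^i) + \partial(\alpha\psi)(v^i - v^{i-1})$, i.e.\ $v^i$ is a constrained critical point, so $v_\delta$ is a bona fide discrete quasistatic evolution. Here (J3) enters to bound $q^i$ via \eqref{gamma}.

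Next I would derive the discrete energy inequality. Comparing $v^i$ with a competitor of the form $v^{i-1} + (\text{correction lifting }f((i-1)\delta)\text{ to }f(i\delta))$ — using surjectivity of $A$ and \eqref{gamma} to choose the correction with norm controlled by $|f(i\delta) - f((i-1)\delta)|_{\mathcal F}$ — and exploiting that $v^i$ is (in the limit of the inner loop) at least as good as any such competitor in the sense of the liminf inequality \eqref{minvinc2}/the limiting minimality, one obtains
$$
J(v^i) + \alpha\psi(v^i - v^{i-1}) \le J(v^{i-1}) + \langle q^i, f(i\delta) - f((i-1)\delta)\rangle_{\mathcal F} + o(\delta)\text{-type error}.
$$
More carefully, using $A^* q^i \in \partial J(v^i) + \partial(\alpha\psi)(v^i-v^{i-1})$ and testing against the correction direction gives the inequality with $\langle q^i, \dot f\rangle$ integrated over $[(i-1)\delta, i\delta]$ on the right. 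Telescoping over $i$ and using $f \in W^{1,2}$ together with the $L^\infty$ bound on $q$ gives $J(v_\delta(t)) + \mathrm{Var}_\psi(v_\delta; [0,t]) \le J(v_0) + C$ uniformly in $\delta$; since $\alpha = 1$ and \eqref{boundsPsi} gives $\psi \ge \frac1c|\cdot|$, this bounds $\mathrm{Var}(v_\delta; [0,T])$ uniformly, and (J1) then bounds $\|v_\delta\|_\infty$ uniformly.

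With uniform $BV$ and sup bounds in hand, I would apply Helly's selection theorem to extract $v_{\delta_k} \to v$ pointwise on $[0,T]$ with $v \in BV([0,T];\mathcal E)$, and extract $q_{\delta_k} \rightharpoonup q$ weakly-$*$ in $L^\infty([0,T];\mathcal F')$. Property~(A) is then immediate from Definition~\ref{evolution}. For~(C), lower semicontinuity of $J$ and of $\mathrm{Var}_\psi$ (the latter by its sup definition \eqref{psi-var} and continuity of $\psi$), together with strong $L^1$ convergence of $\langle q_{\delta_k}, \dot f\rangle$ (or at least weak convergence, which suffices against the fixed $\dot f$), pass the discrete energy inequality to the limit; the $L^1$ membership of $s\mapsto\langle q(s),\dot f(s)\rangle$ follows from $q \in L^\infty$ and $\dot f \in L^2 \subset L^1$. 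For the local stability~(B), at $\mathcal L^1$-a.e.\ $t$ the discrete relation $A^* q_{\delta_k}(t) \in \partial J(v_{\delta_k}(t)) + K^*$ must be passed to the limit; this requires $v_{\delta_k}(t) \to v(t)$ (true, by Helly) and a pointwise-a.e.\ handle on $q_{\delta_k}(t)$, which one gets either by also selecting $q$ pointwise (the $q_{\delta_k}(t)$ live in a fixed bounded set by \eqref{gamma}+(J3), so Helly applies to a scalar surrogate or one argues via a.e.\ convergence along a further subsequence) and then invoking closedness of $\partial J$ under strong convergence (valid in finite dimensions, Remark~\ref{J-prop}) plus closedness and convexity of $K^*$.

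The main obstacle I expect is precisely the passage to the limit in the stability inclusion~(B): the discrete subdifferential inclusion holds at the nodes $i\delta$, and one must both (i) handle the reindexing between the piecewise-constant interpolant's jump times and the continuum time $t$, and (ii) upgrade the weak-$*$ convergence of $q_{\delta_k}$ to something usable pointwise in $t$ against the (only strongly, not uniformly) convergent $v_{\delta_k}(t)$ — since $\partial J$ is merely outer semicontinuous, not continuous. The remedy is the standard one: work with the graph of $\partial J$, use that in finite dimensions $\partial J$ has closed graph and locally bounded values, extract a pointwise-a.e.\ convergent subsequence of $q_{\delta_k}$ (possible because of the uniform bound), and close the inclusion a.e. The footnote's refined statement (stability at all continuity points of $v$) would require a more delicate choice of representative of $q$, which the theorem does not claim and I would not pursue here.
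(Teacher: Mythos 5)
Your overall roadmap (inner loop $\to$ discrete quasistatic evolution $\to$ a priori bounds $\to$ Helly / weak-$*$ compactness $\to$ limit passage) matches the paper's, but there are two genuine gaps in the crucial steps.

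\textbf{Discrete energy inequality.} You propose to test the inclusion $A^* q^i \in \partial J(v^i) + \partial\psi(v^i - v^{i-1})$ at the competitor $v^{i-1}$ (up to a lift). Since $J$ is not convex but only $J_\eta = J + \eta|\cdot|^2$ is, the subdifferential inequality you get is for $J_\eta$, and after cancellation it reads
$
J(v^i) + \psi(v^i - v^{i-1}) \le J(v^{i-1}) + \eta|v^i - v^{i-1}|^2 + \langle q^i, f(i\delta)-f((i-1)\delta)\rangle_{\mathcal F}.
$
The extra term $\eta|v^i - v^{i-1}|^2$ has the wrong sign and cannot be absorbed, because the limit $v^i$ of the inner loop may be far from $v^{i-1}$ (the system can jump). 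This is precisely why the paper, in the dissipative case, sets $q_\delta(t) := r^i_1$ using the \emph{first} inner iterate $v^i_1$ rather than the limit $v^i$: for $v^i_1$, the multiplier $r^i_1$ is the genuine minimizer multiplier, the inner loop is monotone nonincreasing (Lemma~\ref{lemmajNew}(i)) so $J(v^i)+\psi(v^i-v^{i-1}) \le J(v^i_1)+\psi(v^i_1-v^{i-1})$, and strong convexity gives the extra control $|v^i_1 - v^i_0|^2 \lesssim \int_{(i-1)\delta}^{i\delta}|\dot f|$, making the quadratic error $o(1)$.

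\textbf{Stability inclusion (B).} The claim that the uniform $L^\infty$ bound on $q_{\delta_k}$ yields a pointwise-a.e.\ convergent subsequence is false: boundedness in $L^\infty$ gives weak-$*$ compactness only, not pointwise compactness (think of $\sin(kt)$), and there is no $BV$ bound on $q_\delta$ that would let Helly apply. Even if you perform a $t$-dependent measurable selection (as the paper does for $\alpha=0$), the resulting $q$ is not the weak-$*$ limit appearing in your proof of (C), so you would end up with two different $q$'s and the theorem requires one. The paper resolves this by (i) establishing the approximate inclusion $\textnormal{dist}(A^* q_\delta(t) - \partial J(\overline v_\delta(t)), K^*) \le 2\eta\, r_1(\delta)$ for the auxiliary piecewise-constant map $\overline v_\delta(t) = v^i_1$; (ii) extracting a weak-$*$ convergent sequence $\xi_{\delta_k} \in \partial J(\overline v_{\delta_k}(\cdot))$; (iii) closing the inclusion $\xi(t)\in\partial J(v(t))$ a.e.\ by a Lebesgue-point averaging argument based on the convexity inequality for $J_{\eta,\overline v_{\delta_k}(s)}$, whose right-hand side is linear in $\xi_{\delta_k}$ and hence stable under weak-$*$ convergence; and (iv) passing the $\textnormal{dist}$ estimate to the limit by Fatou. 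This avoids any appeal to pointwise convergence of $q_{\delta_k}$.

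Your proof would also need Lemma~\ref{lemmadelta} (or an equivalent) to ensure $v_{\delta_k}(t - \delta_k)$ tracks $v_{\delta_k}(t)$ outside a countable set; without it the strong $L^1$ convergence $\overline v_{\delta_k} \to v$ used in the averaging argument does not follow.
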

In the case without dissipation we need to add the additional assumption (J4),
and we obtain measurability, but in general no further regularity, of the evolution.
\begin{theorem}[Existence of weak potential type evolutions] \label{Teorexist}
Let $\alpha = 0$, and suppose that \eqref{gamma}, (J1), (J2), (J3), and (J4) are satisfied. 
Let $f \in W^{1,2} ( [0,T] ; \mathcal F)$, and let 
$v_0$ be a critical point of $J$ in the affine space $\mathbf{A} (f (0))$.
Then, there exist bounded and measurable functions 
$v: [0,T] \to \mathcal E$ and $q: [0,T] \to \mathcal F'$ such that:

\begin{itemize}

\item[\textit{(a)}] $v (\cdot)$ is an approximable quasistatic evolution 
with initial condition $v_0$ and constraint $f$;

\vspace{.2cm}

\item[\textit{(b)}] $A^* q (t) \in \partial J (v (t))$ for every $t \in [0,T]$;

\vspace{.2cm}

\item[\textit{(c)}] the function $s \mapsto \langle q (s) ,  \dot{f} (s) \rangle_{\mathcal{F}}$ 
belongs to $L^1 (0,T)$, and for every $0< t \in [0,T]$ we have 
\begin{equation*}
J (v (t)) \leq J (v_0) + \int_{0}^{t}  \langle q (s) ,  \dot{f} (s) \rangle_{\mathcal{F}} \, \mathrm{d}s.
\end{equation*}

\end{itemize}
\end{theorem}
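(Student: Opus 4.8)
The plan is to construct the evolution by a double-limit procedure that mirrors the structure already visible in the statement: an \emph{inner} iteration (the algorithm \eqref{algobis}) producing, at each time node $i\delta$, a constrained critical point of $J$ on $\mathbf{A}(f(i\delta))$; and an \emph{outer} limit $\delta_k\to0^+$. First I would analyze the inner loop: fixing $i$ and the previous value $v^{i-1}$, I set $v^i_0:=v^{i-1}$ and define recursively $v^i_j$ as the unique minimizer on $\mathbf{A}(f(i\delta))$ of $v\mapsto J(v)+\eta|v-v^i_{j-1}|^2_{\mathcal E}$ (uniqueness by (J2) and Remark~\ref{vbar}; existence by (J1), which gives coercivity on the affine space since $Av=f(i\delta)$ is fixed). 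The minimality gives, via \eqref{minvinc1} and the Moreau--Rockafellar decomposition, a multiplier $q^i_j\in\mathcal F'$ with $A^*q^i_j\in\partial J_{\eta,v^i_{j-1}}(v^i_j)=\partial J(v^i_j)+2\eta(v^i_j-v^i_{j-1})$. The standard minimizing-movement energy estimate yields $\sum_j \eta|v^i_j-v^i_{j-1}|^2_{\mathcal E}\le J(v^{i-1})-\inf J<\infty$, so $v^i_j-v^i_{j-1}\to0$; by (J1) the sequence $(v^i_j)_j$ is bounded, hence (up to subsequence) $v^i_j\to v^i$ for some $v^i\in\mathbf{A}(f(i\delta))$. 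Using (J3) the multipliers $A^*q^i_j$ are bounded (so by \eqref{gamma} the $q^i_j$ are bounded), and passing to the limit in the inclusion together with closedness of the Fréchet subdifferential in \emph{finite} dimension gives $A^*q^i\in\partial J(v^i)$: thus $v^i$ is a constrained critical point, and $q^i$ is the associated multiplier. This defines the discrete quasistatic evolution $v_\delta$ and a piecewise-constant multiplier $q_\delta$.

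Next I would derive the discrete energy inequality. Comparing $v^i_j$ with the competitor $v^i_{j-1}+(\text{correction to restore }Av=f(i\delta))$: write $f(i\delta)-f((i-1)\delta)=A z^i$ for a suitable $z^i$ with $|z^i|_{\mathcal E}\lesssim|f(i\delta)-f((i-1)\delta)|_{\mathcal F}$ (possible by surjectivity, uniformly by \eqref{gamma}), so that $v^i_{j-1}+z^i\in\mathbf{A}(f(i\delta))$ when $v^i_{j-1}\in\mathbf{A}(f((i-1)\delta))$ — to be applied at $j=1$. Minimality at the first inner step gives $J(v^i_1)+\eta|v^i_1-v^{i-1}|^2\le J(v^{i-1}+z^i)$, and subsequent inner steps only decrease $J$, so $J(v^i)\le J(v^i_1)\le J(v^{i-1}+z^i)$. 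Now estimate $J(v^{i-1}+z^i)-J(v^{i-1})$ by the mean value inequality along the segment, using that $\partial J$ on that segment is controlled and that the ``slope direction'' near $v^{i-1}$ is essentially $A^*q^{i-1}$ up to the vanishing term $2\eta(v^{i-1}-v^{i-1}_{j-1})$; this is where (J4) enters, ensuring $\langle q^{i-1}-(\text{nearby multiplier}),A z^i\rangle_{\mathcal F}$ is $O(|z^i|)$ times a quantity that vanishes along the inner iteration, so that the increment is $\langle q^{i-1},f(i\delta)-f((i-1)\delta)\rangle_{\mathcal F}+o(1)$ contributions. Summing over $i$ telescopes $J$ and produces a Riemann sum for $\int_0^t\langle q_\delta(s),\dot f(s)\rangle_{\mathcal F}\,\mathrm ds$, yielding the discrete version of (c) with a remainder $\to0$ as $\delta\to0$ (using $f\in W^{1,2}$).

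Finally, the outer limit. The functions $q_\delta$ are uniformly bounded in $L^\infty([0,T];\mathcal F')$ (from (J3) and \eqref{gamma}), so up to a subsequence $q_{\delta_k}\rightharpoonup q$ weakly-$*$; the discrete energy inequality gives $\sup_t J(v_{\delta_k}(t))$ bounded, hence by (J1) pointwise boundedness of $v_{\delta_k}$. In the \emph{absence} of dissipation there is no BV bound, so — exactly as flagged in item (ii) of the introduction — I cannot extract a pointwise-everywhere limit by Helly; instead I would use a diagonal/Helly-type selection argument on a countable dense set of times together with boundedness, or invoke a measurable-selection (Aumann-type) argument, to produce a bounded measurable $v$ and a subsequence with $\liminf_k|v_{\delta_k}(t)-v(t)|_{\mathcal E}=0$ for every $t$ — this is exactly the definition of approximable quasistatic evolution, giving (a). For (b): at each fixed $t$, choosing the subsequence realizing the liminf, $v_{\delta_k}(t)\to v(t)$ and the corresponding multipliers (bounded) converge along a further subsequence to some $q(t)$ with $A^*q(t)\in\partial J(v(t))$ by closedness of $\partial J$ in finite dimension — and one defines $q(t)$ this way, checking measurability. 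For (c): pass to the limit in the discrete energy inequality using $v_{\delta_k}(t)\to v(t)$ and lower semicontinuity of $J$ on the left, and weak-$*$ convergence $q_{\delta_k}\rightharpoonup q$ against $\dot f\in L^2$ on the right; $L^1$-integrability of $s\mapsto\langle q(s),\dot f(s)\rangle_{\mathcal F}$ follows from $q\in L^\infty$ and $\dot f\in L^2\subset L^1$.

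The main obstacle, as emphasized in remarks (ii) and (iv) of the introduction, is the \textbf{loss of compactness in time}: without the dissipation-generated BV bound one cannot pass to a limit of the discrete evolutions in any strong topology, which is precisely why the conclusion only asserts measurability of $v$ and why the delicate point is matching the \emph{weak-$*$} limit $q$ of the multipliers with the \emph{pointwise} limit behavior of $v_{\delta_k}$ so that (b) and the energy inequality (c) survive the limit at \emph{every} $t$. Getting (b) to hold at every $t$ (not just a.e.) requires carrying the multiplier selection through the pointwise argument rather than through the weak-$*$ limit, and then reconciling this pointwise $q$ with the $L^\infty$ weak-$*$ $q$ used in (c); this reconciliation, together with the condition (J4) powering the discrete energy estimate in the nonsmooth case, is the technical heart of the proof.
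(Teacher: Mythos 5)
Your overall architecture is right, and you correctly diagnose the lack-of-BV-compactness issue, but you leave unresolved precisely the point that the paper's proof is engineered to settle. In your outer-limit step you propose to carry \emph{two} multipliers: a $t$-dependent pointwise limit $q(t)$ of $q_{\delta_{k_j(t)}}(t)$ (for the stability condition (b)), and a $t$-independent weak-$*$ limit of $q_{\delta_k}$ in $L^\infty([0,T];\mathcal F')$ (to pass to the limit in the integral on the right of (c)). You acknowledge that these two objects have no reason to agree and flag their ``reconciliation'' as the technical heart. That is exactly the gap: there is no way, with the tools you invoke, to show that the $q$ appearing in (b) is the same $q$ for which (c) holds. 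Worse, the discrete energy inequality is available only along the $t$-dependent subsequence realizing $v_{\delta_{k_j(t)}}(t)\to v(t)$, and there the weak-$*$ convergence of the $q_{\delta_k}$'s along the \emph{full} sequence is of no use.

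The paper sidesteps this entirely and never takes a weak-$*$ limit of the multipliers. Instead it defines $\theta_k(t):=\langle q_{\delta_k}(t),\dot f(t)\rangle_{\mathcal F}$ and $\theta(t):=\limsup_k \theta_k(t)$, and then uses the measurable-selection machinery (Lemma~\ref{DalGP} and \cite[Theorem III.6]{cava77}) to choose, for each $t$, a single subsequence along which \emph{simultaneously} $v_{\delta_{k_j}}(t)\to v(t)$, $q_{\delta_{k_j}}(t)\to q(t)$, and $\theta_{k_j}(t)\to\theta(t)$, with $t\mapsto(q(t),v(t))$ measurable. Consequently $\theta(t)=\langle q(t),\dot f(t)\rangle_{\mathcal F}$ for a.e.\ $t$, with the \emph{same} $q$ as in (b). The energy inequality (c) then comes from a reverse-Fatou estimate: since $\limsup_j\int_0^t\theta_{k_j}(s)\,\mathrm ds\le\limsup_k\int_0^t\theta_k(s)\,\mathrm ds\le\int_0^t\limsup_k\theta_k(s)\,\mathrm ds=\int_0^t\langle q(s),\dot f(s)\rangle_{\mathcal F}\,\mathrm ds$, one gets (c) directly without ever introducing the $L^\infty$ weak-$*$ limit of $q_{\delta_k}$. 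So the ``reconciliation'' you need never has to be performed; the one-$q$ strategy is precisely what makes the argument close. (Incidentally this is also why the paper proves (c) only on $[0,t]$ and not on arbitrary $[t_1,t_2]$: the reverse-Fatou inequality has the wrong sign at the lower endpoint.)

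A secondary, smaller issue: your derivation of the discrete energy inequality via a lifted competitor $v^{i-1}+z^i$ and a ``mean value inequality along the segment'' is not robust when $J$ is merely locally Lipschitz (which is all that (J2) gives). The paper instead exploits strong convexity of $J_{\eta,v^i_0}$: writing the subgradient inequality at $v^i_1$ with test vector $v^i_0$ already produces the key estimate, and (J4) is used to control the cross term $\langle r^i-q_\delta((i-1)\delta),Av^i_1-Av^i_0\rangle_{\mathcal F}$ — a direct comparison of two multipliers in $\partial J_{\eta,v^i_0}$, with no smoothness of $J$ required. Your sketch points in that direction but would need to be replaced by this convexity argument to handle the nonsmooth case cleanly.
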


\begin{remark} \label{boundevolution}
The explicit dependence of two constants $C_1$ and $C_2$ with 
$$
|q (t)|_{\mathcal{F}'} \le C_1 \quad 
\text{ and } \quad | v (t) |_{\mathcal{E}} \leq C_2 \qquad \text{ for every } t \in [0,T], 
$$ 
is given in Theorem \ref{teor18}.
\end{remark}

Notice that in the nondissipative case, the energy inequality can not be in principle stated in a proper subinterval $[t_1, t_2]$ of $[0,T]$ with $t_1>0$. This is because the measurable selection procedure we use to overcome lack of compactness is not in general enough to guarantee upper semicontinuity of the right-hand side (see Section \ref{secproof} for details). As mentioned in the Introduction, a time reparametrization technique, yielding an energy equality to hold in the rescaled time, would allow to deal with this difficulty. 

The proofs of Theorem \ref{psiTeorexist} and Theorem \ref{Teorexist} will be given in Sections \ref{proofmain1} and \ref{secproof}, respectively. Before discussing them, we conclude by showing that in both cases, as a consequence of the stability condition, the energy equality actually holds in all the subintervals where the solution happens to be absolutely continuous. On the other hand, it is well-known that solutions to problems as those we consider here are expected to be in general discontinuous because of nonconvexity of the energy.

\begin{theorem}\label{balance}
Let $\alpha=0$ or $\alpha=1$ and assume that the assumptions of Theorem \ref{Teorexist} or of Theorem \ref{psiTeorexist} are satisfied, respectively. Let $v$ be an approximable quasistatic evolution.

\begin{itemize}
 \item If $\alpha=1$ and $[t_1, t_2] \subset [0, T]$, assume additionally that $v$ is absolutely continuous in $[t_1, t_2]$. Then
\begin{equation}\label{energy-eq}
J ( v (t_2) ) + \mathrm{Var}_{\psi} (v; [t_1, t_2]) 
=J ( v (t_1) ) +\int_{t_1}^{t_2} \langle q  (s) ,  \dot{f} (s) \rangle_{\mathcal{F}} \, \mathrm{d}s.
\end{equation}
Furthermore, 
$$
A^* q (t) \in \partial J (v(t)) + \partial \psi (\dot{v}(t)) \qquad \text{ for a.e. } t \in (t_1, t_2). 
$$

\item If $\alpha=0$, assume additionally that $v$ is absolutely continuous in $[0, t]$ with $t>0$. Then
\begin{equation}\label{energy-eq2}
J ( v (t) )  
=J ( v_0 ) +\int_{0}^{t} \langle q  (s) ,  \dot{f} (s) \rangle_{\mathcal{F}} \, \mathrm{d}s.
\end{equation}
\end{itemize}
\end{theorem}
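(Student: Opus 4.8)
The plan is to prove the energy equality \eqref{energy-eq} (and \eqref{energy-eq2}) by establishing the reverse inequality to the one already provided by (C) (resp. (c)), and then to derive the doubly nonlinear inclusion from the equality. The lower bound on $J(v(t_2))$ will come from the local stability condition (B) (resp. (b)) together with the chain rule for the locally Lipschitz function $J$ along the absolutely continuous curve $v$.

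First I would fix $[t_1,t_2]$ on which $v$ is absolutely continuous and observe that, since $J$ is locally Lipschitz (Remark \ref{J-prop}) and $v$ is absolutely continuous with bounded image, the composition $s\mapsto J(v(s))$ is absolutely continuous, hence differentiable a.e., and
$$
J(v(t_2))-J(v(t_1))=\int_{t_1}^{t_2}\frac{d}{ds}J(v(s))\,\mathrm{d}s.
$$
The key point is the chain rule inequality: for a.e. $s$, $\frac{d}{ds}J(v(s))\ge\langle\xi,\dot v(s)\rangle_{\mathcal E}$ for every $\xi\in\partial J(v(s))$. (For the Fréchet/Clarke subdifferential of a locally Lipschitz function this is standard; one can also exploit (J2) to write $J=J_\eta-\eta|\cdot|^2$ with $J_\eta$ convex, for which the chain rule for convex functions applies directly, and the smooth quadratic part is handled by the classical chain rule.) Now take $\xi=A^*q(s)$, which lies in $\partial J(v(s))$ for a.e.\ $s$ in the case $\alpha=0$, and in $\partial J(v(s))+K^*$ in the case $\alpha=1$. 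Using $Av(s)=f(s)$ for all $s$ (so $A\dot v(s)=\dot f(s)$ a.e.) gives $\langle A^*q(s),\dot v(s)\rangle_{\mathcal E}=\langle q(s),\dot f(s)\rangle_{\mathcal F}$.

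In the nondissipative case this immediately yields $\frac{d}{ds}J(v(s))\ge\langle q(s),\dot f(s)\rangle_{\mathcal F}$ a.e., and integrating gives $J(v(t))-J(v_0)\ge\int_0^t\langle q(s),\dot f(s)\rangle_{\mathcal F}\,\mathrm{d}s$; combined with (c) this is \eqref{energy-eq2}. In the dissipative case, write $A^*q(s)=\zeta(s)+\sigma(s)$ with $\zeta(s)\in\partial J(v(s))$ and $\sigma(s)\in K^*=\partial\psi(0)$; then $\langle\zeta(s),\dot v(s)\rangle_{\mathcal E}\ge\frac{d}{ds}J(v(s))$ fails in the wrong direction, so instead I use $\frac{d}{ds}J(v(s))\le\langle\zeta(s),\dot v(s)\rangle_{\mathcal E}$ — wait, the convex chain rule gives $\ge$, so I should rather argue: $\langle q(s),\dot f(s)\rangle_{\mathcal F}=\langle\zeta(s),\dot v(s)\rangle_{\mathcal E}+\langle\sigma(s),\dot v(s)\rangle_{\mathcal E}\ge\frac{d}{ds}J(v(s))+\langle\sigma(s),\dot v(s)\rangle_{\mathcal E}$. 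Since $\sigma(s)\in K^*$, the definition of $K^*$ gives $\langle\sigma(s),\dot v(s)\rangle_{\mathcal E}\le\psi(\dot v(s))$, which is the wrong direction again; the right move is to note we actually need a $\sigma(s)$ with $\langle\sigma(s),\dot v(s)\rangle_{\mathcal E}=\psi(\dot v(s))$, i.e. $\sigma(s)\in\partial\psi(\dot v(s))$. This is exactly where the inclusion must be produced \emph{first}: one shows that the stability $A^*q(s)\in\partial J(v(s))+K^*$ together with the \emph{already known} energy inequality (C) and \eqref{varuac} forces, for a.e.\ $s$, the sharper selection $A^*q(s)\in\partial J(v(s))+\partial\psi(\dot v(s))$. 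Concretely, integrate the pointwise inequality $\langle q(s),\dot f(s)\rangle_{\mathcal F}\ge\frac{d}{ds}J(v(s))+\langle\sigma(s),\dot v(s)\rangle_{\mathcal E}$ and combine with (C) and $\mathrm{Var}_\psi(v;[t_1,t_2])=\int_{t_1}^{t_2}\psi(\dot v(s))\,\mathrm{d}s$ to get $\int_{t_1}^{t_2}\big(\psi(\dot v(s))-\langle\sigma(s),\dot v(s)\rangle_{\mathcal E}\big)\,\mathrm{d}s\le 0$; since the integrand is nonnegative ($\sigma(s)\in K^*$), it vanishes a.e., hence $\langle\sigma(s),\dot v(s)\rangle_{\mathcal E}=\psi(\dot v(s))$, i.e. $\sigma(s)\in\partial\psi(\dot v(s))$, which is the desired inclusion; and all the inequalities above are then equalities, giving \eqref{energy-eq}.

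The main obstacle I anticipate is the chain rule for $s\mapsto J(v(s))$ when $J$ is merely locally Lipschitz and nonconvex: one must justify that $\frac{d}{ds}J(v(s))\ge\langle\xi,\dot v(s)\rangle$ for \emph{every} $\xi\in\partial J(v(s))$ (Fréchet subdifferential), at a.e.\ $s$. Reducing via (J2) to $J=J_\eta-\eta|\cdot|_{\mathcal E}^2$ makes this clean, since for the convex $J_\eta$ the inequality $J_\eta(v(s+h))-J_\eta(v(s))\ge\langle\xi_\eta,v(s+h)-v(s)\rangle$ with $\xi_\eta\in\partial J_\eta(v(s))$ yields the one-sided derivative bound, and the quadratic term is $C^1$; one then uses $\partial J(v)=\partial J_\eta(v)-2\eta v$. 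A secondary technical point is checking measurability of the selections $\zeta(s),\sigma(s)$, which follows from standard measurable selection theorems applied to the closed-graph multifunctions involved.
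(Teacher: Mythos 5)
Your proposal follows essentially the same route as the paper's proof: differentiate the Fr\'echet subdifferential lower bound $J(v(t+h))-J(v(t))\geq\langle\xi(t),v(t+h)-v(t)\rangle_{\mathcal E}+o(h)$ at a.e.\ $t$ (the quadratic correction from (J2) disappears in the limit), identify $\langle A^*q(t),\dot v(t)\rangle_{\mathcal E}=\langle q(t),\dot f(t)\rangle_{\mathcal F}$ via $A\dot v=\dot f$, integrate, and combine with (C) resp.\ (c) to force equality; the doubly nonlinear inclusion is then read off from the saturation of $\langle\sigma(s),\dot v(s)\rangle_{\mathcal E}\leq\psi(\dot v(s))$. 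This is the paper's argument.

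One sign slip worth correcting in the dissipative case: you write the pointwise bound $\langle q(s),\dot f(s)\rangle_{\mathcal F}\geq\tfrac{d}{ds}J(v(s))+\langle\sigma(s),\dot v(s)\rangle_{\mathcal E}$, but the chain rule gives $\langle\zeta(s),\dot v(s)\rangle_{\mathcal E}\leq\tfrac{d}{ds}J(v(s))$ for $\zeta(s)\in\partial J(v(s))$, so the correct direction is
$$
\langle q(s),\dot f(s)\rangle_{\mathcal F}\leq\tfrac{d}{ds}J(v(s))+\langle\sigma(s),\dot v(s)\rangle_{\mathcal E}\leq\tfrac{d}{ds}J(v(s))+\psi(\dot v(s)).
$$
Integrating the first of these and combining with (C) then does produce $\int_{t_1}^{t_2}\bigl(\psi(\dot v(s))-\langle\sigma(s),\dot v(s)\rangle_{\mathcal E}\bigr)\,\mathrm{d}s\leq 0$, which is the conclusion you state; the inequality as you wrote it does not chain with (C) to give that. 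With the corrected sign the argument is complete and matches the paper.
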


\begin{proof}
Clearly, only the ''$\geq$''-inequality in \eqref{energy-eq} or \eqref{energy-eq2} has to be shown. We begin by noticing that, since $J$ is locally Lipschitz by Remark \ref{J-prop}, under our assumption also the map $t\mapsto J(v(t))$ is absolutely continuous.
Let now $t\in [0,T]$ be a common differentiability point for $t\mapsto f(t)$, $t\mapsto v(t)$ and $t\mapsto J(v(t))$.

Now, for $\xi(t)\in \partial J(v(t))$ we have by definition of subdifferential
\begin{equation}\label{ineq}
J(v(t+h))-J(v(t))\ge \langle \xi(t), v(t+h)-v(t)\rangle_{\mathcal{E}}\,.
\end{equation}
If $\alpha=0$, we can take $\xi(t)=A^* q(t)$ we have
\[
J(v(t+h))-J(v(t))\ge \langle q(t), f(t+h)-f(t)\rangle_{\mathcal{F}}
\]
and differentiating
\[
\frac{\mathrm{d}}{\mathrm{d}t}J(v(t))\ge \langle q(t), \dot f(t)\rangle_{\mathcal{F}}\,,
\]
so that the conclusion follows by integration between $0$ and $t$.

If $\alpha=1$, we can take $\xi(t)=A^* q(t)-\zeta(t)$, with $\zeta(t)\in K^*=\partial \psi(0)$ in \eqref{ineq} to obtain
\[
J(v(t+h))-J(v(t))+ \langle \zeta(t), v(t+h)-v(t)\rangle_{\mathcal{E}}\ge \langle q(t), f(t+h)-f(t)\rangle_{\mathcal{F}}\,.
\]
Differentiating we have
\[
\frac{\mathrm{d}}{\mathrm{d}t}J(v(t))+ \langle \zeta(t), \dot v(t)\rangle_{\mathcal{E}}\ge \langle q(t), \dot f(t)\rangle_{\mathcal{F}}\,.
\]
For $\zeta(t)\in \partial \psi(0)$ it holds 
\begin{equation} \label{c'eravamo quasi}
\langle \zeta(t), \dot v(t)\rangle_{\mathcal{E}}\le \psi(\dot v(t))-\psi(0)=\psi(\dot v(t))
\end{equation} 
and therefore
\begin{equation} \label{ci siamo quasi}
\frac{\mathrm{d}}{\mathrm{d}t}J(v(t))+\psi(\dot v(t))\ge \langle q(t), \dot f(t)\rangle_{\mathcal{F}}\,.
\end{equation}
Therefore, by integration between $t_1$ and $t_2$, thanks to \eqref{varuac}, 
we get \eqref{energy-eq}.
Now, \eqref{ci siamo quasi} holds as an equality, and so does \eqref{c'eravamo quasi}.
Since $\psi (0)=0$, the inclusion $\zeta (t) \in \partial \psi (0)$ and the equality 
$\langle \zeta(t), \dot v(t)\rangle_{\mathcal{E}} =\psi(\dot v(t))$ together imply
$\zeta (t) \in \partial \psi (\dot{v} (t))$.
Since, by construction, $A^* q (t) = \xi (t) + \zeta (t)$ with $\xi (t) \in \partial J (v (t))$, 
this concludes the proof. 
\end{proof}

\section{Auxiliary Results}

In this section we prove some auxiliary results, that will be used to prove
both Theorem \ref{psiTeorexist} and Theorem \ref{Teorexist}.
We start by showing that, under suitable assumptions, an approximable quasistatic evolution 
is automatically an evolution of critical points.

\begin{proposition} \label{tre2}
Suppose that \eqref{gamma}, (J2), and (J3) are satisfied, 
and let $f \in W^{1,2} ( [0,T] ; \mathcal F)$ and $\alpha \in \{ 0, 1 \}$.
If $\alpha = 1$, suppose in addition that ($\Psi 1$), ($\Psi 2$) and ($\Psi 3$) hold true.
Let $v_0 \in \Ee$ be a critical point of $v \mapsto J + \alpha \psi (v - v_0)$ 
in the affine space $\mathbf{A} (f (0))$, 
and let $v:[0, T]\to \Ee$ be an approximable  quasistatic evolution 
with initial condition $v_0$ and constraint $f$.
Then, $v(t)$ is a critical point of $v \mapsto J + \alpha \psi (v - v(t))$ 
on the affine space $\mathbf{A} (f (t))$ for every $t \in [0,T]$.
\end{proposition}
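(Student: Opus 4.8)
The plan is to fix $t \in [0,T]$ and extract a discrete quasistatic evolution converging at $t$, then pass to the limit in the criticality condition at the time-discretization node closest to $t$ from below. First I would use Definition~\ref{evolution} to obtain a sequence $\delta_k \to 0^+$ and discrete quasistatic evolutions $v_{\delta_k}$ with $\liminf_k |v_{\delta_k}(t) - v(t)|_{\mathcal E} = 0$; after passing to a subsequence I may assume the $\liminf$ is a genuine limit, so $v_{\delta_k}(t) \to v(t)$. For each $k$ let $i_k \in \mathbb N_0$ be such that $i_k \delta_k \le t < (i_k+1)\delta_k$; by right-continuity and the "constant on $[i\delta,(i+1)\delta)$" property, $v_{\delta_k}(t) = v_{\delta_k}(i_k \delta_k)$, which is a critical point of $w \mapsto J(w) + \alpha\psi(w - v_{\delta_k}((i_k-1)\delta_k))$ on $\mathbf A(f(i_k \delta_k))$. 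Write $u_k := v_{\delta_k}(i_k\delta_k)$ and $\bar u_k := v_{\delta_k}((i_k-1)\delta_k)$. Since $i_k \delta_k \to t$ and $f \in W^{1,2}$, hence continuous, we have $f(i_k\delta_k) \to f(t)$, so $A u_k = f(i_k\delta_k) \to f(t)$ and, by continuity of $A$, $Au_k \to Av(t)$; combined with $u_k \to v(t)$ this gives $Av(t) = f(t)$.

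The criticality of $u_k$ gives $q_k \in \mathcal F'$ with $A^* q_k \in \partial J(u_k) + \partial(\alpha\psi)(u_k - \bar u_k)$. The key step is to get compactness of $(q_k)$ and to identify the limit inclusion. Writing $A^* q_k = \xi_k + \alpha\zeta_k$ with $\xi_k \in \partial J(u_k)$ and $\zeta_k \in \partial\psi(u_k - \bar u_k) \subset K^*$ (using \eqref{sublimitato} when $\alpha=1$), Remark~\ref{limir} bounds $|\zeta_k|_{\mathcal E'} \le c$, while (J3) gives $|\xi_k|_{\mathcal E'} \le L(J(u_k)+1)$, and $J(u_k) \to J(v(t))$ since $J$ is (locally Lipschitz, hence) continuous by Remark~\ref{J-prop} and $u_k \to v(t)$. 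Hence $|A^* q_k|_{\mathcal E'}$ is bounded, and \eqref{gamma} yields $|q_k|_{\mathcal F'} \le \gamma^{-1}|A^* q_k|_{\mathcal E'}$ bounded; passing to a further subsequence, $q_k \to q$ and $\zeta_k \to \zeta$ in the finite-dimensional spaces $\mathcal F'$, $\mathcal E'$, with $\zeta \in K^*$ (as $K^*$ is closed and convex). Then $\xi_k = A^* q_k - \alpha\zeta_k \to A^* q - \alpha\zeta =: \xi$.

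It remains to conclude $\xi \in \partial J(v(t))$ and, when $\alpha=1$, $\zeta \in \partial\psi(0)$; the latter is immediate since $\zeta \in K^* = \partial\psi(0)$. For the former, I would use closedness of the Fréchet subdifferential under the strong convergence $u_k \to v(t)$, $\xi_k \to \xi$: this is where (J2) is essential. Since $J_\eta(v) = J(v) + \eta|v|^2$ is strongly convex, its subdifferential is a (globally) monotone operator with closed graph in finite dimensions; equivalently, $J$ is a $C^1$-perturbation of a convex function (Remark~\ref{J-prop}), so by \eqref{numero2} $\partial J(u_k) = \partial(J_\eta)(u_k) - 2\eta u_k$ and one passes to the limit in the convex subdifferential inclusion $\xi_k + 2\eta u_k \in \partial J_\eta(u_k)$ using lower semicontinuity of $J_\eta$ and testing the subgradient inequality $J_\eta(w) \ge J_\eta(u_k) + \langle \xi_k + 2\eta u_k, w - u_k\rangle_{\mathcal E}$ against an arbitrary fixed $w$, letting $k\to\infty$. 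This yields $\xi + 2\eta v(t) \in \partial J_\eta(v(t))$, i.e. $\xi \in \partial J(v(t))$. Thus $A^* q = \xi + \alpha\zeta \in \partial J(v(t)) + \partial(\alpha\psi)(v(t) - v(t))$, and since $A^* q \in \rg(A^*)$ and $Av(t) = f(t)$, the point $v(t)$ is a critical point of $w \mapsto J(w) + \alpha\psi(w - v(t))$ on $\mathbf A(f(t))$, as desired. The main obstacle is precisely this subdifferential closedness step, which crucially exploits that $\mathcal E$ is finite dimensional (so that bounded sequences have convergent subsequences) together with (J2) and (J3) to upgrade mere boundedness of $(q_k)$ into a usable limit inclusion — this is also the point alluded to in item (iv) of the Introduction, explaining why the finite-dimensional restriction is made.
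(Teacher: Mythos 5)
Your proof is correct and follows essentially the same route as the paper's: extract a subsequence along which $v_{\delta_k}(t) \to v(t)$, write the criticality condition for the piecewise-constant approximants, bound the multipliers $q_k$ via (J3), \eqref{gamma}, and the uniform bound on $\zeta_k$ from Remark \ref{limir}, pass to a convergent subsequence in the finite-dimensional dual spaces, and conclude via closedness of the subdifferential. The only difference is cosmetic: the paper simply invokes ``the closure property of the subdifferential,'' whereas you spell out why this holds, via the decomposition $\partial J(u_k) = \partial J_\eta(u_k) - 2\eta u_k$ and the standard closed-graph argument for the convex subdifferential of $J_\eta$; this extra detail is accurate and consistent with the role of (J2).
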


\begin{proof}
Let $( v_{\delta_k})_{k \in \mathbb{N}}$ be as in \eqref{defi}, and let $t \in [0,T]$ be fixed.  
For every $k \in \mathbb{N}$, let $i_k \in \mathbb{N}$ be such that 
(to ease the notation, we do not stress the dependence of $i_k$ on $t$)
$$
i_k \delta_k \leq t < ( i_k + 1 ) \delta_k.
$$
From the definition of approximate quasistatic evolution we have $Av_{\delta_k}(t)=f(i_k \delta_k)$.
Then, by continuity of $f$ and \eqref{defi} we obtain $Av(t)=f(t)$.

We thus need only to show that $(\partial J(v(t)) + \alpha K^* ) \cap \text{ran} (A^*) \neq \emptyset$.
By definition of constrained critical point, there exists $q_k \in \mathcal F'$, 
$\xi_k \in \partial J(v_{\delta_k}(t))$, 
and $\zeta_k \in \partial (\alpha \psi) (v_{\delta_k}(t) - v_{\delta_k}(t-\delta_k))$ such that
\begin{equation}\label{condiz}
A^*q_k = \xi_k + \zeta_k, \qquad \text{ for every } k \in \mathbb{N}.
\end{equation}
From (J2) it follows that $J$ is locally bounded and therefore, by \eqref{defi}, we have
$$
\sup_{k \in \mathbb{N}} J(v_{\delta_k}(t))<+\infty\,.
$$
On the other hand, thanks to \eqref{sublimitato} and Remark \ref{limir}, we have 
$$
| \zeta_k |_{\mathcal{E}'} \leq c , \qquad \text{ for every } k \in \mathbb{N}.
$$
Therefore, thanks to \eqref{gamma}, \eqref{condiz}, and (J3)
$$
\sup_{k \in \mathbb{N}} |q_k|_{\mathcal{F}'} \le 
\frac1\gamma \sup_{k \in \mathbb{N}} |A^*q_k|_{\mathcal{E}'} 
\le \frac1\gamma\left(L\left( \sup_{k \in \mathbb{N}} J(v_{\delta_k}(t))+1\right)+c\right)<+\infty\,.
$$
Thus, there exists $q \in \mathcal F$ such that, up to subsequences, 
\begin{equation}\label{fatto}
\lim_{k\to +\infty}|q_k-q|_{\mathcal{F}'} =0\,.
\end{equation}
From \eqref{defi}, \eqref{condiz}, and \eqref{fatto} we get, 
by the closure property of the subdifferential, that
\begin{equation*}
A^*q \in \partial J(v(t)) + \alpha K^*\,,
\end{equation*}
as required.
\end{proof}

\subsection{A constructive approach}
In order to construct an approximate quasistatic evolution,
we first introduce an auxiliary minimum problem.
Let $\delta \in (0,1)$ be a fixed time step, and let $i \in \mathbb{N}$ with $i \delta \leq T$.
Set $v^{-1}:= v_0$, and suppose that $v^{i-1} \in \Ee$ is a critical point of 
$v \mapsto J + \alpha \psi (v - v^{i-2})$ on the affine space 
$\mathbf{A} (f ((i-1)\delta))$. 
If property (J2) is satisfied, we define the sequence 
$(v^i_{j})$, ${j \in \mathbb{N}_0}$, by setting $v^i_0 := v^{i-1}$ and
\begin{equation} \label{algobis}
v^i_j := \argmin_{ Av = f (i \delta)} \{ J (v)  + \eta |v - v^i_{j-1} |^2_{\mathcal{E}} 
+ \alpha \psi (v - v^i_0) \, : \,  v \in \mathcal E \}
\qquad \text{ for every } j \in \mathbb{N}
\end{equation}
with $\eta >0$ chosen such that (J2) holds.

\begin{remark}
Note that (J2), ($\Psi 2$) and ($\Psi 3$) guarantee that mimimizers in \eqref{algobis} are unique.
\end{remark}

The following lemma gives some properties of the sequence $( v_j^i )$, $j \in \mathbb{N}_0$. 
\begin{lemma} \label{lemmajNew}
Let \eqref{gamma}, (J1), (J2), and (J3) be satisfied, 
let $f \in W^{1,2} ( [0,T] ; \mathcal F)$, $\alpha \in \{ 0, 1 \}$,
and let $v_0 \in \Ee$ be a critical point of $v \mapsto J + \alpha \psi (v - v_0)$ 
in the affine space $\mathbf{A} (f (0))$.
If $\alpha = 1$, suppose in addition that ($\Psi 1$), ($\Psi 2$) and ($\Psi 3$) hold true.
Let $\delta \in (0,1)$ and let $i \in \mathbb{N}$ with $i \delta \leq T$.
Set $v^{-1}:= v_0$, and suppose that $v^{i-1}$ is a critical point of 
$v \mapsto J (v) + \alpha \psi (v - v^{i-2})$ on the affine space 
$\mathbf{A} (f ((i-1)\delta))$, and let $\left( v^i_{j} \right)_{j \in \mathbb{N}_0}$ be as in \eqref{algobis}.
Then:

\begin{itemize}

\vspace{.2cm}

\item[(i)] $\left(J (v^i_{j}) + \alpha \psi (v^i_{j} - v^i_0) \right)_{j \geq 2}$ is a nonincreasing converging sequence;

\vspace{.2cm}

\item[(ii)] $( v^i_{j})_{j \in \mathbb{N}_0}$ is bounded and 
\begin{equation}\label{convergenzabis}
\lim_{j \to +\infty}|v^i_j - v^i_{j-1}|_{\mathcal{E}}=0;
\end{equation}

\item[(iii)] any limit point of $\left( v^i_{j} \right)_{j \in \mathbb{N}_0}$ is a critical point of the functional 
$v \mapsto J (v)+ \alpha \psi (v - v^{i-1})$ 
on the affine space $\mathbf{A} (f (i \delta))$.

\end{itemize}
%
\end{lemma}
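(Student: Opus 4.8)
The plan is to exploit the variational characterization of each $v^i_j$ as the unique minimizer of a strongly convex functional on the affine space $\mathbf{A}(f(i\delta))$, and to extract decay of the increments from a telescoping energy argument. First I would establish (i): testing the minimality of $v^i_j$ against the competitor $v^i_{j-1}$ (which is admissible, since $Av^i_{j-1} = f(i\delta)$ for $j\geq 2$) gives
\begin{equation}\label{pf-step1}
J(v^i_j) + \eta |v^i_j - v^i_{j-1}|^2_{\mathcal{E}} + \alpha\psi(v^i_j - v^i_0) \leq J(v^i_{j-1}) + \alpha\psi(v^i_{j-1} - v^i_0),
\end{equation}
so the sequence $a_j := J(v^i_j) + \alpha\psi(v^i_j - v^i_0)$ is nonincreasing for $j\geq 2$; since $J\geq 0$ and $\psi\geq 0$ it is bounded below, hence converges, which is (i). The same inequality \eqref{pf-step1} rearranged as $\eta|v^i_j - v^i_{j-1}|^2_{\mathcal{E}} \leq a_{j-1} - a_j$ and summed over $j$ telescopes to a finite bound, forcing $|v^i_j - v^i_{j-1}|_{\mathcal{E}}\to 0$, which is \eqref{convergenzabis}. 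Boundedness of $(v^i_j)$ then follows from (J1): since $a_j$ is bounded, $J(v^i_j)$ is bounded, and $|Av^i_j|_{\mathcal{F}} = |f(i\delta)|_{\mathcal{F}}$ is fixed, so $J(v^i_j) + |Av^i_j|^2_{\mathcal{F}}$ is bounded and coercivity gives a uniform bound on $|v^i_j|_{\mathcal{E}}$; this completes (ii).

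For (iii), let $\bar v$ be a limit point, say $v^i_{j_k}\to \bar v$. Each $v^i_{j_k}$ is a constrained minimizer, so by the optimality condition \eqref{minvinc3} there exist $q_k\in\mathcal{F}'$, $\xi_k\in\partial J(v^i_{j_k})$, and $\zeta_k\in\partial(\alpha\psi)(v^i_{j_k} - v^i_0)$ with
\begin{equation*}
A^* q_k = \xi_k + \zeta_k + 2\eta(v^i_{j_k} - v^i_{j_k - 1}).
\end{equation*}
The term $2\eta(v^i_{j_k} - v^i_{j_k-1})$ vanishes by \eqref{convergenzabis}; the $\zeta_k$ lie in the bounded set $K^*$ by \eqref{sublimitato}; and the $\xi_k$ are bounded by (J3) together with the boundedness of $J(v^i_{j_k})$. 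Hence $A^*q_k$ is bounded, so by \eqref{gamma} the $q_k$ are bounded, and up to a further subsequence $q_k\to q$, $\zeta_k\to\zeta\in K^*$ (using that $v^i_{j_k}-v^i_0\to\bar v - v^i_0$ and closedness of the graph of $\partial(\alpha\psi)$, or just closedness of $K^*$ when $\alpha=1$ combined with the characterization of $\partial\psi$), and $\xi_k\to\xi$. By the closure property of the Fr\'echet subdifferential (valid here since $J$ is a $C^1$ perturbation of a convex function by (J2), Remark \ref{J-prop}), $\xi\in\partial J(\bar v)$; likewise $\zeta\in\partial(\alpha\psi)(\bar v - v^i_0)$. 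Passing to the limit yields $A^*q = \xi + \zeta \in \partial J(\bar v) + \partial(\alpha\psi)(\bar v - v^i_0)$, and since $A\bar v = f(i\delta)$ (limit of $Av^i_{j_k} = f(i\delta)$), this is exactly the statement that $\bar v$ is a critical point of $v\mapsto J(v) + \alpha\psi(v - v^i_0) = J(v) + \alpha\psi(v - v^{i-1})$ on $\mathbf{A}(f(i\delta))$.

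The main obstacle I anticipate is the passage to the limit in the subdifferential inclusion in (iii): one must be careful that the closure property of $\partial J$ applies (which is why (J2) is essential — without semiconvexity the Fr\'echet subdifferential need not be closed even in finite dimensions) and that the $\alpha\psi$ term is handled correctly, distinguishing the case $\alpha=0$ (where it is absent) from $\alpha=1$ (where one uses either the closedness of $K^*$ from Remark \ref{limir} together with the explicit characterization $\partial\psi(v) = K^*\cap\{\langle\xi,v\rangle = \psi(v)\}$, or directly the upper semicontinuity of $v\mapsto\partial\psi(v)$). The telescoping argument in (i)–(ii) is routine once one checks that $v^i_{j-1}$ is genuinely admissible in the minimization defining $v^i_j$, i.e.\ that it satisfies the linear constraint — this holds for $j\geq 2$ since then $v^i_{j-1}$ is itself a minimizer over $\mathbf{A}(f(i\delta))$, and the $j=1$ step is absorbed harmlessly since we only claim monotonicity from $j\geq 2$.
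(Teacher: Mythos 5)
Your proposal is correct and follows essentially the same route as the paper's proof: testing minimality against $v^i_{j-1}$ to get the decay inequality, telescoping to obtain summability of $|v^i_j - v^i_{j-1}|^2_{\mathcal{E}}$, invoking (J1) for boundedness, and passing to the limit in the Euler--Lagrange inclusion $A^*q_k = \xi_k + 2\eta(v^i_{j_k}-v^i_{j_k-1}) + \zeta_k$ using the uniform bounds, \eqref{gamma}, (J3), \eqref{sublimitato}, \eqref{convergenzabis}, and the closure of the subdifferentials. Your additional remarks about distinguishing $\alpha=0$ from $\alpha=1$ in handling $\partial(\alpha\psi)$ and about why (J2) guarantees the closure of $\partial J$ are sound but not needed beyond what the paper already implicitly uses.
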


\begin{proof}
For every $j \geq 2$ we have $A v^i_{j} = A v^i_{j-1} = f(i \delta)$,
and therefore $v^i_{j-1}$ is a competitor for the minimum problem in \eqref{algobis}. 
Thus, 
\begin{equation} \label{jbis}
J (v^i_{j}) + \alpha \psi (v^i_{j} - v^i_0) 
\leq J (v^i_{j-1}) + \alpha \psi (v^i_{j-1} - v^i_0)
- \eta | v^i_j - v^i_{j-1} |^2_{\mathcal{E}}, \qquad \text{ for every }j \geq 2. 
\end{equation}
In particular, the sequence $\left( J (v^i_j) + \alpha \psi (v^i_{j} - v^i_0) \right)_{j \geq 2}$ is nonincreasing. 
Since $J + \alpha \psi \geq 0$, the limit
\begin{equation} \label{starbis}
\lim_{j \to \infty} \left( J (v^i_j) + \alpha \psi (v^i_{j} - v^i_0) \right) = : C \geq 0.
\end{equation}
exists and it is nonnegative, eventually showing (i).
Let now $M \in \mathbb{N}$ with $M > 2$. Summing up relation \eqref{jbis} for $j = 2, \ldots, M$ we obtain
$$
\sum_{j =2}^{M} | v^i_j - v^i_{j-1} |^2_{\mathcal{E}} \leq 
\frac{1}{\eta} (J(v^i_1) + \alpha \psi (v^i_{1} - v^i_0)
- J(v^i_M) - \alpha \psi (v^i_{M} - v^i_0)).
$$
Sending $M \to \infty$ we then have 
$$
\sum_{j =2}^{\infty} | v^i_j - v^i_{j-1} |^2_{\mathcal{E}} \leq \frac{1}{\eta} (J(v^i_1) + \alpha \psi (v^i_{1} - v^i_0) - C ) < \infty.
$$
In particular, this shows that \eqref{convergenzabis} holds true.
Note now that, by \eqref{starbis}, $\left( J (v^i_j) \right)_{j \in \mathbb{N}_0}$ is bounded.
Therefore, since $|Av^i_j|^2_{\mathcal{F}} = |f (i \delta) |^2_{\mathcal{F}}$
for every $j \geq 1$, the sequence 
$\left( J (v^i_j)+|Av^i_j|^2_{\mathcal{F}} \right)_{j \in \mathbb{N}_0}$ is bounded.
By (J1), we have that $\left( v^i_j \right)_{j \in \mathbb{N}_0}$ is also bounded, 
and this concludes the proof of (ii).

Let $v^i \in \mathcal{E}$ be a limit point of $\left( v^i_j \right)_{j \in \mathbb{N}_0}$.
Up to subsequences, we can assume that 
$$
\lim_{j \to \infty} v^i_{j} = v^i, \qquad \text{ in } \mathcal{E}.
$$
First of all, note that $Av^i= f(i \delta)$. 
By \eqref{algobis}, for every $j \in \mathbb{N}_0$ there exists $q^i_{j} \in \mathcal{F}'$ such that
$$
A^*q^i_{j} \in \partial J(v^i_{j}) + 2\eta(v^i_{j}-v^i_{j -1}) + \partial (\alpha \psi) (v^i_{j} - v^i_0), 
$$
where we used Remark \ref{sumsubdiff}. 
The previous relation can also be written as
\begin{equation} \label{ewqqbis}
A^*q^i_{j} = \xi^i_{j}+ 2\eta(v^i_{j}-v^i_{j -1}) + \zeta^i_{j}, 
\end{equation}
for some $\xi^i_{j} \in \partial J(v^i_{j})$ and $\zeta^i_{j} \in \partial (\alpha \psi) (v^i_{j} - v^i_0)$.
Note that, since $\left( v^i_j \right)_{j \in \mathbb{N}_0}$ is bounded, by (J3)
we also have that $\left( \xi^i_j \right)_{j \in \mathbb{N}_0}$ is bounded.
From \eqref{sublimitato}, $\left( \zeta^i_j \right)_{j \in \mathbb{N}_0}$ is bounded.
Thanks to \eqref{convergenzabis} and \eqref{gamma}, this implies that 
$\left(q^i_{j} \right)_{j \in \mathbb{N}_0}$ is also bounded.
Thus, up to subsequences, we can assume that
$$
\begin{cases}
\lim_{j \to \infty} \xi^i_{j} = \xi^i \\
\lim_{j \to \infty} \zeta^i_{j} = \zeta^i
\end{cases}
\quad \text{ in } \mathcal{E}' \qquad 
\text{ and } \qquad 
\lim_{j \to \infty} q^i_{j} = q^i \quad \text{ in } \mathcal{F}', 
$$
for some $\xi^i, \zeta^i \in \mathcal{E}'$ and $q^i \in \mathcal{F}'$.
Passing to the limit in \eqref{ewqqbis}, thanks to \eqref{convergenzabis} we conclude that 
$$
A^* q^i = \xi^i + \zeta^i.
$$
By the closure property of subdifferentials we have $\xi^i  \in \partial J(v^i)$ 
and $\zeta^i \in \partial (\alpha \psi) (v^i - v^i_0)$, 
and thus 
$$
\left( \partial J(v^i) + \partial (\alpha \psi) (v^i - v^i_0) \right) \cap \rg(A^*)\neq~\emptyset.
$$ 
\end{proof}

\begin{remark}
Suppose that $v^i$ and $z^i$ are two limit points 
of the sequence $\left( v^i_j \right)_{j \in \mathbb{N}_0}$.
By property (i) in  the previous lemma and the continuity of $J$, even if $v^i \neq z^i$ we have  
$$
J (v^i) + \alpha \psi (v^i - v^i_0) =  J(z^i) + \alpha \psi (z^i - v^i_0).
$$
\end{remark}
We state now a direct consequence of the previous lemma.
\begin{corollary}\label{cor}
Let \eqref{gamma}, (J1), (J2), and (J3) be satisfied, 
and let $f \in W^{1,2} ( [0,T] ; \mathcal F)$ and $\alpha \in \{ 0, 1 \}$.
If $\alpha = 1$, suppose in addition that ($\Psi 1$), ($\Psi 2$) and ($\Psi 3$) hold true.
Let $\delta \in (0,1)$ and let $v_0 \in \Ee$ be a critical point of $v \mapsto J + \alpha \psi (v - v_0)$ 
in the affine space $\mathbf{A} (f (0))$.
Set $v^0 := v_0$ and, for every $i \in \mathbb{N}$ with $i \delta \leq T$, let 
$\left( v^i_{j} \right)_{j \in \mathbb{N}_0}$ be defined by \eqref{algobis}, 
and let $v^i$ be a limit point of $\left( v^i_{j} \right)_{j \in \mathbb{N}_0}$.
Then, the function $v_{\delta} : [-\delta,T] \to \mathcal{E}$ defined as
\begin{equation}\label{approx}
v_\delta(t) : =v^i \quad \text{ for every }t \in [-\delta,T] \cap [i \delta, (i + 1) \delta), 
\quad \text{ for every } i \in \{ -1\} \cup \mathbb{N}_0 \text{ with } i \delta \leq T,
\end{equation}
is a discrete quasistatic evolution with time step $\delta$, 
initial condition $v_0$, and constraint $f$.
\end{corollary}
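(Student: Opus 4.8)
The plan is to argue by induction on $i$, with Lemma~\ref{lemmajNew} supplying the inductive step; the corollary is then just a matter of chaining together finitely many applications of that lemma and reading off Definition~\ref{defdiscr}. First I would record the convention $v^{-1} := v_0 = v^0$ already used in the construction preceding \eqref{algobis}, so that the base case of the induction is precisely the hypothesis that $v_0$ is a critical point of $v \mapsto J + \alpha\psi(v - v_0)$ on $\mathbf{A}(f(0))$.

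For the inductive step, suppose that for some $i \in \mathbb{N}$ with $i\delta \le T$ the vector $v^{i-1}$ has been constructed and is a critical point of $v \mapsto J + \alpha\psi(v - v^{i-2})$ on $\mathbf{A}(f((i-1)\delta))$. Since \eqref{gamma}, (J1)--(J3) (and, when $\alpha = 1$, also ($\Psi 1$)--($\Psi 3$)) are in force, the sequence $(v^i_j)_{j \in \mathbb{N}_0}$ defined by \eqref{algobis} satisfies the conclusions of Lemma~\ref{lemmajNew}: by part (ii) it is bounded, so, $\mathcal{E}$ being finite dimensional, it admits at least one limit point --- and $v^i$ is, by assumption in the statement, such a limit point. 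By part (iii), $v^i$ is then a critical point of $v \mapsto J + \alpha\psi(v - v^{i-1})$ on $\mathbf{A}(f(i\delta))$. This is exactly the hypothesis needed to run the lemma at the next index, so the induction closes and $v^i$ is well defined for every $i \in \{-1\} \cup \mathbb{N}_0$ with $i\delta \le T$.

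It then remains to check the three bullet points of Definition~\ref{defdiscr} for the function $v_\delta$ given by \eqref{approx}. Right-continuity is immediate because $v_\delta$ is constant on each half-open interval $[i\delta, (i+1)\delta)$; this also gives the second bullet on $[0,T]$. For the first bullet, on $[-\delta, \delta)$ one has $v_\delta \equiv v^{-1} = v^0 = v_0$ by the convention above. For the third bullet, \eqref{approx} gives $v_\delta(i\delta) = v^i$ and $v_\delta((i-1)\delta) = v^{i-1}$, and the critical point property obtained in the induction (via Lemma~\ref{lemmajNew}(iii)) says precisely that $v_\delta(i\delta)$ is a critical point of $v \mapsto J + \alpha\psi(v - v_\delta((i-1)\delta))$ on $\mathbf{A}(f(i\delta))$.

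I do not expect a genuine obstacle: the statement is essentially a bookkeeping consequence of Lemma~\ref{lemmajNew}. The only point deserving a little care is the handling of the initial indices, namely checking that the convention $v^{-1} = v^0 = v_0$ simultaneously matches the first bullet of Definition~\ref{defdiscr} and provides the correct starting hypothesis ($v^0$ critical for $v \mapsto J + \alpha\psi(v - v^{-1})$) for the induction.
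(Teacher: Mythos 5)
Your proposal is correct and matches the paper's intended argument: the paper states Corollary~\ref{cor} as a ``direct consequence'' of Lemma~\ref{lemmajNew} without writing out the proof, and your induction on $i$ (with the convention $v^{-1}=v^0=v_0$ handling the base case and Lemma~\ref{lemmajNew}(ii)--(iii) giving existence of a limit point and the critical-point property at each step) is exactly the bookkeeping being elided. The final verification against the three bullets of Definition~\ref{defdiscr} is likewise what is implicitly meant.
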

We now prove a uniform bound for the discrete quasistatic evolution defined above.  In the statement we write that the constant $Z_1$ depends also on $\alpha c$, with $c$ as in Remark \ref{limir}. With this we mean that this additional dependence only occurs in the dissipative case $\alpha=1$.

\begin{proposition} \label{z1}
Let \eqref{gamma}, (J1), (J2), and (J3) be satisfied, 
and let $f \in W^{1,2} ( [0,T] ; \mathcal F)$ and $\alpha \in \{ 0, 1 \}$.
If $\alpha = 1$, suppose in addition that ($\Psi 1$), ($\Psi 2$) and ($\Psi 3$) hold true.
Let $\delta \in (0,1)$ and let $v_0 \in \Ee$ be a critical point of $v \mapsto J + \alpha \psi (v - v_0)$ 
in the affine space $\mathbf{A} (f (0))$.
Let $v_{\delta} : [-\delta,T] \to \mathcal{E}$ be defined as in \eqref{approx}.
Then, there exists a positive constant 
$Z_1 = Z_1 (J, v_0, \gamma,  L, \alpha c, \eta, \| \dot{f} \|_{L^1([0,T];\mathcal{F})} , \| f \|_{L^{\infty}([0,T];\mathcal{F})})$, with $c$ as in Remark \ref{limir},
such that
$$
\sup_{\substack{ \delta \in (0,1) \vspace{.1cm} \\ t \in [0,T]} } | v_{\delta} (t) |_{\mathcal{E}} \leq Z_1.
$$
\end{proposition}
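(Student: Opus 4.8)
The plan is to deduce the uniform bound entirely from the coercivity assumption (J1): I would first reduce the claim to a bound on the energy $J(v^i)$ that is independent of $i$ and of $\delta$, and then obtain such a bound by a discrete Gr\"onwall argument along the time steps of the construction. For the reduction, note that $A v^i = f(i\delta)$, so
\[
J(v^i) + |A v^i|^2_{\mathcal F} \le J(v^i) + \|f\|^2_{L^\infty([0,T];\mathcal F)};
\]
hence, once $J(v^i)$ is bounded by a constant of the type asserted for $Z_1$, assumption (J1) produces a bound of the same type on $|v^i|_{\mathcal E}$, and therefore on $\sup_{t}|v_\delta(t)|_{\mathcal E}$, since $v_\delta$ takes only the values $v^i$. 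So the whole problem is to bound $J(v^i)$ uniformly.

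\textbf{One--step estimate.} Fix $i\ge 1$ with $i\delta\le T$ and recall that, by the inductive construction underlying \eqref{approx} and Lemma \ref{lemmajNew}(iii), $v^{i-1}=v^i_0$ is a constrained critical point and $v^i$ is a limit point of the sequence $(v^i_j)_j$ defined in \eqref{algobis}. Using \eqref{gamma} --- equivalently, a quantitative right inverse of the surjection $A$ --- I would pick $w_i\in\mathcal E$ with $A w_i = f(i\delta)-f((i-1)\delta)$ and $|w_i|_{\mathcal E}\le \gamma^{-1}|f(i\delta)-f((i-1)\delta)|_{\mathcal F}$; then $\tilde v_i:=v^{i-1}+w_i$ is admissible in \eqref{algobis} with $j=1$, and comparison with $\tilde v_i$ (discarding $\eta|v^i_1-v^i_0|^2_{\mathcal E}\ge 0$) together with \eqref{boundsPsi} gives $J(v^i_1)+\alpha\psi(v^i_1-v^i_0)\le J(\tilde v_i)+\eta|w_i|^2_{\mathcal E}+\alpha c\,|w_i|_{\mathcal E}$. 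Since the inner--loop energies $j\mapsto J(v^i_j)+\alpha\psi(v^i_j-v^i_0)$ are nonincreasing and converge to the value $J(v^i)+\alpha\psi(v^i-v^i_0)$ attained at the limit point (Lemma \ref{lemmajNew}), I would then get
\[
J(v^i)\le J(v^i)+\alpha\psi(v^i-v^i_0)\le J(v^i_1)+\alpha\psi(v^i_1-v^i_0)\le J(\tilde v_i)+\eta|w_i|^2_{\mathcal E}+\alpha c\,|w_i|_{\mathcal E}.
\]
The remaining point is to pass from $J(\tilde v_i)$ to $J(v^{i-1})$, and here $J$ is only locally Lipschitz (Remark \ref{J-prop}), so I would not use a global Lipschitz bound but rather the growth condition (J3): the absolutely continuous map $\phi(s):=J(v^{i-1}+sw_i)$ satisfies $|\phi'(s)|\le |\xi_s|_{\mathcal E'}|w_i|_{\mathcal E}\le L(\phi(s)+1)|w_i|_{\mathcal E}$ for a.e. $s$, where $\xi_s\in\partial J(v^{i-1}+sw_i)$ and the chain rule is justified by Remark \ref{sumsubdiff}; Gr\"onwall's inequality then yields $J(\tilde v_i)+1\le (J(v^{i-1})+1)\,e^{L|w_i|_{\mathcal E}}$.

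\textbf{Closing the recursion.} Setting $b_i:=J(v^i)+1$, the above combines into the discrete Gr\"onwall inequality
\[
b_i\le b_{i-1}\,e^{L|w_i|_{\mathcal E}}+\eta|w_i|^2_{\mathcal E}+\alpha c\,|w_i|_{\mathcal E},\qquad i\ge1,
\]
which I would iterate and then estimate using $\prod_j e^{L|w_j|_{\mathcal E}}=\exp(L\sum_j|w_j|_{\mathcal E})$ together with the elementary bounds $\sum_j|w_j|_{\mathcal E}\le \gamma^{-1}\|\dot f\|_{L^1([0,T];\mathcal F)}$ (from $|f(j\delta)-f((j-1)\delta)|_{\mathcal F}\le\int_{(j-1)\delta}^{j\delta}|\dot f(s)|_{\mathcal F}\,\mathrm ds$) and $|w_j|_{\mathcal E}\le\gamma^{-1}\|\dot f\|_{L^1([0,T];\mathcal F)}$; this gives $b_i\le e^{L\|\dot f\|_{L^1}/\gamma}\big(J(v_0)+1+\frac{\eta}{\gamma^2}\|\dot f\|^2_{L^1}+\frac{\alpha c}{\gamma}\|\dot f\|_{L^1}\big)$, with $\|\dot f\|_{L^1}=\|\dot f\|_{L^1([0,T];\mathcal F)}$. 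Thus $J(v^i)$ is bounded by a constant depending only on $J,v_0,\gamma,L,\alpha c,\eta,\|\dot f\|_{L^1([0,T];\mathcal F)}$, and feeding this into the coercivity reduction of the first step yields $Z_1$ with the announced dependence (the extra dependence on $\|f\|_{L^\infty([0,T];\mathcal F)}$ entering only at the coercivity step).

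\textbf{Main obstacle.} The one genuinely delicate point is to keep the bound \emph{independent of $\delta$}: for small $\delta$ each increment $|w_i|_{\mathcal E}$ becomes small, but the number of steps grows accordingly, and the discrete Gr\"onwall inequality is precisely what makes these two effects balance --- provided $J(\tilde v_i)$ has first been controlled through the polynomial growth condition (J3) rather than through a (generally unavailable) global Lipschitz estimate on $J$. Once this is in place, the rest is bookkeeping of constants.
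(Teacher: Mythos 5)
Your argument is correct and reaches the claimed bound, but it is a genuinely different route from the paper's. The paper never constructs a lifting $w_i$ of the boundary increment: it instead plugs the competitor $v^{i-1}=v^i_0$ into the subgradient inequality coming from the strong convexity of $J_{\eta,v^i_0}(\cdot)+\alpha\psi(\cdot-v^i_0)$ at the first inner iterate $v^i_1$, identifies the subgradient with $A^*r^i$ (the Lagrange multiplier), rewrites $\langle A^*r^i,v^i_1-v^i_0\rangle=\langle r^i,f(i\delta)-f((i-1)\delta)\rangle$, bounds $r^i$ by (J3) and \eqref{gamma}, and ends up with the recursion $a_{\delta,i}(1-b_{\delta,i})\le a_{\delta,i-1}+b_{\delta,i}$, $b_{\delta,i}=\tfrac{L'}{\gamma}\int_{(i-1)\delta}^{i\delta}|\dot f|$, which is iterated using $-\ln(1-x)\le 2x$. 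You instead plug a \emph{translated} competitor $\tilde v_i=v^{i-1}+w_i$ (with $w_i$ a bounded right inverse of $A$ applied to the data increment, which indeed exists with norm $\le\gamma^{-1}$) directly into the minimality of $v^i_1$, and then use (J3) not on a Lagrange multiplier but through a one--dimensional Gr\"onwall inequality for $s\mapsto J(v^{i-1}+sw_i)$, which converts the energy difference $J(\tilde v_i)-J(v^{i-1})$ into a multiplicative factor $e^{L|w_i|}$. This sidesteps both the subgradient inequality and the bound on $r^i$; your final recursion $b_i\le b_{i-1}e^{L|w_i|}+\beta_i$ holds for every $\delta\in(0,1)$ without the smallness threshold $\overline\delta$ that the paper needs to keep $b_{\delta,i}<1/2$. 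The one step you should spell out more carefully is the chain rule $\phi'(s)=\langle\xi_s,w_i\rangle$, $\xi_s\in\partial J$: Remark \ref{sumsubdiff} by itself is not a chain rule — what you really use is that by (J2), $J=J_\eta-\eta|\cdot|^2$ with $J_\eta$ convex, so $\phi$ decomposes into a convex function of $s$ plus a $C^1$ one, the convex part is a.e.\ differentiable with derivative $\langle\xi_1,w_i\rangle$ for any $\xi_1\in\partial J_\eta$, and then Remark \ref{sumsubdiff} lets you recombine into a Fr\'echet subgradient of $J$. With that patched, the proof is sound and the bookkeeping does produce a $\delta$-free constant with the stated dependencies.
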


\begin{proof}
Let $i \in \mathbb{N}$ with $i \delta \leq T$ be fixed, 
and let $\left( v^i_j \right)_{j \in \mathbb N_0}$ be the sequence defined by \eqref{algobis}.
By property (J2) and Remark \ref{vbar}, the functional 
$J_{\eta, v^i_0} (\cdot) + \alpha \psi (\cdot - v^i_0)$
is strongly convex. Therefore, whenever 
$\xi \in \partial J_{\eta, v^i_0} (v^i_1) + \partial (\alpha \psi) (v^i_1 - v^i_0)$, 
we have  
$$
J_{\eta, v^i_0} (v) + \alpha \psi (v - v^i_0) 
\geq J_{\eta, v^i_0} (v^i_1) + \alpha \psi (v^i_1 - v^i_0)
+ \langle \xi , v - v^i_1 \rangle_{\mathcal{E}} 
\qquad \text{ for every } v \in \mathcal{E}.
$$
In particular, choosing 
$v = v^i_0 = v^{i-1}$ and recalling the definition of $J_{\eta, v^i_0}$ we have 
\begin{equation} \label{step1bis}
J (v^{i-1}) \geq J (v^i_1) 
+ \eta |v^i_1 - v^i_0|_{\mathcal{E}}^2
+ \alpha \psi (v^i_1 - v^i_0)
+ \langle \xi , v^i_0 - v^i_1 \rangle_{ \mathcal{E}}\,.
\end{equation}

By \eqref{algobis}, $v^i_1$ is the global minimizer of 
$J_{\eta, v^i_0} (\cdot) + \alpha \psi (\cdot - v^i_0)$ on $\mathbf{A} (f (i \delta))$.
Therefore, there exists $r^i \in \mathcal F'$ 
such that $A^* r^i \in \partial J_{\eta, v^i_0}(v^i_1) 
+ \partial (\alpha \psi) (v^i_1 - v^i_0)$ so that, by \eqref{step1bis},
\begin{equation*} 
J (v^{i-1}) \geq J_{\eta, v^i_0}(v^i_1)
+ \alpha \psi (v^i_1 - v^i_0)
+ \langle A^* r^i , v^i_0 - v^i_1 \rangle_{\mathcal{E}},
\end{equation*}
which gives
\begin{equation} \label{trg}
J_{\eta, v^i_0}(v^i_1) + \alpha \psi (v^i_1 - v^i_0) \leq J (v^{i-1}) + \langle A^* r^i , v^i_1 - v^i_0 \rangle_{\mathcal{E}}.
\end{equation}

\vspace{.2cm}

\noindent
\textbf{Step 1.} We show that there exist positive constants
$L' = L'(L, \alpha c, \eta)$ and $\overline{\delta} = \overline{\delta} 
(L, \alpha c, \eta, \gamma, \dot{f})$ with the following property:
for every $\delta \in (0, \overline{\delta})$
and $i \in \mathbb{N}$ with $i \delta \leq T$ we have 
\begin{equation}  \label{intermediate}
\left( 1- \frac{L'}{\gamma}  \int_{(i-1) \delta}^{i \delta} | \dot{f} (s) |_{ \mathcal{F}} \, \mathrm{d}s \right) J (v^i) 
\leq J (v^{i-1}) + \frac{L'}{\gamma} \int_{(i-1) \delta}^{i \delta} | \dot{f} (s) |_{ \mathcal{F}} \, \mathrm{d}s. 
\end{equation}
We start observing that
$$
A^* r^i \in \partial J_{\eta, v^i_0}(v^i_1) 
+ \partial (\alpha \psi) (v^i_1 - v^i_0)
\subset \partial J_{\eta, v^i_0}(v^i_1) + \alpha K^*.
$$
Therefore, thanks to condition (J3) we have 
$$
| A^* r^i |_{\mathcal{E}'} \leq L' \left(J_{\eta, v^i_0}(v^i_1) + 1\right), 
$$
for some positive constant $L'= L' (L, \alpha c, \eta)$, 
where $c$ is given by Remark~\ref{limir}.
Thus, from \eqref{gamma}
$$
| r^i |_{\mathcal{F}'} \leq \frac{L'}{\gamma} (J_{\eta, v^i_0}(v^i_1) + 1)\,.
$$
We also have
\begin{align*}
&\left| \langle A^* r^i ,  v^i_1 - v^i_0 \rangle_{ \mathcal{E}} \right|
= \left| \langle r^i , A v^i_1 - A v^i_0 \rangle_{\mathcal{F}} \right| 
\leq  \int_{(i-1) \delta}^{i \delta}   | r^i |_{ \mathcal{F}'} | \dot{f} (s) |_{ \mathcal{F}} \, \mathrm{d}s  \\
& \leq \left( \frac{L'}{\gamma} J_{\eta, v^i_0}(v^i_1) + \frac{L'}{\gamma} \right) 
\int_{(i-1) \delta}^{i \delta} | \dot{f} (s) |_{ \mathcal{F}} \, \mathrm{d}s \\
&\leq \left[ \frac{L'}{\gamma} \left( J_{\eta, v^i_0}(v^i_1) + \alpha \psi (v^i_1 - v^i_0) \right)
+ \frac{L'}{\gamma} \right]
\int_{(i-1) \delta}^{i \delta} | \dot{f} (s) |_{ \mathcal{F}} \, \mathrm{d}s.
\end{align*}
Using last inequality, \eqref{trg} gives
\begin{equation}  \label{asdf}
 \left( 1- \frac{L'}{\gamma}  \int_{(i-1) \delta}^{i \delta} | \dot{f} (s) |_{ \mathcal{F}} \, \mathrm{d}s \right)  
 \left( J_{\eta, v^i_0}(v^i_1) + \alpha \psi (v^i_1 - v^i_0) \right)
\leq J (v^{i-1}) + \frac{L'}{\gamma} \int_{(i-1) \delta}^{i \delta} | \dot{f} (s) |_{ \mathcal{F}} \, \mathrm{d}s. 
\end{equation}
By the absolutely continuity of the integral, there exists a positive 
constant $\overline{\delta} = \overline{\delta} 
(L, \alpha c, \eta, \gamma, \dot{f})$ such that
\begin{equation} \label{asbcont}
\mathcal{L}^1 (G) < \overline{\delta} \quad \Longrightarrow \quad 
\frac{L'}{\gamma} \int_{G} | \dot{f} (s) |_{ \mathcal{F}} \, \mathrm{d}s < \frac{1}{2}.
\end{equation}
Note now that, for every $j \geq 2$, by the minimality of $v^i_j$ we have
\begin{align*}
J (v^i_j) &\leq J (v^i_j)  + \eta |v^i_j - v^i_{j-1} |^2_{\mathcal{E}} 
+ \alpha \psi (v^i_j - v^i_0)
\leq J (v^i_{j-1}) + \alpha \psi (v^i_{j-1} - v^i_0) \\
&\leq  J (v^i_1) + \alpha \psi (v^i_1 - v^i_0)
\leq J_{\eta, v^i_0} (v^i_1) + \alpha \psi (v^i_1 - v^i_0), 
\end{align*}
where we also took into account that 
$\left( J (v^i_{j}) + \alpha \psi (v^i_{j} - v^i_0) \right)_{j \geq 2}$ is nonincreasing.
Passing to the limit when $j \to \infty$, up to subsequences, we obtain
$$
J (v^i) \leq J_{\eta, v^i_0} (v^i_1) + \alpha \psi (v^i_1 - v^i_0).
$$
Combining last relation with \eqref{asdf}, we get \eqref{intermediate}. 

\vspace{.2cm}

\noindent
\textbf{Step 2.} We conclude. We start by proving that
\begin{equation} \label{laksdj}
\sup_{\substack{ \delta \in (0,1) \vspace{.1cm} \\ t \in [0,T]} } | J (v_{\delta} (t))| \leq Z_2,
\end{equation}
for some positive constant $Z_2 =Z_2 (\gamma, L', v_0, \| \dot{f} \|_{L^1 ([0,T];\mathcal{F})})$.
Note that it is not restrictive to assume $\delta \in (0, \overline{\delta})$.
We now set, for every $i \in \mathbb{N}$ such that $i \delta \leq T$,
$$
a_{\delta, i} := J (v^i) \quad \text{ and }
\quad b_{\delta, i} := \frac{L'}{\gamma} \int_{(i-1) \delta}^{i \delta} | \dot{f} (s) |_{ \mathcal{F}} \, \mathrm{d}s\,.
$$
Notice that $ b_{\delta,i}<\frac12$ by \eqref{asbcont}, and that relation \eqref{intermediate} gives
$$
a_{\delta,i} \leq \frac{b_{\delta,i}}{1 - b_{\delta,i}} + \frac{a_{\delta,i-1}}{1 - b_{\delta,i}}.
$$
Iterating the previous inequality we obtain
\begin{align*}
a_{\delta, i} &\leq \frac{b_{\delta,i}}{1 - b_{\delta,i}} + \frac{a_{\delta,i-1}}{1 - b_{\delta,i}} 
\leq \frac{b_{\delta,i}}{1 - b_{\delta,i}} + \frac{1}{1 - b_{\delta,i}} 
\left[ \frac{b_{\delta,i-1}}{1 - b_{\delta,i-1}} + \frac{a_{\delta,i-2}}{1 - b_{\delta,i-1}} \right] \\
&= \frac{b_{\delta,i}}{1 - b_{\delta,i}} + \frac{b_{\delta,i-1}}{(1 - b_{\delta,i})(1 - b_{\delta,i-1})}
+ \frac{a_{\delta,i-2}}{(1 - b_{\delta,i})(1 - b_{\delta,i-1})} \\
&\leq \ldots \leq \frac{a_{\delta,0}}{(1 - b_{\delta,i}) (1 - b_{\delta,i-1}) \ldots(1 - b_{\delta,1})} 
+ \sum_{k=0}^{i-1} \frac{b_{\delta,i-k}}{(1 - b_{\delta,i}) (1 - b_{\delta,i-1}) \ldots(1 - b_{\delta,i-k})} \\
&\leq \frac{a_{\delta,0}}{(1 - b_{\delta,i}) (1 - b_{\delta,i-1}) \ldots(1 - b_{\delta,1})} 
+ \frac{1}{(1 - b_{\delta,i}) (1 - b_{\delta,i-1}) \ldots(1 - b_{\delta,1})}  
\sum_{k=0}^{i-1} b_{\delta,i-k} \\
&\leq \frac{1}{(1 - b_{\delta,i}) (1 - b_{\delta,i-1}) \ldots(1 - b_{\delta,1})}  
\left[ J (v_0) + \int_{0}^{T} | \dot{f} (s) |_{ \mathcal{F}} \, \mathrm{d}s \right].
\end{align*}

We therefore only need to find a bound for the quantity
$
 \frac{1}{(1 - b_{\delta,i}) (1 - b_{\delta,i-1}) \ldots(1 - b_{\delta,1})}.
$

\noindent
Since $ b_{\delta,i}<\frac12$, with the elementary inequality $0<-\ln(1-x)\le 2x$ for all $0\le x \le \frac12$ we eventually get
\begin{align*}
\ln \left[ \frac{1}{(1 - b_{\delta,i}) (1 - b_{\delta,i-1}) \ldots(1 - b_{\delta,1})} \right]
= - \sum_{l=1}^i  \ln (1 - b_{\delta, l} ) \leq 2\sum_{l=1}^i  b_{\delta, l} \le \frac{2L'}{\gamma}\int_{0}^{T} | \dot{f} (s) |_{ \mathcal{F}} \, \mathrm{d}s
\end{align*}
so that \eqref{laksdj} follows.
We now have 
$$
\sup_{\substack{ \delta \in (0,1) \vspace{.1cm} \\ t \in [0,T]} } 
\left( J (v_{\delta} (t))+|Av_{\delta} (t)|^2_{\mathcal{F}} \right)
\leq \sup_{\substack{ \delta \in (0,1) \vspace{.1cm} \\ t \in [0,T]} } 
\left( J (v_{\delta} (t))+ \| f \|^2_{L^{\infty} ((0,T) ; \mathcal{F} )} \right) < Z_3,
$$
for some positive constant 
$Z_3 = Z_3 (\gamma, L', \eta, \| \dot{f} \|_{L^2 ((0,T); \mathcal{F})}, 
\| f \|_{L^{\infty} ((0,T) ; \mathcal{F} )})$.
Taking into account (J1), last inequality implies that 
$$
| v_{\delta} (t) |_{\mathcal{E}} \leq Z_1, \qquad \text{ for every } \delta \in (0,1) \text{ and } t \in [0,T],
$$
with a constant $Z_1$ that also depends on the coercivity of the function 
$v \mapsto J (v )+|Av|^2_{\mathcal{E}}$. 
\end{proof}

\section{Proof of Theorem \ref{psiTeorexist}}\label{proofmain1}
We start by stating a lemma that will be used later.
\begin{lemma} \label{lemmadelta}
Let $X$ be a Banach space, let $T > 0$, and let $v \in BV ([0,T]; X)$.
Let $\left( v_{\delta_{k}} \right)_{k \in \mathbb{N}} \subset L^{\infty} ([0,T]; X) \cap BV ([0,T]; X)$
have equibounded variation. Then it exists an at most countable set $N\subset [0,T]$ and a subsequence $\delta_{{k}_j}$, independent of $t$, with
$$
\lim_{j \to \infty} \|v_{\delta_{{k}_j}} (t-\delta_{{k}_j})-v_{\delta_{{k}_j}} (t) \|_X=0\quad \text{ for every }t \in (0,T] \setminus N.
$$
\end{lemma}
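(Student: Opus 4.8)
The plan is to dominate the oscillation $\|v_{\delta_k}(t-\delta_k)-v_{\delta_k}(t)\|_X$ by the increment of the (real-valued, nondecreasing) variation functions attached to the $v_{\delta_k}$, and then to invoke Helly's selection theorem so as to obtain a \emph{single} subsequence that is effective simultaneously for all $t$.

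First I would set $V_k(t):=\mathrm{Var}(v_{\delta_k};[0,t])$ for $t\in[0,T]$. Each $V_k$ is nondecreasing, and the hypothesis of equibounded variation is precisely the statement $\sup_k V_k(T)<\infty$; hence $(V_k)_k$ is a uniformly bounded sequence of nondecreasing functions on $[0,T]$. By Helly's selection theorem there exist a subsequence $(\delta_{k_j})_j$, \emph{independent of $t$}, and a bounded nondecreasing function $V:[0,T]\to\mathbb{R}$ with $V_{k_j}(t)\to V(t)$ for every $t\in[0,T]$. Let $N\subset[0,T]$ be the set of points at which $V$ is discontinuous: since $V$ is monotone, $N$ is at most countable.

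Next, fix $t\in(0,T]\setminus N$. For $j$ large enough that $\delta_{k_j}<t$ we have $[t-\delta_{k_j},t]\subset[0,T]$, so by additivity of the pointwise variation and the elementary estimate $\|v_{\delta_{k_j}}(b)-v_{\delta_{k_j}}(a)\|_X\le\mathrm{Var}(v_{\delta_{k_j}};[a,b])$ one gets
\[
\|v_{\delta_{k_j}}(t-\delta_{k_j})-v_{\delta_{k_j}}(t)\|_X\le V_{k_j}(t)-V_{k_j}(t-\delta_{k_j}).
\]
The term $V_{k_j}(t)$ converges to $V(t)$ by construction. For the term $V_{k_j}(t-\delta_{k_j})$ I would argue as follows: given $\varepsilon>0$, continuity of $V$ at $t$ yields $\rho\in(0,t)$ with $V(t)-V(t-\rho)<\varepsilon$; then for $j$ so large that $\delta_{k_j}<\rho$, monotonicity of $V_{k_j}$ gives $V_{k_j}(t-\delta_{k_j})\ge V_{k_j}(t-\rho)$, whence $\liminf_j V_{k_j}(t-\delta_{k_j})\ge V(t-\rho)>V(t)-\varepsilon$. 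Combined with $V_{k_j}(t-\delta_{k_j})\le V_{k_j}(t)\to V(t)$ and the arbitrariness of $\varepsilon$, this gives $V_{k_j}(t-\delta_{k_j})\to V(t)$, so $V_{k_j}(t)-V_{k_j}(t-\delta_{k_j})\to 0$ and the claim follows.

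The only real point requiring care is that the subsequence must be chosen independently of $t$, and this is exactly what Helly's theorem provides, producing pointwise convergence of the monotone functions $V_{k_j}$ on all of $[0,T]$ at once; everything else is a routine $\varepsilon$-argument for monotone functions, so I do not anticipate any further obstacle. (Note that the hypothesis $v\in BV([0,T];X)$ is not actually used in proving this particular lemma.)
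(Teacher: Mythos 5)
Your argument is correct and follows essentially the same route as the paper's proof: introduce the cumulative variation functions $V_k$, invoke Helly's selection theorem to get a $t$-independent subsequence converging to a monotone limit $V$, take $N$ to be the (countable) discontinuity set of $V$, and bound the oscillation by $V_{k_j}(t)-V_{k_j}(t-\delta_{k_j})$, which is then controlled via continuity of $V$ at $t$ and monotonicity. Your concluding observation that the hypothesis $v\in BV([0,T];X)$ is never used is also correct.
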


\begin{proof}
We set $V_k(t)=\mathrm{Var}(v_{\delta_{k_j}}; [0,t])$ and we observe that, by the assumption, $V_{k}$ is an equibounded sequence of monotone nondecreasing functions. By Helly's Theorem it exists a subsequence $\delta_{{k}_j}$, independent of $t$, and a monotone nondecreasing function $V$ such that
$
V_{{k}_j}(t)\to V(t)
$
for every $t\in [0,T]$. Let now $N$ be the (at most countable) set of discontinuity points of $V$. For $t\in (0,T] \setminus N$ and an arbitrary $\varepsilon >0$, let $\delta_0>0$ be such that
\begin{equation}\label{delta0}
V(t)-V(t-\delta_0)\le \varepsilon\,.
\end{equation}
Using the definition of pointwise variation, \eqref{additivity} (with $\psi=\|\cdot\|_X$), the monotonicity of $V_k$ and \eqref{delta0} we get
\begin{eqnarray*}
&\displaystyle
\lim\sup_{j \to \infty} \|v_{\delta_{{k}_j}} (t-\delta_{{k}_j})-v_{\delta_{{k}_j}} (t) \|_X \le \lim\sup_{j \to \infty}\left(V_{k_j}(t)- V_{{k}_j}(t-\delta_{{k}_j})\right)\\
&\displaystyle
\le \lim\sup_{j \to \infty}\left(V_{k_j}(t)- V_{{k}_j}(t-\delta_0)\right)=V(t)-V(t-\delta_0)\le \varepsilon\,.
\end{eqnarray*}
This proves the statement by the arbitrariness of $\varepsilon$.
\end{proof}

The following a-priori estimates will be needed to prove the Theorem.

\begin{theorem} \label{erfd}
Let \eqref{gamma}, (J1), (J2), (J3), ($\Psi 1$), ($\Psi 2$) and ($\Psi 3$) be satisfied, 
and let $f \in W^{1,2} ( [0,T] ; \mathcal F)$. 
Let $\delta \in (0,1)$ and let $v_0 \in \Ee$ be a critical point of $v \mapsto J +  \psi (v - v_0)$ 
in the affine space $\mathbf{A} (f (0))$.
Let $v_{\delta} : [-\delta,T] \to \mathcal{E}$ be defined as in \eqref{approx} with $\alpha = 1$.
Then, there exist $q_{\delta} \in L^{\infty} ([0,T] ; \mathcal{F}')$, 
$\overline{v}_{\delta} : [0,T] \to \mathcal{E}$, two positive constants $Z_1, Z_5$, 
and $r_1 : (0, 1) \to [0, \infty)$ and $r_2 : (0, 1) \to [0, \infty)$ with
$$
\lim_{\delta \to 0^+} r_1 (\delta) = \lim_{\delta \to 0^+} r_2 (\delta)  = 0, 
$$
such that
\begin{itemize}

\item[(i)] $|q_\delta (t)|_{\mathcal{F}'} \le Z_4$ and $|v_\delta (t)|_{\mathcal{E}} \leq Z_1$ for every $t \in [0,T]$;

\vspace{.2cm}

\item[(ii)] $\textnormal{dist} \left( A^* q_{\delta} (t) - \partial J (\overline{v}_{\delta} (t)) , K^*  \right) \leq 2\eta r_1(\delta)$, with $\eta>0$ as in (J2);

\vspace{.2cm}

\item[(iii)] $
\sup_{t \in [0,T]} | v_{\delta} (t-\delta) - \overline{v}_{\delta} (t) |_{\mathcal{E}} 
\leq r_1 (\delta)$;

\vspace{.2cm}

\item[(iv)] For every $0 \leq t_1 < t_2 \leq T$
$$
J ( v_{\delta} (t_2) ) + \mathrm{Var}_{\psi} (v_{\delta}; [t_1, t_2]) 
\leq J ( v_{\delta} (t_1) ) +\int_{t_1}^{t_2} \langle q_\delta (s) ,  \dot{f} (s) \rangle_{\mathcal{F}} \, \mathrm{d}s + r_2 (\delta).
$$

\end{itemize}

\end{theorem}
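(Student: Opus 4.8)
\emph{Setup and construction.}
Fix $\delta\in(0,1)$ and, as in Corollary~\ref{cor}, let $(v^i_j)_{j\in\mathbb N_0}$ be the sequence \eqref{algobis} (with $\alpha=1$), $v^i$ a limit point of it, and $v_\delta$ the induced discrete quasistatic evolution \eqref{approx}. For $i\in\mathbb N$ with $i\delta\le T$ let $q^i_1\in\mathcal F'$ be the multiplier of the strongly convex problem defining $v^i_1$, so that $A^*q^i_1=\xi^i_1+2\eta(v^i_1-v^i_0)+\zeta^i_1$ with $\xi^i_1\in\partial J(v^i_1)$, $\zeta^i_1\in\partial\psi(v^i_1-v^i_0)\subset K^*$ (this is \eqref{ewqqbis} for $j=1$). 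The plan is to set
$$
\overline v_\delta(t):=v^i_1,\qquad q_\delta(t):=q^i_1\qquad\text{for }t\in[i\delta,(i+1)\delta),
$$
and $r_1(\delta):=\sup_i|v^i_1-v^i_0|_{\mathcal E}$. The bound $|v_\delta(t)|_{\mathcal E}\le Z_1$ in (i) is Proposition~\ref{z1}; the bound on $|q_\delta(t)|_{\mathcal F'}$ follows by reproducing Step~1 of its proof: since $A^*q^i_1\in\partial J_{\eta,v^i_0}(v^i_1)+K^*$, condition (J3) and \eqref{gamma} give $|q^i_1|_{\mathcal F'}\le\tfrac{L'}{\gamma}(J_{\eta,v^i_0}(v^i_1)+1)$, while \eqref{trg}, the uniform bound \eqref{laksdj} on $J(v^{i-1})$ and the smallness of $\tfrac{L'}{\gamma}\int_{(i-1)\delta}^{i\delta}|\dot f|$ force $J_{\eta,v^i_0}(v^i_1)\le 2J(v^{i-1})+1$; call the resulting constant $Z_4$.

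\emph{Stability and closeness.}
From $A^*q^i_1-\xi^i_1-\zeta^i_1=2\eta(v^i_1-v^i_0)$ and $\zeta^i_1\in K^*$ one reads off (ii):
$$
\dist\big(A^*q_\delta(t)-\partial J(\overline v_\delta(t)),\,K^*\big)\le 2\eta|v^i_1-v^i_0|_{\mathcal E}\le 2\eta\,r_1(\delta).
$$
Since $v_\delta(t-\delta)=v^{i-1}=v^i_0$ for $t\in[i\delta,(i+1)\delta)$, also $|v_\delta(t-\delta)-\overline v_\delta(t)|_{\mathcal E}=|v^i_0-v^i_1|_{\mathcal E}\le r_1(\delta)$, which is (iii). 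Thus the crux is to show $r_1(\delta)\to 0$; this I expect to be the main obstacle.

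\emph{The key estimate $r_1(\delta)\to0$.}
I would argue by contradiction: suppose there are $\delta_k\to0^+$ and $i_k$ with $i_k\delta_k\le T$ and $|v^{i_k}_1-v^{i_k}_0|_{\mathcal E}\ge\varepsilon_0>0$. By Proposition~\ref{z1} the points $v^{i_k}_0=v^{i_k-1}$ are bounded, so along a subsequence $v^{i_k}_0\to\bar v$ and $(i_k-1)\delta_k\to\bar t\in[0,T]$. By construction $v^{i_k-1}$ is a constrained critical point of $J+\psi(\cdot-v^{i_k-2})$ on $\mathbf A(f((i_k-1)\delta_k))$, hence — since $\partial\psi(w)\subset\partial\psi(0)=K^*$ — also of $J+\psi(\cdot-v^{i_k-1})$ on the same affine space, i.e.\ $A^*p_k-\zeta_k\in\partial J(v^{i_k-1})$ for some $p_k$ and $\zeta_k\in K^*$. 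As $K^*$ is bounded, (J3) and \eqref{gamma} bound $(p_k)$; letting $k\to\infty$ and using closedness of $\partial J$ and continuity of $f$ we get $A^*\bar p\in\partial J(\bar v)+K^*$ and $A\bar v=f(\bar t)$, so $\bar v$ is a constrained critical point of $J+\psi(\cdot-\bar v)$ on $\mathbf A(f(\bar t))$. On the other hand $v^{i_k}_1$ is the unique minimizer of $J(\cdot)+\eta|\cdot-v^{i_k}_0|_{\mathcal E}^2+\psi(\cdot-v^{i_k}_0)$ over $\mathbf A(f(i_k\delta_k))$; since $v^{i_k}_0\to\bar v$ and $f(i_k\delta_k)\to f(\bar t)$, stability of minimizers under this (uniform on bounded sets) convergence of functionals and affine constraints forces $v^{i_k}_1\to w$, where $w$ is the unique minimizer of $J(\cdot)+\eta|\cdot-\bar v|_{\mathcal E}^2+\psi(\cdot-\bar v)$ over $\mathbf A(f(\bar t))$. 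But the subdifferential of this functional at $\bar v$ is $\partial J(\bar v)+K^*$, which meets $\rg(A^*)$, so $\bar v$ is already its minimizer; hence $w=\bar v=\lim v^{i_k}_0$, contradicting $|v^{i_k}_1-v^{i_k}_0|_{\mathcal E}\ge\varepsilon_0$. Therefore $r_1(\delta)\to0$.

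\emph{The energy inequality (iv).}
By the inner-loop monotonicity \eqref{jbis} and Lemma~\ref{lemmajNew}(i), passing to the limit in $j$ and using \eqref{trg} (dropping $\eta|v^i_1-v^i_0|_{\mathcal E}^2\ge0$),
$$
J(v^i)+\psi(v^i-v^{i-1})\le J(v^i_1)+\psi(v^i_1-v^i_0)\le J(v^{i-1})+\langle q^i_1,\,f(i\delta)-f((i-1)\delta)\rangle_{\mathcal F}
$$
for every $i$; this one-step inequality is \emph{exact}, the inner loop absorbing the quadratic remainder. Summing it over the indices $i$ with $i\delta\in(t_1,t_2]$, telescoping the $J$-terms, and using that $v_\delta$ is right-continuous and piecewise constant (so that $\mathrm{Var}_\psi(v_\delta;[t_1,t_2])=\sum_{i\delta\in(t_1,t_2]}\psi(v^i-v^{i-1})$), we obtain
$$
J(v_\delta(t_2))+\mathrm{Var}_\psi(v_\delta;[t_1,t_2])\le J(v_\delta(t_1))+\sum_{i\delta\in(t_1,t_2]}\langle q^i_1,\,f(i\delta)-f((i-1)\delta)\rangle_{\mathcal F}.
$$
Finally, writing $f(i\delta)-f((i-1)\delta)=\int_{i\delta}^{(i+1)\delta}\dot f(s-\delta)\,\mathrm ds$ and $q^i_1=q_\delta(s)$ on $[i\delta,(i+1)\delta)$, the sum equals $\int\langle q_\delta(s),\dot f(s-\delta)\rangle_{\mathcal F}\,\mathrm ds$ over an interval differing from $[t_1,t_2]$ by sets of measure $\le\delta$; comparing with $\int_{t_1}^{t_2}\langle q_\delta(s),\dot f(s)\rangle_{\mathcal F}\,\mathrm ds$ gives (iv) with
$$
r_2(\delta):=Z_4\Big(\|\dot f(\cdot-\delta)-\dot f\|_{L^1([0,T];\mathcal F)}+C\sup_{|G|\le\delta}\textstyle\int_G|\dot f|\,\mathrm ds\Big),
$$
($\dot f$ extended by $0$ outside $[0,T]$), which tends to $0$ by absolute continuity of $s\mapsto\int_0^s|\dot f|$ and continuity of translations in $L^1$. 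The index shift in this last step, together with the compactness argument for $r_1(\delta)\to0$, are the only nonroutine points.
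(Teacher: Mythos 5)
Your proof follows the paper's scaffolding closely: you define $\overline v_\delta$ and $q_\delta$ exactly as in the paper, and your arguments for (i), (ii) and (iv) are essentially the same (with a small correction in (iv), see below). The genuine departure is in the key step (iii), i.e. showing $r_1(\delta)=\sup_i|v^i_1-v^i_0|_{\mathcal E}\to0$. The paper's proof is a one-line strong-convexity estimate: after re-basing the dissipation via $\partial\psi(v^i_0-v^{i-1}_0)\subset K^*=\partial\psi(0)$, both $A^*r^i_0$ (multiplier for the previous critical point $v^i_0$) and $A^*r^i_1$ (multiplier for the minimizer $v^i_1$) lie in $\partial\bigl(J_{\eta,v^i_0}(\cdot)+\psi(\cdot-v^i_0)\bigr)$ evaluated at $v^i_0$ and $v^i_1$; strong convexity of this single auxiliary functional, together with $Av^i_1-Av^i_0=f(i\delta)-f((i-1)\delta)$ and Step~1's multiplier bound, gives directly
$\nu|v^i_1-v^i_0|_{\mathcal E}^2\le Z_4\int_{(i-1)\delta}^{i\delta}|\dot f|_{\mathcal F}\,\mathrm ds$,
hence the explicit modulus $r_1(\delta)=\sqrt{Z_4/\nu}\,\bigl(\sup_i\int_{(i-1)\delta}^{i\delta}|\dot f|\bigr)^{1/2}$. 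You instead run a compactness-contradiction argument (extract a subsequence, pass to the limit in the criticality inclusion, invoke stability of minimizers under local uniform convergence of functionals and Mosco-type convergence of constraints). This is correct, but it is heavier machinery for a qualitative conclusion, and it buys nothing here since the theorem only needs some vanishing $r_1$; the direct argument is also the one that reappears quantitatively in later estimates. Note that you already use the exact re-basing observation $\partial\psi(w)\subset\partial\psi(0)$ inside your contradiction argument, so you were one step from the paper's route: apply strong convexity of $J_{\eta,v^i_0}(\cdot)+\psi(\cdot-v^i_0)$ to the pair $(v^i_0,v^i_1)$ and you are done without compactness. As for (iv), your explicit $L^1$-translation term $Z_4\|\dot f(\cdot-\delta)-\dot f\|_{L^1}$ in $r_2(\delta)$ cleanly handles an index-offset between the written definition $q_\delta(t)=r^i_1$ on $[i\delta,(i+1)\delta)$ and the identity $\langle A^*r^i_1,v^i_1-v^i_0\rangle=\int_{(i-1)\delta}^{i\delta}\langle q_\delta(s),\dot f(s)\rangle\,\mathrm ds$ used in the paper's Step~4; the paper's $r_2(\delta)$ accounts only for the boundary intervals, so your version is the more careful one (the discrepancy is harmless, as the extra term vanishes by $L^1$-continuity of translations and the weak-$*$ limit of $q_{\delta_k}$ is unaffected).
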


\begin{remark} \label{bounds constants}
The constants $Z_1$ is given in Proposition \ref{z1}, while $Z_4$ depends on the following quantities
$$
Z_4 = Z_4 (L, c, \eta, \gamma, v_0, \| \dot{f}\|_{L^1 ([0,T];\mathcal{F})}).
$$
\end{remark}

\begin{proof}
First of all, note that inequality $|v_\delta (t)|_{\mathcal{E}} \leq Z_1$ was proven in Proposition \ref{z1}.
Let now $\eta$ be given by (J2), let $\nu$ be the constant of strong convexity of 
$v \mapsto J (v) + \eta | v |^2_{\mathcal{E}}$, and let 
$\overline{\delta}$ 
be given by \eqref{asbcont}.

\vspace{.2cm}

\noindent
\textbf{Step 1.} We show that there exists a positive constant 
$Z_4= Z_4 (L, c, \gamma, v_0, \| \dot{f}\|_{L^1 ([0,T];\mathcal{F})})$ 
with the following property.
If $r^i_0, r^i_1 \in \mathcal{F}'$ are such that
$$
A^* r^i_0 \in \partial J (v^{i}_0) + K^*
\quad \text{ and } \quad 
A^* r^i_1 \in \partial J_{\eta, v^{i}_0} (v^{i}_1) +  \partial \psi (v^{i}_1 - v^{i}_0),
$$
then we have 
$$
| r^i_0 |_{\mathcal{F}'} + | r^i_1 |_{\mathcal{F}'} \leq Z_4.
$$
By \eqref{asdf} and \eqref{laksdj} we have
$$
J_{\eta, v^i_0} (v^i_1) + \psi (v^i_1-v^i_0) \leq Z_2 + \frac{L'}{\gamma} \| \dot{f}\|_{L^1 ([0,T];\mathcal{F})}
 \quad \text{ for every } \delta \in (0, \overline{\delta})
\text{ and } i \in \mathbb{N} \text{ with } i \delta \leq T,
$$
where $Z_2$ does not depend on $\delta$ and $i$.
Then, by (J3) and \eqref{gamma} the estimate $| r^i_1 |_{\mathcal{F}'} \leq Z_4$ follows. Since by \eqref{laksdj} it also holds $J (v^{i}_0)\le Z_2$, while $K^*\subset B(0,c)$ by Remark \ref{limir}, again by (J3) and \eqref{gamma} the claim is proved.

\vspace{.2cm}

\noindent
\textbf{Step 2.} We prove (iii). 
First of all, we define $\overline{v}_{\delta} (t) = v_0$
for every $t \in [0, \delta)$, and 
$$
\overline{v}_{\delta} (t) := v^{i}_1 \quad \text{ for } i \delta \leq t < (i+1) \delta,
\quad i \in \mathbb{N} \text{ with } i \delta \leq T.
$$
Let now $i \in \mathbb{N}$ with $i \delta \leq T$.
We need to show that there exists $r_1: [0, \overline{\delta}) \to [0, \infty)$ with
$r_1 (\delta) \to 0$ as $\delta \to 0^+$ such that
 $$
| v^{i}_1 - v^{i}_0 |_{\mathcal{E}} \leq r_1 (\delta).
$$
By construction, we have that $v^{i}_0 = v^{i-1}$ is a critical point of the functional
$v \mapsto J (v) +  \psi (v - v^{i-1}_0)$ on the affine space $\mathbf{A} (f ((i-1)\delta))$.
Therefore, there exists $r^i_0$ such that
\begin{align*}
A^* r^i_0 \in \partial J (v^{i}_0) +   \partial \psi (v^{i}_0 - v^{i-1}_0)
\subset \partial J (v^{i}_0) +  K^*.
\end{align*}
Using the fact that $\partial J (v^{i}_0) = \partial J_{\eta, v^{i}_0} (v^{i}_0)$ we can also write
$$
A^* r^i_0 \in \partial J_{\eta, v^{i}_0} (v^{i}_0) +  \partial \psi (v^{i}_0 -v^{i}_0).
$$
Note now that, by construction, $v^i_1$ minimizes the functional
$v \mapsto J_{\eta, v^{i}_0} (v) +  \psi (v - v^{i}_0)$ on the affine space $\mathbf{A} (f (i \delta))$.
Therefore, there exists $r^i_1$ such that
\begin{equation} \label{r1i}
A^* r^i_1 \in \partial J_{\eta, v^{i}_0} (v^{i}_1) +  \partial \psi (v^{i}_1 - v^{i}_0).
\end{equation}
Now, $A^* r^i_0$ and $A^* r^i_1$ are both in the subdifferential of the strongly convex functional $J_{\eta, v^{i}_0} (v) +  \psi (v - v^{i}_0)$ at $v^{i}_0$. The constant $\nu$ of strong convexity is furthermore indepedendent of $v^{i}_0$ by Remark \ref{vbar}. With this and step 1 we have 
\begin{align*}
\nu | v^{i}_1 - v^{i}_0 |^2_{\mathcal{E}} 
&\leq \langle A^* r^i_1 - A^* r^i_0 , v^{i}_1 - v^{i}_0 \rangle_{\mathcal{E}}
= \langle r^i_1 - r^i_0 , A v^{i}_1 - A v^{i}_0 \rangle_{\mathcal{F}} \\
& = \langle r^i_1 - r^i_0 , f (i \delta) - f ((i-1) \delta) \rangle_{ \mathcal{F}}
\leq Z_4 | f (i \delta) - f ((i-1) \delta) |_{\mathcal{F}} \\
&\leq Z_4 \int_{(i-1) \delta}^{i \delta} | \dot{f} (s) |_{\mathcal{F}} \, \mathrm{d}s.
\end{align*}
Then,
$$
| \overline{v}_{\delta} (t) - v_{\delta} (t-\delta) |^2_{\mathcal{E}} 
= | v^{i}_1 - v^{i}_0 |^2_{\mathcal{E}} \leq \frac{Z_4}{\nu} 
\int_{(i-1) \delta}^{i \delta} | \dot{f} (s) |_{\mathcal{F}} \, \mathrm{d}s
\leq \frac{Z_4}{\nu} \sup_{i \delta \leq T} \int_{(i-1) \delta}^{i \delta} | \dot{f} (s) |_{\mathcal{F}} \, \mathrm{d}s. 
$$
Setting 
$$
r_1 (\delta)  := \sqrt{ \frac{Z_4}{\nu}} 
\left( \sup_{i \delta \leq T} \int_{(i-1) \delta}^{i \delta} | \dot{f} (s) |_{\mathcal{F}} \, \mathrm{d}s \right)^{1/2},
$$
by the absolute continuity of the integral we have that $r_1 (\delta) \to 0$ as $\delta \to 0^+$ and we conclude.

\vspace{.2cm}

\noindent
\textbf{Step 3.} 
We prove (i) and (ii).
We start by defining the function 
$q_{\delta}: [0, T] \to \mathcal F'$.
Since $v_0$ is a critical point of $v \mapsto J + \psi (v - v_0)$
in the affine space $\mathbf{A} (f (0))$, there exists $q_0 \in \mathcal F'$
such that 
$$
A^* q_0 \in \partial J (v_0) + K^*.
$$
We set $q_{\delta} (t) = q_0$ for $0 \leq t < \delta$. Moreover, we set
$$
q_{\delta} (t) := r^i_1  \quad \text{ for } i \delta \leq t < (i+1) \delta,
\quad i \in \mathbb{N} \text{ with } i \delta \leq T.
$$
Then, by construction and step 1 we have $| q_{\delta} (i \delta) |_{\mathcal{F}} \leq Z_4$, 
so that (i) is satisfied. Moreover, 
\begin{align*}
A^* q_{\delta} (i \delta) &\in \partial J_{\eta, v^{i}_0} (v^{i}_1) +  \partial \psi (v^{i}_1 - v^{i}_0) \\
&= \partial J (v^{i}_1) + 2 \eta (v^{i}_1-v^{i}_0) +  \partial \psi (v^{i}_1 - v^{i}_0) \\
&\subset \partial J (v^{i}_1) + 2 \eta (v^{i}_1-v^{i}_0) +  K^*,
\end{align*}
so that
$$
\textnormal{dist} \left( A^* q_{\delta} (t) - \partial J (\overline{v}_{\delta} (t)) , K^*  \right) 
\leq 2 \eta | v^{i}_1-v^{i}_0 |_{\mathcal{E}} \leq 2 \eta r_1 (\delta).
$$

\vspace{.2cm}

\noindent
\textbf{Step 4.} We show (iv). Let $0 \leq t_1 < t_2 \leq T$.
By \eqref{r1i} we have that for every $v \in \mathcal{E}$
$$
J_{\eta, v^{i}_0} (v) +  \psi (v - v^{i}_0) 
\geq J_{\eta, v^{i}_0} (v^{i}_1) +  \psi (v^{i}_1 - v^{i}_0) +
\langle A^* r^i_1 , v - v^{i}_1 \rangle_{\mathcal{E}},
$$
for every $i \delta \leq t < (i+1) \delta$, and $i \in \mathbb{N}$ with $i \delta \leq T$.
In particular, choosing $v = v^{i}_0$ we obtain
\begin{align*}
J (v^{i}_0)  
\geq J (v^{i}_1) +  \psi (v^{i}_1 - v^{i}_0) +
\langle A^* r^i_1 , v^{i}_0 - v^{i}_1 \rangle_{ \mathcal{E}},
\end{align*}
which gives
\begin{align*}
&J (v^{i}_1) + \psi (v^{i}_1 - v^{i}_0) 
\leq J (v^{i}_0)  + \langle A^* r^i_1 , v^{i}_1 - v^{i}_0 \rangle_{ \mathcal{E}} \\
&\hspace{.2cm}= J (v_{\delta} ((i-1)\delta ) ) + \int^{i \delta}_{(i-1)\delta} \langle q_{\delta} (s), \dot{f} (s) \rangle_{\mathcal{F}} \, \mathrm{d}s.
\end{align*}
Thanks to (i) in  Lemma~\ref{lemmajNew}, we have that for every $j \geq 2$:
\begin{align*}
J (v^{i}_j) + \psi (v^{i}_j - v^{i}_0) 
\leq J (v_{\delta} ((i-1)\delta ) ) + \int^{i \delta}_{(i-1)\delta} 
\langle q_{\delta} (s), \dot{f} (s) \rangle_{\mathcal{F}} \, \mathrm{d}s.
\end{align*}
Passing to the limit as $j \to \infty$ we then obtain
\begin{equation} \label{i}
J (v_{\delta} (i \delta)) +  \psi (v_{\delta} (i \delta) - v_{\delta} ((i-1)\delta )) 
\leq J (v_{\delta} ((i-1)\delta ) ) + \int^{i \delta}_{(i-1)\delta} 
\langle q_{\delta} (s), \dot{f} (s) \rangle_{\mathcal{F}} \, \mathrm{d}s
\end{equation}
Let now $l , k \in \mathbb{N}$ be such that
$l \delta \leq t_1 < (l+1) \delta$ and $k \delta \leq t_2 < (k+1) \delta$.
By summing up relation \eqref{i} for $i = l+1, \ldots, k$ we obtain
\begin{align*}
&J (v_{\delta} (k \delta)) +  \sum_{i = l}^k \psi (v_{\delta} (i \delta) - v_{\delta} ((i-1)\delta )) 
\leq J (v_{\delta} (l \delta ) ) + \int^{k \delta}_{l \delta} 
\langle q_{\delta} (s), \dot{f} (s) \rangle_{\mathcal{F}} \, \mathrm{d}s \\
&\hspace{.2cm}= J (v_{\delta} (l \delta ) ) 
+ \int^{t_2}_{t_1} \langle q_{\delta} (s), \dot{f} (s) \rangle_{ \mathcal{F}} \, \mathrm{d}s
+ \int^{t_1}_{l \delta} \langle q_{\delta} (s), \dot{f} (s) \rangle_{\mathcal{F}} \, \mathrm{d}s
- \int_{k \delta}^{t_2} \langle q_{\delta} (s), \dot{f} (s) \rangle_{\mathcal{F}} \, \mathrm{d}s \\
&\leq J (v_{\delta} (l \delta ) ) 
+ \int^{t_2}_{t_1} \langle q_{\delta} (s), \dot{f} (s) \rangle_{ \mathcal{F}} \, \mathrm{d}s 
+ 2Z_4\sup_{i \delta \leq T} \int_{(i-1) \delta}^{i \delta} | \dot{f} (s) |_{\mathcal{F}} \, \mathrm{d}s.
\end{align*}
Since $v_{\delta}$ is piecewise constant, we have
$$
\sum_{i = l}^k \psi (v_{\delta} (i \delta) - v_{\delta} ((i-1)\delta ))  = \mathrm{Var}_{\psi} (v_{\delta}; [t_1, t_2])\,.
$$
With this, and since $v_{\delta} (k \delta)=v_\delta(t_2)$ and $v_{\delta} (l \delta)=v_\delta(t_1)$ by construction, (iv) follows by setting
\[
r_2(\delta):=2Z_4\sup_{i \delta \leq T} \int_{(i-1) \delta}^{i \delta} | \dot{f} (s) |_{\mathcal{F}} \, \mathrm{d}s\,.
\]

\end{proof}
We can finally give the proof of Theorem \ref{psiTeorexist}.

\begin{proof}[Proof of Theorem \ref{psiTeorexist}]
We divide the proof into several steps.

\vspace{.2cm}

\noindent
\textbf{Step 1.} We prove that there exists a sequence $\left( \delta_k \right)_{k \in \mathbb{N}}$,
and functions $v \in L^1 ( [0,T] ; \mathcal{E})$, and $q \in L^{\infty} ( [0,T] ; \mathcal{F}')$,
such that
$$
v_{\delta_k} \to v \text{ in } L^1 ( [0,T] ; \mathcal{E})
\qquad \text{ and } \qquad
q_{\delta_k} \stackrel{*}{\rightharpoonup} q \quad \text{ weakly* in } L^{\infty} ( [0,T] ; \mathcal{F}').
$$

From properties (i) and (iv) of Theorem~\ref{erfd} we have 
$$
\| v_{\delta} \|_{L^1 ( [0,T] ; \mathcal{E})} + \mathrm{Var}_{\psi} (v_{\delta}; [t_1, t_2]) \leq C,
$$
for some constant $C$ that is independent on $\delta$. 
By Helly's theorem (and since $\mathcal E$ has finite dimension), there exists a subsequence
$\left( v_{\delta_k} \right)_{k \in \mathbb{N}}$ and a function $v \in BV ([0,T];\mathcal{E})$ such that
$$
v_{\delta_k} \to v \text{ in } L^1 ( [0,T] ; \mathcal{E})
$$
and, in addition, 
$$
\lim_{k \to \infty} v_{\delta_k} (t) \to v (t) \quad \text{in $\mathcal{E}$ for every } t \in [0,T]
$$
Up to extracting a further subsequence we can assume that Lemma \ref{lemmadelta} holds. Furthermore, since by (i) in  Theorem~\ref{erfd} we also have 
$$
\| q_{\delta} \|_{L^{\infty} ( [0,T] ; \mathcal{F}')}  \leq C,
$$
without any loss of generality we can assume that for the same subsequence 
$\left( \delta_k\right)_{k \in \mathbb{N}}$ we have 
$$
q_{\delta_k} \stackrel{*}{\rightharpoonup} q \quad
\text{ weakly* in } L^{\infty} ( [0,T] ; \mathcal{F}'),
$$
for some function $q \in L^{\infty} ( [0,T] ; \mathcal{F}')$.

\vspace{.2cm}

\noindent
\textbf{Step 2.} We are going to show that
$$
\overline{v}_{\delta_k} \to v \quad \text{ strongly in } L^1 ((0,T); \mathcal{E}).
$$
Indeed, we have 
\begin{align*}
\int_0^T | \overline{v}_{\delta_k} (s) - v (s) |_{\mathcal{E}} \, \mathrm{d}s
\leq \int_0^T | \overline{v}_{\delta_k} (s) - v_{\delta_k} (s-\delta_k) |_{\mathcal{E}} \, \mathrm{d}s
+ \int_0^T | v_{\delta_k} (s-\delta_k) - v (s) |_{\mathcal{E}} \, \mathrm{d}s.
\end{align*}
By (iii) in  Theorem~\ref{erfd} we have 
$$
\lim_{k \to \infty} \int_0^T | \overline{v}_{\delta_k} (s) - v_{\delta_k} (s-\delta_k) |_{\mathcal{E}} \, \mathrm{d}s = 0.
$$
Concerning the second term, by Lemma~\ref{lemmadelta}, the convergence of $v_{\delta_k}$ to $v$ in $L^1$, and dominated convergence we have 
$$
\lim_{k \to \infty} \int_0^T | v_{\delta_k} (s-\delta_k) - v (s) |_{\mathcal{E}} \, \mathrm{d}s = 0.
$$
By (iii) in  Theorem~\ref{erfd}, Lemma~\ref{lemmadelta}, and the pointwise convergence of $v_{\delta_k}$ to $v$, we also have 
$$
\lim_{k \to \infty} \overline{v}_{\delta_k} (t) \to v (t) \quad \text{in $\mathcal{E}$ for every } t \in (0,T] \setminus N\,, 
$$
where $N$ is at most countable.
By (ii) in  Theorem~\ref{erfd}, for every $t \in [0,T]$ 
there exists $\xi_{\delta} (t) \in \partial J (\overline{v}_{\delta_k} (t))$ such that 
$$
\textnormal{dist} \left( A^* q_{\delta} (t) - \xi_{\delta} (t) , K^*  \right) \leq 2\eta r_1 (\delta).
$$
In particular, since $\left( q_{\delta_k} \right)_{k \in \mathbb{N}}$ is bounded in $L^{\infty} ( [0,T] ; \mathcal{F}')$, 
this implies that $\left( \xi_{\delta_k} \right)_{k \in \mathbb{N}}$ is bounded in $L^{\infty} ( [0,T] ; \mathcal{E}')$.
Without any loss of generality, we can assume that 
$$
\xi_{\delta_k} \stackrel{*}{\rightharpoonup} \xi \quad \text{ weakly* in } L^{\infty} ( [0,T] ; \mathcal{E}')
$$
for some $\xi \in L^{\infty} ( [0,T] ; \mathcal{E}')$.

\vspace{.2cm}

\noindent
\textbf{Step 3.}
We are now going to prove that
$$
\xi (t) \in \partial J (v (t)) \quad \text{ for $\mathcal{L}^1$-a.e. } t \in [0,T].
$$
First of all note that there exists a set $\Lambda \subset [0,T]$ with $\mathcal{L}^1 (\Lambda) = 0$, such that
for every $t \in [0,T] \setminus \Lambda$ the following properties are satisfied: 
$$
\begin{cases}
\lim_{h \to 0^+} \frac{1}{h} \int_t^{t + h} J (v (s)) \, \mathrm{d}s = J (v (t)) &\text{ in }\R, \\
\lim_{h \to 0^+} \frac{1}{h} \int_t^{t + h} \langle \xi (s), v (s) \rangle_{\mathcal{E}} \, \mathrm{d}s 
= \langle \xi (t), v (t) \rangle_{\mathcal{E}} &\text{ in }\R, \\
\lim_{h \to 0^+} \frac{1}{h} \int_t^{t + h} \xi (s) \, \mathrm{d}s = \xi (t) &\text{ in }\mathcal{E}', \\
\lim_{h \to 0^+} \frac{1}{h} \int_t^{t + h} | v (s) |^2_{\mathcal{E}} \, \mathrm{d}s = | v (t) |^2_{\mathcal{E}} &\text{ in }\R, \\
\lim_{h \to 0^+} \frac{1}{h} \int_t^{t + h} v (s) \, \mathrm{d}s = v (t) &\text{ in }\mathcal{E}.
\end{cases}
$$

Let now $s \in [0,T]$.
Since for every $k \in \mathbb{N}$ we have $\xi_{\delta_k} (s) \in \partial J (\overline{v}_{\delta_k} (s))
= \partial J_{\eta, \overline{v}_{\delta_k} (s)} (\overline{v}_{\delta_k} (s))$, 
and recalling that $J_{\eta, \overline{v}_{\delta_k} (s)}$ is convex, for every $w \in \mathcal{E}$ 
$$
J (w) \geq J (\overline{v}_{\delta_k} (s)) - \eta | w - \overline{v}_{\delta_k} (s) |_{\mathcal{E}}^2
+ \langle \xi_{\delta_k} (s) , w - \overline{v}_{\delta_k} (s) \rangle_{\mathcal{E}}.
$$
Let now $t \in [0,T] \setminus \Lambda$.
Averaging the previous inequality between $t$ and $t+h$ we obtain
$$
J (w) \geq \frac{1}{h} \int_t^{t + h} \left[ J (\overline{v}_{\delta_k} (s)) - \eta | w - \overline{v}_{\delta_k} (s) |_{\mathcal{E}}^2
+ \langle \xi_{\delta_k} (s) , w - \overline{v}_{\delta_k} (s) \rangle_{\mathcal{E}} \right] \, \mathrm{d}s.
$$
Passing to the limit as $k \to \infty$ we obtain (recalling that $\overline{v}_{\delta_k} \to v$ strongly in $L^1$)
\begin{align*}
J (w) &\geq \lim_{k \to \infty} \frac{1}{h} \int_t^{t + h} \left[ J (\overline{v}_{\delta_k} (s)) - \eta | w - \overline{v}_{\delta_k} (s) |_{\mathcal{E}}^2
+ \langle \xi_{\delta_k} (s) , w - \overline{v}_{\delta_k} (s) \rangle_{\mathcal{E}} \right] \, \mathrm{d}s \\
&= \frac{1}{h} \int_t^{t + h} 
\left[ J (v (s)) + \langle \xi (s) , w - v (s) \rangle_{\mathcal{E}} \right] \, \mathrm{d}s
- \frac{1}{h} \int_t^{t + h} \eta | w - v (s) |_{\mathcal{E}}^2 \, \mathrm{d}s.
\end{align*}
Then, passing to the limit as $h \to 0^+$ we obtain
$$
J (w) + \eta | w - v (t) |_{\mathcal{E}}^2 \geq J (v (t)) + \langle \xi (t) , w - v (t) \rangle_{\mathcal{E}} 
\qquad \forall \, w \in \mathcal{E}.
$$
The inequality above shows that $\xi (t) \in \partial J_{\eta, v (t)} (v (t)) = \partial J (v (t))$.

\vspace{.2cm}

\noindent
\textbf{Step 4.} We prove that
$$
A^* q (t) \in \partial J ( v (t)) + K^* \qquad \text{ for $\mathcal{L}^1$-a.e. } t \in [0,T].
$$
Using (ii) in  Theorem~\ref{erfd} we have 
\begin{align*}
&\int_0^T \textnormal{dist} \left( A^* q (t) - \partial J ( v (t)) , K^*  \right)  \, \mathrm{d}s \\
&\leq \liminf_{k \to \infty} 
\int_0^T \textnormal{dist} \left( A^* q_{\delta_k} (t) - \partial J (\overline{v}_{\delta_k} (t)) , K^*  \right) \, \mathrm{d}s \\
& \leq 2\eta T \liminf_{k \to \infty} r_1 (\delta_k) = 0,
\end{align*}
thus giving that
$$
A^* q (t) \in \partial J ( v (t)) + K^* \qquad \text{ for $\mathcal{L}^1$-a.e. } t \in [0,T].
$$

\vspace{.2cm}

\noindent
\textbf{Step 5.} We prove the energy inequality.
Let now $0 \leq t_1 < t_2 \leq T$. 
From (iv) in  Theorem \ref{erfd} we have
$$
J ( v_{\delta_k} (t_2) ) + \mathrm{Var}_{\psi} (v_{\delta_k}; [t_1, t_2]) 
\leq J ( v_{\delta_k} (t_1) ) +\int_{t_1}^{t_2} \langle q_{\delta_k} (s) ,  \dot{f} (s) \rangle_{\mathcal{F}} \, \mathrm{d}s + r_2 (\delta_k).
$$
Passing to the limit as $k \to \infty$, and using the fact that the total variation
is lower semicontinuous with respect to the $L^1$ topology
\begin{align*}
&J ( v (t_2) ) + \mathrm{Var}_{\psi} (v; [t_1, t_2]) \leq 
\liminf_{k \to \infty} \left( J ( v_{\delta_k} (t_2) ) + \mathrm{Var}_{\psi} (v_{\delta_k}; [t_1, t_2]) \right) \\
&\leq \liminf_{k \to \infty} \left( J ( v_{\delta_k} (t_1) ) 
+ \int_{t_1}^{t_2} \langle q_{\delta_k} (s) ,  \dot{f} (s) \rangle_{\mathcal{F}} \, \mathrm{d}s + r_2 (\delta_k) \right) \\
&= J ( v (t_1) ) + \int_{t_1}^{t_2} \langle q (s) ,  \dot{f} (s) \rangle_{\mathcal{F}} \, \mathrm{d}s.
\end{align*}
\end{proof}

\section{Proof of Theorem \ref{Teorexist}} \label{secproof}

We now prove Theorem \ref{Teorexist}. A relevant difference with the previous section is that in the discrete approximate energy inequality we need to choose $q_\delta(t)$ in a different way to cope later with the lack fo $BV$ compactness. In particular we will need the inclusion $A^* q_\delta (t) \in \partial J (v_\delta (t))$ to hold already at this level. In order to do this, we need the additional assumption (J4).

\begin{theorem} \label{teor18}
Let $\alpha = 0$, let \eqref{gamma}, (J1), (J2), (J3), and (J4) be satisfied, 
and let $f \in W^{1,2} ( [0,T] ; \mathcal F)$.
Let $\delta \in (0,1)$ and let $v_{0}$ be a critical point of $J$ 
on the affine space $\mathbf{A} (f (0))$.
Let $v_{\delta} : [0,T] \to \mathcal{E}$ be the discrete quasistatic evolution 
with time step $\delta$, initial condition $v_0$, and constraint $f$
given by \eqref{approx}. 
Then, there exist a piecewise constant right-continuous function
$q_\delta: [0,T] \to \mathcal F'$ 
and positive constants $C_1$, $C_2$ and $C_3$, independent of $\delta$, such that
\begin{itemize}
 \item[\textit{(i)}] $A^* q_\delta (t) \in \partial J (v_\delta (t))$ for every $t \in [0,T]$;

\vspace{.2cm}

\item[\textit{(ii)}] $|q_\delta (t)|_{\mathcal{F}'} \le C_1$ 
and $|v_\delta (t)|_{\mathcal{E}} \leq C_2$ for every $t \in [0,T]$;

\vspace{.2cm}

\item[\textit{(iii)}] for every $t_1<t_2 \in [0,T]$ 
$$
J (v_\delta(t_2)) \leq J (v_\delta(t_1)) 
+ \int_{t_1}^{t_2}  \langle q_\delta (s) ,  \dot{f} (s) \rangle_{\mathcal{F}} \, \mathrm{d}s+ C_3 \sqrt{ \delta}.
$$ 
\end{itemize}
\end{theorem}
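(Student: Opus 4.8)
\emph{Overall plan.} I would follow the same architecture as Theorem~\ref{erfd}, but define $q_\delta$ \emph{directly} from the limit critical points produced by the algorithm, so that the inclusion (i) is built in from the start, and then absorb the resulting mismatch in the discrete energy estimate by means of (J4) — this is exactly the place where the extra assumption is needed. Concretely: for $i\in\mathbb N_0$ with $i\delta\le T$ put $v^i:=v_\delta(i\delta)$, which by Corollary~\ref{cor} is a critical point of $J$ on $\mathbf A(f(i\delta))$, and pick $q^i\in\mathcal F'$ with $A^*q^i\in\partial J(v^i)$; let $q_\delta$ be the right-continuous piecewise constant function with $q_\delta\equiv q^i$ on $[i\delta,(i+1)\delta)$. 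Since $v_\delta\equiv v^i$ there as well, (i) holds. The bound $|v_\delta(t)|_{\mathcal E}\le Z_1=:C_2$ is Proposition~\ref{z1}; combining (J3), \eqref{gamma} and the $\delta$-uniform bound $J(v^i)\le Z_2$ from \eqref{laksdj} gives $|q^i|_{\mathcal F'}\le\tfrac L\gamma(Z_2+1)=:C_1$, which proves (ii).

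\emph{One-step estimate.} Fix $i\ge1$, write $v^i_0=v^{i-1}$ and $\Delta_i f:=f(i\delta)-f((i-1)\delta)$. By \eqref{algobis} with $\alpha=0$, $v^i_1$ minimizes the convex functional $J_{\eta,v^i_0}$ on $\mathbf A(f(i\delta))$, so $A^*r^i_1\in\partial J_{\eta,v^i_0}(v^i_1)$ for some $r^i_1\in\mathcal F'$; testing the convexity (subgradient) inequality for $J_{\eta,v^i_0}$ at the feasible competitor $v^i_0$ yields
\[
J(v^i_1)\le J(v^{i-1})-\eta\,|v^i_1-v^i_0|^2_{\mathcal E}+\langle r^i_1,\Delta_i f\rangle_{\mathcal F}\,.
\]
Now I invoke (J4) with $\overline v=v^i_0$ for the pair $(v^i_1,r^i_1)$ and $(v^i_0,q^{i-1})$ — the second inclusion being $A^*q^{i-1}\in\partial J(v^{i-1})=\partial J_{\eta,v^i_0}(v^i_0)$, which is legitimate because $v^{i-1}$ is a critical point of $J$ and the quadratic term of $J_{\eta,v^i_0}$ has vanishing gradient at $v^i_0$ — and since $Av^i_1-Av^i_0=\Delta_i f$ this gives $\langle r^i_1-q^{i-1},\Delta_i f\rangle_{\mathcal F}\le C_{J,\eta}\,|v^i_1-v^i_0|_{\mathcal E}\,|\Delta_i f|_{\mathcal F}$. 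Splitting the right-hand side by Young's inequality as $\eta\,|v^i_1-v^i_0|^2_{\mathcal E}+\tfrac{C_{J,\eta}^2}{4\eta}|\Delta_i f|^2_{\mathcal F}$, the quadratic terms cancel, and since $J(v^i)=\lim_j J(v^i_j)\le J(v^i_1)$ by the monotonicity established in Lemma~\ref{lemmajNew}, I obtain, using $q_\delta\equiv q^{i-1}$ on $[(i-1)\delta,i\delta)$,
\[
J(v^i)\le J(v^{i-1})+\int_{(i-1)\delta}^{i\delta}\langle q_\delta(s),\dot f(s)\rangle_{\mathcal F}\,\mathrm ds+\frac{C_{J,\eta}^2}{4\eta}\,|\Delta_i f|^2_{\mathcal F}\,.
\]

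\emph{Summation.} Given $0\le t_1<t_2\le T$, choose $l,k$ with $l\delta\le t_1<(l+1)\delta$ and $k\delta\le t_2<(k+1)\delta$, so that $v_\delta(t_1)=v^l$ and $v_\delta(t_2)=v^k$. Summing the one-step estimate over $i=l+1,\dots,k$ telescopes the energies, and Cauchy--Schwarz in the form $|\Delta_i f|^2_{\mathcal F}\le\delta\int_{(i-1)\delta}^{i\delta}|\dot f(s)|^2_{\mathcal F}\,\mathrm ds$ bounds the accumulated error by $\tfrac{C_{J,\eta}^2}{4\eta}\,\delta\,\|\dot f\|^2_{L^2([0,T];\mathcal F)}$. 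Replacing $\int_{l\delta}^{k\delta}$ by $\int_{t_1}^{t_2}$ costs at most $2C_1\int$ over two intervals of length $<\delta$, i.e.\ at most $2C_1\sqrt\delta\,\|\dot f\|_{L^2([0,T];\mathcal F)}$ by (ii) and Cauchy--Schwarz (this also covers the degenerate case $l=k$). Since $\delta<1$, collecting terms gives (iii) with $C_3:=2C_1\|\dot f\|_{L^2([0,T];\mathcal F)}+\tfrac{C_{J,\eta}^2}{4\eta}\|\dot f\|^2_{L^2([0,T];\mathcal F)}$, which is independent of $\delta$.

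\emph{Main obstacle.} Everything above is routine once the one-step estimate is in place, so that is where the difficulty lies. Unlike the dissipative case of Theorem~\ref{erfd}, here $q_\delta$ is forced to satisfy $A^*q_\delta(t)\in\partial J(v_\delta(t))$, so one cannot simply take $q_\delta=r^i_1$ — that vector lies in $\partial J_{\eta,v^i_0}(v^i_1)$, not in $\partial J(v^i)$. Assumption (J4) is precisely the monotonicity-type inequality that allows trading $r^i_1$ for $q^{i-1}$ at the cost of a term that is \emph{quadratic} in the increment of $f$, hence summable to $O(\delta)$; by contrast, the genuinely large displacement caused by the inner relaxation from $v^i_1$ to $v^i$ is harmless, because it only \emph{decreases} $J$. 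Finally, the factor $\sqrt\delta$ (rather than $\delta$) in (iii) is due solely to $f$ being merely $W^{1,2}$: on sub-$\delta$ time intervals Cauchy--Schwarz yields only a $\sqrt\delta$ endpoint error, which is the dominant term.
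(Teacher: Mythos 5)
Your proof is correct and follows essentially the same route as the paper's: you define $q_\delta$ from the limit critical points so that (i) is automatic, derive the one-step estimate by testing the convexity inequality for $J_{\eta,v^i_0}$ at $v^i_0$, use (J4) together with Young's inequality to cancel the quadratic term $\eta|v^i_1-v^i_0|^2_{\mathcal E}$ against an $O(|\Delta_i f|^2)$ remainder, invoke $J(v^i)\le J(v^i_1)$ from the monotonicity in Lemma~\ref{lemmajNew}, and finish by telescoping plus the $\sqrt\delta$ endpoint correction via Cauchy--Schwarz. The only difference is that you make the constant $M$ explicit as $C_{J,\eta}^2/(4\eta)$, which the paper leaves implicit.
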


\begin{remark} \label{dependence}
More precisely, as it appears by a careful reading of the proof 
of Theorem \ref{teor18}, we have
$$
C_1 = C_1 (J,\gamma, L, \eta, \| \dot{f} \|_{L^2 ((0,T); \mathcal{F})})
\quad \text{ and } \quad 
C_3 = C_3 (J, \gamma, L, \eta, C_{J, \eta}, \| \dot{f} \|_{L^2 ((0,T); \mathcal{F})}).
$$
The boundedness of $v(t)$ has been already proved in Proposition \ref{z1} and one can take for $C_2$ the constant $Z_1$ provided there. The dependence on $J$ has again to be understood in the sense of the coercivity of the function 
$v \mapsto J (v )+|Av|^2_{\mathcal{E}}$, see the proof of Theorem~\ref{erfd}.   
\end{remark}

\begin{proof}
Since $v_{\delta}$ is a discrete quasistatic evolution, for every 
$i \in \mathbb{N}$ with $i \delta \leq T$ there exists $q^i \in \mathcal F'$ 
such that $A^* q^i \in \partial J (v_\delta (i \delta))$. 
Then, if we define $q_{\delta} : [0,T] \to \mathcal{F}'$ as 
\begin{equation*}
q_\delta(t) : =q^i \quad \hbox{ for all } t \in [0,T] \cap [i \delta, (i + 1) \delta), 
\quad \hbox{ for all } i \in \mathbb{N}_0 \text{ with } i \delta \leq T,
\end{equation*}
property (i) is satisfied. Since $|v_\delta (t)|_{\mathcal{F}}$ is bounded by Proposition \ref{z1}, with (J3) and \eqref{gamma} we obtain (ii)
We now divide the remaining part of the proof into two steps.

\vspace{.2cm}

\noindent
\textbf{Step 1.} We show that there exists  a constant $M$, 
depending only on $\eta$ and $C_{J, \eta}$, such that
\begin{equation} \label{try}
J (v_{\delta} (i \delta)) \leq J (v_\delta((i-1) \delta) )
+ \int_{(i-1) \delta}^{i \delta}  \langle q^{\delta} (s) ,  \dot{f} (s) \rangle_{\mathcal{F}} \, \mathrm{d}s
+M \delta  \int_{(i-1) \delta}^{i \delta} | \dot{f} (s)|^2_{\mathcal{F}} \,\mathrm{d}s,
\end{equation}
for every $i \in \mathbb{N}$ with $i \delta \leq T$.

To this aim, let $i \in \mathbb{N}$ with $i \delta \leq T$ be fixed, 
and let $\left( v^i_j \right)_{j \in \mathbb N_0}$ be the sequence defined by \eqref{algobis}.
By property (J2) and Remark \ref{vbar}, the functional $J_{\eta, v^i_0}$
is strongly convex. Therefore, whenever $\xi \in \partial J_{\eta, v^i_0} (v^i_1)$, 
we have  
$$
J_{\eta, v^i_0} (v) \geq J_{\eta, v^i_0} (v^i_1) 
+ \langle \xi , v - v^i_1 \rangle_{\mathcal{E}} 
\qquad \text{ for every } v \in \mathcal{E}.
$$
In particular, choosing 
$v = v^i_0$ and recalling the definition of $J_{\eta, v^i_0}$ we have 
\begin{equation} \label{step1}
J (v^i_0) \geq J (v^i_1) + \eta |v^i_1 - v^i_0|_{\mathcal{E}}^2
+ \langle \xi , v^i_0 - v^i_1 \rangle_{\mathcal{E}} \qquad 
\text{ for every } \xi \in \partial J_{\eta, v^i_0} (v^i_1).
\end{equation}
By \eqref{algobis}, $v^i_1$ is the global minimizer of $J_{\eta, v^i_0}$ on $\mathbf{A} (f (i \delta))$.
Therefore, there exists $r^i \in \mathcal F'$ 
such that $A^* r^i \in \partial J_{\eta, v^i_0}(v^i_1)$ so that, by \eqref{step1},
$$
J (v^i_0) \geq J (v^i_1) + \eta |v^i_1 - v^i_0|_{\mathcal{E}}^2
+ \langle A^* r^i , v^i_0 - v^i_1 \rangle_{\mathcal{E}}.
$$
Therefore, by the absolute continuity of $f$, and
recalling that $q_\delta$ is constant in the interval $[ (i-1) \delta , i \delta)$, we have  
\begin{align}
&\eta |v^i_1 - v^i_0 |^2_{\mathcal{E}} 
+ J (v^i_1) - J (v^i_0) 
\leq \langle A^* r^i , v^i_1 - v^i_0 \rangle_{\mathcal{E}} \nonumber \\
&\hspace{.2cm}= \langle A^* r^i - A^*q_\delta((i-1) \delta), 
v^i_1 - v^i_0 \rangle_{\mathcal{E}}
+ \langle A^*q_\delta((i-1) \delta), v^i_1 - v^i_0 \rangle_{\mathcal{E}} \nonumber \\
&\hspace{.2cm}= \langle r^i - q_\delta((i-1) \delta) , Av^i_1 - Av^i_0 \rangle_{\mathcal{F}} 
+  \langle q^{\delta} ((i-1) \delta) ,  Av^i_1 - Av^i_0 \rangle_{\mathcal{F}} \label{conti} \\
&\hspace{.2cm}= \langle r^i - q_\delta((i-1) \delta) , Av^i_1 - Av^i_0 \rangle_{\mathcal{F}} 
+ \int_{(i-1) \delta}^{i \delta}  
\langle q^{\delta} ((i-1) \delta) ,  \dot{f} (s) \rangle_{\mathcal{F}} \, \mathrm{d}s, \nonumber
\end{align}
where we used the fact that $Av^i_1 = f (i \delta)$ and $Av^i_0 = f ((i-1) \delta)$.
Observe now that, by definition of $q_{\delta}$, 
we have $A^* q_{\delta} ((i-1) \delta) \in \partial J (v_{\delta} ((i-1) \delta))$.
Thus, recalling that $v_{\delta} ((i-1) \delta) = v^{i-1} = v^i_0$, we obtain 
$$
A^* q_{\delta} ((i-1) \delta) \in \partial J (v^i_0) = \partial J_{\eta, v^i_0}(v^i_0).
$$
Thus, recalling that $A^* r^i \in \partial J_{\eta, v^i_0}(v^i_1)$, by property (J4) we obtain
$$
\langle r^i - q_\delta((i-1) \delta) , Av^i_1 - Av^i_0 \rangle_{\mathcal{F}}
\leq C_{J,\eta}|v^i_1 - v^i_0 |_{\mathcal{E}}
 | Av^i_1 - Av^i_0 |_{\mathcal{F}}
$$
which, together with \eqref{conti},  gives
\begin{align*}
\eta |v^i_1 - v^i_0 |^2_{\mathcal{E}} + J (v^i_1) - J (v^i_0)
\leq  C_{J,\eta}|v^i_1 - v^i_0 |_{\mathcal{E}}
 | f(i \delta) - f((i-1) \delta) |_{\mathcal{F}}
+ \int_{(i-1) \delta}^{i \delta}  \langle q^{\delta} (s) ,  \dot{f} (s) \rangle_{\mathcal{F}} \, \mathrm{d}s.
\end{align*}
Using Young's and H\"{o}lder's inequality, we get
\begin{align} 
J (v^i_1) &\leq J (v^i_0) + M | f(i \delta) - f((i-1) \delta) |^2_{\mathcal{F}} 
+ \int_{(i-1) \delta}^{i \delta}  \langle q^{\delta} (s) ,  \dot{f} (s) \rangle_{\mathcal{F}} \, \mathrm{d}s \nonumber \\
 &= J (v^i_0) + M \left| \int_{(i-1) \delta}^{i \delta} \dot{f} (s) \, \mathrm{d}s \right|^2_{\mathcal{F}}  
 + \int_{(i-1) \delta}^{i \delta}  \langle q^{\delta} (s) ,  \dot{f} (s) \rangle_{\mathcal{F}} \, \mathrm{d}s \label{poli}\\
&\leq J (v^i_0) + M \delta \int_{(i-1) \delta}^{ i \delta} |\dot{f} (s)|^2_{\mathcal{F}} \, \mathrm{d}s 
+ \int_{(i-1) \delta}^{i \delta}  \langle q^{\delta} (s) ,  \dot{f} (s) \rangle_{\mathcal{F}} \, \mathrm{d}s, \nonumber
\end{align}
for a suitable constant $M>0$ (depending only on $\eta$ and $C_{J, \eta}$), 
where we also used the fact that $f \in W^{1,2}([0,T];\mathcal F)$.
Recalling that $J(v^i_j)\le J(v^i_1)$ for all $j\ge 2$, 
by \eqref{approx} and property (ii) in  Lemma \ref{lemmajNew}, we have 
$$
J (v_{\delta} (i \delta)) = J (v^i) = \lim_{j \to \infty} J (v^i_j) \le J(v^i_1).
$$
Taking into account last inequality, and recalling that $v^i_0 = v^{i-1} = v_\delta( (i-1) \delta)$, 
relation \eqref{poli} gives \eqref{try}.

\vspace{.2cm}

\noindent
\textbf{Step 2.} We show (iii). Let $l , k \in \mathbb{N}$ be such that
$l \delta \leq t_1 < (l+1) \delta$ and $k \delta \leq t_2 < (k+1) \delta$. Recall that $v_{\delta} (k \delta)=v_\delta(t_2)$ and $v_{\delta} (l \delta)=v_\delta(t_1)$ by construction. By summing up relation \eqref{try} for $i = l+1, \ldots, k$, using H\"{o}lder's  inequality and taking into account that $\delta \in (0,1)$ we have
\begin{eqnarray*}
&\displaystyle
J (v_{\delta} (t_2)) \leq J (v_{\delta} (t_1) )
+ \int_{l\delta}^{k\delta}  \langle q_{\delta} (s) ,  \dot{f} (s) \rangle_{\mathcal{F}} \, \mathrm{d}s
+ M \delta \| \dot{f} \|^2_{L^2 ((0,T); \mathcal{F})} \\
&\displaystyle
= J(v_{\delta} (t_1) )
+ \int_{t_1}^{t_2}  \langle q_{\delta} (s) ,  \dot{f} (s) \rangle_{\mathcal{F}} \, \mathrm{d}s 
+ \int_{l\delta}^{t_1}  \langle q_{\delta} (s) ,  \dot{f} (s) \rangle_{\mathcal{F}} \, \mathrm{d}s- \int_{t_2}^{k\delta}  \langle q^{\delta} (s) ,  \dot{f} (s) \rangle_{\mathcal{F}} \, \mathrm{d}s
+M  \delta \| \dot{f} \|^2_{L^2 ((0,T); \mathcal{F})} \\
&\displaystyle
\leq J(v_{\delta} (t_1))+ \int_{t_1}^{t_2}  \langle q^{\delta} (s) ,  \dot{f} (s) \rangle_{\mathcal{F}}\,\mathrm{d}s
+ C_1 \sqrt{ \delta} \| \dot{f} \|_{L^2 ((0,T); \mathcal{F})}
+ M \sqrt{\delta} \| \dot{f} \|^2_{L^2 ((0,T); \mathcal{F})} \\
&= J(v_{\delta} (t_1))+ \int_{t_1}^{t_2}  \langle q^{\delta} (s) ,  \dot{f} (s) \rangle_{\mathcal{F}}\,\mathrm{d}s
+ \sqrt{ \delta} \left( C_1  \| \dot{f} \|_{L^2 ((0,T); \mathcal{F})}
+ M \| \dot{f} \|^2_{L^2 ((0,T); \mathcal{F})} \right),
\end{eqnarray*}
which gives (iii).
\end{proof}
Before giving the proof of Theorem \ref{Teorexist}, 
we need the following result (see \cite[Lemma 3.6]{dmgipo09}).
\begin{lemma} \label{DalGP}   
Let $X$ be a compact metric space.
Let $p:[0,T] \to \mathbb{R}$,
$p_k:[0,T] \to \mathbb{R}$ and
$f_k: [0,T] \rightarrow X$ be measurable functions,
for every $k \in \mathbb{N}$.
For every $t \in [0,T]$ let us set
\begin{align*}
\mathcal{I} (t):= \{ x \in X : \, \exists \, k_j \rightarrow + \infty
\textnormal{ such that }x = \lim_{j \rightarrow + \infty} f_{k_j} (t)
\text{ and } p (t) = \lim_{j \rightarrow \infty} p_{k_j} (t) \}.
\end{align*}
Then
\begin{itemize}
\item $\mathcal{I}(t)$ is closed for all $t \in [0,T]$; 
\vspace{.1cm}
\item for every open set $U \subseteq X$ the set 
$\{ t \in [0,T]: \mathcal{I}(t) \cap U \neq \emptyset \}$
is measurable.
\end{itemize}
\end{lemma}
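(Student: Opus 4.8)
The plan is to prove both assertions by elementary diagonal extraction arguments, the compactness of $X$ entering in an essential way only for the second one.

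\textbf{Closedness of $\mathcal{I}(t)$.} Fix $t$ and let $x_n\in\mathcal{I}(t)$ with $x_n\to x$ in $X$. For each $n$ there is, by definition, a subsequence along which $f_k(t)\to x_n$ and $p_k(t)\to p(t)$; hence I may pick indices $k_1<k_2<\dots$ with $d(f_{k_n}(t),x_n)<1/n$ and $|p_{k_n}(t)-p(t)|<1/n$. The triangle inequality gives $d(f_{k_n}(t),x)\le d(f_{k_n}(t),x_n)+d(x_n,x)\to 0$ together with $p_{k_n}(t)\to p(t)$, so the subsequence $(k_n)$ witnesses $x\in\mathcal{I}(t)$. (Equivalently, $\mathcal{I}(t)$ is the $p(t)$-slice of the cluster-point set of $((f_k(t),p_k(t)))_k$ in $X\times\mathbb{R}$, and cluster-point sets are always closed.)

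\textbf{Measurability.} The point to be careful about is that one cannot characterise $\{t:\mathcal{I}(t)\cap U\neq\emptyset\}$ through conditions like ``$f_k(t)\in U$'', since a subsequence of $(f_k(t))$ contained in the open set $U$ may cluster only on $\partial U$. So I first exhaust $U$ by \emph{closed} balls: since $X$ is compact metric, hence second countable, one can write $U=\bigcup_m K_m$ with each $K_m$ a closed (thus compact) ball and $K_m\subseteq U$; then $\mathcal{I}(t)\cap U\neq\emptyset$ iff $\mathcal{I}(t)\cap K_m\neq\emptyset$ for some $m$, and it suffices to treat $\{t:\mathcal{I}(t)\cap K\neq\emptyset\}$ for a fixed compact $K\subseteq X$. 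For such $K$ I claim
\[
\{t:\mathcal{I}(t)\cap K\neq\emptyset\}=\bigcap_{j\in\mathbb{N}}\ \bigcap_{N\in\mathbb{N}}\ \bigcup_{k\ge N}\bigl\{\,t:\dist(f_k(t),K)<\tfrac1j,\ |p_k(t)-p(t)|<\tfrac1j\,\bigr\}.
\]
The inclusion ``$\subseteq$'' is immediate from a witnessing subsequence for a point of $\mathcal{I}(t)\cap K$. For ``$\supseteq$'', a diagonal extraction over $j$ and $N$ produces $k_1<k_2<\dots$ with $\dist(f_{k_j}(t),K)<1/j$ and $p_{k_j}(t)\to p(t)$; picking $y_j\in K$ realising the distance (possible because $K$ is compact) and passing to a further subsequence along which $y_j\to x\in K$ yields $f_{k_j}(t)\to x$, hence $x\in\mathcal{I}(t)\cap K$. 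Finally, each set on the right is measurable: $t\mapsto\dist(f_k(t),K)$ is the composition of the measurable map $f_k$ with the continuous function $y\mapsto\dist(y,K)$, and $t\mapsto p_k(t)-p(t)$ is measurable; countable unions and intersections of measurable sets are measurable, so $\{t:\mathcal{I}(t)\cap K\neq\emptyset\}$, and therefore $\{t:\mathcal{I}(t)\cap U\neq\emptyset\}=\bigcup_m\{t:\mathcal{I}(t)\cap K_m\neq\emptyset\}$, is measurable.

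\textbf{Main obstacle.} The only genuine subtlety is the one flagged above: the reduction from the open set $U$ to a countable family of compact exhausting balls, which is precisely what makes the ``$\supseteq$'' inclusion (extraction of a cluster point lying inside $K$) go through. Everything else is routine bookkeeping with diagonal subsequences and the measurability of compositions.
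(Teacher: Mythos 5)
Your proof is correct. Note, however, that the paper does not prove this lemma at all: it simply cites \cite[Lemma 3.6]{dmgipo09} and moves on, so there is no internal argument to compare yours against. The argument you give is clean and self-contained: the exhaustion of $U$ by closed (hence compact) balls $K_m\subseteq U$ is exactly the right device to sidestep the pitfall you flagged (a subsequence of $f_k(t)$ inside $U$ could cluster only on $\partial U$); the characterisation of $\{t:\mathcal I(t)\cap K\neq\emptyset\}$ as $\bigcap_j\bigcap_N\bigcup_{k\ge N}\{t:\mathrm{dist}(f_k(t),K)<1/j,\ |p_k(t)-p(t)|<1/j\}$ is accurate, and the reverse inclusion uses compactness of $K$ precisely where it is needed to extract a cluster point lying \emph{inside} $K$. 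The closedness argument via the triangle inequality (or the slicker remark that $\mathcal I(t)$ is the $p(t)$-slice of the cluster set of $(f_k(t),p_k(t))_k$, which is always closed) is fine as well.
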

We conclude this section with the proof of Theorem \ref{Teorexist}.

\begin{proof}[Proof of Theorem \ref{Teorexist}]

We divide the proof into several steps.

\vspace{.2cm}

\noindent
\textbf{Step 1: proof of (a) and (b).}

\vspace{.2cm}
\noindent
For every $\delta \in (0, 1)$, let $q_{\delta}: [0,T] \to \mathcal F'$ 
and $v_{\delta}: [0,T] \to \mathcal E$ be given by Theorem \ref{teor18}. 
Let $\Lambda \subset [0,T]$ be such that $\mathcal{L}^1 (\Lambda) = 0$ 
and $\dot{f} (t)$ is well defined for every $t \in [0,T] \setminus \Lambda$.
We fix a sequence $\left( \delta_k \right)_{k \in \mathbb{N}}$ such that $\delta_k \to 0^+$
and define
$$
\theta_k (t) : = 
\begin{cases}
\langle q_{\delta_k} (t) ,  \dot{f} (t) \rangle_{\mathcal{F}} & \text{ for every } 
t \in [0,T] \setminus \Lambda, \vspace{.1cm} \\
0 & \text{ for every } t \in \Lambda,
\end{cases}
$$
and 
$$
\theta (t):= \limsup_{k \to \infty} \theta_k (t) \quad \text{ for every } t \in [0,T].
$$
By definition of $\theta$, for every $t \in [0,T]$ we can extract 
a subsequence $\left( \delta_{k_j} \right)_{j \in \mathbb{N}}$ (possibly depending on $t$) such that
$$
\theta (t) = \lim_{j \to \infty} \theta_{k_j} (t) \quad \text{ for every } t \in [0,T].
$$
By (ii) of Theorem \ref{teor18}, we have 
$$
| q_{\delta} (t) |_{\mathcal{F}'} \leq C_1,  
\quad | v_{\delta} (t) |_{\mathcal{E}} \leq C_2, \quad \text{ for every } 
\delta \in (0, 1) \text{ and } t \in [0,T].
$$ 
Thus, for every $t \in [0,T]$ we can extract a further subsequence
(not relabelled) such that
$$
\lim_{j \to \infty} v_{\delta_{k_j}} (t) = v (t) \quad \text{ in } \mathcal E
\quad \text{ and } \quad 
\lim_{j \to \infty} q_{\delta_{k_j}} (t) = q (t)\quad \text{ in } \mathcal F',
$$
for some $v (t) \in \mathcal E$ and $q (t) \in \mathcal F'$
with $| q (t) |_{\mathcal{F}'} \leq C_1$ and $| v (t) |_{\mathcal{E}} \leq C_2$.
Let us now show that, for every $t \in [0,T]$, we can choose 
the subsequence $\left( k_j \right)_{j \in \mathbb{N}}$ in such a way that the maps 
$q:[0,T] \to \mathcal F'$ and $v:[0,T] \to \mathcal E$
are measurable.

Let us denote by $B^{\mathcal{F}'}_{C_1}$ ($B^{\mathcal{E}}_{C_2}$)
the closed ball of $\mathcal{F}$ ($\mathcal{E}$) with
center at the origin and radius $C_1$ ($C_2$).
Applying Lemma~\ref{DalGP} with 
$X = B^{\mathcal{F}'}_{C_1} \times B^{\mathcal{E}}_{C_2}$, 
$f_k = (q_{\delta_k} , v_{\delta_k})$ and $p_k = \theta_k$, 
we have that
\begin{itemize}
\item $\mathcal{I}(t)$ is closed for all $t \in [0,T]$,
\vspace{.1cm}
\item for every open set $U \subseteq X$ the set 
$\{ t \in [0,T]: \mathcal{I}(t) \cap U \neq \emptyset \}$
is measurable,
\end{itemize}
where the set $\mathcal{I} (t)$ is given by 
\begin{align*}
\mathcal{I} (t) &:= \{ (q (t),v(t)) \in 
B^{\mathcal{F}'}_{C_1} \times B^{\mathcal{E}}_{C_2} : 
\, \exists \, k_j \rightarrow + \infty \textnormal{ such that } \\
&(q (t),v(t)) = \lim_{j \rightarrow + \infty} (q_{\delta_{k_j}} (t), v_{\delta_{k_j}} (t))
\text{ and } \theta (t) = \lim_{j \rightarrow \infty} \theta_{k_j} (t) \}.
\end{align*}
Thanks to \cite[Theorem III.6]{cava77}, for every $t \in [0,T]$ 
we can select $(q (t),v(t)) \in B^{\mathcal{F}'}_{C_1} \times B^{\mathcal{E}}_{C_2}$ such that
$t \to (q (t),v(t))$ is measurable.
Thus, (a) is proven.
Finally, by repeating the arguments used in the proof of Proposition~\ref{tre2} we obtain (b).

\vspace{.2cm}

\noindent
\textbf{Step 2: Proof of (c).}
\vspace{.2cm}
\noindent
Observe that, for every $t \in [0,T] \setminus \Lambda$, 
$$
\theta (t) = \limsup_{k \to \infty} \theta_k (t) = \lim_{j \to \infty} \theta_{k_j} (t)
= \lim_{j \to \infty} \langle q_{\delta_{k_j}} (t) ,  \dot{f} (t) \rangle_{\mathcal F}
= \langle q (t) ,  \dot{f} (t) \rangle_{\mathcal F}.
$$
Let us now show that $\theta \in L^1 (0,T)$.
Since $\theta$ is the $\limsup$ of measurable functions, we deduce that it is measurable.
Moreover, we have 
$$
\int_0^T | \theta (t) | \, dt = \int_0^T | \langle q (t) ,  \dot{f} (t) \rangle_{\mathcal F} | \, dt
\leq C_1 \int_0^T |\dot{f} (t)|_{\mathcal F} \, dt \leq C_1 \sqrt{T} \, \| \dot{f} \|_{L^2 ((0,T); \mathcal F)}.
$$
In order to get the energy inequality, recall that
by (iii) of Theorem \ref{teor18} (for $t_2=t$ and $t_1=0$) we have, for every $j \in \mathbb{N}$,
$$
J (v_{\delta_{k_j}}(t)) \leq J (v_0) 
+ \int_{0}^{t}  \langle q_{\delta_{k_j}} (s) ,  \dot{f} (s) \rangle_{\mathcal F} \, \mathrm{d}s
+ C_3 \, \delta_{k_j}^{1/2}.
$$ 
Taking the limsup in $j$ of the previous expression, using Fatou's Lemma
\begin{align*}
J (v (t)) &= \lim_{j \to \infty}
J (v_{\delta_{k_j}}(t)) \leq J (v_0) + \limsup_{j \to \infty} 
\int_{0}^{t}  \langle q_{\delta_{k_j}} (s) ,  \dot{f} (s) \rangle \, \mathrm{d}s \\
&\leq J (v_0) + \limsup_{k \to \infty} 
\int_{0}^{t}  \langle q_{\delta_{k}} (s) ,  \dot{f} (s) \rangle_{\mathcal F} \, \mathrm{d}s
\leq J (v_0) + \int_{0}^{t}  \limsup_{k \to \infty} \, 
\langle q_{\delta_{k}} (s) ,  \dot{f} (s) \rangle_{\mathcal F} \, \mathrm{d}s \\
&= J (v_0) + \int_{0}^{t}  \langle q (s) ,  \dot{f} (s) \rangle_{\mathcal F} \, \mathrm{d}s,
\end{align*}
so that (c) follows.
\end{proof}

\end{section}

\section{A discrete version of fracture evolution for cohesive zone models}\label{appl}

In the remaining part of the paper, we show how to apply Theorem \ref{Teorexist}
to cohesive fracture evolution.
We start this section by recalling the model introduced in~\cite{C}, 
where a critical points evolution is obtained by following the scheme described 
in the first part of the Introduction.
We conclude the section performing a finite dimensional discretization.
In the next section we will then show how it is possible to pass to the limit, 
thus obtaining a different proof of the existence result in~\cite{C}.

\subsection{A previous model for the time evolution of cohesive fractures} \label{recallfract}

Let $\Omega \subset \mathbb{R}^d$ be a bounded 
open set with Lipschitz boundary, with $d \in \{ 1, 2 \}$.
We assume that the reference configuration is the infinite cylinder
$\Omega \times \mathbb{R}\subset \mathbb{R}^{d+1}$,
and that the displacement $U : \Omega \times \mathbb{R} \to \mathbb{R}^{d+1}$
has the special form 
$$
U (x_1, \ldots, x_d, x_{d+1}) = (0, \ldots, 0, u (x_1, \ldots, x_d)), 
$$ 
where $u: \Omega \to \mathbb{R}$.
This situation is referred to in the literature as \textit{generalized antiplanar shear}.

We assume that the crack path in the reference
configuration is contained in $\Gamma \cap \overline{\Omega}$, 
where $\Gamma \subset \mathbb R^d$ is a Lipschitz closed set such that 
$0 < \mathcal H^{d-1}(\Gamma \cap \overline{\Omega}) < \infty$ 
and $\Omega \setminus \Gamma = \Omega^+ \cup \Omega^-$,
where $\Omega^+$ and $\Omega^-$ are disjoint open connected sets with Lipschitz boundary. 

We will study the energy of a finite portion of the cylinder, 
obtained by intersection with two horizontal hyperplanes separated by a unit distance.
Given a time interval $[0,T]$, with $T > 0$, we assume  
that the evolution is driven by a time-dependent displacement
$\omega \in H^1 ([0,T] ; H^1 (\Omega))$, imposed on a fixed portion
$\partial_D \Omega$ of $\partial \Omega$.
We make the assumption that $\partial_D \Omega$ is well separated from $\Gamma$, 
and that $\mathcal{H}^{d-1} ( \partial_D \Omega \cap \partial \Omega^{\pm} ) > 0$.

In the framework of linearized elasticity, the stored elastic energy associated 
with a displacement $u \in H^1 (\Omega \setminus \Gamma)$ is given by 
\begin{equation} \label{elastic}
W (u):= \frac{1}{2} \int_{\Omega \setminus \Gamma} |\nabla u|^2 \, \mathrm{d}x.
\end{equation}
The crack in the reference configuration can be identified with the set
$$
J_u:= \{ x \in \Gamma : u^+ (x) \neq u^- (x) \},
$$ 
where $u^{\pm}$ denotes the trace on $\Gamma$ of the restriction of $u$ to $\Omega^{\pm}$. 
In order to take into account the cohesive forces acting 
between the lips of the crack, according to Barenblatt's model \cite{Bar62} 
we consider a fracture energy of the following form:
$$
\kappa\int_\Gamma g (|[u]|) d \mathcal H^{d-1}, 
$$
where $[u]:= u^+ - u^-$ is the jump of $u$ across $\Gamma$, 
and the energy density $g: [0, \infty) \to [0, \infty)$ is a $C^1$, 
nondecreasing, bounded, concave function with $g(0) = 0$ and 
$\sigma:=g'(0^+) \in (0,+\infty)$.
We will consider here the special case (see also \cite[Section 9]{C}) 
\begin{equation}\label{g}
g (s) = 
\begin{cases}
 \vspace{.1cm}
 - \frac{s^2}{2 R} + s &\text{ if } 0 \leq s < R, \\
\frac{R}{2} &\text{ if }  s \geq R,
\end{cases}
\end{equation}
where the parameter $R > 0$ represents the range of the cohesive interactions 
between the lips of the crack.
The parameter $\kappa$ is the stiffness material constant and, for sake of simplicity, it will be taken equal to $1$ through all the theoretical analysis.
Summarizing, the energy functional $E: H^1 (\Omega \setminus \Gamma) \to [0, \infty)$ 
is given by
\begin{equation}\label{eq:Energy2}
E (u) = \frac{1}{2} \int_{\Omega \setminus \Gamma} |\nabla u|^2 \, \mathrm{d}x 
+ \int_\Gamma g (|[u]|) d \mathcal H^{d-1}.
\end{equation}

Let $t \in [0,T]$, and let $u(t)$ be a  minimizer for the problem 
$$
\min \{ E (v) : v \in H^1 (\Omega \setminus \Gamma), v = \omega (t) \text{ on } \partial_D \Omega \}. 
$$
Then \cite[Proposition 3.1]{C}, 
\begin{equation}\label{eq:crtpcond3}
\int_{\Omega \setminus \Gamma} \nabla u (t) \cdot \nabla \psi \, \mathrm{d}x + \int_\Gamma \left ([\psi] g'(|[u(t)]|)\operatorname{sign}([u(t)])1_{J_{u(t)}} + |[\psi]| 1_{J_{u(t)}^c} \right ) d \mathcal H^{d-1} \geq 0,
\end{equation}
for all $\psi \in H^1 (\Omega\setminus \Gamma)$ with $\psi = 0$ on $\partial_D \Omega$.
One can see \cite[Proposition 3.2]{C} that $u(t)$ satisfies \eqref{eq:crtpcond3} if and only if 
it is a weak solution of 
\begin{equation}\label{eq:crptcond}
\begin{cases}
\Delta u(t) = 0 & \mbox{ in } \Omega \setminus \Gamma, \\
u(t) = \omega(t) & \mbox{ on } \partial_D \Omega,\\
\partial_\nu u(t) = 0 & \mbox{ on } \partial \Omega \setminus \partial_D \Omega, \\
\partial_\nu u^+(t) = \partial_\nu u^-(t) & \mbox{ on } \Gamma, \\
|\partial_\nu u (t)| \leq 1 &\mbox{ on } \Gamma \setminus J_{u(t)}, \\
\partial_\nu u (t)= g'(|[u(t)]|)\operatorname{sign}([u(t)]) & \mbox{ on } J_{u(t)},
\end{cases}
\end{equation}
where $\operatorname{sign}( \cdot )$ denotes the sign function.
Any function $u (t)$ satisfying \eqref{eq:crtpcond3} or \eqref{eq:crptcond} 
will be referred to as \textit{critical point of $E$ at time $t$}.

In \cite{C}, a critical points evolution is obtained by following the general ideas given in the Introduction, 
by setting $Y = L^2(\Omega)$ and
$$
F(u,t) = 
\begin{cases}
E(u) & \mbox { for } u \in H^1(\Omega \setminus \Gamma), u = w(t) \mbox{ on } \partial_D \Omega, \\
+\infty & \mbox { otherwise in } L^2(\Omega).
\end{cases}
$$
Given a critical point $u_0$ of $E$ at time $0$ and $\varepsilon \in (0,1)$, 
the author shows \cite[Theorem 4.1]{C} the existence of a function 
$u^\varepsilon: [0,T] \to H^1 (\Omega \setminus \Gamma)$ 
satisfying $u^{\varepsilon}(0) = u_0$ and \eqref{eq:evcrptsBp}, which in this case reads as
\begin{equation}\label{eq:crptcond2}
\begin{cases}
\Delta u^\varepsilon(t) =  \varepsilon \dot u^\varepsilon(t) & \mbox{ in } \Omega \setminus \Gamma, \\
u^\varepsilon(t) = \omega(t) & \mbox{ on } \partial_D \Omega,\\
\partial_\nu u^\varepsilon(t) = 0 & \mbox{ on } \partial \Omega \setminus \partial_D \Omega, \\
\partial_\nu u^\varepsilon(t)_{| \Omega^-} 
= \partial_\nu u^\varepsilon_{| \Omega^+}(t)&  \mbox{ on } \Gamma, \\
|\partial_\nu u^\varepsilon (t)| \leq 1 &\mbox{ on } \Gamma \setminus J_{u^\varepsilon(t)}, \\
\partial_\nu u^\varepsilon = g'(|[u^\varepsilon (t)]|)
\operatorname{sign}([u^\varepsilon (t)]) & \mbox{ on } S_{u^\varepsilon(t)}.
\end{cases}
\end{equation}
Under the additional assumption $g \in C^{1,1}$ (which is fulfilled by the function in \eqref{g}), 
uniqueness for the above problem also holds true \cite[Theorem 4.2]{C}. 
Finally \cite[Theorem 4.4]{C}, there exists a bounded measurable function 
$u:[0,T] \to H^1(\Omega \setminus \Gamma)$ with $u(0)=u_0$ such that the following properties are satisfied:

\vspace{.2cm}

\begin{itemize}
\item {\it approximability}: for every $t \in [0,T]$ there exists a sequence $\varepsilon_n \to 0^+$ 
such that
$$
u^{\varepsilon_n}(t) \rightharpoonup u(t) \mbox{ weakly in } H^1(\Omega \setminus \Gamma);
$$

\vspace{.2cm}

\item {\it stationarity}: for a.e. $t \in [0,T]$ the function $u(t)$ is a critical point of $E$ at time $t$;

\vspace{.2cm}

\item {\it energy inequality}: for every $t \in [0,T]$
\begin{equation}\label{eq:enineq}
E(u(t)) \leq E(u(0)) + \int_0^t \int_{\Omega \setminus \Gamma} \nabla u(s) \cdot \nabla \dot \omega(s) \, \mathrm{d}x \mathrm{d}s.
\end{equation}
\end{itemize}

\subsection{Discrete Setting} \label{sec:discrete}

In view of the applications of the results of this paper to the model just introduced, 
we need a finite dimensional version of the energy functional $E$ in \eqref{eq:Energy2}. 
In order to focus on the main ideas of our approach, 
we keep the formulation as clear as possible, considering a very simple geometry.

Let $d=2$ and $\ell > 0$ be fixed, and define
$$
\Omega:= (0,2\ell)^2, \qquad 
\Omega^-:= (0, \ell) \times (0,2\ell),
\qquad
\Omega^+:= (\ell, 2 \ell) \times (0,2\ell),
\qquad
\Gamma:= \{ \ell \} \times [0,2\ell].
$$
We will study a fracture evolution where the deformation is imposed on the set 
$$
\partial_D\Omega:= \left( \{0\}\times[0,2\ell] \right) \cup \left( \{2\ell\}\times [0,2\ell] \right),
$$
see Figure \ref{mesh0}.
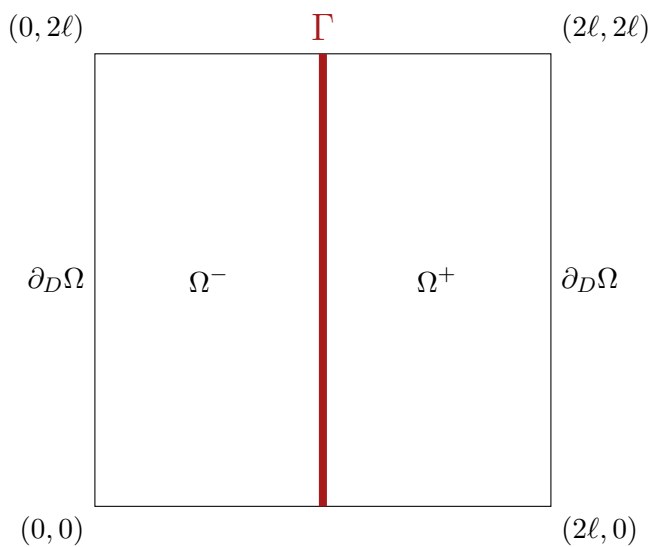
\begin{figure}[h]
	\begin{center}
		\begin{tikzpicture}[>=latex, scale=0.6]
			\draw(0,0)rectangle(10,10);
						
			\draw[line width=3pt, color=nicosred] (5,0)--(5,10)node[above]{\Large{$\Gamma$}};
			
			\draw(0,0)node[below left]{$(0,0)$};
			\draw(10,10)node[above right]{$(2 \ell, 2 \ell)$};
			\draw(0,10)node[above left]{$(0, 2 \ell)$};		
			\draw(10,0)node[below right]{$(2 \ell, 0)$};
			\draw(10, 5)node[right]{$\partial_D \Omega$};
			\draw(0, 5)node[left]{$\partial_D \Omega$};
			\draw(2.5, 5)node{$\Omega^-$};
			\draw(7.5, 5)node{$\Omega^+$};
		\end{tikzpicture}
	\end{center}
	\caption{Geometry of the problem.}\label{mesh0}
\end{figure}





For every $h > 0$, we assume we are given a triangulation
$\mathcal T_h$ of the set $\Omega \setminus \Gamma$,
and we define $\mathcal{E}_h$ as the finite dimensional space of continuous functions 
that are affine on each triangle belonging to $\mathcal T_h$.
More precisely, we set
$$
\mathcal{E}_h := \{u \in C(\overline{\Omega}\setminus \Gamma) \cap H^1 (\Omega\setminus \Gamma) 
:\quad \nabla u=\hbox{ const. a.e. on $T$, \quad for every }T\in \mathcal T_h\}\, ,
$$
and we define $\mathcal{E}_h^{reg}$ as the set of functions of $\mathcal{E}_h$ 
that do not jump across $\Gamma$:
$$
\mathcal{E}_h^{reg}:= \mathcal{E}_h \cap H^1 (\Omega).
$$

We endow $\mathcal{E}_h$ with the induced norm of $H^1 (\Omega \setminus \Gamma)$
$$
| u |^2_{\mathcal{E}_h} := \int_{\Omega}  u^2 \,\mathrm{d}x
+ \int_{\Omega \setminus \Gamma} |\nabla u |^2  \,\mathrm{d}x \qquad \qquad u \in \Esp, 
$$
With a slight abuse of notation, we will denote with
$\langle \cdot , \cdot \rangle_{\Esp}$ both the duality pairing between $\Esp'$ and $\Esp$ (whenever the two are not identified), as well as the scalar product in $\Esp$. In particular,
throughout this section, we convene that the equality
$$
\xi  = v
$$
where $\xi \in \Esp'$ and $v \in \Esp$, is meant in sense of the Riesz isometry.
We will denote the restriction of the energy functionals $E$ and $W$ to the space $\mathcal{E}_h$ by
$$
E_h:= E \mid_{\mathcal{E}_h}, \qquad  W_h:= W \mid_{\mathcal{E}_h}.
$$
We denote by $A_h$ the operator which associates to every function of
$\mathcal{E}_h$ its trace on $\partial_D \Omega$, and we set $\mathcal{F}_h:= A_h (\mathcal{E}_h)$.
Note that $\mathcal{F}_h$ is closed, since $\mathcal{E}_h$ is finite dimensional.
Therefore, $\mathcal{F}_h$ endowed with the induced scalar product 
$\langle \cdot, \cdot \rangle_{\mathcal{F}_h}$ is a Hilbert subspace of $H^{1/2} (\partial_D \Omega)$.
We also notice that, since the operators $A_h$ are the restrictions to $\mathcal{E}_h$ of the surjective trace operator $A: H^{1}(\Omega \setminus \Gamma)\to H^{1/2} (\partial_D \Omega)$, by the open mapping theorem we can assume that \eqref{gamma} is satisfied for all $h$ with a constant $\gamma$ independent of $h$. In the sequel this will be used tacitely.

We finally observe that, applying \cite[Lemma 4.1.3]{Z} to our setting, 
we obtain the following version of Poincar\'e inequality: 
$$
\| u - u_{D} \|_{L^2 (\Omega)} \leq C \| \nabla u \|_{L^2 (\Omega \setminus \Gamma)}
\qquad \text{ for every } u \in \Esp,
$$
where the constant $C$ depends on $\Omega$ and $\partial_D \Omega$, and
$$
u_{D} :=   \left( \, \dashint_{\partial_D \Omega} u^2 \, d \mathcal{H}^1 \right)^{1/2}.
$$
The previous inequality in particular implies that
\begin{equation} \label{poincare}
\| u \|_{L^2 (\Omega)} \leq u_{D} + C \| \nabla u \|_{L^2 (\Omega \setminus \Gamma)}
\leq C \left( \left| A_h u \right|_{\mathcal{F}_h} + \| \nabla u \|_{L^2 (\Omega \setminus \Gamma)} \right) \qquad \text{ for every } u \in \Esp,
\end{equation} 
where with $C$ we denote different constants, all depending on $\Omega$ and $\partial_D \Omega$.
We conclude this subsection with an important remark that will be used later.

\begin{remark} \label{remarkregular}
Let $v \in \Esp$, $w \in \Esp^{reg}$, and let $\xi \in \partial E_h (v)$.
Then, from the definition of subdifferential and direct computation, one can check that the action of 
$\xi$ on $w$ coincides with the action of the Fr\'echet differential $\partial W_h (v)$ on $w$.
In formulas:
$$
\langle \xi, w \rangle_{\Esp} = \langle \partial W_h (v), w \rangle_{\Esp}.
$$
\end{remark}

We show now that the functional $E_h$ satisfies the assumptions of Theorem \ref{Teorexist}.

\subsection{Assumptions (J1)--(J4) are satisfied by $E_h$ and $A_h$} \label{conditions a0-a3}

First of all, we start by observing that condition (J1) is satisfied, 
by using standard arguments of calculus of variations.

\begin{proposition} 
The functional
$$
\mathcal{E}_h \ni v \longmapsto E_h (v) + |A_h v|^2_{\mathcal{F}_h}
$$
is coercive.
\end{proposition}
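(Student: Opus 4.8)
The plan is to bound the functional from below by a positive multiple of $|v|^2_{\mathcal{E}_h}$; coercivity then follows at once, since such a lower bound diverges as $|v|_{\mathcal{E}_h}\to+\infty$.

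First I would discard the fracture term: since $g\geq 0$, one has $E_h(v)\geq \frac{1}{2}\|\nabla v\|^2_{L^2(\Omega\setminus\Gamma)}$, so it suffices to control $|v|^2_{\mathcal{E}_h}=\|v\|^2_{L^2(\Omega)}+\|\nabla v\|^2_{L^2(\Omega\setminus\Gamma)}$ by $|A_h v|^2_{\mathcal{F}_h}+\|\nabla v\|^2_{L^2(\Omega\setminus\Gamma)}$. This is exactly what the Poincar\'e-type inequality \eqref{poincare} provides: squaring it yields a constant $C$, depending only on $\Omega$ and $\partial_D\Omega$, with $\|v\|^2_{L^2(\Omega)}\leq 2C^2\big(|A_h v|^2_{\mathcal{F}_h}+\|\nabla v\|^2_{L^2(\Omega\setminus\Gamma)}\big)$. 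Adding $\|\nabla v\|^2_{L^2(\Omega\setminus\Gamma)}$ to both sides gives $|v|^2_{\mathcal{E}_h}\leq (2C^2+1)\big(|A_h v|^2_{\mathcal{F}_h}+\|\nabla v\|^2_{L^2(\Omega\setminus\Gamma)}\big)\leq 2(2C^2+1)\big(|A_h v|^2_{\mathcal{F}_h}+E_h(v)\big)$, using once more $\frac{1}{2}\|\nabla v\|^2_{L^2(\Omega\setminus\Gamma)}\leq E_h(v)$. Rearranging, $E_h(v)+|A_h v|^2_{\mathcal{F}_h}\geq \frac{1}{2(2C^2+1)}|v|^2_{\mathcal{E}_h}$, which is the desired coercivity.

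There is no real obstacle here; the only subtlety worth emphasizing is that the usual Poincar\'e inequality on $\Omega$ cannot be invoked, because functions in $\mathcal{E}_h$ are allowed to jump across $\Gamma$ and hence need not belong to $H^1(\Omega)$. The inequality \eqref{poincare}, which bounds $\|v\|_{L^2(\Omega)}$ in terms of $\|\nabla v\|_{L^2(\Omega\setminus\Gamma)}$ together with the trace of $v$ on $\partial_D\Omega$ (i.e.\ $|A_h v|_{\mathcal{F}_h}$), is precisely the tool that accommodates the jump and closes the estimate; note that the constants involved are independent of $h$, as already remarked for $\gamma$ in connection with \eqref{gamma}.
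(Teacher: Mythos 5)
Your proof is correct and uses the same essential ingredient as the paper — the Poincaré-type inequality \eqref{poincare}, which handles the possible jump across $\Gamma$ by bounding $\|v\|_{L^2(\Omega)}$ in terms of $\|\nabla v\|_{L^2(\Omega\setminus\Gamma)}$ and the trace on $\partial_D\Omega$. The only difference is presentational: you derive the explicit quadratic lower bound $E_h(v)+|A_h v|^2_{\mathcal{F}_h}\geq \tfrac{1}{2(2C^2+1)}|v|^2_{\mathcal{E}_h}$ directly, while the paper argues via sequences, showing that any sublevel set is bounded and then extracting a (strongly) convergent subsequence using finite dimensionality; these are equivalent formulations of coercivity, and your version is arguably the cleaner one.
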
 

\begin{proof}
Let $C > 0$ be fixed, and let $\left( v_k \right)_{k \in \mathbb{N}} \subset \mathcal{E}_h$ 
be a sequence such that
$$
E_h (v_k) + |A_h v_k|^2_{\mathcal{F}_h} \leq C.
$$
Then, recalling the expression of $E_h$ and thanks to Poincar\'e inequality \eqref{poincare}, we have 
$$
|v_k|^2_{\Esp} \leq C, 
$$
for some new constant, still denoted by $C$, depending on $\Omega, \partial_D \Omega$.
Then, there exists a subsequence $\left( v_{k_j} \right)_{j \in \mathbb{N}}$ and a function 
$v \in \mathcal{E}_h$ such that
$$
v_{k_j} \rightharpoonup v \qquad \text{ weakly in } \mathcal{E}_h.
$$
Since $\mathcal{E}_h$ is finite dimensional, this implies that 
$$
v_{k_j} \to v \qquad \text{ in } \mathcal{E}_h,
$$
and this concludes the proof.
\end{proof}
We now show that condition (J2) is satisfied.
\begin{proposition} \label{ConditionA1}
There exists $\eta > 0$ such that the function
$$
u \longmapsto E_h (u) + \eta |u|_{\Esp}^2
$$
is strictly convex.   
\end{proposition}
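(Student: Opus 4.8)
The plan is to isolate the \emph{only} source of non-convexity of $E_h$, namely the concave part of $g$, whose negative curvature is uniformly bounded by $-1/R$, and to absorb it into a jump term that can be controlled by a trace inequality. Concretely, I would first introduce $\tilde g(s):=g(|s|)+\frac{s^2}{2R}$ for $s\in\mathbb{R}$, with $g$ as in \eqref{g}, and check that $\tilde g$ is convex: a direct computation gives $\tilde g(s)=|s|$ for $|s|\le R$ and $\tilde g(s)=\frac{R}{2}+\frac{s^2}{2R}$ for $|s|>R$, so $\tilde g$ is $C^1$, convex on each of the three pieces, with matching one-sided derivatives at $s=\pm R$, hence globally convex; it also has quadratic growth.

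Next I would write, for every $u\in\mathcal{E}_h$,
\[
E_h(u)+\eta\,|u|_{\mathcal{E}_h}^2 \;=\; Q_\eta(u)\;+\;\int_\Gamma \tilde g([u])\,\mathrm{d}\mathcal{H}^1,
\]
where
\[
Q_\eta(u):=\Big(\tfrac12+\eta\Big)\!\int_{\Omega\setminus\Gamma}\!|\nabla u|^2\,\mathrm{d}x+\eta\!\int_\Omega u^2\,\mathrm{d}x-\frac{1}{2R}\!\int_\Gamma [u]^2\,\mathrm{d}\mathcal{H}^1,
\]
which follows simply by substituting $g(|[u]|)=\tilde g([u])-\tfrac{1}{2R}[u]^2$ and regrouping with the norm term. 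The second summand is convex, being the composition of the convex function $\tilde g$ with the linear map $u\mapsto[u]$, integrated over $\Gamma$ (and finite, by the quadratic growth of $\tilde g$ and $[u]\in L^2(\Gamma)$). Hence it suffices to choose $\eta$ so that the quadratic form $Q_\eta$ is positive definite on $\mathcal{E}_h$: a positive definite quadratic form is strictly convex, and adding a convex functional preserves strict convexity.

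To make $Q_\eta$ positive definite I would use the trace theorem on $\Omega^+$ and $\Omega^-$: there is $C_{\mathrm{tr}}$, depending only on $\Omega$ and $\Gamma$ (in particular independent of $h$), such that $\|u^\pm\|_{L^2(\Gamma)}\le C_{\mathrm{tr}}\|u\|_{H^1(\Omega^\pm)}$, whence
\[
\int_\Gamma [u]^2\,\mathrm{d}\mathcal{H}^1 \le 2C_{\mathrm{tr}}^2\Big(\int_\Omega u^2\,\mathrm{d}x+\int_{\Omega\setminus\Gamma}|\nabla u|^2\,\mathrm{d}x\Big)=2C_{\mathrm{tr}}^2\,|u|_{\mathcal{E}_h}^2.
\]
Plugging this in and estimating $(\tfrac12+\eta)\ge\eta$ gives $Q_\eta(u)\ge\big(\eta-\tfrac{C_{\mathrm{tr}}^2}{R}\big)|u|_{\mathcal{E}_h}^2$, so any $\eta>C_{\mathrm{tr}}^2/R$ works; since $C_{\mathrm{tr}}$ is purely geometric, $\eta$ can be taken independent of $h$, and replacing $\eta$ by $2\eta$ even yields strong convexity of $E_h(\cdot)+2\eta|\cdot|_{\mathcal{E}_h}^2$, as required for (J2).

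The only genuinely delicate point is the convexity of $\tilde g$ near the corner $s=0$: one must verify that the ``positive kink'' of $|s|$ at the origin is not spoiled by the smooth concave part of $g$. Since on $[-R,R]$ one has exactly $\tilde g(s)=|s|$, this is immediate — but it is the conceptual heart of why the decomposition closes (the concave curvature is spread over $(-R,R)$ and is strictly smaller in magnitude than the convexity contributed by the kink, while the jump term it leaves behind is lower order with respect to the $\mathcal{E}_h$-norm). Everything else — the regrouping identity, finiteness, and the positive-definiteness estimate — is routine.
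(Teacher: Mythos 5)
Your proof is correct and follows essentially the same route as the paper: compensate the concavity of $g$ by a quadratic in the jump, check convexity of the compensated jump energy, and then absorb the quadratic jump term into $\eta|u|_{\mathcal E_h}^2$ via a trace estimate on $\Omega^\pm$ (the paper uses \cite[Lemma~5.3]{C} for exactly this bound). The only difference is cosmetic: the paper takes $\mu>\tfrac{1}{2R}$ so that $g(|\cdot|)+\mu(\cdot)^2$ is strictly convex and then adds and subtracts $\mu\int_\Gamma[u]^2$, whereas you take the borderline $\mu=\tfrac{1}{2R}$ (so $\tilde g\equiv|\cdot|$ on $[-R,R]$) and recover strict convexity directly from the positive-definite quadratic form $Q_\eta$; both ultimately derive strict convexity from a leftover positive multiple of $|u|_{\mathcal E_h}^2$, so this is the same argument with slightly cleaner bookkeeping.
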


\begin{proof}
We divide the proof into several steps.

\vspace{.2cm}

\noindent
\textbf{Step 1}. We show that there exists $\mu > 0$ such that the function 
$p_{\mu} : \mathbb{R} \to \mathbb{R}$ given by
\begin{equation}\label{econv}
p_{\mu} (s) := g (|s|) + \mu s^2, \qquad s \in \mathbb{R},
\end{equation}
is strictly convex. To this aim, we need to find $\mu$ such that the second 
distributional derivative $p_{\mu}''$ of $p_{\mu}$ is a positive Radon measure.
Recalling the definition of $g$, we have 
$$
p_{\mu} (s) = 
\begin{cases}
 \vspace{.1cm}
|s| + \left( \mu - \frac{1}{2 R} \right) s^2 &\text{ if } 0 \leq |s| < R, \\
\frac{R}{2} + \mu s^2 &\text{ if }  |s| \geq R,
\end{cases}
$$
The distributional derivative $p_{\mu}'$ of $p_{\mu}$ is given by
$$
p_{\mu}' (s) =  
\begin{cases}
\vspace{.2cm}
-1 + \left( 2 \mu - \frac{1}{R} \right) s &\text{ if } - R < s < 0, \\
\vspace{.2cm}
1 + \left( 2 \mu - \frac{1}{R} \right) s&\text{ if } 0 < s < R, \\
\vspace{.2cm}
2 \mu s &\text{ if } |s| \geq R.
\end{cases}
$$
Note that $p_{\mu}' \in L^1_{loc} (\mathbb{R})$.
We can then calculate the second distributional derivative $p_{\mu}''$ of $p_{\mu}$, 
which is the Radon measure in $\mathbb{R}$ given by
$$
p_{\mu}'' = \left( 2 \mu - \frac{1}{R} \right) \mathcal{L}^1 \lfloor_{\left(- R , R \right)} 
+ 2 \mu  \mathcal{L}^1 \lfloor_{\left(- \infty, - R \right) \cup \left( R , \infty \right)}
+ 2 \delta_{0},
$$
where $\delta_0$ represents the Dirac measure concentrated at the origin.
Note that 
$$
p_{\mu}'' (B) \geq \left( 2 \mu - \frac{1}{R} \right) \mathcal{L}^1 (B), \qquad \text{ for every Borel set } B \subset \mathbb{R}. 
$$
Thus, if we choose $\mu$ such that
\begin{equation} \label{red}
\mu > \frac{1}{2R},
\end{equation}
$p_{\mu}''$ is a positive Radon measure on $\mathbb{R}$, 
and $p_{\mu}$ is convex.

\vspace{.2cm}

\noindent
\textbf{Step 2}. We show that the functional $\overline{E}_h: \mathcal{E}_h \to [0, \infty)$ given by
$$
\overline{E}_h (u):= E_h (u) + \mu \int_{\Gamma} |[u]|^2 \, d \mathcal{H}^1,
$$
is convex.
By the previous step, the function
$r_{\mu} : \mathcal{G}_h \to [0, \infty)$ defined as
$$
r_{\mu} ([u]) := \int_{\Gamma} g (|[u]| ) \, d \mathcal{H}^1 + \mu \int_{\Gamma} |[u]|^2 \, d \mathcal{H}^1
$$
is convex, where $\mathcal{G}_h$ is the subset of $L^2 (\Gamma)$ given by
$$
\mathcal{G}_h:= \{ [u] : u \in \mathcal{E}_h \}.
$$
Note now that
$$
\overline{E}_h (u) =  W_h (u) + r_{\mu} (|[u]|),
$$
where $W$ was defined in \eqref{elastic}.
From the fact that $W_h : \mathcal{E}_h \to [0, \infty)$ is convex, 
we then obtain that also $\overline{E}_h : \mathcal{E}_h \to [0, \infty)$ is convex.

\vspace{.2cm}

\noindent
\textbf{Step 3: conclusion.} 
By \cite[Lemma 5.3]{C}, there exists a constant $\overline{C} > 0$ such that
\begin{equation} \label{semdefpos}
\int_{\Gamma} |[u]|^2 \, d \mathcal{H}^1 \leq \overline{C} \, | u |^2_{\Esp}. 
\end{equation}
Taking $\eta > \mu \, \overline{C}$ we have 
$$
E_h (u) + \eta |u|^2_{\Esp} = \underbrace{E_h (u) + \mu \int_{\Gamma} |[u]|^2 \, d x_2}_{\overline{E}_h (u)}
+ \mu \underbrace{\left( \overline{C} \, | u |^2_{\Esp} 
- \int_{\Gamma} |[u]|^2 \, d x_2 \right)}_{\widetilde{E}_h (u)}
+ ( \eta - \mu \, \overline{C} ) |u|^2_{\Esp}.
$$
We have already proven that $\overline{E}_h$ is convex.
On the other hand, $\widetilde{E}_h$ is a quadratic form which is positive semidefinite
by \eqref{semdefpos}, and thus is convex.
Since the remaining term $( \eta - \mu \, \overline{C} ) |u|^2_{\Esp}$
is strictly convex, this concludes the proof of (J2).
\end{proof}

Before passing to the proof of (J3) we need some preliminary results.
First, we make a few remarks on the regularity of the elastic part 
and on the crack part of the energy functional.
\begin{remark} \label{lip1}
Note that $W_h \in C^{1,1} (\mathcal{E}_h)$. 
In particular, $\partial W_h (\cdot): \Esp \to \mathcal{E}_h'$ 
is a single-valued Lipschitz function with Lipschitz constant $1$.
Indeed, we have
\begin{equation}\label{isoequiv}
\langle \partial W_h(w), v \rangle_{\mathcal{E}_h} =
\int_{\Omega \setminus \Gamma} \nabla w \cdot \nabla v \,\mathrm{d}x \qquad 
\text{ for every } w, v \in \mathcal{E}_h.
\end{equation}
Then, for every $w_1,w_2,v \in \mathcal{E}_h$ 
\begin{align*}
\left|  \partial W_h(w_1) - \partial W_h (w_2) \right|_{\mathcal{E}_h'}
&= \sup \left\{  \int_{\Omega \setminus \Gamma} \left( \nabla w_1 - \nabla w_2 \right) \cdot \nabla v  \,\mathrm{d}x, 
\quad v \in \mathcal{E}_h \text{ with } |v|_{\mathcal{E}_h} = 1 \right\} \\
&\leq \sup \left\{ 
\| \nabla w_1 - \nabla w_2 \|_{L^2 (\Omega \setminus \Gamma)}
\| \nabla v \|_{L^2 (\Omega \setminus \Gamma)}, 
\quad v \in \mathcal{E}_h \text{ with } |v|_{\mathcal{E}_h} = 1 \right\} \\
&\leq \sup \left\{ |w_1 - w_2|_{\mathcal{E}_h} |v|_{\mathcal{E}_h},
\quad v \in \mathcal{E}_h \text{ with } |v|_{\mathcal{E}_h} = 1 \right\} \\
&\leq |w_1 - w_2|_{\mathcal{E}_h}.
\end{align*}
Since $\partial W_h (0) = 0$, this implies
\begin{equation} \label{riso}
\left|  \partial W(w) \right|_{\mathcal{E}_h'} \leq \left|  w \right|_{\mathcal{E}_h}, 
\quad \text{ for every } w \in \Esp.
\end{equation}

\end{remark}

\begin{remark} \label{lipb}
From the previous remark, it also follows that
$$
\left|  \partial W_h(w_1) - \partial W_h (w_2) \right|_{\mathcal{E}_h'}
\leq \| \nabla w_1 - \nabla w_2 \|_{L^2 (\Omega \setminus \Gamma)}
\qquad \text{ for every } w_1, w_2 \in \mathcal{E}_h.
$$
\end{remark}

\begin{remark} \label{gh}
The functional $G_h : \mathcal{E}_h \to [0, \infty)$ defined as
$$
G_h (v):= \int_{\Gamma} g (|[v]|) \, d \mathcal{H}^1. 
$$
is globally Lipschitz continuous.
Indeed, for every $v_1, v_2 \in \mathcal{E}_h$ we have 
\begin{align*}
\left| G_h (v_1) - G_h (v_2) \right| 
&\leq \int_{\Gamma} \left| g (|[v_1]|) - g (|[v_2]|) \right| \, d \mathcal{H}^1
\leq \int_{\Gamma} \big| |[v_1]|  - |[v_2]| \big| \, d \mathcal{H}^1 \\
& \leq \int_{\Gamma} \big| [v_1]  - [v_2] \big| \, d \mathcal{H}^1
= \int_{\Gamma} \big| [v_1  - v_2] \big| \, d \mathcal{H}^1 \\
& \leq \left( H^1 (\Gamma) \right)^{1/2} \| [v_1  - v_2] \|_{L^2 (\Gamma)}
\leq \overline{C} \left( H^1 (\Gamma) \right)^{1/2} | v_1  - v_2 |_{\mathcal{E}_h}, 
\end{align*}
where $\overline{C}$ is given by \eqref{semdefpos}, 
and we used the fact that $\| g' \|_{L^{\infty} ([0, \infty))} = 1$.
\end{remark}

Next proposition shows condition (J3).
\begin{proposition}
$E_h$ satisfies condition (J3).
\end{proposition}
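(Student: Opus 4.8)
The plan is to exploit the additive structure $E_h = W_h + G_h$, where $W_h$ is the elastic part and $G_h(v) := \int_{\Gamma} g(|[v]|)\,d\mathcal{H}^1$ is the cohesive part. Since $W_h \in C^{1,1}(\Esp)$ by Remark \ref{lip1} and $G_h$ is globally Lipschitz (hence lower semicontinuous) by Remark \ref{gh}, and since $\partial E_h(v) \neq \emptyset$ for every $v$ (by (J2), proven in Proposition \ref{ConditionA1}, and Remark \ref{J-prop}), the sum rule \eqref{numero2} of Remark \ref{sumsubdiff} gives, at every $v \in \Esp$,
\[
\partial E_h(v) = \partial G_h(v) + \partial W_h(v)\,.
\]
Hence any $\xi \in \partial E_h(v)$ can be written as $\xi = \zeta + \partial W_h(v)$ with $\zeta \in \partial G_h(v)$, and it suffices to bound the two summands separately.

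For the cohesive term, the global Lipschitz estimate for $G_h$ in Remark \ref{gh} together with \eqref{numero} gives $|\zeta|_{\Esp'} \le \overline{C}\,(\mathcal{H}^1(\Gamma))^{1/2} =: L_1$ for every $\zeta \in \partial G_h(v)$, which is already of the required form. For the elastic term, since $\partial W_h(0) = 0$, Remark \ref{lipb} yields
\[
|\partial W_h(v)|_{\Esp'} = |\partial W_h(v) - \partial W_h(0)|_{\Esp'} \le \|\nabla v\|_{L^2(\Omega\setminus\Gamma)} = \sqrt{2\,W_h(v)}\,.
\]
Using $G_h \ge 0$, hence $W_h(v) \le E_h(v)$, and the elementary inequality $\sqrt{2a} \le a + \tfrac12$, I obtain $|\partial W_h(v)|_{\Esp'} \le E_h(v) + \tfrac12$.

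Combining the two bounds, $|\xi|_{\Esp'} \le E_h(v) + \tfrac12 + L_1 \le (L_1 + 1)\,(E_h(v) + 1)$, so (J3) holds with $L := \overline{C}\,(\mathcal{H}^1(\Gamma))^{1/2} + 1$. The argument presents essentially no obstacle; the only point requiring care is that (J3) must be controlled by $E_h(v)$ \emph{alone}, so one must not use the crude bound $|\partial W_h(v)|_{\Esp'} \le |v|_{\Esp}$ of \eqref{riso} — whose right-hand side also involves $\|v\|_{L^2(\Omega)}$, a quantity not controlled by the energy — but the sharper estimate through $\|\nabla v\|_{L^2(\Omega\setminus\Gamma)}$, which is exactly $\sqrt{2\,W_h(v)} \le \sqrt{2\,E_h(v)}$. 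By the remark following Remark \ref{sumsubdiff} it would even suffice to verify the inequality at differentiability points of $E_h$, but the above reasoning works verbatim at every point.
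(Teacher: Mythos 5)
Your proof is correct and follows essentially the same route as the paper: split $E_h = W_h + G_h$, apply the sum rule for the subdifferential, bound the cohesive part via the global Lipschitz constant of $G_h$, and bound the elastic part by $\|\nabla v\|_{L^2(\Omega\setminus\Gamma)} = \sqrt{2W_h(v)}$ before absorbing it into $E_h(v)+1$ via an elementary inequality. The only cosmetic difference is that you use $\sqrt{2a}\le a+\tfrac12$ where the paper uses $a\le 1+a^2$; both yield a bound of the required form (J3) with different constants.
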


\begin{proof}
Note that 
$$
E_h (u) = W_h (u) + G_h (u), 
$$
where $G_h: \Esp \to [0, \infty)$ is defined in Remark~\ref{gh}.
Let now $v \in \mathcal{E}$. 
By \eqref{numero2}, every $\xi \in \partial J (v)$ can be written as
$$
\xi = \xi_1 + \xi_2, 
$$
where $\xi_1 \in \partial W_h (v)$, and $\xi_2 \in \partial G_h (v)$.
By Remark~\ref{lipb} we have 
$$
\left|  \xi_1 \right|_{\mathcal{E}_h'}
\leq \| \nabla v \|_{L^2 (\Omega \setminus \Gamma)}
\leq 1+ \| \nabla v \|^2_{L^2 (\Omega \setminus \Gamma)}
\leq 2 (1 + E_h (v) ).
$$
On the other hand, thanks to Remark~\ref{gh} $G_h$ is globally Lipschitz continuous
with Lipschitz constant $\overline{C} \left( H^1 (\Gamma) \right)^{1/2}$. 
Therefore, by \eqref{numero}
$$
|\xi_2|_{\mathcal{E}'_h} \leq \overline{C} \left( H^1 (\Gamma) \right)^{1/2}.
$$
Thus, 
$$
| \xi |_{\mathcal{E}'_h} \leq | \xi_1 |_{\mathcal{E}'_h} + | \xi_2 |_{\mathcal{E}'_h} 
\leq 2 E_h (v) + 2 + \overline{C} \left( H^1 (\Gamma) \right)^{1/2}.
$$
\end{proof}
Next lemma will be used to prove (J4), and gives a bound on the norm of a regular critical point, 
in terms of its trace on $\partial_D \Omega$.
\begin{lemma} \label{lemmaw}
Let $w \in \Esp^{reg}$, let $f \in \mathcal{F}_h$ be such that $A_h w = f$, 
and suppose $\partial W_h (w) \in \textnormal{ran} (A^*)$.
Then, there exists a positive constant $C = C (\Omega, \gamma)$ such that
\begin{equation}\label{vleqf1}
|w|_\Esp \leq C |f|_{\mathcal{F}_h}.
\end{equation}
\end{lemma}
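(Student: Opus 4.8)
The plan is to turn the abstract inclusion $\partial W_h(w)\in\rg(A_h^*)$ into a testable identity by means of \eqref{isoequiv}, to evaluate it on $w$ itself in order to control the Dirichlet energy $\|\nabla w\|_{L^2(\Omega\setminus\Gamma)}$, and then to close the estimate for the full norm $|w|_{\Esp}$ via the Poincar\'e inequality \eqref{poincare} and a Young inequality absorption.

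First I would pick $q\in\mathcal F_h'$ with $\partial W_h(w)=A_h^*q$. Since \eqref{gamma} holds for $A_h$ with a constant $\gamma$ independent of $h$ (as recalled in Section~\ref{sec:discrete}), and $|\partial W_h(w)|_{\Esp'}\le |w|_{\Esp}$ by \eqref{riso} in Remark~\ref{lip1}, this gives
\[
|q|_{\mathcal F_h'}\ \le\ \frac1\gamma\,|A_h^*q|_{\Esp'}\ =\ \frac1\gamma\,|\partial W_h(w)|_{\Esp'}\ \le\ \frac1\gamma\,|w|_{\Esp}.
\]
Pairing the identity $\partial W_h(w)=A_h^*q$ with $v=w$, and using \eqref{isoequiv} on the left together with $A_hw=f$ on the right, I obtain
\[
\|\nabla w\|_{L^2(\Omega\setminus\Gamma)}^2\ =\ \langle \partial W_h(w),w\rangle_{\Esp}\ =\ \langle q,f\rangle_{\mathcal F_h}\ \le\ |q|_{\mathcal F_h'}\,|f|_{\mathcal F_h}\ \le\ \frac1\gamma\,|w|_{\Esp}\,|f|_{\mathcal F_h}.
\]

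To conclude, I would feed this into \eqref{poincare} (again using $A_hw=f$), obtaining, for some $C=C(\Omega,\partial_D\Omega)$,
\[
|w|_{\Esp}^2\ =\ \|w\|_{L^2(\Omega)}^2+\|\nabla w\|_{L^2(\Omega\setminus\Gamma)}^2\ \le\ C\Big(|f|_{\mathcal F_h}^2+\|\nabla w\|_{L^2(\Omega\setminus\Gamma)}^2\Big)\ \le\ C\Big(|f|_{\mathcal F_h}^2+\frac1\gamma\,|w|_{\Esp}\,|f|_{\mathcal F_h}\Big),
\]
and then apply Young's inequality $\frac{C}{\gamma}|w|_{\Esp}|f|_{\mathcal F_h}\le\frac12|w|_{\Esp}^2+\frac{C^2}{2\gamma^2}|f|_{\mathcal F_h}^2$ to absorb the term $\frac12|w|_{\Esp}^2$ into the left-hand side, which produces \eqref{vleqf1} with a constant depending only on $\Omega$, $\partial_D\Omega$ and $\gamma$. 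I do not expect any genuine obstacle here: once \eqref{isoequiv} is invoked the argument is elementary, and the only points deserving a line of care are the $h$-independence of $\gamma$ in \eqref{gamma} and the bookkeeping in the final absorption; the regularity assumption $w\in\Esp^{reg}$ does not actually enter this particular estimate, but is what makes the lemma useful in the verification of (J4).
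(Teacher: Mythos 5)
Your proof is correct and follows essentially the same route as the paper: both combine the discrete Poincar\'e inequality \eqref{poincare}, the identity \eqref{isoequiv}, the surjectivity bound \eqref{gamma}, the Lipschitz estimate \eqref{riso}, and a Young absorption, merely ordering the first two ingredients differently. Your side remark that $w\in\Esp^{reg}$ is not used in the estimate itself (only in how the lemma is later applied to verify (J4)) is accurate.
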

\begin{proof}
By \eqref{poincare} and \eqref{isoequiv}
\begin{align} 
|w|_{\Esp}^2 &= \| w \|_{L^2 (\Omega)}^2 + \| \nabla w \|_{L^2 (\Omega \setminus \Gamma)}^2
\leq C \left( \left| A_h w \right|_{\mathcal{F}_h}^2 
+ \| \nabla w \|_{L^2 (\Omega \setminus \Gamma)}^2 \right) \label{amaro} \\
&= C \left[ | f |^2_{\mathcal{F}_h} + \langle \partial W_h (w) , w \rangle_{\Esp} \right], \label{amaro2}
\end{align}
where $C$ denotes different constants, depending only on $\Omega$ and $\partial_D \Omega$.
Let now $q \in \mathcal{F}_h$ be such that $A^*_h q = \partial W_h (w)$.
Then,
$$
\left|  \partial W_h (w) \right|_{\mathcal{E}_h'} = | A^*_h q |_{\Esp'} \geq \gamma | q |_{\mathcal{F}_h}.
$$
Thus, taking into account \eqref{riso} we have 
\begin{align*}
\langle \partial W_h (w) , w \rangle_{\Esp} 
&= \langle A^*_h q , w \rangle_{\Esp}
= \langle q , A_h w \rangle_{\mathcal{F}_h}
= \langle q , f \rangle_{\mathcal{F}_h}
\leq | q |_{\mathcal{F}_h} |f|_{\mathcal{F}_h} \\
&\leq \frac{1}{\gamma} \left|  \partial W_h (w) \right|_{\mathcal{E}_h'} |f|_{\mathcal{F}_h}
\leq \frac{1}{\gamma} \left|  w  \right|_{\mathcal{E}_h} |f|_{\mathcal{F}_h}.
\end{align*}
Using \eqref{amaro2}, we obtain
\begin{align*}
|w|_{\Esp}^2 \leq C 
\left[ | f |^2_{\mathcal{F}_h} + \frac{1}{\gamma} \left|  w  \right|_{\mathcal{E}_h} |f|_{\mathcal{F}_h} \right].
\end{align*}
From the previous relation, the thesis follows using Young inequality.
\end{proof}
We can finally prove (J4). 
\begin{proposition} 
Let $v_1,v_2, \overline{v} \in \Esp$ and $q_1,q_2, f_1, f_2 \in \mathcal{F}_h$ be such that
$$
A_h v_i = f_i, \qquad A^*_h q_i \in \partial  (E_h )_{\eta, \overline{v}} (v_i), \qquad i = 1, 2,
$$
where $\eta$ is given by Proposition \ref{ConditionA1}.
Then, there exists $C >0$, depending only on $\Omega$ and $\partial_D \Omega$ such that
$$
\langle q_1 - q_2 , A_h v_1 - A_h v_2 \rangle_{\mathcal{F}_h} 
\leq C |v_1 - v_2 |_{\mathcal{E}_h}  | A_h v_1 - A_h v_2 |_{\mathcal{F}_h}.
$$
\end{proposition}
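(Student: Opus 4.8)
The plan is to exploit the splitting $E_h=W_h+G_h$ of the energy, the $C^1$--regularity of the elastic part $W_h$, and the crucial fact that the cohesive part $G_h$ of the subdifferential is invisible to displacements that do not jump across $\Gamma$.

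\emph{Step 1 (splitting of the subdifferential).} Since $W_h$ is of class $C^1$ (Remark~\ref{lip1}) and $G_h$ is continuous (Remark~\ref{gh}), formula \eqref{numero2}, applied with $J_1=G_h$ and $J_2=W_h+\eta|\cdot-\overline v|^2$, gives
\[
\partial (E_h)_{\eta,\overline v}(v)=\partial W_h(v)+\partial G_h(v)+2\eta(v-\overline v)\qquad\text{for every }v\in\Esp .
\]
Hence the hypothesis $A_h^*q_i\in\partial (E_h)_{\eta,\overline v}(v_i)$ produces $\zeta_i\in\partial G_h(v_i)$ such that
\[
A_h^*q_i=\partial W_h(v_i)+\zeta_i+2\eta(v_i-\overline v),\qquad i=1,2 .
\]

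\emph{Step 2 (reduction to a regular test field).} Writing $f_i:=A_hv_i$, one has $\langle q_1-q_2,A_hv_1-A_hv_2\rangle_{\mathcal F_h}=\langle A_h^*(q_1-q_2),v_1-v_2\rangle_{\Esp}$. I would then pick a \emph{regular extension} $w\in\Esp^{reg}$ of the boundary increment, i.e.\ with $A_hw=f_1-f_2$ and $|w|_{\Esp}\le C\,|f_1-f_2|_{\mathcal F_h}$, $C=C(\Omega,\partial_D\Omega)$; such a $w$ is obtained by taking the $W_h$-minimal function among those of $\Esp^{reg}$ having trace $f_1-f_2$ and estimating its norm through the Poincar\'e inequality \eqref{poincare}, in the same spirit as the proof of Lemma~\ref{lemmaw}. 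Since $A_h(v_1-v_2-w)=0$ and $A_h^*q_i$ vanishes on $\ker A_h$ (being in $\rg A_h^*$), it follows that
\[
\langle A_h^*(q_1-q_2),v_1-v_2\rangle_{\Esp}=\langle A_h^*(q_1-q_2),w\rangle_{\Esp}.
\]

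\emph{Step 3 (conclusion).} Because $w\in\Esp^{reg}$, Remark~\ref{remarkregular} yields $\langle\zeta_i,w\rangle_{\Esp}=0$ for $i=1,2$, so the cohesive contributions cancel and, by Step 1,
\[
\langle A_h^*(q_1-q_2),w\rangle_{\Esp}=\langle \partial W_h(v_1)-\partial W_h(v_2),w\rangle_{\Esp}+2\eta\,\langle v_1-v_2,w\rangle_{\Esp}.
\]
Since $\partial W_h$ is $1$-Lipschitz (Remark~\ref{lip1}), the Cauchy--Schwarz inequality bounds the right-hand side by $(1+2\eta)\,|v_1-v_2|_{\Esp}\,|w|_{\Esp}\le (1+2\eta)C\,|v_1-v_2|_{\Esp}\,|f_1-f_2|_{\mathcal F_h}$, which is the claim with constant $(1+2\eta)C$, recalling $|f_1-f_2|_{\mathcal F_h}=|A_hv_1-A_hv_2|_{\mathcal F_h}$.

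\emph{Main obstacle.} The delicate point is Step 2: one must produce the regular extension $w$ with a norm control whose constant does not deteriorate as $h\to0$. This calls for combining \eqref{poincare} with a uniform discrete extension estimate on $\mathcal E_h^{reg}$ (for instance a Scott--Zhang--type interpolation preserving the finite element trace on $\partial_D\Omega$), so that the harmonic-type competitor used to bound $W_h(w)$ has $H^1$-norm controlled by $\|f_1-f_2\|_{H^{1/2}(\partial_D\Omega)}$ independently of $h$; here the hypothesis that $\Gamma$ is well separated from $\partial_D\Omega$ is what makes the construction possible. Everything else (the subdifferential calculus of Step 1, the cancellation in Step 3) is routine given the remarks already collected.
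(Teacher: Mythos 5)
Your proof is correct and follows essentially the same route as the paper: define $w$ as the $W_h$-minimizer in $\Esp^{reg}$ with trace $f_1-f_2$, use Remark~\ref{remarkregular} to kill the cohesive contributions when testing against $w$, use the Lipschitz bound of Remark~\ref{lip1}, and bound $|w|_{\Esp}$ by $|f_1-f_2|_{\mathcal F_h}$ via Lemma~\ref{lemmaw}. The ``main obstacle'' you flag is already handled by the paper's observation that the surjectivity constant $\gamma$ in \eqref{gamma} can be taken independent of $h$ (as $A_h$ is the restriction of the continuous trace operator), so Lemma~\ref{lemmaw} directly yields the $h$-uniform estimate and no Scott--Zhang-type construction is needed.
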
 

\begin{proof}
Let $w$ be the unique solution of the following minimization problem:
\begin{equation}\label{minW}
w=\argmin_{v \in \mathcal{E}_h^{reg}}\left\{W_h (v):A_h (v)=f_1-f_2\right\}.
\end{equation}
By Remark~\ref{remarkregular} we have
$$
\langle \xi_i, w \rangle_{\Esp} = \langle \partial W_h(v_i), w \rangle_{\Esp}
\qquad i = 1, 2.
$$
Now, by definition of $( E_h )_{\eta, \overline{v}}$ and Remark~\ref{sumsubdiff}, 
there exist $\xi_i \in\partial E_h (v_i)$, with $i=1,2$ 
such that
$$
A^*_h q_i - 2 \eta (v_i - \overline{v})= \xi_i, \qquad i = 1,2.
$$
Subtracting term by term we obtain
$$
A^*_h (q_1 - q_2) - 2 \eta (v_1 - v_2)= \xi_1 - \xi_2.
$$
Thus, thanks to Remark~\ref{lip1}
\begin{align*}
&\langle q_1 - q_2, A_h v_1 - A_h v_2 \rangle_{\mathcal{F}_h}
= \langle q_1 - q_2, f_1 - f_2 \rangle_{\mathcal{F}_h} 
=\langle q_1 - q_2, A_h (w) \rangle_{\mathcal{F}_h} \\
&\hspace{.2cm}= \langle A^*_h (q_1 - q_2), w\rangle_{\mathcal{E}_h}
= \langle \xi_1 - \xi_2 , w\rangle_{\Esp}
+ 2 \eta \langle  v_1 - v_2 , w\rangle_{\Esp} \\
&\hspace{.2cm}= \langle \partial W_h (v_1)- \partial W_h (v_2) ,w \rangle_{\Esp} 
+ 2 \eta \langle v_1 - v_2,w \rangle_{\Esp} \\
&\hspace{.2cm}\leq (1+2\eta) |w|_\Esp|v_1-v_2|_\Esp 
\leq C ( 1+2\eta )|f_1-f_2|_{\mathcal{F}_h}|v_1-v_2|_\Esp,
\end{align*}
where we also used the fact that $w$ satisfies the assumptions
of Lemma \ref{lemmaw} with $f = f_1 - f_2$.
\end{proof}

We conclude this section with a remark, which clarifies why we didn't directly prove the main theorem
of the paper in the infinite dimensional setting.

\begin{remark} \label{no-infinitedim}
As already explained in the Introduction, the proof of the main result of the paper (Theorem \ref{Teorexist}) 
is given only in a finite dimensional setting. 
This is due to the fact that, in the infinite dimensional case, 
the subdifferential is in general not closed with respect to the weak convergence in the domain of the energy. 
Such a difficulty could be overcome by requiring that the energy functional has compact sublevels, 
an assumption which is quite common in literature. 
In the cohesive fracture model case, this would amount to choosing
$L^2(\Omega)$ as domain of the energy , and considering the Dirichlet linear constraint 
as being encoded by an {\it unbounded} densely defined surjective linear operator 
$A:L^2(\Omega) \to H^{\frac12}(\partial_D\Omega)$. This does not affect neither \eqref{gamma} nor hypothesis (J1), which are still satisfied with minor modifications. 
Even condition (J2) can be proved with a little bit more of effort. 
On the other hand, the key conditions (J3) and  (J4), 
which we need in order to cope with the cohesive fracture energy in our model, would no longer hold true. 
This motivates our choice of first dealing with a finite dimensional setting, 
and then extend the results with a problem-specific technique. 
\end{remark}

\section{Recovering an approximable quasistatic evolution} \label{hto0}

We show now that the existence of a quasistatic evolution for the functional $E$, 
in the sense of \cite{C} and Section \ref{recallfract}, 
can be recovered from a discrete quasistatic evolution for $E_h$, when the parameter 
$h$ controlling the mesh size tends to $0$.
Before stating the main theorem of the section we need some notation.
We set
\begin{equation*} 
D := \left\{ h > 0 : h = \frac{\ell}{N} \text{ for some } N \in \mathbb{N} \right\}
\end{equation*}
with $\ell$ as in \ref{sec:discrete}.
Let $\omega \in W^{1,2} ([0,T], H^1(\Omega))$, 
and let $u_0$ be a contrained critical point of $E$ at time $0$, under the constraint
$u_0 = \omega (0)$ on $\partial_D \Omega$.
By \cite{quarteroni08}, there exists a sequence 
$\left( \omega_h \right)_{h \in D} \subset W^{1,2} ([0,T], H^1(\Omega))$ 
such that $\omega_h \in W^{1,2} ([0,T], \Esp)$ for every $h \in D$ and
\begin{equation} \label{approxbound}
\omega_h \stackrel{h \to 0^+}{\longrightarrow} \omega \qquad \text{ in } W^{1,2} ([0,T], H^1(\Omega)).
\end{equation}
For every $t \in [0,T]$, we define $f_h (t) := A_h \omega_h (t)$, 
we will denote by $f(t)$ the trace of $\omega (t)$ on $\partial_D \Omega$.
Again by \cite{quarteroni08}, there exists a sequence 
$\left( u_{0,h} \right)_{h \in D} \subset H^1(\Omega \setminus \Gamma)$ 
such that 
$$
u_{0,h} \in \Esp \qquad  \text{ with } \qquad A_h u_{0,h} = f_h (0) \qquad  \text{ for every } h \in D,
$$
and
\begin{equation} \label{approxbound2}
u_{0,h} \stackrel{h \to 0^+}{\longrightarrow} u_0 
\qquad \text{ in } H^1(\Omega \setminus \Gamma).
\end{equation}

\begin{remark}
Let $h \in D$ be fixed.
Since in general $u_{0,h}$ is not a critical point of $E_h$ at time $0$, 
it is not possible to consider an approximable quasistatic evolution 
with initial condition $u_{0,h}$ and constraint $f_h$.
We can, however, modify Definition \ref{evolution} in such a way 
that no critical point condition is required at the initial time $0$, 
as clarified below.
For our purposes, this is still sufficient.
Indeed, as stated in Theorem \ref{quasist} below, the critical point condition at $t=0$ 
is recovered when passing to the limit $h \to 0$.
\end{remark}

\begin{definition} \label{newquasist}
Let $h \in D$, $\delta \in (0,1)$, and let $u_{0,h}$ and $f_h$ be given above.
A \emph{discrete quasistatic evolution} with time step $\delta$, 
initial condition $u_{0,h}$, and constraint $f_h$ is a right-continuous function 
$u_{\delta, h}: [0, T] \to \Esp$ such that 
\begin{itemize}
 \item $u_{\delta, h} (0) = u_{0,h}$;
 \item $u_\delta$ is constant in $[0,T] \cap [i \delta, (i + 1) \delta)$ 
 for all $i \in \mathbb{N}_0$ with $i \delta \leq T$;
 
 \item $u_\delta ( i \delta )$ is a critical point of $E_h$ 
 on the affine space $\mathbf{A} (f_h (i \delta))$ for every 
 $i \in \mathbb{N}$ with $i \delta \leq T$.
\end{itemize}

\end{definition}

\noindent

\begin{remark}
Note that, in the definitions above, we do not require $u_{\delta, h} (0)$ 
to be a critical point of $E_h$ at $t=0$.
\end{remark}

\begin{definition} \label{newquasist2}
Let $h \in D$, and let $u_{0,h}$ and $f_h$ be given above.
We say that a measurable function $u_h:[0, T]\to \Esp$ 
is an \emph{approximable quasistatic evolution} 
with initial condition $u_{0,h}$ and constraint $f_h$, 
if for every $t \in [0,T]$ there exists a sequence $\delta_k \to 0^+$ (possibly depending on $t$)
and a sequence $\left( u_{\delta_k, h} \right)_{k \in \mathbb{N}}$
of discrete quasistatic evolutions with time step $\delta_k$, 
initial condition $u_{0,h}$, and constraint $f_h$, such that
\begin{equation*} 
\lim_{k\to +\infty} | u_{\delta_k, h}(t)- u_h(t) |_{\mathcal{E}_h} =0. 
\end{equation*}
\end{definition}
With this choice, an approximable quasistatic evolution still satisfies $u_h(0)=u_{0,h}$. On the other hand (see Theorem \ref{theoremh} below) in this case we will simply require that the stationarity condition for $u_h$ holds for every $t\in (0,T]$, while stationarity at $0$ will be recovered in the limit passage $h \to 0$.

Our goal is now proving the following version of \cite[Theorem 4.4]{C},
which is the main result of this section.

\begin{theorem} \label{quasist}
Let $\omega \in W^{1,2} ([0,T], H^1(\Omega))$, and let $u_0$ be a critical point 
of $E$ at time $0$ with $u_0 = \omega (0)$ on $\partial_D \Omega$. 
For every $h \in D$, let $\omega_h$ and $u_{0,h}$ be defined as above. 
Then, there exists a bounded measurable function $u:[0,T] \to H^1(\Omega \setminus \Gamma)$ 
with $u(0)=u_0$ such that the following properties are satisfied:

\vspace{.2cm}

\begin{itemize}
\item[(a)] {\it approximability}: for every $t \in [0,T]$ 
there exists a sequence $h_j \to 0^+$ such that
$$
u_{h_j} (t) \rightharpoonup u(t) \mbox{ weakly in } H^1(\Omega \setminus \Gamma)
$$
where, for every $j \in \mathbb{N}$, $u_{h_j}$ is an approximable quasistatic evolution
of $E_{h_j}$ with initial condition $u_{0, h_j}$ and constraint $f_{h_j}$, 
see Definition \ref{newquasist};

\vspace{.2cm}

\item[(b)] {\it stationarity}: for every $t \in [0,T]$ the function $u(t)$ is a critical point of $E$ at time $t$ under the constraint $u(t)=\omega(t)$ on $\partial_D \Omega$;

\vspace{.2cm}

\item[(c)] {\it energy inequality}: for every $t \in [0,T]$
\begin{equation*}
E(u(t)) \leq E(u(0)) + \int_0^t \int_{\Omega \setminus \Gamma} \nabla u(s) \cdot \nabla \dot \omega(s) \, \mathrm{d}x \mathrm{d}s.
\end{equation*}
\end{itemize}
\end{theorem}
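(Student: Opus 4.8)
The plan is to apply Theorem~\ref{Teorexist}, in the variant adapted to Definitions~\ref{newquasist}--\ref{newquasist2} in which the initial datum need not be a constrained critical point, to the pair $(E_h,A_h)$ for each $h\in D$, and then to let $h\to0^+$. The preliminary point is that all the constants entering (J1)--(J4), verified in Section~\ref{conditions a0-a3}, can be taken independent of $h$: the constant $\gamma$ in \eqref{gamma} is uniform in $h$ as already observed in Section~\ref{sec:discrete}, the Poincar\'e constant in \eqref{poincare} and the constant $\overline C$ in \eqref{semdefpos} depend only on $\Omega$, $\partial_D\Omega$ and $R$, hence so do $\eta$ in (J2), $L$ in (J3) and $C_{J,\eta}$ in (J4); moreover $\|\dot f_h\|_{L^2((0,T);\mathcal F_h)}$, $\|f_h\|_{L^\infty((0,T);\mathcal F_h)}$ and $E_h(u_{0,h})$ are bounded uniformly in $h$ by \eqref{approxbound} and \eqref{approxbound2}. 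The only change needed in the proof of Theorem~\ref{teor18} is at the first step $i=1$ of \eqref{algobis}: since $u_{0,h}$ is not known to be critical, one estimates $E_h(v^1_1)$ by comparison with a lifting of $f_h(\delta)-f_h(0)$ bounded in $\mathcal E_h$ (uniformly in $h$ by \eqref{gamma}), which costs only an extra $O(\sqrt\delta)$ absorbed into $C_3\sqrt\delta$; for $i\ge1$ the limit points of \eqref{algobis} are critical on $\mathbf A(f_h(i\delta))$ by Lemma~\ref{lemmajNew}, so the resulting $u_{\delta,h}$ are discrete quasistatic evolutions in the sense of Definition~\ref{newquasist}. This produces, for every $h\in D$, an approximable quasistatic evolution $u_h:[0,T]\to\mathcal E_h$ with initial condition $u_{0,h}$ and constraint $f_h$, and $q_h:[0,T]\to\mathcal F_h$ with $A_h^*q_h(t)\in\partial E_h(u_h(t))$ for every $t$, together with $|q_h(t)|_{\mathcal F_h}\le C_1$, $|u_h(t)|_{\mathcal E_h}\le C_2$ and the energy inequality of Theorem~\ref{teor18}(iii), all with constants independent of $h$.

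Next I would rewrite the power term. Since $\dot\omega_h(s)\in\mathcal E_h^{reg}$ does not jump across $\Gamma$, Remark~\ref{remarkregular} and \eqref{isoequiv} give
\[
\langle q_h(s),\dot f_h(s)\rangle_{\mathcal F_h}=\langle A_h^*q_h(s),\dot\omega_h(s)\rangle_{\mathcal E_h}=\langle\partial W_h(u_h(s)),\dot\omega_h(s)\rangle_{\mathcal E_h}=\int_{\Omega\setminus\Gamma}\nabla u_h(s)\cdot\nabla\dot\omega_h(s)\,\mathrm{d}x,
\]
so that, using $\dot\omega_h\to\dot\omega$ in $L^2((0,T);H^1(\Omega))$ and the bound on $u_h$ to replace $\dot\omega_h$ by $\dot\omega$ up to an error $o(1)$ uniform in $t$, for every $t\in[0,T]$ we get $E(u_h(t))\le E(u_{0,h})+\int_0^t\int_{\Omega\setminus\Gamma}\nabla u_h(s)\cdot\nabla\dot\omega(s)\,\mathrm{d}x\,\mathrm{d}s+o(1)$ as $h\to0^+$. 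From here the argument mirrors the proof of Theorem~\ref{Teorexist}: set $\theta_h(s):=\int_{\Omega\setminus\Gamma}\nabla u_h(s)\cdot\nabla\dot\omega(s)\,\mathrm{d}x$ where $\dot\omega(s)$ exists and $0$ otherwise, put $\theta(s):=\limsup_{h\to0^+}\theta_h(s)$, and for each $t$ extract $h_j\to0^+$ realizing $\theta(t)$ along which $u_{h_j}(t)\rightharpoonup u(t)$ weakly in $H^1(\Omega\setminus\Gamma)$ (possible since $(u_h(t))_h$ is bounded there, and closed balls of $H^1(\Omega\setminus\Gamma)$ endowed with the weak topology are compact metric spaces). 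Lemma~\ref{DalGP} applied with $f_h=u_h$, $p_h=\theta_h$, together with \cite[Theorem~III.6]{cava77}, yields a measurable selection $t\mapsto u(t)$; this gives (a), and $u(0)=u_0$ follows from $u_h(0)=u_{0,h}\to u_0$ strongly.

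For stationarity (b), fix $t$ and $(h_j)$ as above. Each $u_{h_j}(t)$ satisfies the discrete analogue of \eqref{eq:crtpcond3} tested on $\mathcal E_{h_j}$; given $\psi\in H^1(\Omega\setminus\Gamma)$ with $\psi=0$ on $\partial_D\Omega$, approximate it by $\psi_{h_j}\in\mathcal E_{h_j}$ with $\psi_{h_j}=0$ on $\partial_D\Omega$ and $\psi_{h_j}\to\psi$ in $H^1(\Omega\setminus\Gamma)$ (again via \cite{quarteroni08}). The elastic term passes to the limit by weak--strong convergence; for the cohesive term, let $\phi(s):=g(|s|)$, which is $1$-Lipschitz and, by Step~1 of Proposition~\ref{ConditionA1}, semiconvex, so its one-sided directional derivative $\phi'(s;\cdot)$ is $1$-Lipschitz and $s\mapsto\phi'(s;p)$ is upper semicontinuous. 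Since $[u_{h_j}(t)]\to[u(t)]$ strongly in $L^2(\Gamma)$ by compactness of the trace and $[\psi_{h_j}]\to[\psi]$ in $L^2(\Gamma)$, a reverse Fatou argument (the integrand being dominated by $|[\psi]|\in L^1(\Gamma)$) gives $\limsup_j\int_\Gamma\phi'([u_{h_j}(t)];[\psi_{h_j}])\,d\mathcal H^1\le\int_\Gamma\phi'([u(t)];[\psi])\,d\mathcal H^1$; hence $u(t)$ satisfies \eqref{eq:crtpcond3}, and $Au(t)=f(t)$ follows from $A_{h_j}u_{h_j}(t)=f_{h_j}(t)$ and weak convergence of traces, so $u(t)$ is a critical point of $E$ at time $t$.

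Finally, for (c) I would take $\limsup_j$ in the inequality of the second paragraph: the left-hand side is bounded below by $E(u(t))$, since $W$ is weakly lower semicontinuous and $v\mapsto\int_\Gamma g(|[v]|)$ is continuous along the strongly convergent traces; on the right, $E(u_{0,h_j})\to E(u_0)$, the $o(1)$ term vanishes, $\theta(s)=\int_{\Omega\setminus\Gamma}\nabla u(s)\cdot\nabla\dot\omega(s)\,\mathrm{d}x$ for a.e.\ $s$ by construction of the selection, and $\limsup_j\int_0^t\theta_{h_j}(s)\,\mathrm{d}s\le\int_0^t\theta(s)\,\mathrm{d}s$ by reverse Fatou (domination $|\theta_h(s)|\le C_2\|\dot\omega(s)\|_{H^1(\Omega)}\in L^1(0,T)$), yielding $E(u(t))\le E(u_0)+\int_0^t\int_{\Omega\setminus\Gamma}\nabla u(s)\cdot\nabla\dot\omega(s)\,\mathrm{d}x\,\mathrm{d}s$ for every $t$. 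I expect the main obstacle to be exactly the limit passage in the stationarity condition: the sign and indicator functions in \eqref{eq:crtpcond3} are discontinuous, and it is the semiconvexity of $s\mapsto g(|s|)$ (already used for (J2)) --- forcing the upper semicontinuity of its directional derivative --- combined with the strong $L^2(\Gamma)$ convergence of the traces that makes the argument go through; a more routine but non-negligible point is checking the $h$-independence of all the constants in (J1)--(J4), so that Theorem~\ref{teor18} applies uniformly and the a priori bounds survive the limit $h\to0^+$.
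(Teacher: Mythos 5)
Your proposal is correct and follows the paper's overall skeleton — apply the finite-dimensional existence theorem to each $E_h$ (Theorem~\ref{theoremh}), pass to the limit $h\to 0^+$ with a pointwise-in-$t$ subsequence selection and the measurable selection lemma (Lemma~\ref{DalGP}), and treat the energy term with a reverse-Fatou argument — but your stationarity step is a genuinely different and cleaner argument than the one in the paper.

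The paper proves (b) by first restricting to a dyadic subsequence $h_k=\ell/2^k$, so that the finite element spaces are \emph{nested}: a coarse test function $\psi_{h_{k_l}}$ lies in every finer $\mathcal{E}_{h_{k_j}}$ with $j>l$. One then fixes $\psi_{h_{k_l}}$, passes $j\to\infty$ by a pointwise case analysis on $\Gamma$ (distinguishing $x\in J_{u(t)}$, where $\operatorname{sign}([u_{h_{k_j}}(t)](x))$ eventually stabilizes, from $x\in J_{u(t)}^c$, where one only keeps the one-sided bound $\geq -|[\psi_{h_{k_l}}]|$), applies Fatou's lemma, and finally lets $l\to\infty$. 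You instead observe that the cohesive integrand is exactly $\phi'([u_h(t)];[\psi])$, the one-sided directional derivative of $\phi(s)=g(|s|)$, and that the semiconvexity of $\phi$ proved in Step 1 of Proposition~\ref{ConditionA1} yields upper semicontinuity of $s\mapsto\phi'(s;p)$ (the difference quotient of $\phi+\mu s^2$ decreases to its directional derivative, hence it is an infimum of continuous functions), while $p\mapsto\phi'(s;p)$ is $1$-Lipschitz uniformly in $s$. This lets you pass the limit jointly in $[u_{h_j}(t)]\to[u(t)]$ and $[\psi_{h_j}]\to[\psi]$ in one reverse-Fatou step, with no nested meshes and no case analysis. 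This is conceptually tighter: the semiconvexity introduced to secure (J2) is seen to be precisely what makes the stationarity survive the limit. Your treatment of (c) is also a small simplification: replacing $\dot\omega_h$ by $\dot\omega$ up front (at a uniform $o(1)$ cost, via $\|\dot\omega_h-\dot\omega\|_{L^2(H^1)}\to0$) gives an $h$-independent $L^1$ dominating function $C_2\|\dot\omega(\cdot)\|_{H^1(\Omega)}$ for reverse Fatou, whereas the paper works with $\dot\omega_{h_k}$ directly and invokes the a.e.\ strong convergence of $\dot\omega_{h_k}(t)$.

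Two small points to tidy up if you were writing this out in full. In the reverse-Fatou step for stationarity the upper bound $|[\psi_{h_j}]|$ depends on $j$; since $[\psi_{h_j}]\to[\psi]$ in $L^2(\Gamma)$, you should pass to a further subsequence along which $|[\psi_{h_j}]|$ is dominated by a fixed $L^1(\Gamma)$ function, rather than writing that the integrand is dominated by $|[\psi]|$. And for the treatment of $i=1$ in the construction of the discrete evolution without an initial critical point, the comparison test vector used in the paper is the lift $\omega_h(\delta)$ (uniformly bounded in $h$ and $\delta$), not a lift of the increment $f_h(\delta)-f_h(0)$; your version works but the precise estimate is most easily obtained that way.
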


Before proving Theorem \ref{quasist}, we need the following result, 
which is obtained by applying Theorem \ref{Teorexist} to $E_h$.
\begin{theorem}  \label{theoremh}
Let $h \in D$ be fixed, and let $u_{0,h}$ and $f_h$ given above. 
Then, there exists a measurable bounded mapping 
$u_h \colon [0,T] \to \mathcal{E}_h$ such that 

\begin{itemize}

\item[\textit{(a')}] $u_h (\cdot)$ is an approximable quasistatic evolution 
with initial condition $u_{0,h}$ and constraint~$f_h$;

\vspace{.2cm}

\item[\textit{(b')}] stationarity: for every $t \in (0,T]$ we have 

\begin{equation}\label{eq:crtpconddiscr+}
\int_{\Omega \setminus \Gamma} \nabla u_h (t) \cdot \nabla \psi \, \mathrm{d}x 
+ \int_\Gamma \left ([\psi] g'(|[u_h(t)]|)\operatorname{sign}([u_h(t)])1_{J_{u_h(t)}} 
+ |[\psi]| 1_{J_{u_h(t)}^c} \right ) d \mathcal H^{d-1} \geq 0,
\end{equation}
for all $\psi \in \Esp$ with $\psi = 0$ on $\partial_D \Omega$.

\vspace{.2cm}

\item[\textit{(c')}] {\it energy inequality}: The function 
$s \longmapsto \int_{\Omega \setminus \Gamma} 
\nabla u_h (s) \cdot \nabla \dot \omega_h(s) \, \mathrm{d}x$ belongs to $L^1 (0,T)$ and
\begin{equation*} 
E_h (u_h(t)) \leq E (u_{0,h}) + \int_0^t \int_{\Omega \setminus \Gamma} 
\nabla u_h (s) \cdot \nabla \dot \omega_h (s) \, \mathrm{d}x \mathrm{d}s \qquad \textnormal{ for every } t \in [0,T].
\end{equation*}

\item[\textit{(d')}] {\it Uniform bound}: There exists a constant $\overline{C}_2$, 
independent of $h$, such that
\begin{equation}\label{compattezza}
\|u_h (t)\|_{\mathcal{E}_h}\le \overline{C}_2 \qquad \text{ for every } t \in [0,T].
\end{equation}

\end{itemize}
\end{theorem}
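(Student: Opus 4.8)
The plan is to apply Theorem~\ref{Teorexist} to the finite dimensional pair $(E_h,A_h)$. All the structural hypotheses are available: \eqref{gamma} holds for $A_h$ with a constant $\gamma$ independent of $h$ (Section~\ref{sec:discrete}), the propositions of Section~\ref{conditions a0-a3} verify (J1)--(J4) for $E_h$, and $f_h=A_h\omega_h\in W^{1,2}([0,T];\mathcal F_h)$ since $\omega_h\in W^{1,2}([0,T];\Esp)$ and $A_h$ is bounded. The one genuine discrepancy is that $u_{0,h}$ need not be a constrained critical point of $E_h$; I would therefore work with Definition~\ref{newquasist}, i.e. set $u_{\delta,h}(0):=u_{0,h}$, put $q_\delta\equiv 0$ on $[0,\delta)$, and for $i\ge 1$ let $u_{\delta,h}(i\delta)$ be a limit point of the sequence $(v^i_j)_j$ of \eqref{algobis} with $\alpha=0$, with the associated $q_\delta(t)=q^i$ on $[i\delta,(i+1)\delta)$ coming from the critical point condition. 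Lemma~\ref{lemmajNew}, Proposition~\ref{z1} and parts (i)--(ii) of Theorem~\ref{teor18} are unaffected by this change, since they only use strong convexity and (J3) (in Proposition~\ref{z1} the iteration simply terminates at $a_{\delta,0}=E_h(u_{0,h})$).

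For the discrete energy inequality I would keep the estimate \eqref{try} of Theorem~\ref{teor18} for every $i\ge 2$ — where $A_h^*q_\delta((i-1)\delta)\in\partial E_h(v_\delta((i-1)\delta))$ holds by criticality, so (J4) applies as before — and treat the first step $i=1$ separately: from \eqref{trg} with $\alpha=0$ one has $E_h(u_{\delta,h}(\delta))\le J_{\eta,v^1_0}(v^1_1)\le E_h(u_{0,h})+\langle r^1,f_h(\delta)-f_h(0)\rangle_{\mathcal F_h}$, and bounding $|r^1|_{\mathcal F_h'}$ by a constant independent of $\delta$ and $h$ via (J3), \eqref{gamma} and \eqref{laksdj} gives $E_h(u_{\delta,h}(\delta))\le E_h(u_{0,h})+C\sqrt\delta$. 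Adding this to the telescoped contributions of the steps $i\ge 2$, and recalling $q_\delta\equiv 0$ on $[0,\delta)$, yields for every $t\in[0,T]$
\begin{equation*}
E_h(u_{\delta,h}(t))\le E_h(u_{0,h})+\int_0^t\langle q_\delta(s),\dot f_h(s)\rangle_{\mathcal F_h}\,\mathrm ds+C\sqrt\delta ,
\end{equation*}
the discrete counterpart of Theorem~\ref{teor18}(iii) with $t_1=0$, with $C$ uniform in $h$.

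From here the proof of Theorem~\ref{Teorexist} transfers verbatim. Its Step~1 (measurable selection via Lemma~\ref{DalGP} and \cite{cava77}) produces a bounded measurable $u_h:[0,T]\to\Esp$ that is an approximable quasistatic evolution with initial condition $u_{0,h}$ and constraint $f_h$ in the sense of Definition~\ref{newquasist2} — this is (a') — together with a selected $q_h:[0,T]\to\mathcal F_h'$. Repeating the argument of Proposition~\ref{tre2}, which only uses criticality of the discrete states at times $i\delta$ with $i\ge 1$, gives $A_h^*q_h(t)\in\partial E_h(u_h(t))$ for every $t\in(0,T]$; expanding $\partial E_h=\partial W_h+\partial G_h$ and computing the one-sided directional derivative of $v\mapsto g(|[v]|)$ as in \cite[Propositions 3.1 and 3.2]{C} (note $g\in C^{1,1}$, so the only nonsmoothness is that of $|[\cdot]|$ on $J_u^c$, where $g'(0^+)=1$) turns this inclusion into the variational inequality \eqref{eq:crtpconddiscr+}, which is (b'). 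Passing to the $\limsup$ as $\delta\to 0^+$ in the displayed inequality exactly as in Step~2 of the proof of Theorem~\ref{Teorexist} (Fatou's lemma, using $|\langle q_\delta,\dot f_h\rangle|\le C_1|\dot f_h|\in L^1$), and then rewriting $\langle q_h(s),\dot f_h(s)\rangle_{\mathcal F_h}=\int_{\Omega\setminus\Gamma}\nabla u_h(s)\cdot\nabla\dot\omega_h(s)\,\mathrm dx$ through $A_h^*q_h(s)\in\partial E_h(u_h(s))$ and Remark~\ref{remarkregular} (here $\dot\omega_h(s)\in\Esp^{reg}$ and $A_h\dot\omega_h(s)=\dot f_h(s)$), gives the energy inequality (c'). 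Part (d') is Proposition~\ref{z1} applied to $E_h$.

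I expect the main obstacle to be bookkeeping the $h$-uniformity of all the constants: Theorem~\ref{Teorexist} is stated for a fixed Euclidean space, and a priori its constants ($Z_1$, $C_1$, $C_3$, $L$, $\eta$, $C_{J,\eta}$, and the coercivity modulus of $v\mapsto E_h(v)+|A_hv|^2_{\mathcal F_h}$) could degrade as $\dim\Esp\to\infty$. I would go back through the proofs of Section~\ref{conditions a0-a3} and of Proposition~\ref{z1} and check that $L$ depends only on $\mathcal H^1(\Gamma)$ and the trace constant $\overline C$ of \eqref{semdefpos}, that $\eta$ can be fixed once $R$ and $\overline C$ are fixed, that $C_{J,\eta}$ depends only on $\Omega$ and $\partial_D\Omega$, and that the coercivity modulus is controlled through the Poincar\'e inequality \eqref{poincare} — all $h$-free — while $\|\dot f_h\|_{L^1([0,T];\mathcal F_h)}$, $\|f_h\|_{L^\infty([0,T];\mathcal F_h)}$ and $E_h(u_{0,h})$ stay bounded by \eqref{approxbound}--\eqref{approxbound2}; the uniform constant $\overline C_2$ in (d') is then $\sup_{h\in D}Z_1(h)<\infty$. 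A secondary, purely technical point is the $i=1$ modification above, forced by $u_{0,h}$ not being a critical point.
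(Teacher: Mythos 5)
Your proposal is essentially correct and follows the same overall route as the paper: check \eqref{gamma} and (J1)--(J4) for $(E_h,A_h)$ (done in Section~\ref{conditions a0-a3}), adopt Definition~\ref{newquasist}--\ref{newquasist2} to dispense with criticality of the initial datum, rework the $i=1$ step of the discrete energy estimate, and then transfer the measurable--selection machinery of Theorem~\ref{Teorexist} and the rewriting via Remark~\ref{remarkregular} to get (a')--(d'). The one tangible deviation is your choice $q_\delta\equiv 0$ on $[0,\delta)$. The paper instead sets $q_\delta(t)=r^1$ on $[0,\delta)$, where $A_h^*r^1\in\partial (E_h)_{\eta,u_{0,h}}(u^1_{h,1})$ is the multiplier for the first inner minimization; this makes the cross term $\langle r^1-q_\delta(0),A_hu^1_{h,1}-A_hu_{0,h}\rangle_{\mathcal F_h}$ vanish identically, giving the discrete energy inequality over $[0,\delta]$ with no extra remainder, whereas your choice produces the additional $C\sqrt\delta$ at $i=1$ by bounding $|r^1|_{\mathcal F_h'}\cdot|f_h(\delta)-f_h(0)|_{\mathcal F_h}$. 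Both handle the issue; yours is marginally less tidy but the extra term vanishes in the limit $\delta\to 0^+$. Your observation that Proposition~\ref{z1} and Lemma~\ref{lemmajNew} only use minimality in \eqref{algobis}, strong convexity and (J3) — and never criticality of the previous state — is exactly the point the paper relies on implicitly, and your final paragraph about $h$-uniformity of $L$, $\eta$, $C_{J,\eta}$, $\gamma$, the Poincar\'e/coercivity modulus, and $E_h(u_{0,h})$, $\|\dot f_h\|$ via \eqref{approxbound}--\eqref{approxbound2} is precisely what Remarks~\ref{bounds constants} and~\ref{dependence} track for the paper's statement of (d').
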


\begin{remark}
As already observed, property (a') has to be intended 
in the sense of Definition~\ref{newquasist2}.
\end{remark}

\begin{proof}
As proven in the previous subsection, \eqref{gamma} and assumptions (J1)--(J4) are satisfied.
We now need to check that the proof of Theorem~\ref{Teorexist} can be adapted
to the present situation, where Definition~\ref{newquasist} and Definition~\ref{newquasist2}
substitute Definition~\ref{defdiscr} and Definition~\ref{evolution}, respectively.

\vspace{.2cm}

\noindent
\textbf{Step 1:} We construct a discrete quasistatic evolution.
Let $\delta \in (0,1)$ be a fixed time step, and let $i \in \mathbb{N}$ with $i \delta \leq T$.
We set $u^0_h := u_{0,h}$ while, for $i \geq 2$,   
we suppose that $u^{i-1}_h \in \Esp$ is a critical point of $E_h$ on the affine space 
$\mathbf{A} (f_h ((i-1)\delta))$. 
Then, analogously to \eqref{algobis}, we define the sequence 
$\left( u^i_{h,j} \right)_{j \in \mathbb{N}_0}$ by setting $u^i_{h,0} := u^{i-1}_h$ and
\begin{equation*} 
u^i_{h,j} := \argmin_{ A_h v = f_h (i \delta)} \{ E_h (v)  + \eta |v - u^i_{h,j-1} |^2_{\Esp} 
\, : \,  v \in \Esp \} \qquad \text{ for every } j \in \mathbb{N}.
\end{equation*}
Let now $i \in \mathbb{N}$ with $i \delta \leq T$.
One can check that Lemma \ref{lemmajNew} and Corollary \ref{cor} still hold true. 
In particular, for every $i \in \mathbb{N}$ with $i \delta \leq T$ we can find 
a function $u^i_{h} \in \Esp$ with $A_h u^i_{h} = f_h (i \delta)$ such that, up to subsequences, 
$$
\lim_{j \to \infty} u^i_{h,j} = u^i_{h} \qquad \text{ in } \Esp.
$$
Moreover, the function $u_{h,\delta} : [0,T] \to \mathcal{E}_h$ defined as
\begin{equation*} 
u_{h,\delta} (t) : = u^i_{h} \quad \text{ for every }t \in [0,T] \cap [i \delta, (i + 1) \delta), 
\quad \text{ for every } i \in \mathbb{N}_0 \text{ with } i \delta \leq T,
\end{equation*}
is a discrete quasistatic evolution with time step $\delta$, 
initial condition $(u_0 )_{h}$, and constraint $f_h$, in the sense of Definition~\ref{newquasist}.

At this point, we need to check that Theorem~\ref{teor18} can still be proven.
In particular, we want to define a function $q_{h,\delta} : [0,T] \to \mathcal{F}_h$
such that an approximate energy inequality (as (iii) of Theorem~\ref{teor18}) holds true.
The main problem consists in defining $q_{h,\delta}$ in the interval $[0, \delta)$. 
Indeed, since $u_{h,\delta} (0)$ is not a critical point, 
we do not have a natural choice available.
We then modify the proof of Theorem~\ref{teor18} in the following way.

Let $i = 1$. Since $(E_h)_{\eta, (u_0 )_{h}}$ is strictly convex, 
for every $\xi \in \partial (E_h)_{\eta, (u_0 )_{h}} (u^1_{h,1})$, we have  
$$
(E_h)_{\eta, (u_0 )_{h}} (v) \geq (E_h)_{\eta, (u_0 )_{h}} (u^1_{h,1}) 
+ \langle \xi , v - u^1_{h,1} \rangle_{\mathcal{E}_h}
\qquad \text{ for every } v \in \mathcal{E}_h.
$$
In particular, choosing 
$v = (u_0 )_{h}$ and recalling the definition of $(E_h)_{\eta, (u_0 )_{h}}$ we have 
\begin{equation} \label{step1b}
E_h ((u_0 )_{h}) \geq E_h (u^1_{h,1}) + \eta |u^1_{h,1} - (u_0 )_{h} |_{\mathcal{E}_h}^2
+ \langle \xi , (u_0 )_{h} - u^1_{h,1} \rangle_{\mathcal{E}_h} \quad
\forall \, \xi \in \partial (E_h)_{\eta, (u_0 )_{h}} (u^1_{h,1}).
\end{equation}
Recall now that $u^1_{h,1}$ is the global minimizer of $(E_h)_{\eta, (u_0 )_{h}}$ 
on $\mathbf{A} (f_h (\delta))$.
Therefore, there exists $r^1 \in \mathcal{F}_h$ such that 
$A^*_h r^1 \in \partial (E_h)_{\eta, (u_0 )_{h}} (u^1_{h,1})$.
Therefore, by \eqref{step1b},
$$
E_h ((u_0 )_{h}) \geq E_h (u^1_{h,1}) + \eta |u^1_{h,1} - (u_0 )_{h} |_{\mathcal{E}_h}^2
+ \langle A^*_h r^1 , (u_0 )_{h} - u^1_{h,1} \rangle_{\mathcal{E}_h}.
$$
At this point we can finally define the function $q_{h,\delta}: [0,T] \to \mathcal{F}_h$.
Since $u_{h,\delta}$ is a discrete quasistatic evolution, for every $i \in \mathbb{N}$
with $i \delta \leq T$ there exists $q^i \in \mathcal{F}_h$ 
such that $A^* q^i \in \partial (E_h ) (u_{h,\delta} (i \delta))$. 
Then, we define $q_{\delta} : [0,T] \to \mathcal{F}_h$ as 
\begin{equation*}
q_\delta(t) : =
\begin{cases}
r^1 & t \in [0, \delta), \\
q^i & t \in [0,T] \cap [i \delta, (i + 1) \delta), 
\quad \hbox{ for } i \in \mathbb{N} \text{ with } i \delta \leq T.
\end{cases}
\end{equation*}
Thus, repeating the proof of Theorem~\ref{teor18} for $i =1$ we have
\begin{align*}
&\eta |u^1_{h,1} - (u_0 )_{h} |_{\mathcal{E}_h}^2 + E_h (u^1_{h,1}) - E_h ((u_0 )_{h}) \\
&\leq \langle r^1 - q_\delta(0) , A_h u^1_{h,1} - A_h (u_0 )_{h} \rangle_{\mathcal{F}_h} 
+ \int_{0}^{\delta}  \langle q^{\delta} (0) ,  \dot{f}_h (s) \rangle_{\mathcal{F}_h} \, \mathrm{d}s \\
&= \int_{0}^{\delta}  \langle q^{\delta} (0) ,  \dot{f}_h (s) \rangle_{\mathcal{F}_h} \, \mathrm{d}s.
\end{align*}
Recalling that Lemma \ref{lemmajNew} holds true also in this case, we have
\begin{align*} 
E_h (u_{h, \delta} (\delta)) &\leq E_h (u^1_{h,1}) \leq E_h ((u_0 )_{h}) 
+ \int_{0}^{\delta}  \langle q^{\delta} (s) ,  \dot{f}_h (s) \rangle_{\mathcal{F}_h} \, \mathrm{d}s.
\end{align*}
Since for $i \geq 2$ the proof of \eqref{try} can be repeated with no modifications, 
this shows Step 1 of the proof of Theorem~\ref{teor18}.

In order to prove condition (ii) of Theorem~\ref{teor18}, we proceed in the following way.
By the minimality property of $u^1_{h,1}$, we have 
$$
E_h (u^1_{h,1}) \leq E_h (u^1_{h,1})  + \eta |u^1_{h,1} - (u_0 )_{h} |_{\mathcal{E}_h}^2
\leq  E_h ( \omega_h (\delta) )  + \eta |\omega_h (\delta) - (u_0 )_{h} |_{\mathcal{E}_h}^2 \leq C, 
$$
where, by \eqref{approxbound} and \eqref{approxbound2}, $C$ is a constant depending only on 
$\eta$, $\omega$ and $u_0$ (and not on $h$ and $\delta$).
Then, thanks to \eqref{approxbound}
$$
E_h (u^1_{h,1}) + |A_h u^1_{h,1}|^2_{\mathcal{F}_h}
= E_h (u^1_{h,1}) + |f_h (\delta)|^2_{\mathcal{F}_h} \leq C, 
$$
where $C$ is again a (possibly different) constant, depending only on 
$\eta$, $\omega$ and $u_0$ (and not on $h$ and $\delta$).
Thus, by condition (J1) and by equicoercivity of the family of functionals 
$\left( E_h \right)_{h \in D}$ we have that 
$$
| u^1_{h,1} |_{\mathcal{E}_h} \leq C
$$
for some constant independent of $h$.
Therefore, thanks to property (J3) and 
recalling that $A^*_h r^1 \in \partial (E_h)_{\eta, (u_0 )_{h}} (u^1_{h,1})$
\begin{align*}
|r^1|_{\mathcal{F}_h} \leq \frac{1}{\gamma} | A^*_h r^1 |_{\mathcal{E}_h}
\leq \frac{L}{\gamma}\left( E_h (u^1_{h,1}) + 1\right) 
+ \frac{2 \eta}{\gamma} | u^1_{h,1} - (u_0 )_{h}|_{\mathcal{E}_h} \leq C, 
\end{align*}
for some constant $C$ independent of $h$.
From this, in particular, we obtain that 
$$
| q_{\delta} (0) |_{\mathcal{F}_h} = |r^1|_{\mathcal{F}_h} \leq C
\leq  \frac{L}{\gamma} E_h (u_{h, \delta} (0)) + C, 
$$
which gives the equiboundedness of $| q_{\delta} (t) |_{\mathcal{F}_h}$ in the interval $[0, \delta)$.
At this point, the proof of Theorem~\ref{teor18} can be repeated without any modifications.

\vspace{.2cm}

\noindent
\textbf{Step 2:} We apply Theorem~\ref{Teorexist}.

By Step 1, properties (a)--(c) of Theorem~\ref{Teorexist} hold true.
In particular, (a) implies (a').
By (b) of Theorem~\ref{Teorexist}, there exists 
a bounded measurable function $q_h : (0,T] \to \mathcal{F}_h$ such that
\begin{equation} \label{aqh}
A^* q_h (t) \in \partial E_h (u_h (t)) \qquad \text{ for every }t \in (0,T].
\end{equation}
Let now $t \in (0,T]$ be fixed. Thanks to Remark~\ref{liminf}, we have
$$
0 \leq \liminf_{\varepsilon \to 0^+} \frac{E_h (u_h (t) + \varepsilon w) - E_h (u_h (t))}{\varepsilon}
\qquad \text{ for every } w \in \text{ker} ( A^*_h).
$$
A careful inspection of the proof of \cite[Proposition 3.1]{C} 
shows that last inequality implies (b').
Let us now show (c'). 
From (c) of Theorem~\ref{Teorexist}, the function $s \mapsto \langle q_h (s) ,  
\dot{f}_h (s) \rangle_{\mathcal{F}_h}$ belongs to $L^1 (0,T)$,
and for every $t \in [0,T]$ we have 
\begin{equation*} 
E_h (u_h (t)) \leq E ((u_0)_h) 
+ \int_{0}^{t}  \langle q_h (s) ,  \dot{f}_h (s) \rangle_{\mathcal{F}_h} \, \mathrm{d}s.
\end{equation*}
Recalling that $f_h (s) = A_h \omega_h (s)$ and that the linear operator $A_h$ 
is independent of time, we have 
\begin{align*}
E_h (u_h (t)) &\leq E ((u_0)_h) 
+ \int_{0}^{t}  \langle q_h (s) ,  \dot{f}_h (s) \rangle_{\mathcal{F}_h} \, \mathrm{d}s \\
&= E ((u_0)_h) 
+ \int_{0}^{t}  \langle q_h (s) ,  A_h \dot{\omega}_h (s) \rangle_{\mathcal{F}_h} \, \mathrm{d}s \\
&= E ((u_0)_h) 
+ \int_{0}^{t}  \langle A^*_h q_h (s) ,  \dot{\omega}_h (s) \rangle_{\mathcal{E}_h} \, \mathrm{d}s.
\end{align*} 
By \eqref{aqh}, for every $s \in (0,T)$ we have $A^* q_h (s) \in \partial E_h (u_h (s))$. 
Since $\dot{\omega}_h (s) \in \mathcal{E}_h^{\text{reg}}$ for every $s \in (0,T)$, 
by Remark \ref{remarkregular} we have
$$
\langle A^*_h q_h (s) ,  \dot{\omega}_h (s) \rangle_{\mathcal{E}_h} 
= \int_{\Omega \setminus \Gamma} \nabla u_h (s) \cdot \nabla \dot{\omega}_h (s) \, \mathrm{d}x
\qquad \text{ for every } s \in (0,T).
$$
Therefore, 
\begin{align*}
E_h (u_h (t)) &\leq E ((u_0)_h)
+ \int_{0}^{t}  \int_{\Omega \setminus \Gamma} \nabla u_h (s) 
\cdot \nabla \dot{\omega}_h (s) \, \mathrm{d}x \, \mathrm{d}s, 
\end{align*} 
which gives (c).
Finally, property (d') directly follows from Remark~\ref{boundevolution}
and Remark~\ref{dependence}.
\end{proof}
We can now pass to the limit as $h \to 0^+$.

\begin{proof}[Proof of Theorem \ref{quasist}]
Let $t \in [0,T]$.
We will use an argument similar to the one  in the proof of Theorem \ref{Teorexist}.

\vspace{.2cm}

\noindent
\textbf{Step 1: Proof of (a) and (c).}

\vspace{.2cm}
\noindent
First of all, we fix the subsequence $\left( h_{k} \right)_{k \in \mathbb{N}}$ given by
$$
h_{k} := \frac{\ell}{2^k}, \qquad \qquad k \in \mathbb{N},
$$
so that 
\begin{equation} \label{inclusion}
\mathcal{E}_{h_{l}} \subset \mathcal{E}_{h_{m}} \quad \text{ for every } m > l .
\end{equation}
By \eqref{approxbound}, we have 
$$
\dot{\omega}_{h_k} \to  \dot{\omega}
\quad \text{ strongly in } L^2 ([0,T]; H^1 (\Omega)) \quad \text{ as } k \to \infty.
$$
Thus, there exists a set $\Lambda_2 \subset [0,T]$ 
with $\mathcal{L}^1 (\Lambda_2) = 0$ such that  
$\dot{\omega} (t)$ is well defined for every $t \in [0,T] \setminus \Lambda_2$ and
\begin{equation} \label{strongtime}
\dot{\omega}_{h_{k}} (t) \to  \dot{\omega} (t)
\quad \text{ strongly in } H^1 (\Omega) \quad \text{for every } t \in [0,T] \setminus \Lambda_2
\quad \text{ as } k \to \infty.
\end{equation}
For every $h \in D$, let $u_h: [0,T] \to \mathcal{E}_h$ be given
by Theorem~\ref{theoremh}. We define
$$
\theta_k (t) : = 
\begin{cases}
\int_{\Omega \setminus \Gamma} \nabla u_{h_k} (t) 
\cdot \nabla \dot{\omega}_{h_k} (t) \, \mathrm{d}x & \text{ for every } 
t \in [0,T] \setminus \Lambda_2, \vspace{.1cm} \\
0 & \text{ for every } t \in \Lambda_2,
\end{cases}
$$
and 
$$
\theta (t):= \limsup_{k \to \infty} \theta_k (t) \quad \text{ for every } t \in [0,T].
$$
By definition of $\theta$, for every $t \in [0,T]$ we can extract 
a subsequence $\left( h_{k_j} \right)_{j \in \mathbb{N}}$ (possibly depending on $t$) such that
$$
\theta (t) = \lim_{j \to \infty} \theta_{k_j} (t) \quad \text{ for every } t \in [0,T].
$$
By \eqref{compattezza}, for every $t \in [0,T]$ we can extract a further subsequence
(not relabelled) such that
\begin{equation} \label{weakly}
u_{h_{k_j}} (t) \rightharpoonup u (t) \quad \text{weakly in } H^1 (\Omega \setminus \Gamma) \quad
\text{ as } j \to \infty.
\end{equation}
for some $u (t) \in H^1 (\Omega \setminus \Gamma)$ 
with $\| u (t) \|_{H^1 (\Omega \setminus \Gamma)} \leq \overline{C}_2$.
By repeating what was done in the proof of Theorem \ref{Teorexist}, we can show that  
the subsequence $\left( k_j \right)_{j \in \mathbb{N}}$ can be chosen in such a way that the map 
$u :[0,T] \to \mathcal H^1 (\Omega \setminus \Gamma)$ is measurable, 
and this shows (a).

Let us now show the energy inequality.
By \eqref{strongtime} and \eqref{weakly} we have that, for every $t \in [0,T] \setminus \Lambda_2$, 
$$
\theta (t) = \limsup_{k \to \infty} \theta_k (t) = \lim_{j \to \infty} \theta_{k_j} (t)
= \lim_{j \to \infty} \int_{\Omega \setminus \Gamma} \nabla u_{h_{k_j}} (t) 
\cdot \nabla \dot{\omega}_{h_{k_j}} (t) \, \mathrm{d}x
= \int_{\Omega \setminus \Gamma} \nabla u (t) 
\cdot \nabla \dot{\omega} (t) \, \mathrm{d}x.
$$
In order to prove that $\theta \in L^1 (0,T)$ we first observe that 
$\theta$ is measurable, since it is the $\limsup$ of a sequence of measurable functions.
Moreover, we have 
\begin{align*}
\int_0^T | \theta (t) | \, dt 
&= \int_0^T \left| \int_{\Omega \setminus \Gamma} \nabla u (t) 
\cdot \nabla \dot{\omega} (t) \, \mathrm{d}x \right| \, dt
\leq \int_0^T \| \nabla u (t) \|_{L^2 (\Omega \setminus \Gamma)} 
\| \nabla \dot{\omega} (t) \|_{L^2 (\Omega \setminus \Gamma)} \, dt \\
&\leq \overline{C}_2 \int_0^T \| \nabla \dot{\omega} (t) \|_{L^2 (\Omega \setminus \Gamma)} \, dt 
\leq \overline{C}_2 \sqrt{T} \, \| \dot{\omega} \|_{L^2 ((0,T); H^1 (\Omega))}.
\end{align*}
By (c') of Theorem \ref{theoremh} we have, for every $j \in \mathbb{N}$ and for every $t \in [0,T]$
\begin{align} 
E (u_{h_{k_j}} (t)) &= E_{h_{k_j}} (u_{h_{k_j}} (t)) \leq E (u_{0,h_{k_j}}) 
+ \int_0^t \int_{\Omega \setminus \Gamma} 
\nabla u_{h_{k_j}} (s) \cdot \nabla \dot \omega_{h_{k_j}} (s) \, \mathrm{d}x \mathrm{d}s .  \label{eq:enineq4}
\end{align}
Note now that the energy $E (\cdot)$ is lower semicontinuos w.r.t. 
weak convergence in $H^1 (\Omega \setminus \Gamma)$.
Moreover, since $u_{0,h_{k_j}} \to u_0$ 
strongly in $H^1 (\Omega \setminus \Gamma)$, we have 
$$
\lim_{j \to \infty} E (u_{0,h_{k_j}}) = E (u_0).
$$
Therefore, taking the limsup in $j$ of the \eqref{eq:enineq4}, and using Fatou's Lemma
\begin{align*}
E (u (t)) &\leq \liminf_{j \to \infty}
E (u_{h_{k_j}} (t)) \leq E (u_0) + \limsup_{j \to \infty} 
\int_0^t \int_{\Omega \setminus \Gamma} 
\nabla u_{h_{k_j}} (s) \cdot \nabla \dot \omega_{h_{k_j}} (s) \, \mathrm{d}x \mathrm{d}s \\
&\leq E (u_0) + \limsup_{k \to \infty} 
\int_0^t \int_{\Omega \setminus \Gamma} 
\nabla u_{h_{k}} (s) \cdot \nabla \dot \omega_{h_{k}} (s) \, \mathrm{d}x \mathrm{d}s \\
&\leq E (u_0) + \int_{0}^{t}  \limsup_{k \to \infty} \, 
\int_{\Omega \setminus \Gamma} \nabla u_{h_{k}} (s) \cdot \nabla \dot \omega_{h_{k}} (s) \, \mathrm{d}x \, \mathrm{d}s \\
&= E (u_0) + \int_{0}^{t} 
\int_{\Omega \setminus \Gamma} \nabla u (s) \cdot \nabla \dot \omega (s) \, \mathrm{d}x \, \mathrm{d}s,
\end{align*}
so that (c) follows.

We finally prove the stationarity. Since $u_h(0)=u_{0,h}\to u_0$ as $h \to 0$, we only have to prove the condition at a point $t\in (0,T]$.
Let $\psi \in H^1 (\Omega \setminus \Gamma)$ with $\psi = 0$ on $\partial_D \Omega$.
Then (see for instance \cite{quarteroni08}), we can find a sequence $\left( \psi_{h_{k_j}} \right)_{j \in \mathbb{N}}$ such that 
$$
\psi_{h_{k_j}} \to \psi \quad \text{ strongly in } H^1 (\Omega \setminus \Gamma)
\quad \text{ as } j \to \infty 
$$
and $\psi_{h_{k_j}} \in \mathcal{E}_{h_{k_j}}$  with $\psi_{h_{k_j}} = 0$ 
on $\partial_D \Omega$, for every $j \in \mathbb{N}$.
Note that, by \eqref{inclusion}, we have
$$
\psi_{h_{k_l}} \in \mathcal{E}_{h_{k_j}} \quad \text{with }\psi_{h_{k_l}} = 0 \text{ on } 
\partial_D \Omega \quad \text{ for every } j > l .
$$
Therefore, by \eqref{eq:crtpconddiscr+}  
\begin{align} 
&\int_{\Omega \setminus \Gamma} \nabla u_{h_{k_j}} (t) \cdot \nabla \psi_{h_{k_l}} \, \mathrm{d}x \nonumber \\
&\hspace{1cm}\geq \int_\Gamma \left ( - [\psi_{h_{k_l}}] g'(|[u_{h_{k_j}}(t)]|)
\operatorname{sign}([u_{h_{k_j}}(t)])1_{J_{u_{h_{k_j}}(t)}} 
- |[\psi_{h_{k_l}}]| 1_{J_{u_{h_{k_j}}(t)}^c} \right ) d \mathcal H^{d-1}, \label{eq:crtpconddiscr+2}
\end{align}
for every $j > l$.
By \eqref{weakly} we have 
\begin{equation} \label{one}
\lim_{j \to \infty} \int_{\Omega \setminus \Gamma} 
\nabla u_{h_{k_j}} (t) \cdot \nabla \psi_{h_{k_l}} \, \mathrm{d}x
= \int_{\Omega \setminus \Gamma} 
\nabla u (t) \cdot \nabla \psi_{h_{k_l}} \, \mathrm{d}x.
\end{equation}
Define now, for every $t \in [0,T]$ and for every $j > l$, 
the function $f_j (t): \Gamma \to \mathbb{R}$ as
$$
f_j (t) := - [\psi_{h_{k_l}}] g'(|[u_{h_{k_j}}(t)]|)
\operatorname{sign}([u_{h_{k_j}}(t)])1_{J_{u_{h_{k_j}}(t)}} 
- |[\psi_{h_{k_l}}]| 1_{J_{u_{h_{k_j}}(t)}^c}.
$$
We want to prove that for every $t \in [0,T]$
\begin{equation} \label{uno}
\liminf_{j \to \infty} f_j (t) \geq - [\psi_{h_{k_l}}] g'(|[u(t)]|)
\operatorname{sign}([u(t)])1_{J_{u(t)}} 
- |[\psi_{h_{k_l}}]| 1_{J_{u(t)}^c} \qquad \mathcal{H}^1\text{-a.e. in } \Gamma.
\end{equation}
Up to extracting a further subsequence, we can assume that
\begin{equation} \label{1l}
\liminf_{j \to \infty} f_j (t) = \lim_{j \to \infty} f_j (t) \qquad \mathcal{H}^1\text{-a.e. in } \Gamma,
\end{equation}
and 
\begin{equation} \label{2l}
\lim_{j \to \infty} [u_{h_{k_j}}(t)] = [u (t)] 
\qquad \mathcal{H}^1\text{-a.e. in } \Gamma.
\end{equation}
Now, let us fix $x \in J_{u(t)}$ such that \eqref{1l} and \eqref{2l} hold true.
Then, for $j \in \mathbb{N}$ large enough we have 
$$
x \in J_{u_{h_{k_j}}(t)} \qquad \text{ and } \qquad \operatorname{sign}([u_{h_{k_j}}(t)] (x)) 
=  \operatorname{sign}([u(t)] (x)).
$$
Therefore, 
\begin{align}
&\liminf_{j \to \infty} f_j (t) (x) = \lim_{j \to \infty} f_j (t) (x) \nonumber \\
&= \lim_{j \to \infty} - [\psi_{h_{k_l}}] (x) g'(|[u_{h_{k_j}}(t)] (x)| )
\operatorname{sign}([u_{h_{k_j}}(t)] (x)) 1_{J_{u_{h_{k_j}}(t)}} (x)
- |[\psi_{h_{k_l}}] (x)| 1_{J_{u_{h_{k_j}}(t)}^c} (x) \nonumber \\
&= - [\psi_{h_{k_l}}] (x) g'(|[u (t)] (x)| )
\operatorname{sign}([u (t)] (x)) 1_{J_{u (t)}} (x)
- |[\psi_{h_{k_l}}] (x)| 1_{J_{u (t)}^c} (x)  \label{due}
\end{align}
for $\mathcal{H}^1$-a.e. $x \in \Gamma \cap J_{u (t)}$.
If, instead, $x \in J_{u(t)}^c$, then recalling that $0 \leq g' \leq 1$ we have 
\begin{align}
&\liminf_{j \to \infty} f_j (t) (x) = \lim_{j \to \infty} f_j (t) (x) \nonumber\\
&= \lim_{j \to \infty} - [\psi_{h_{k_l}}] (x) g'(|[u_{h_{k_j}}(t)] (x)| )
\operatorname{sign}([u_{h_{k_j}}(t)] (x)) 1_{J_{u_{h_{k_j}}(t)}} (x)
- |[\psi_{h_{k_l}}] (x)| 1_{J_{u_{h_{k_j}}(t)}^c} (x) \nonumber  \\
&\geq - |[\psi_{h_{k_l}}] (x)|
= - |[\psi_{h_{k_l}}] (x)| 1_{J_{u (t)}^c} (x).  \label{tre}
\end{align}
Combining \eqref{due} and \eqref{tre} we obtain \eqref{uno}.
Thanks to \eqref{one} and \eqref{uno} we can 
pass to the limit in \eqref{eq:crtpconddiscr+2}, obtaining
\begin{align*} 
&\int_{\Omega \setminus \Gamma} 
\nabla u (t) \cdot \nabla \psi_{h_{k_l}} \, \mathrm{d}x 
= \lim_{j \to \infty} \int_{\Omega \setminus \Gamma} \nabla u_{h_{k_j}} (t) \cdot \nabla \psi_{h_{k_l}} \, \mathrm{d}x \\
&\geq \liminf_{j \to \infty} \int_\Gamma \left ( - [\psi_{h_{k_l}}] g'(|[u_{h_{k_j}}(t)]|)
\operatorname{sign}([u_{h_{k_j}}(t)])1_{J_{u_{h_{k_j}}(t)}} 
- |[\psi_{h_{k_l}}]| 1_{J_{u_{h_{k_j}}(t)}^c} \right ) d \mathcal H^{d-1} \\
&\geq \int_\Gamma \liminf_{j \to \infty}  \left ( - [\psi_{h_{k_l}}] g'(|[u_{h_{k_j}}(t)]|)
\operatorname{sign}([u_{h_{k_j}}(t)])1_{J_{u_{h_{k_j}}(t)}} 
- |[\psi_{h_{k_l}}]| 1_{J_{u_{h_{k_j}}(t)}^c} \right ) d \mathcal H^{d-1} \\
&\geq \int_\Gamma \left ( - [\psi_{h_{k_l}}] g'(|[u (t)]|)
\operatorname{sign}([u (t)])1_{J_{u (t)}} 
- |[\psi_{h_{k_l}}]| 1_{J_{u (t)}^c} \right ) d \mathcal H^{d-1}.
\end{align*}
Finally, passing to the limit as $l \to \infty$ we have 

\begin{align*} 
\int_{\Omega \setminus \Gamma} 
\nabla u (t) \cdot \nabla \psi \, \mathrm{d}x  
\geq \int_\Gamma \left ( - [\psi] g'(|[u (t)]|)
\operatorname{sign}([u (t)])1_{J_{u (t)}} 
- |[\psi]| 1_{J_{u (t)}^c} \right ) d \mathcal H^{d-1}, 
\end{align*}
and we conclude.

\end{proof}

\section{Numerical experiments}\label{sec:exp}

The scope of this section is to practically show that the procedure illustrated in the previous sections can be effectively implemented and produces the desired quasistatic evolution, according to the one described in \cite{C}.
We refer the reader to Section \ref{appl} for the notations used here.

\subsection{Numerical simulations in $1$ dimension}

We first analyze the results obtained for a one-dimensional problem, when $\Omega\subset\R$. 
Despite its simplicity, the one-dimensional setting allows us to give a detailed comparison 
between numerical results and analytic predictions, since 
in this case the explicit solutions of \eqref{eq:crptcond} are known. 
We consider the following geometry: 
$$
\Omega = [0,2\ell], \qquad \ell = 0.5, \qquad \Gamma = \{\ell\}, \qquad \partial_D \Omega = \{0,2\ell\}.
$$
We follow the evolution in the time interval $[0,T] = [0,1]$,  
and the external load applied to the endpoints $\partial_D \Omega = \{0,2\ell\}$ is given by
$$
\omega(t) (x) = 2(x-\ell)t, \qquad \text{ for every } x \in [0,2\ell] \text{ and } t \in [0,1].
$$
We uniformly discretize the domain into $2N = 80$ intervals, so that
the spatial discretization step is given by $h=\ell/N$.
Finally, we choose a time step $\delta = 0.02$, so that the total evolution 
is concluded after $50$ time steps. 
In our specific case, by Proposition~\ref{ConditionA1} and a direct calculation we have that the parameter $\eta$ in condition (J2) can be taken as
$$
\eta = \frac{1}{2R} + \max\{4,4\sqrt{\ell}\},
$$
where the constant $R$ is the one appearing in the definition of the function $g$, see \eqref{g}.

From a practical viewpoint, the computational time needed 
to solve the minimization problem \eqref{algobis} could grow without any control.
Hence, for any $i \in \{ 0, \ldots, 50 \}$ and $j \in \mathbb{N}_0$ fixed, 
we stop the minimization loop as soon as $\|A v - f (i \delta) \|<10^{-6}$, where $A$ is the trace operator and $f=A\omega$ (we omit the dependence of $A$ on $h$ to ease notation).
That is, $v^i_j$ is chosen in such a way that $\|A v^i_j - f (i \delta) \|<10^{-6}$.
Instead, we stop the external loop (that is, the limit of $v^i_j$ as $j \to \infty$), 
as soon as $\|v^i_j-v^i_{j-1}\|<10^{-13}$.
The main reason for these choices is that the quasistatic evolution generated by the algorithm
is extremely sensitive to any perturbation. 
Thus, a larger time step $\delta$, or a too badly approximated critical point 
at each time step, could lead to nonphysical results.

We observe that the evolutions discussed in \cite[Section 9]{C} depend
on  the size of the parameter $R$. 
Therefore, in order to compare our results with those in \cite{C}, we distinguish two cases.

\vspace{.2cm}

\noindent

\textbf{Case $R \ge 2\ell$.}
When $R$ is chosen large with respect to the size $2\ell$ of the elastic body, 
the evolution found by numerical simulations evolves along \textit{global} minimizers of the energy,
and we can observe the three phases of the cohesive fracture formation: non-fractured, pre-fractured
(that is when the opening of the crack is smaller than $R$ and cohesive forces appear), and 
completely fractured (when the opening of the crack is larger than $R$ and the cohesive forces
disappear), see Figure~\ref{cohes1d}. 
\begin{figure}[!ht]
\centering
  \includegraphics[height=.29\textwidth]{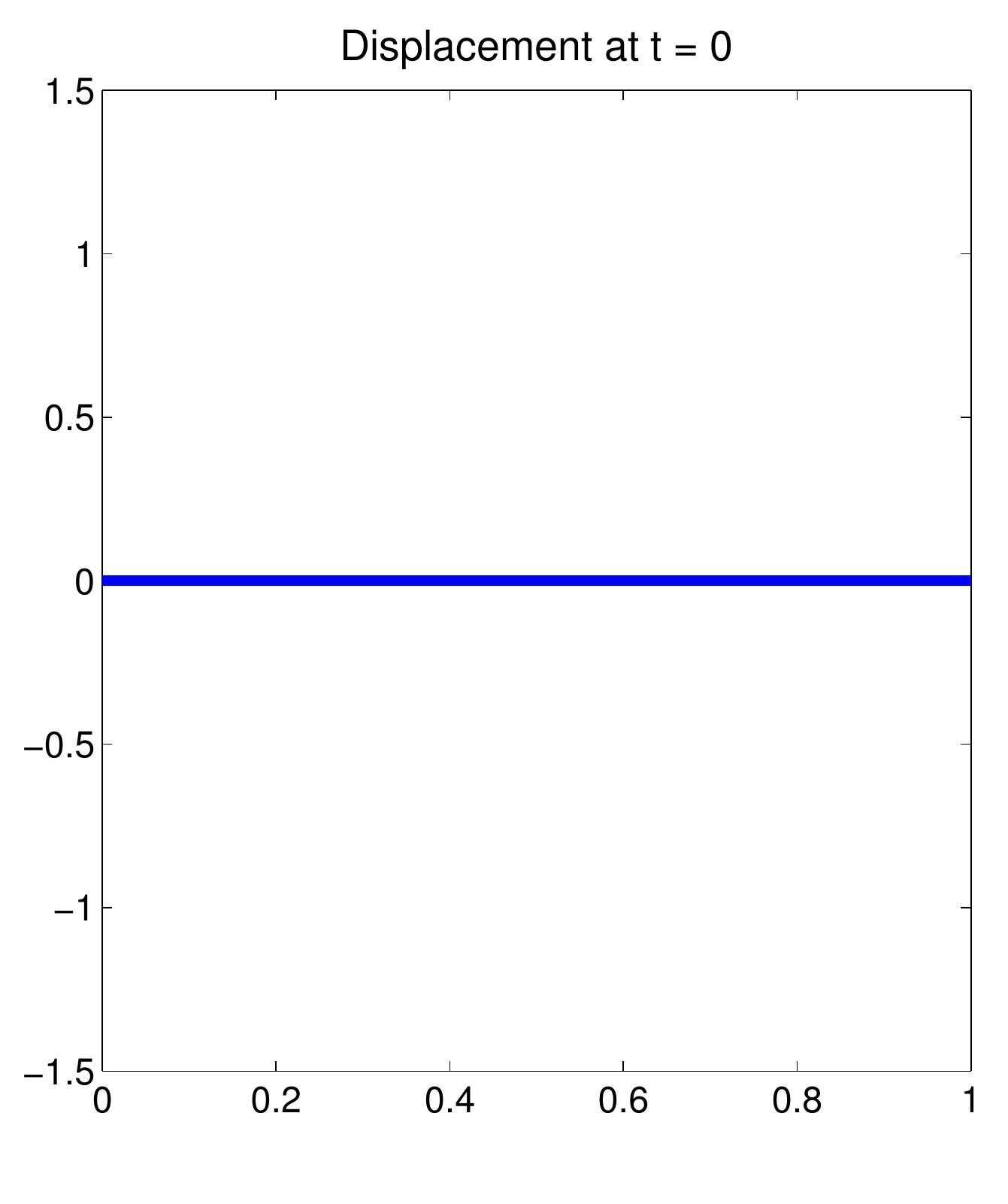}
 \includegraphics[height=.29\textwidth]{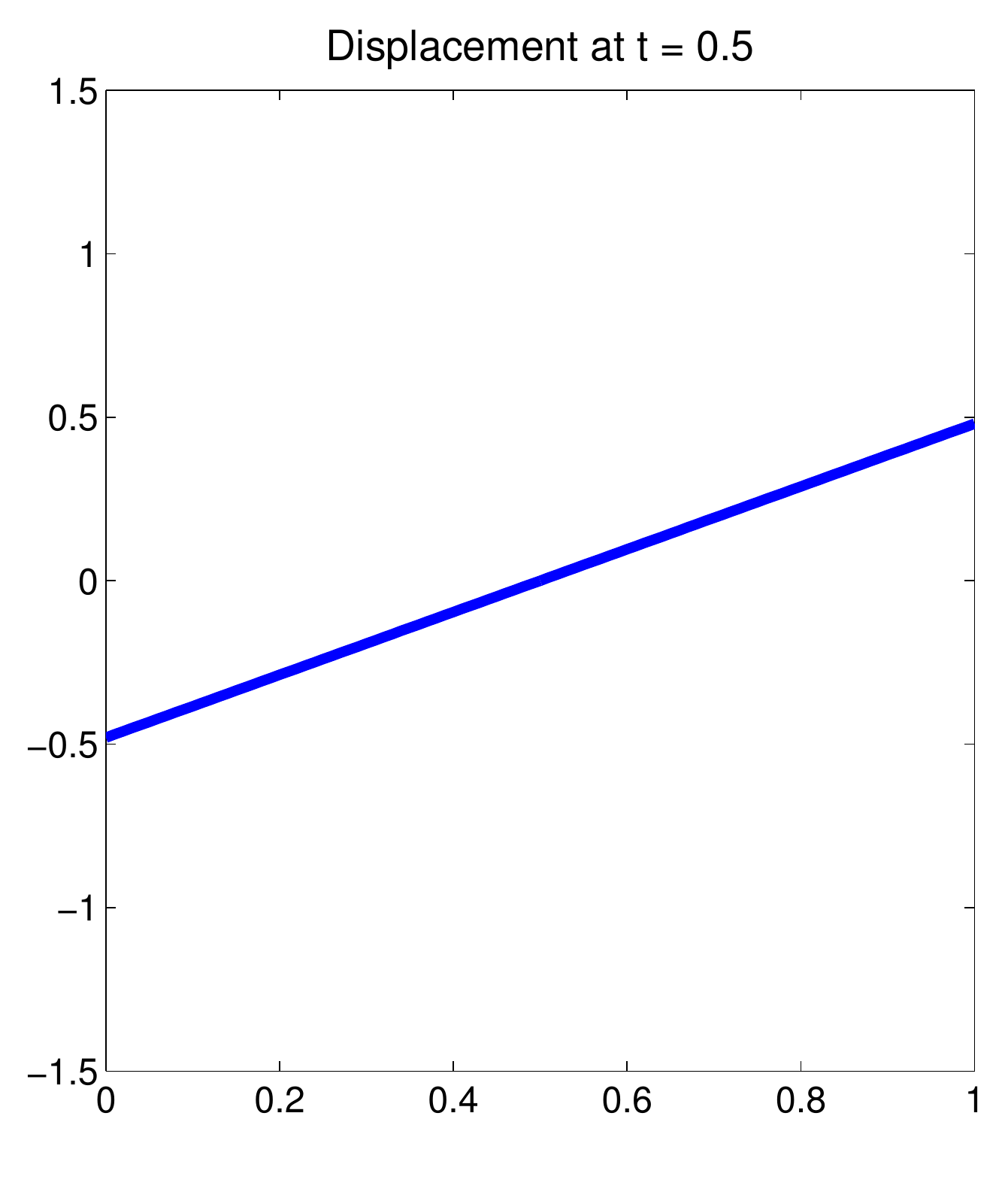}
  \includegraphics[height=.29\textwidth]{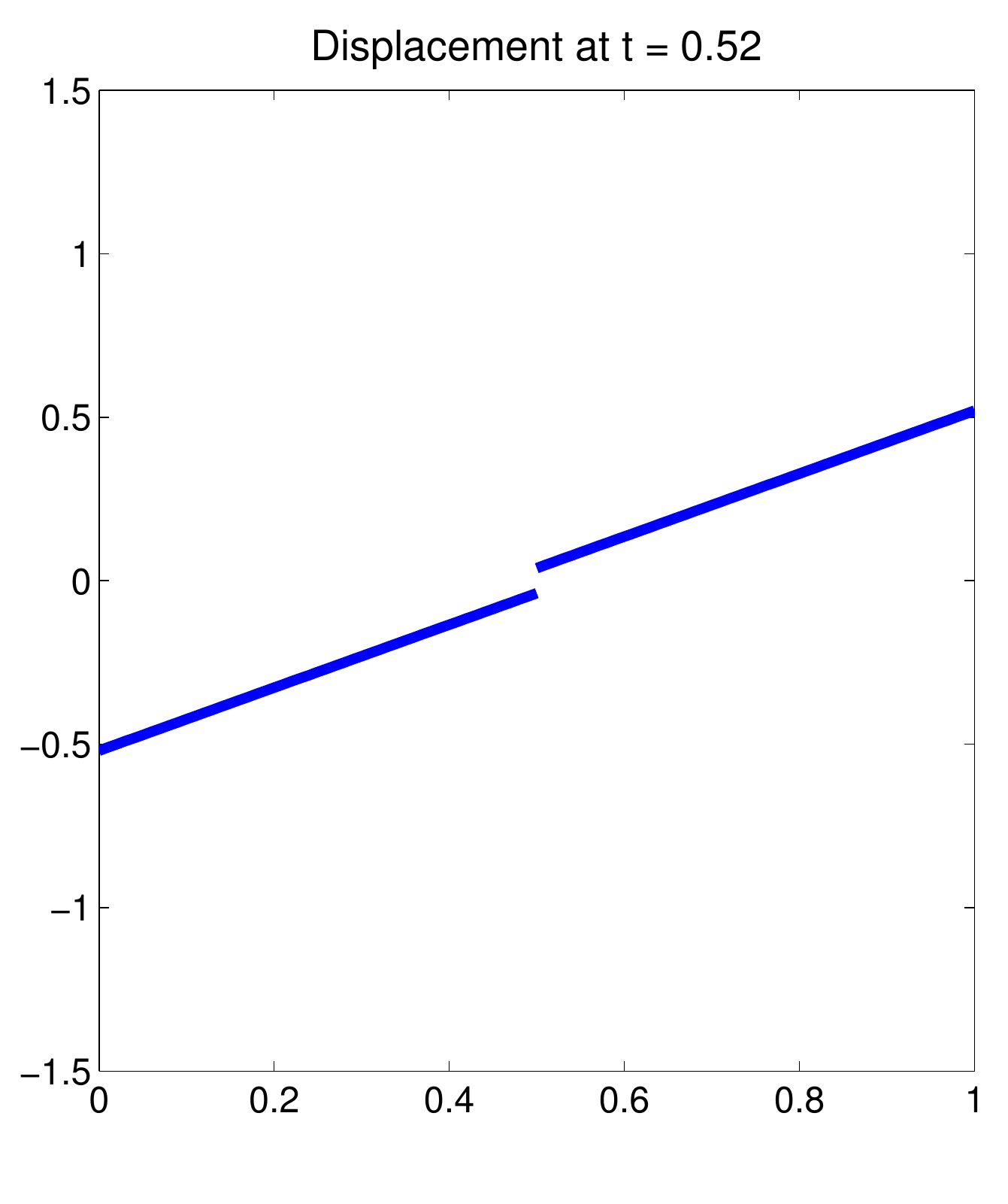}
  \includegraphics[height=.29\textwidth]{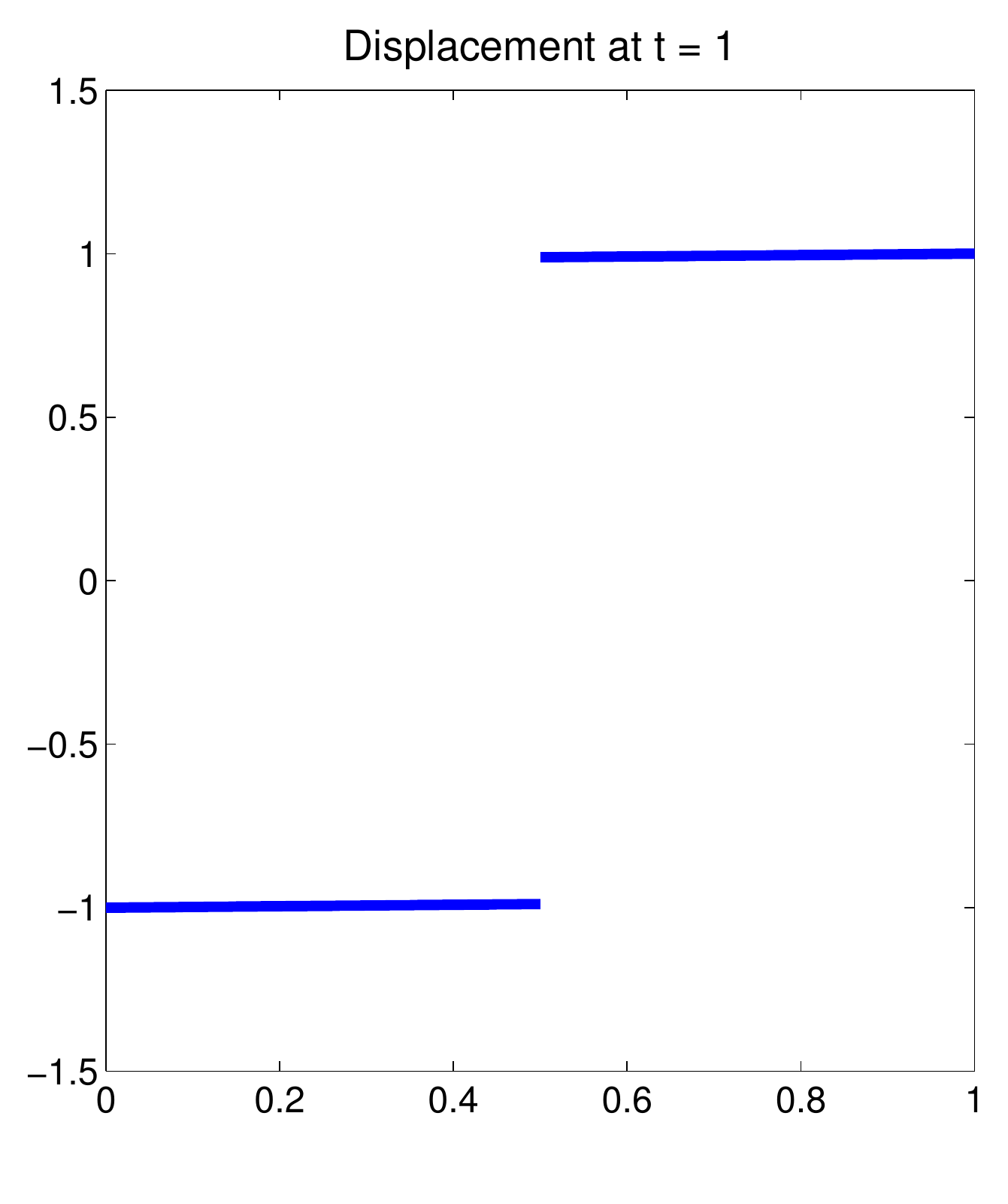}
  \caption[One dimensional cohesive crack evolution $R\ge2\ell$]
{The evolution of the quasistatic cohesive fracture for $R\ge2\ell$ at time instances $t=0, 0.5, 0.52, 1$.}\label{cohes1d}
\end{figure}
Note that in the time interval $[0, 0.5]$ the evolution follows the elastic deformation.
After $t= 0.5$ a fracture appears, since the elastic deformation 
is not any more a critical point of the energy functional (see \cite[Section 9]{C}).
Then, the pre-fracture phase starts, showing a bridging force acting on the two lips of the crack.
At time $t=1$ the cohesive energy reaches its maximum, and the body is completely fractured. 
It is worth observing that this evolution coincides with the one analytically calculated in \cite[Section 9]{C}.
\begin{figure}[!ht]
\centering
\includegraphics[height=.45\textwidth]{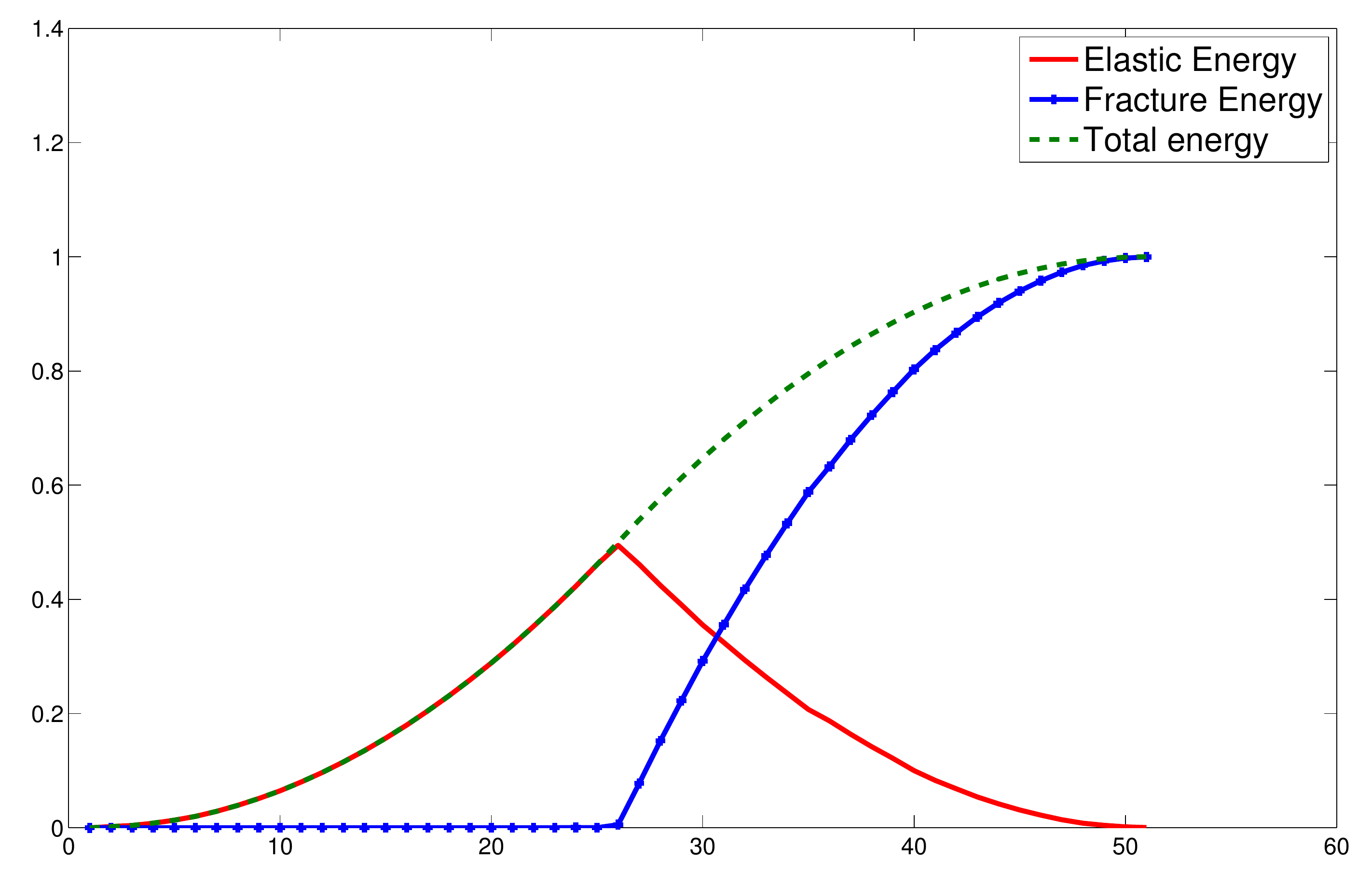}
\caption[One dimensional cohesive crack energy growth with $R\ge2\ell$]{The total, fracture, and elastic energy evolution of the quasistatic cohesive fracture for $R\ge2\ell$.}\label{cohes1d_en}
\end{figure}
We can also investigate what happens from the energy point of view, see Figure~\ref{cohes1d_en}. 
We have a smooth transition between the different phases, and the total energy has a nondecresing profile.  
The beginning of the pre-fractured phase can be observed at the $25^{th}$ time step (i.e. at time $t = 0.5$), 
when the elastic energy (in red) starts decreasing and the crack energy (in blue) starts increasing. 
The final phase of complete rupture is then attained at the final time step $t = 1$.
Although we focused on the time interval $[0,1]$, one could 
check that the three energy profiles remain constant for $t > 1$.

\begin{figure}[!ht]
\centering
 \includegraphics[height=.29\textwidth]{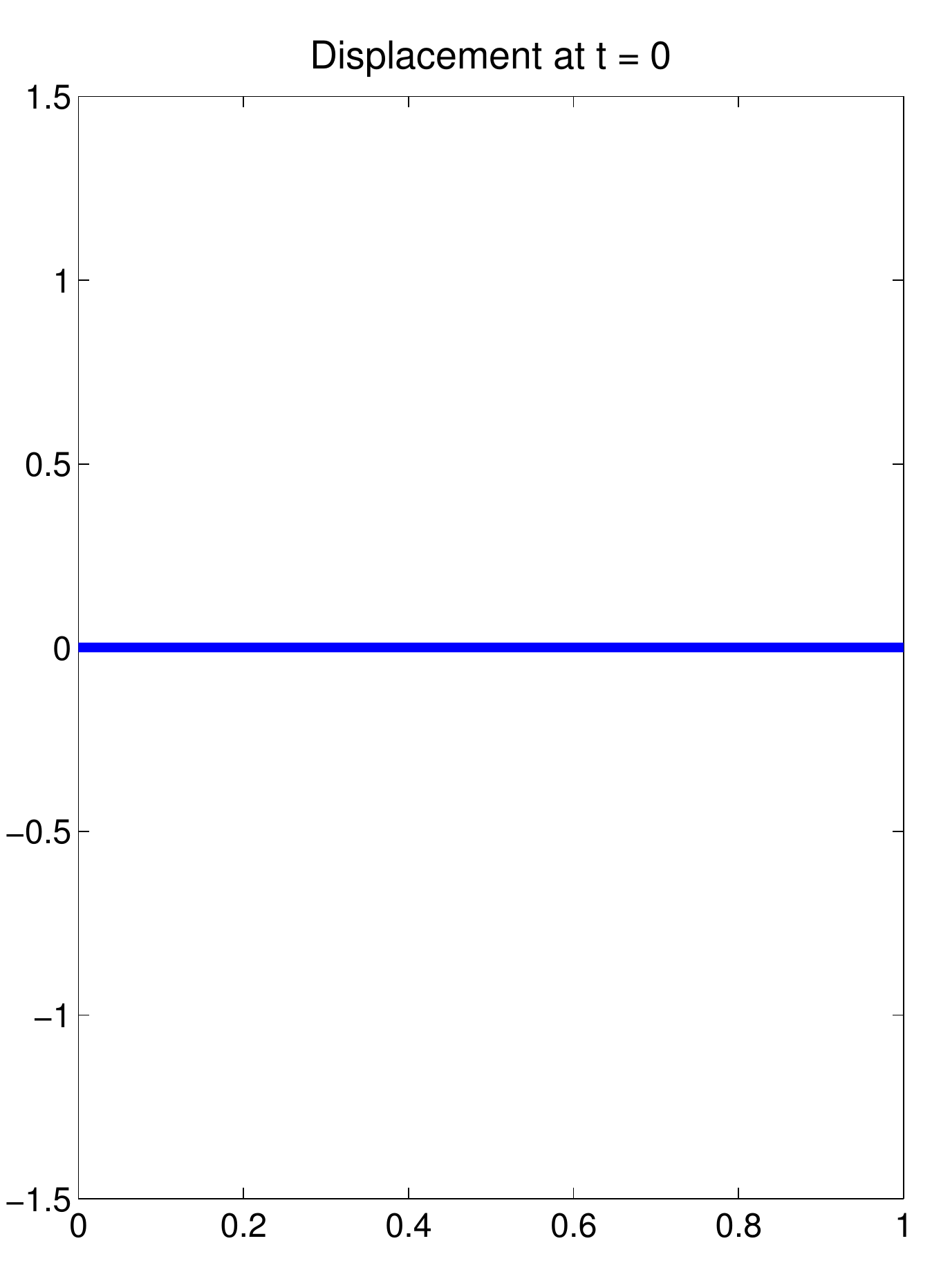}
  \includegraphics[height=.29\textwidth]{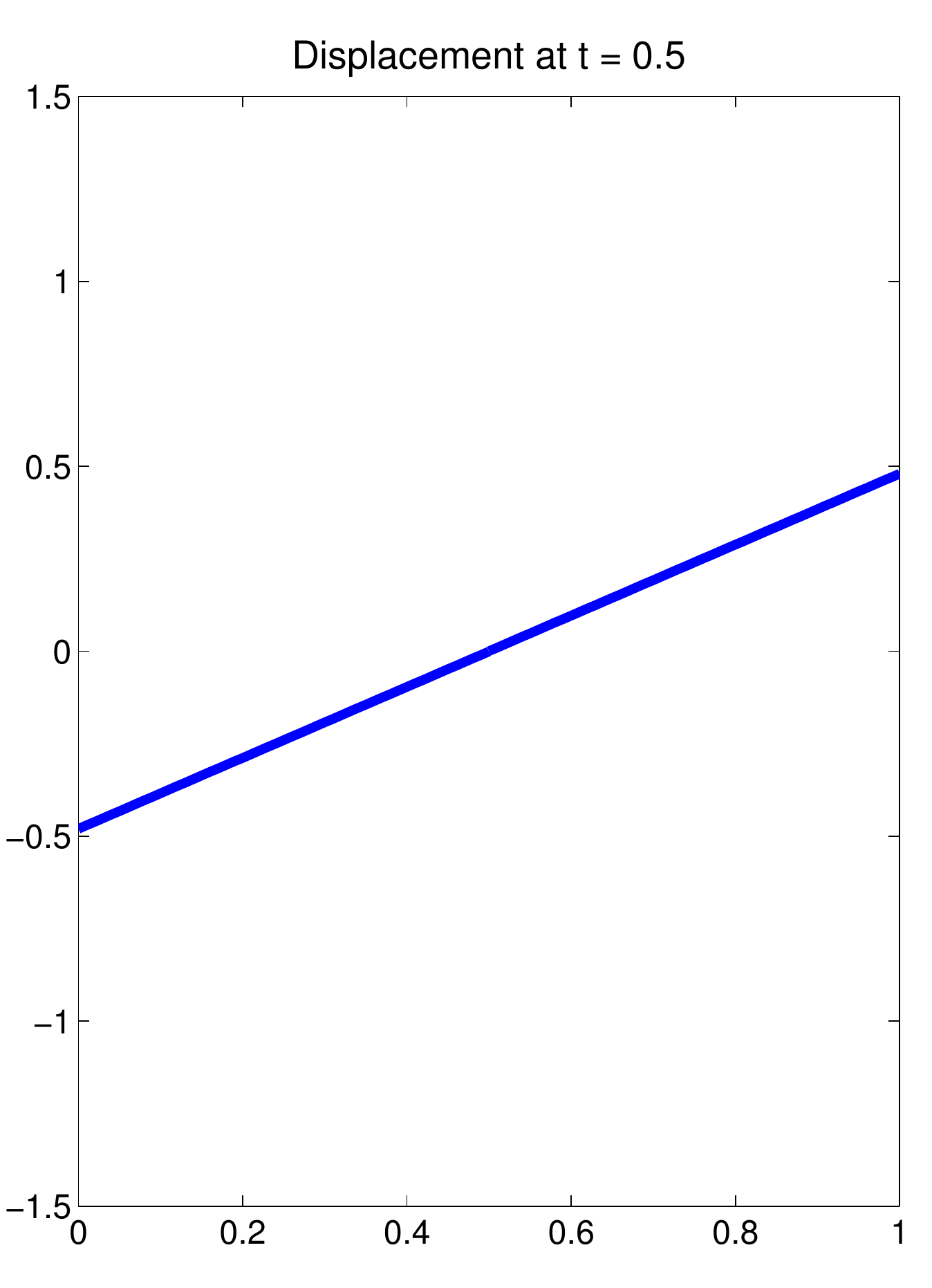}
 \includegraphics[height=.29\textwidth]{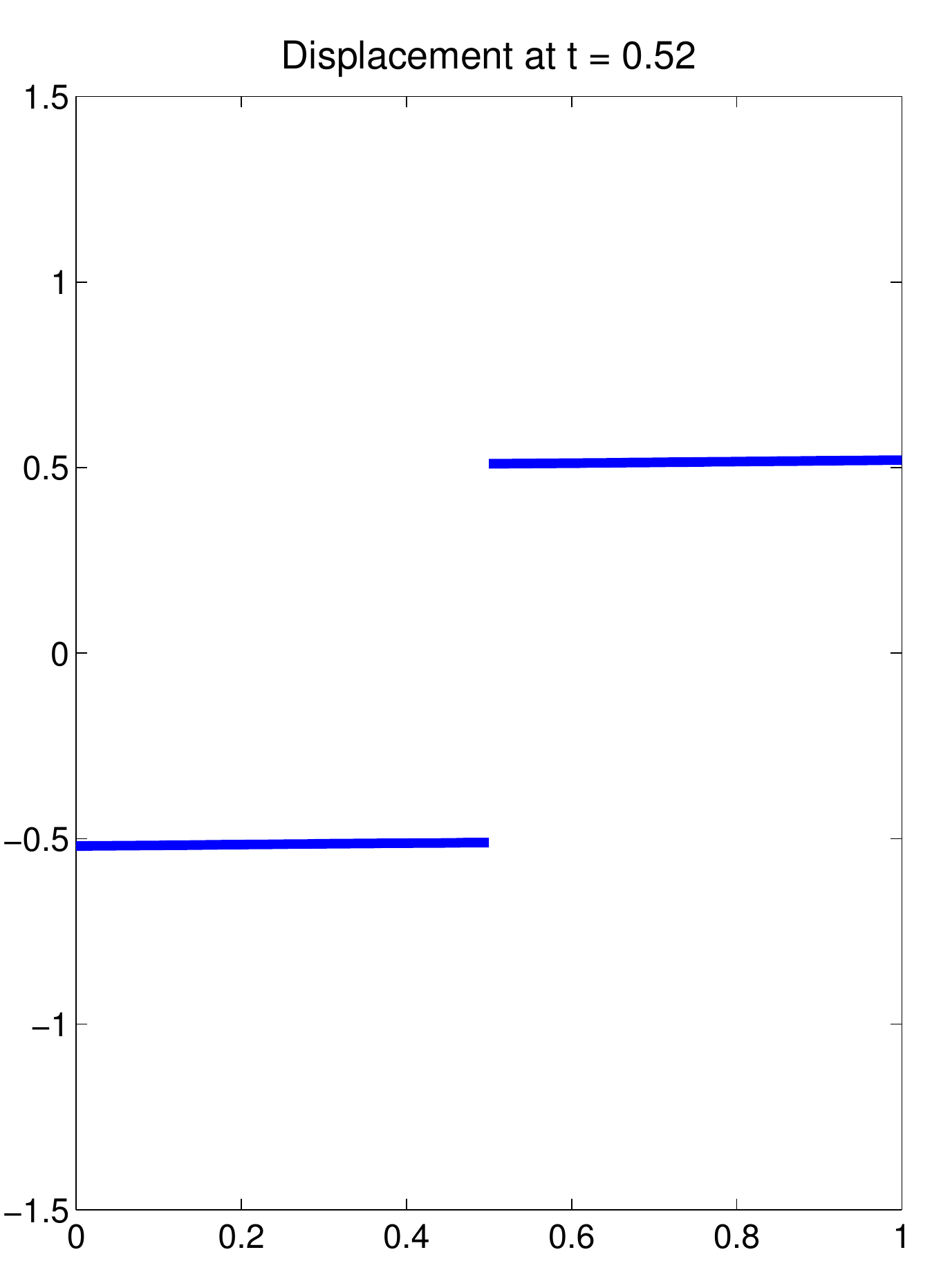}
  \includegraphics[height=.29\textwidth]{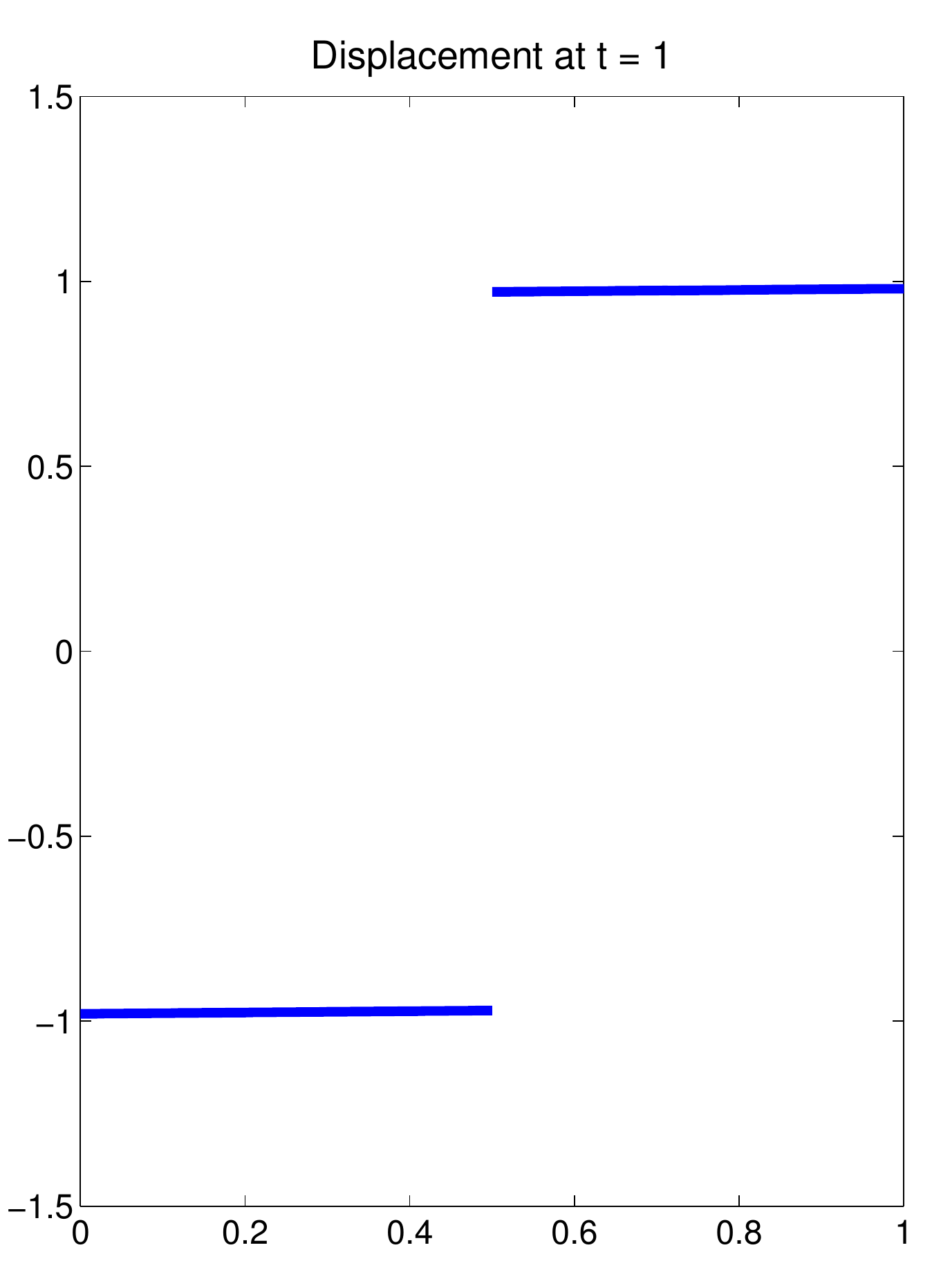}
\caption[One dimensional cohesive crack evolution $R\le2\ell$]{The evolution of the quasistatic cohesive fracture for $R < 2\ell$ at time instances $t=0, 0.5, 0.52, 1$.}\label{noncohes1d}
\end{figure}

\vspace{.2cm}

\noindent

\textbf{Case $R < 2\ell$.}
The evolution of the system changes radically when $R < 2\ell$. 
In this case (see Figure~\ref{noncohes1d}) the failure happens instantaneously, 
without a bridging phase, and thus the body exhibits 
what in literature is known as \textit{brittle} behavior. 
More precisely, in the time interval $[0, 0.5]$ the evolution follows 
again the elastic deformation, and a crack appears at $t = 0.5$.
However, immediately after $t = 0.5$ the body is completely fractured, 
and no cohesive forces appear.
It is important to observe that in this case we actually observe an evolution 
along critical points that are \textit{not global minimizers}.
Indeed, the evolution is elastic until $t = 0.5$, 
although it would be energetically convenient
to completely break the body at some earlier time $\overline{t} < 0.5$
(see \cite[Section 9]{C} for a detailed description of all critical points).
Thus, we see that the algorithm chooses the critical point which is the closest to the initial configuration, even if other options are available, which are more convenient 
from an energetic point of view.
This evolution is particularly supported by the idea that in nature a body does not completely change its configuration crossing high energetic barriers if a stable configuration can be found with less energetic effort.

\begin{figure}[!ht]
\centering
 \includegraphics[height=.45\textwidth]{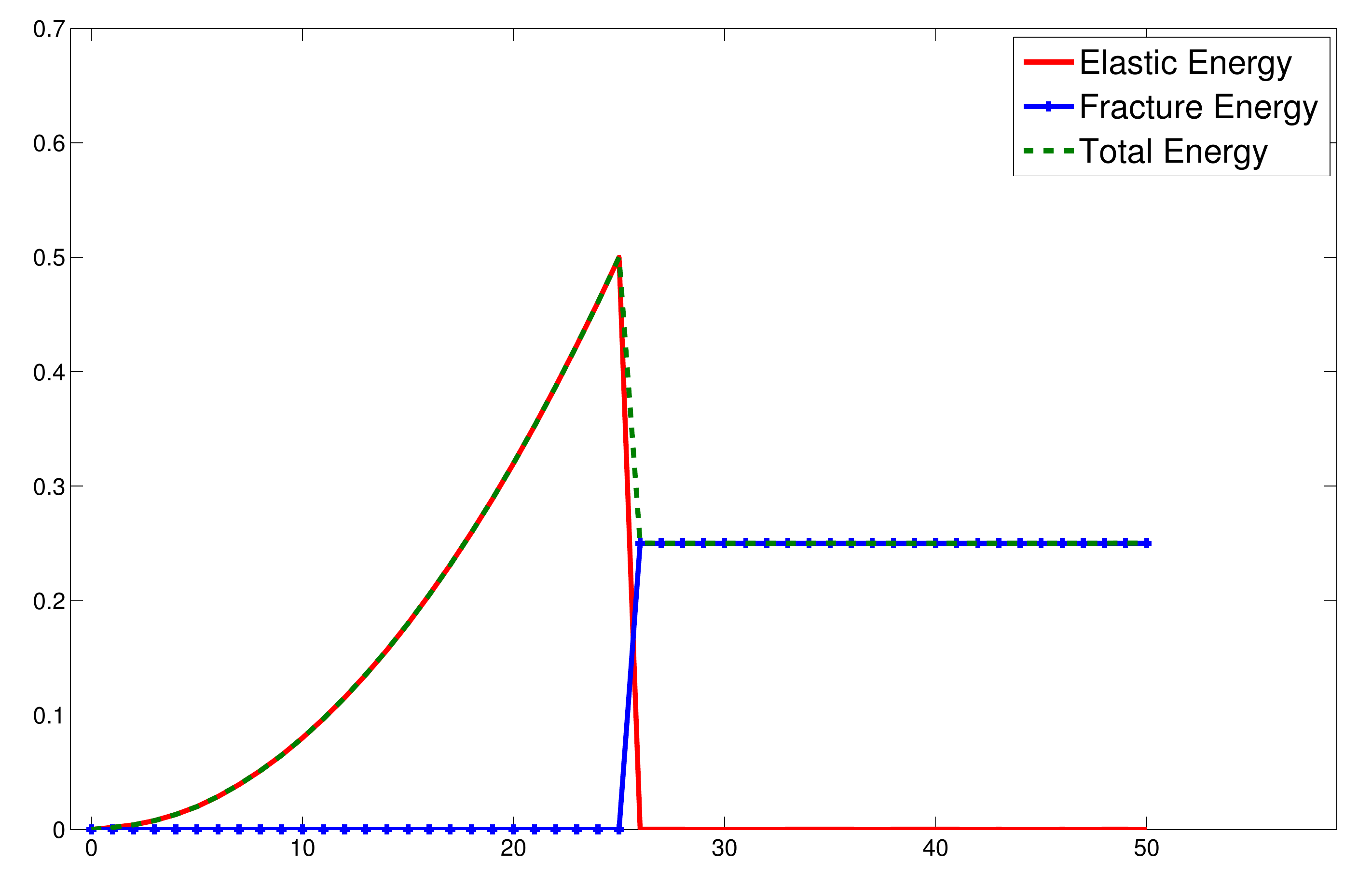}
\caption[One dimensional cohesive crack energy growth with $R\le2\ell$]{The total, fracture, and elastic energy evolution of the quasistatic cohesive fracture for $R < 2\ell$.}\label{Nocohes1d_en}
\end{figure}

Also in this case, we can observe the evolution from the energetic viewpoint, 
see Figure~\ref{Nocohes1d_en}. 
At time $t =0.5$, when the elastic deformation ceases to be a critical point, 
the domain breaks and the total energy decreases up to the value of $R/2$, 
so that no bridging force is keeping the two lips together.
As we already observed, the evolution along global minimizers would instead 
lead to a fracture way before the critical load is reached.\\
Again, the evolution found with our numerical simulation coincides with
that one given in \cite[Section 9]{C}.
In particular, our simulations agree with the \textit{crack initiation criterion}
(see \cite[Theorem 4.6]{C}), which states that a crack appears only when 
the maximum sustainable stress along $\Gamma$ is reached.
In this case, this happens at $t = 0.5$, when the slope of the elastic evolution 
reaches the value $g'(0) = 1$.

\subsection{Numerical simulations in $2$ dimensions}

Having a first analytical validation of the numerical minimization procedure, we can now 
challenge the algorithm in the simulation of two dimensional evolutions.
We now consider the domain introduced in Section~\ref{sec:discrete} 
setting $\ell = 0.5$, $2N = 8$, and $\kappa=1/2$.
Within this choice, the crack initiation time is reduced exactly of a factor $1/2$, allowing us to speed up the failure process. 
Since all the computations are performed on a MacBook Pro equipped 
with a 2.6GHz Intel Core i7 processor, 8GB of RAM, 1600MHz DDR3, 
the two dimensional simulations are performed only for a qualitative purpose. 
Indeed, we are mainly interested in showing that our algorithm produces physically 
sound evolutions also in dimension $2$, and when the external displacement $f$ is non-trivial.
The very sparse discretization of the domain $\Omega$ is due to the fact that
the minimization in \eqref{algobis} requires a huge computational effort, 
both in terms of time and memory.
Indeed, in order to implement more realistic experiments, with a finer discretization, 
we would need to modify the architecture of the minimization algorithm, in such a way 
that it may run on parallel cores.

We perform two different series of experiments, one with boundary datum
$$
\omega_1(t)(\xx) = 2(x_1-\ell)t, \qquad \text{ for every  } t \in [0,1] \text{ and } \xx \in \Omega,
$$
see Figure~\ref{cohes2d_1}, and the other one with boundary datum
$$
\omega_2 (t)(\xx) = 2t \cos\left(2\frac{x_2-\ell}{\ell}\right)(x_1-\ell) , \qquad \text{ for every  } t \in [0,1] \text{ and } \xx \in \Omega,
$$
see Figure~\ref{cohes2d_2}.
Here, we denoted by $\xx = (x_1, x_2)$ the generic point of $\Omega = (0, 1) \times (0,1)$.
We now need to reduce the tolerance of the termination condition 
of the outer loop of the Algorithm, setting it to $5\cdot10^{-14}$.
Indeed, we experimented that for bigger values of this tolerance some instabilities 
in the solution were introduced, leading to an asymmetric evolution, 
also in the case of $\omega_1$ as external displacement, 
where we expect an invariant behavior with respect to the space variable $x_2$.

\begin{figure}[!ht]
\centering
  \includegraphics[width=.24\textwidth]{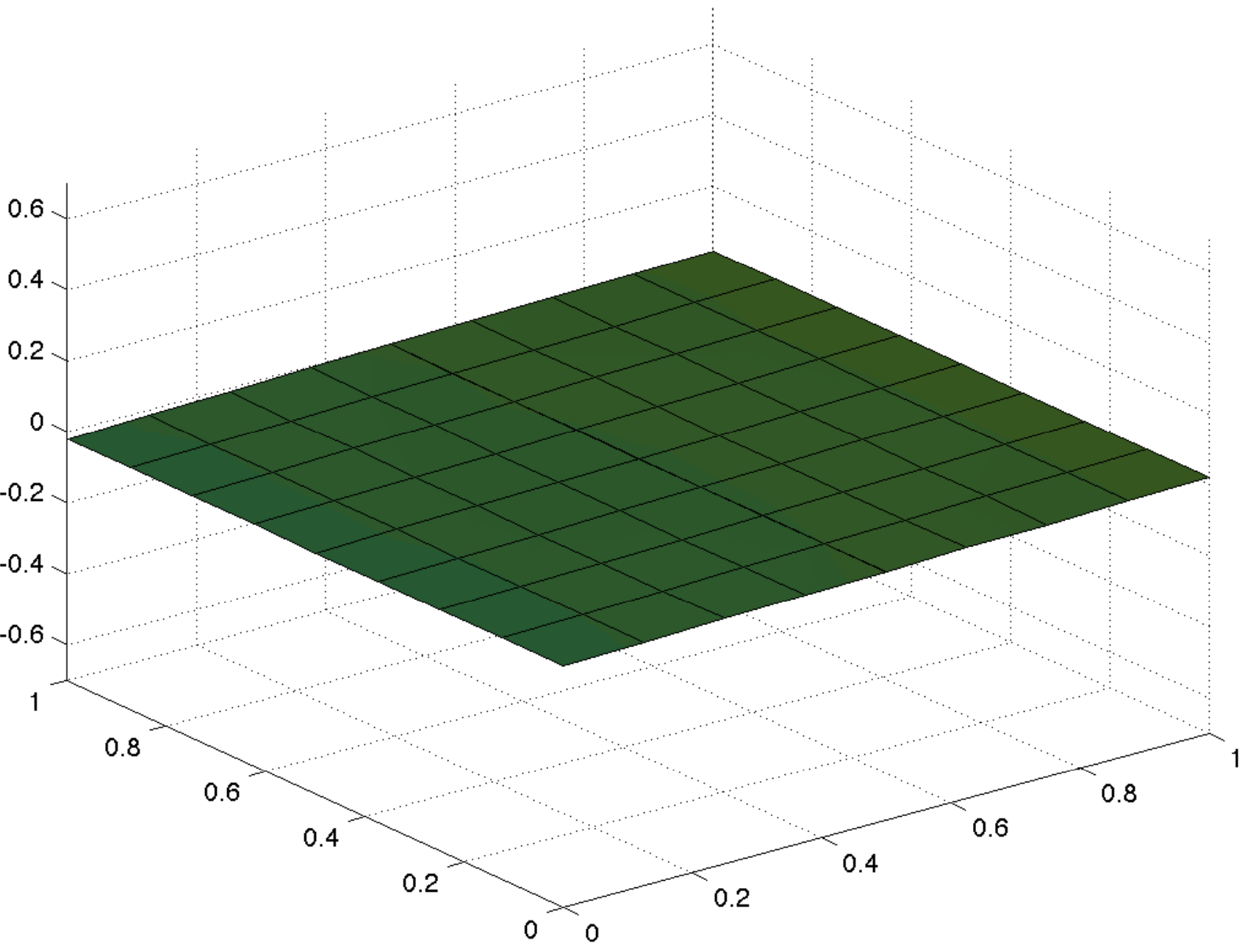}
  \includegraphics[width=.24\textwidth]{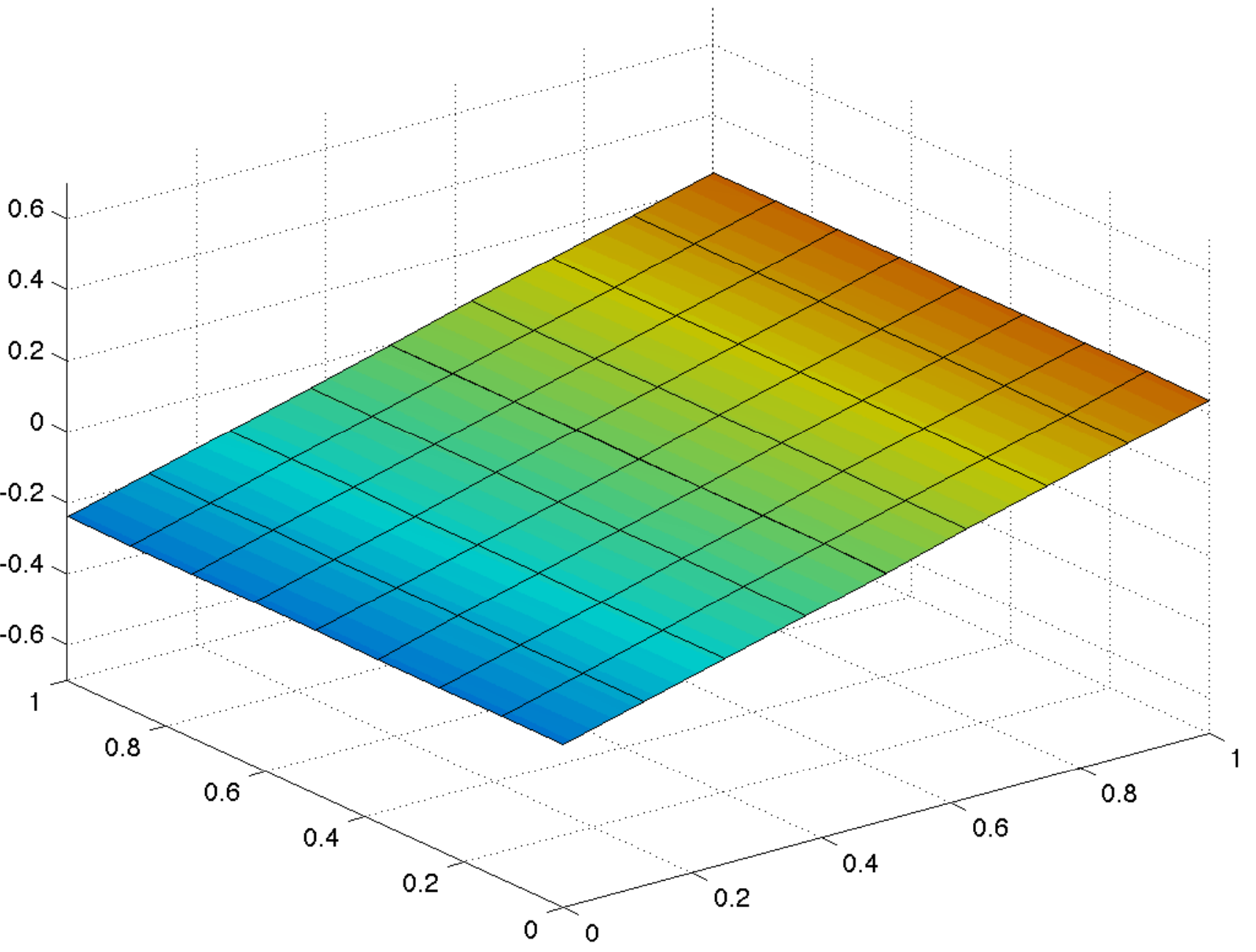}
  \includegraphics[width=.24\textwidth]{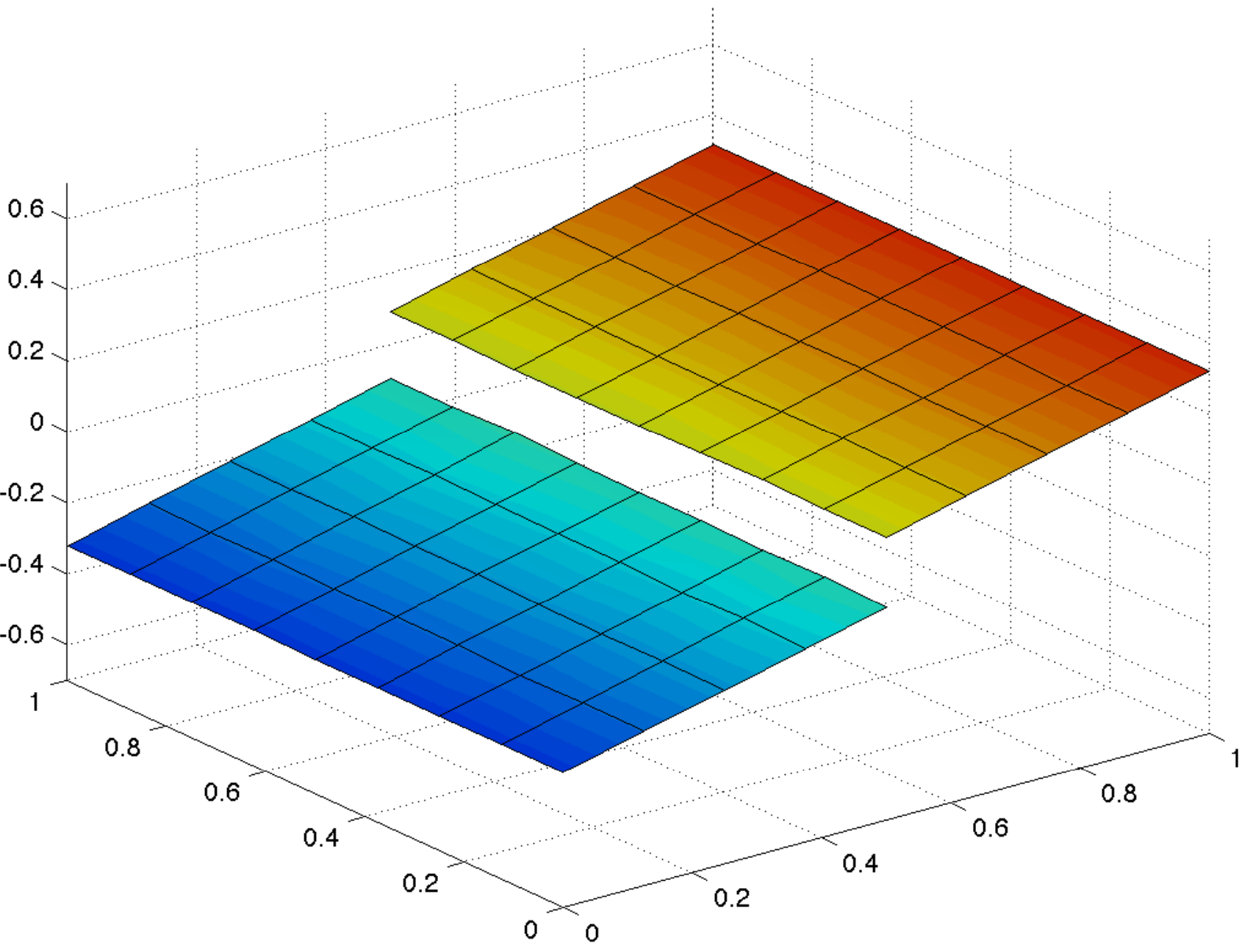}
  \includegraphics[width=.24\textwidth]{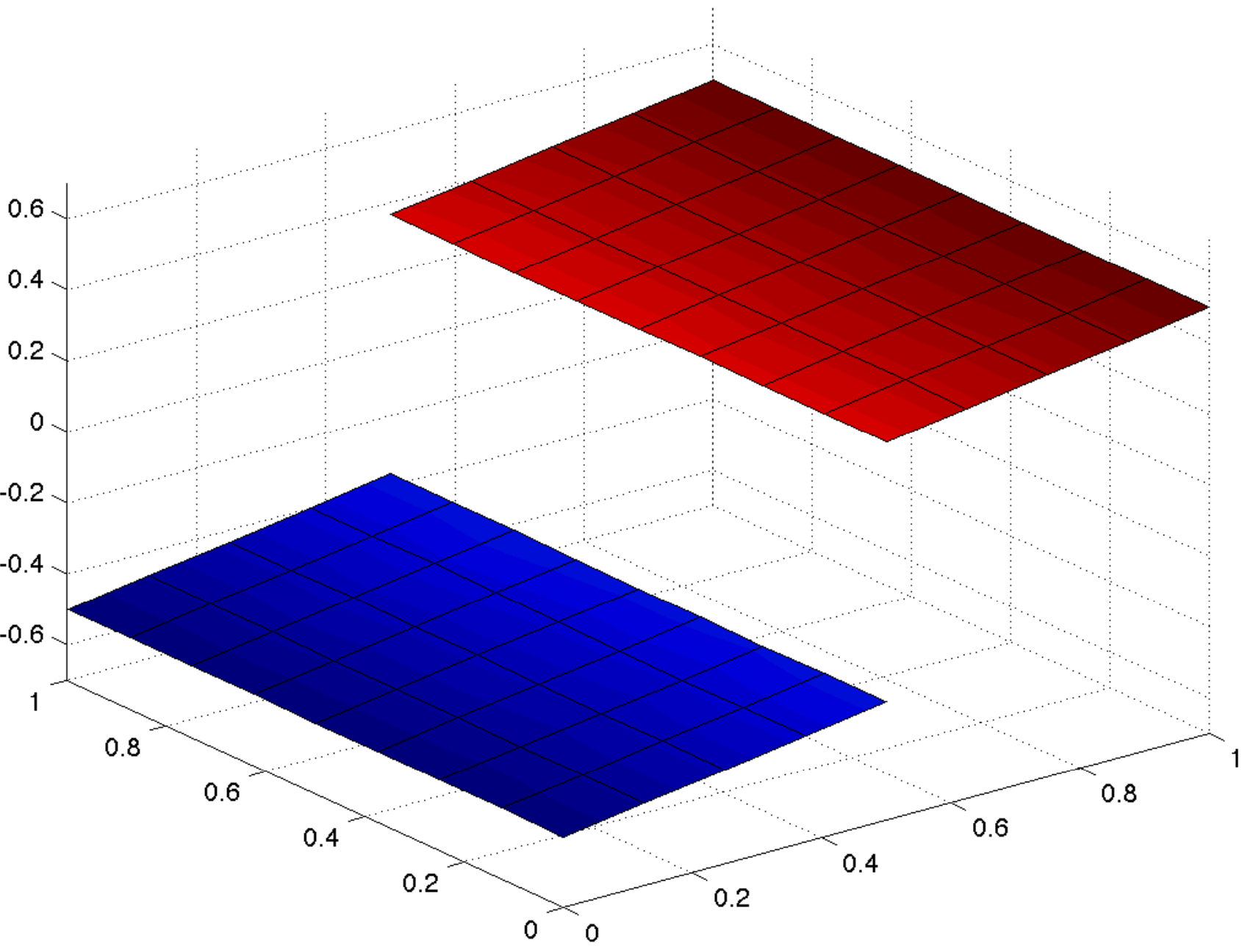}
\caption[Two dimensional cohesive crack evolution $R\ge2\ell$ (1)]{The evolution of the quasistatic cohesive fracture for $R\ge2\ell$ at time instances $t=0, 0.24, 0.32, 0.5$ 
with external displacement $\omega_1$.}\label{cohes2d_1}
\end{figure}

\begin{figure}[!ht]
\centering
  \includegraphics[width=.24\textwidth]{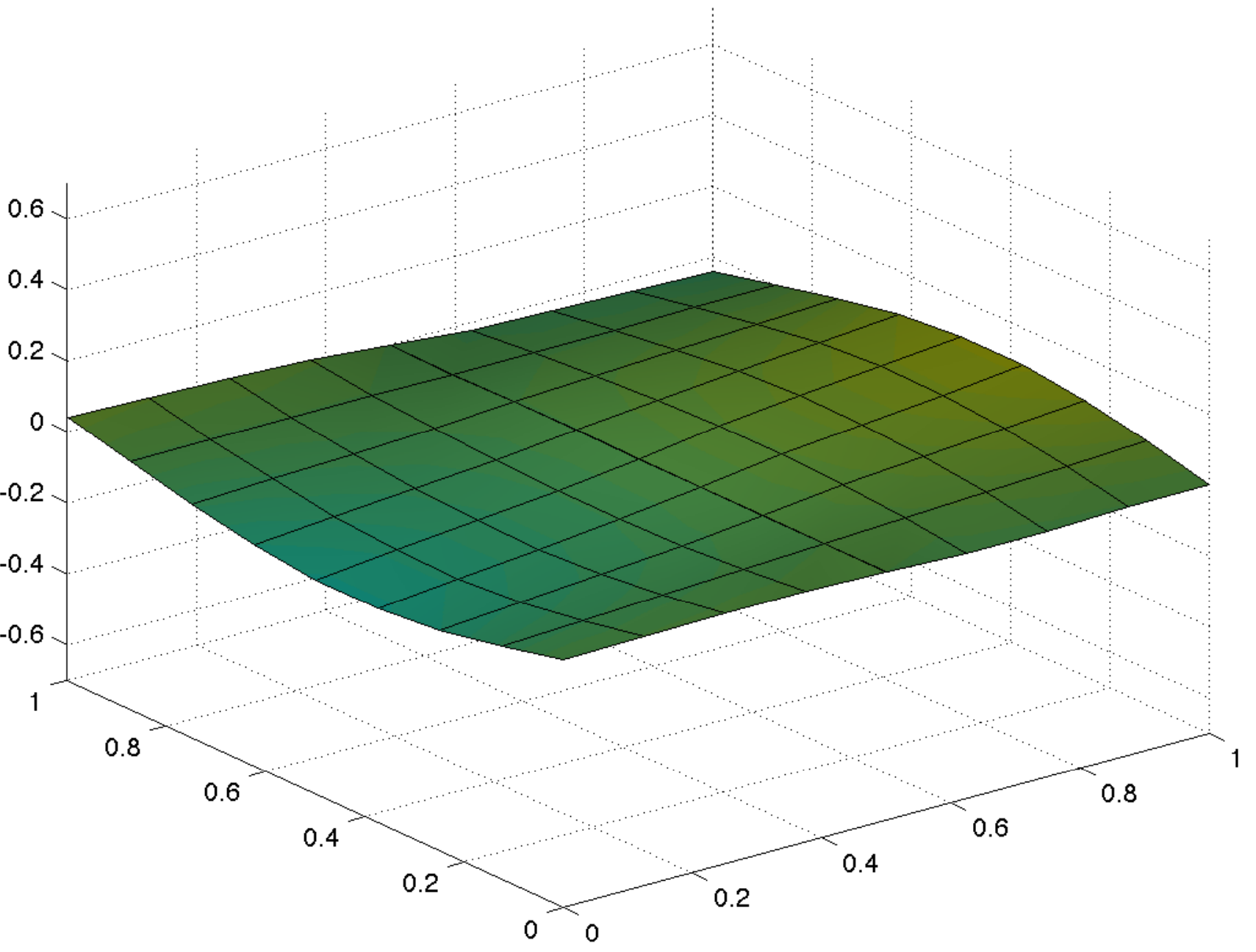}
 \includegraphics[width=.24\textwidth]{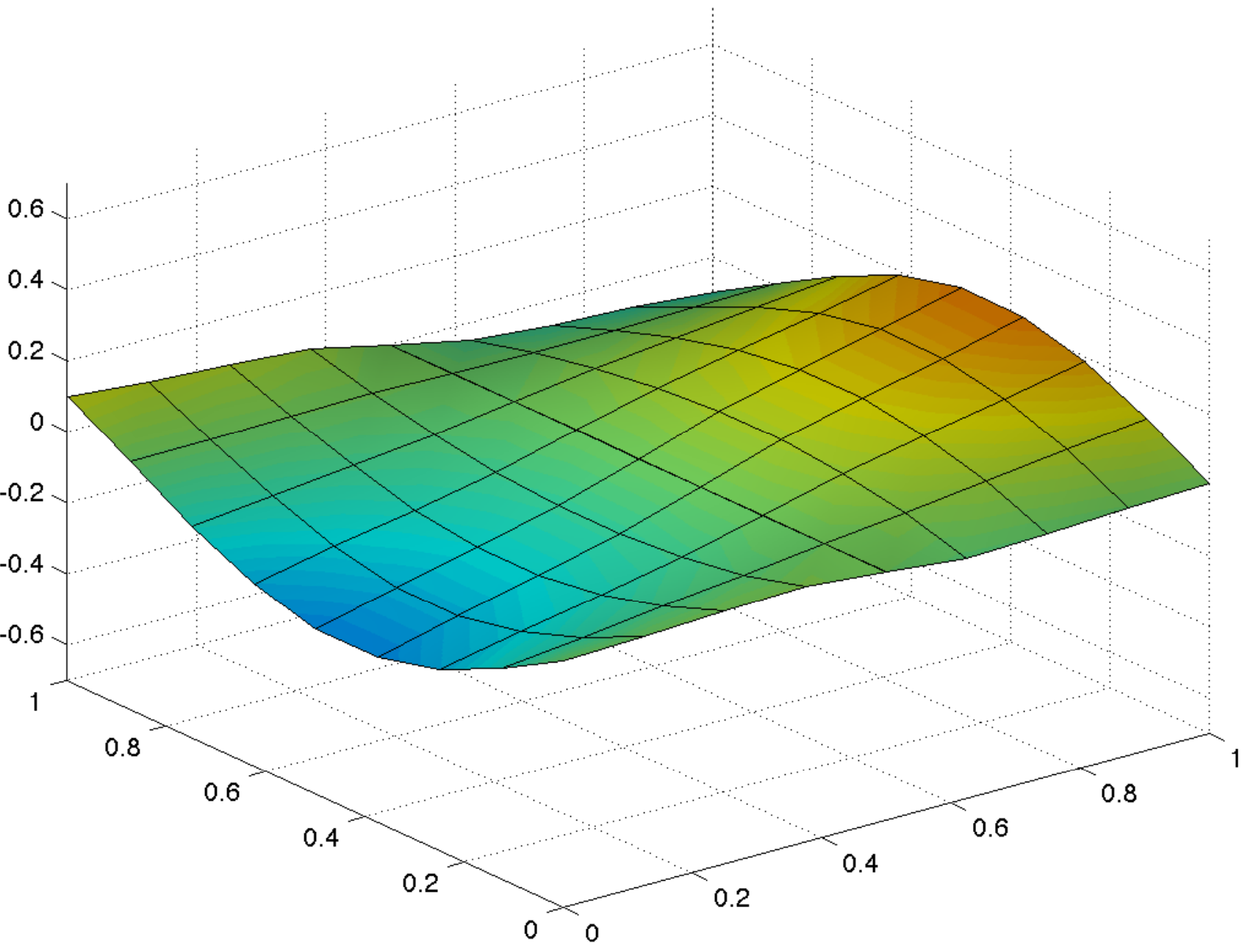}
  \includegraphics[width=.24\textwidth]{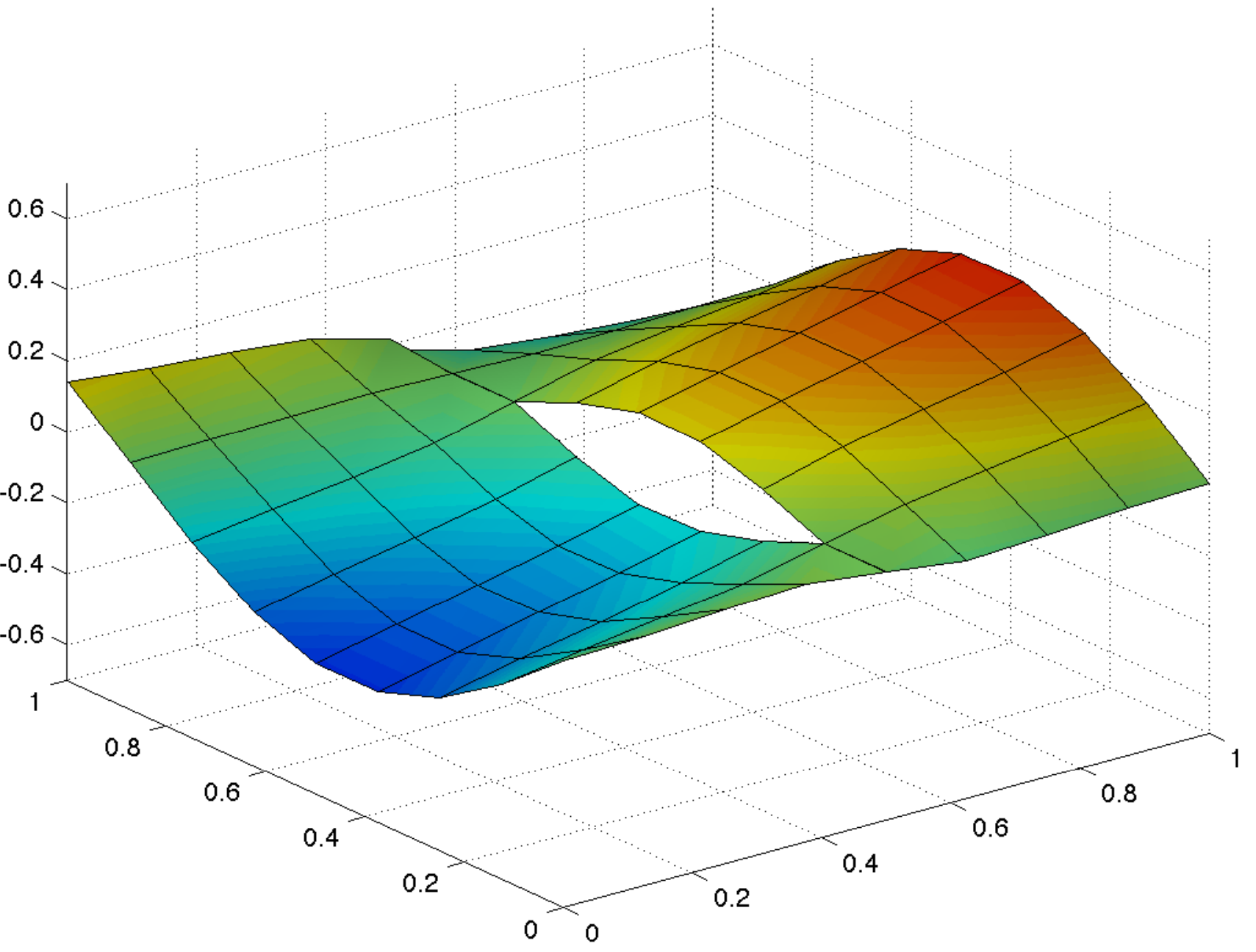}
  \includegraphics[width=.24\textwidth]{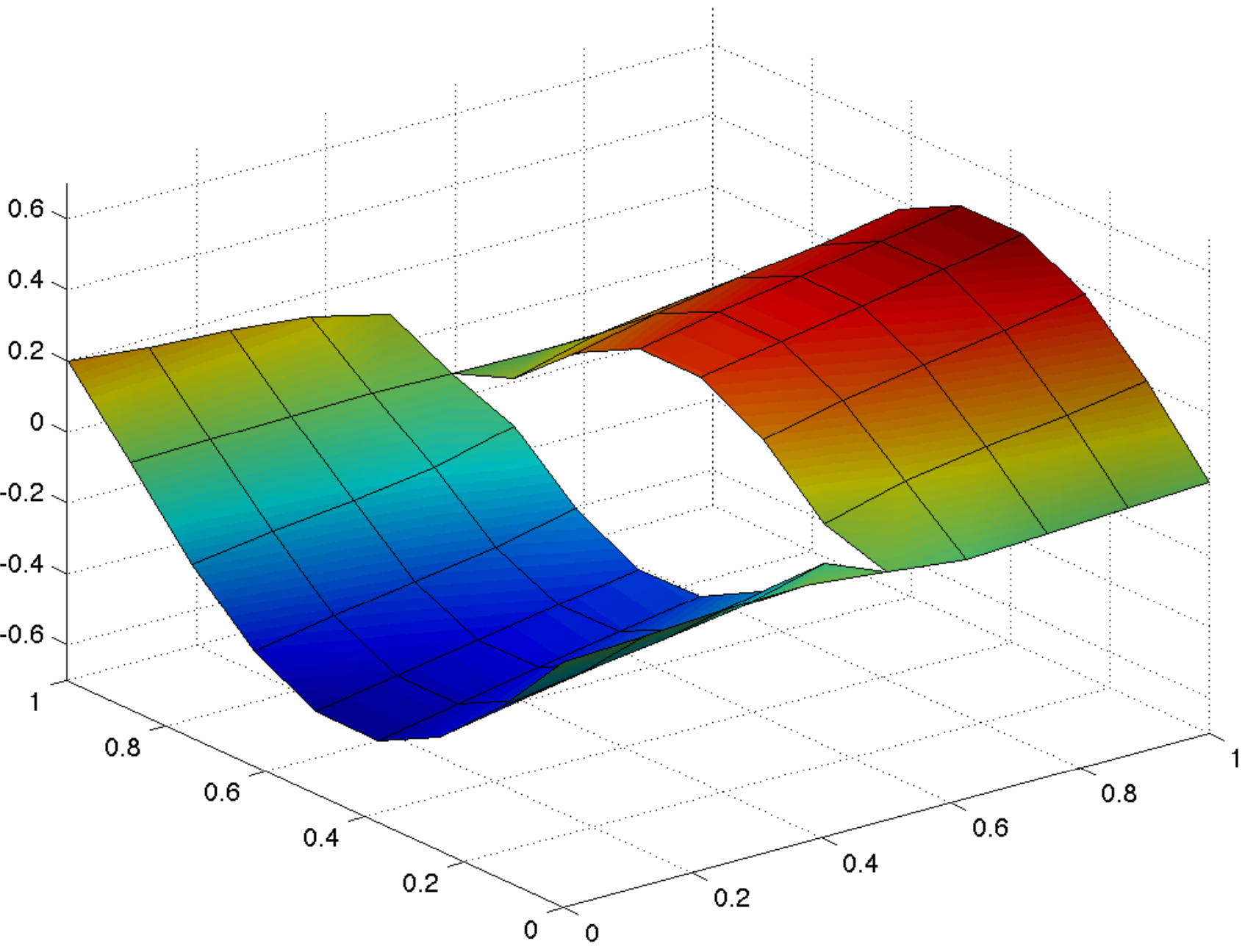}
\caption[Two dimensional cohesive crack evolution $R\ge2\ell$ (2)]{The evolution of the quasistatic cohesive fracture for $R\ge2\ell$ at time instances 
$t=0.1, 0.24, 0.34, 0.5$ with external displacement $\omega_2$.}\label{cohes2d_2}
\end{figure}

\vspace{.2cm}

\noindent
\textbf{Case $R\ge2\ell$.}
In Figure~\ref{cohes2d_1} and~\ref{cohes2d_2} we report 4 different instances of the evolution for the two different boundary data, when $R \ge2 \ell$. 
When the external displacement is $\omega_1$, 
which is constant with respect to the second coordinate $x_2$,
we observe that the evolution is also constant with respect to $x_2$.
For both boundary data, the failure of the body undergoes the three phases 
of deformation, as it happened in the one dimensional case.


\vspace{.2cm}

\noindent
\textbf{Case $R < 2 \ell$.}
When the boundary datum is $\omega_1$, see Figure~\ref{Noncohes2d_1}, 
the specimen breaks in a brittle fashion, without showing any cohesive intermediate phase.
This simulation is actually an evidence that the algorithm still characterizes the correct critical points, following the principle that the domain should not fracture 
as long as a non-fractured configuration is still a critical point.
We conclude commenting the simulation where the boundary datum is $\omega_2$ with $R < 2\ell$, 
see Figure~\ref{Noncohes2d_2}. 
By setting a displacement highly varying with respect to the $x_2$ coordinate, 
we observe that the different phases of the fracture formation can cohexist. 
At time $t=0.24$ the domain still presents no fracture, 
as expected by the previous numerical experiments. 
Then, at $t=0.34$, a pre-fracture appeares, but only 
at those points where the external load is bigger, i.e. around $x_2 = \ell$. 
In fact, even at the final time $t = 1$, the domain is not completely fractured.

Note that, when the boundary datum is $\omega_1$, 
the evolution coincides with the one obtained analytically
\cite[Section 9]{C}. In particular, the fracture appears at $t = 0.25$, when the slope of the elastic evolution reaches the value $\kappa g'(0) = 1/2$ and thus the \textit{crack initiation criterion is satisfied}.

\begin{figure}[!ht]
\centering
 \includegraphics[width=.24\textwidth]{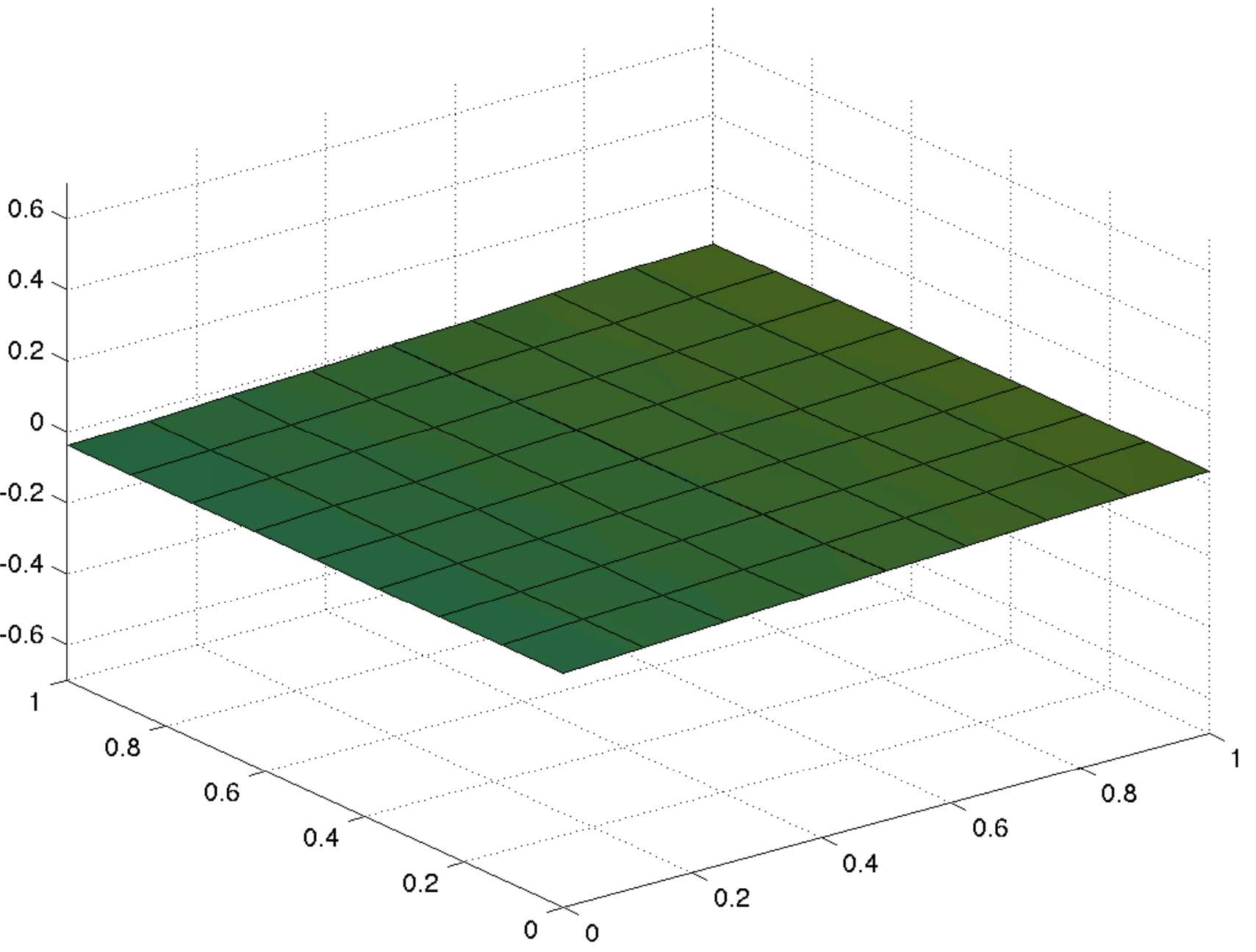}
  \includegraphics[width=.24\textwidth]{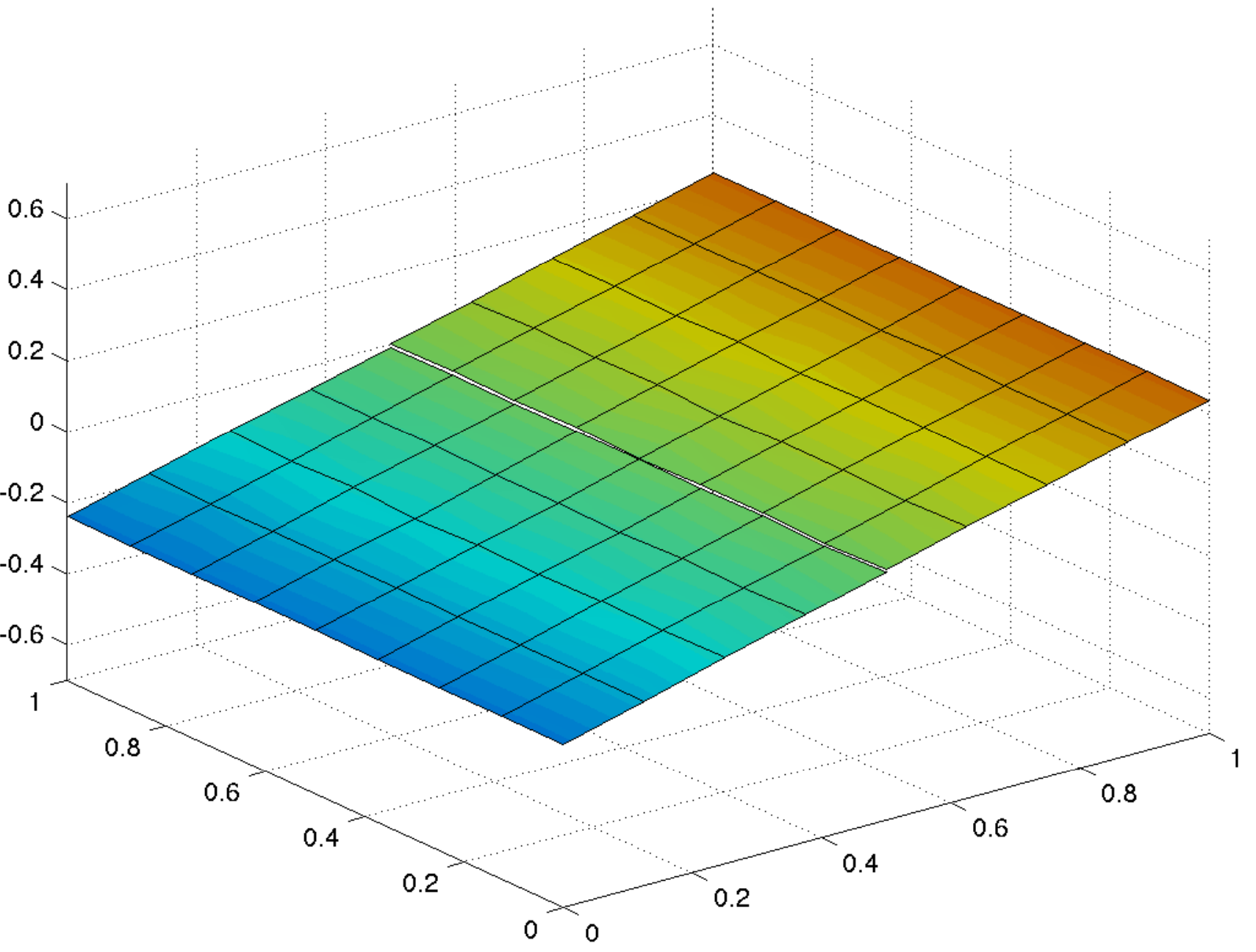}
  \includegraphics[width=.24\textwidth]{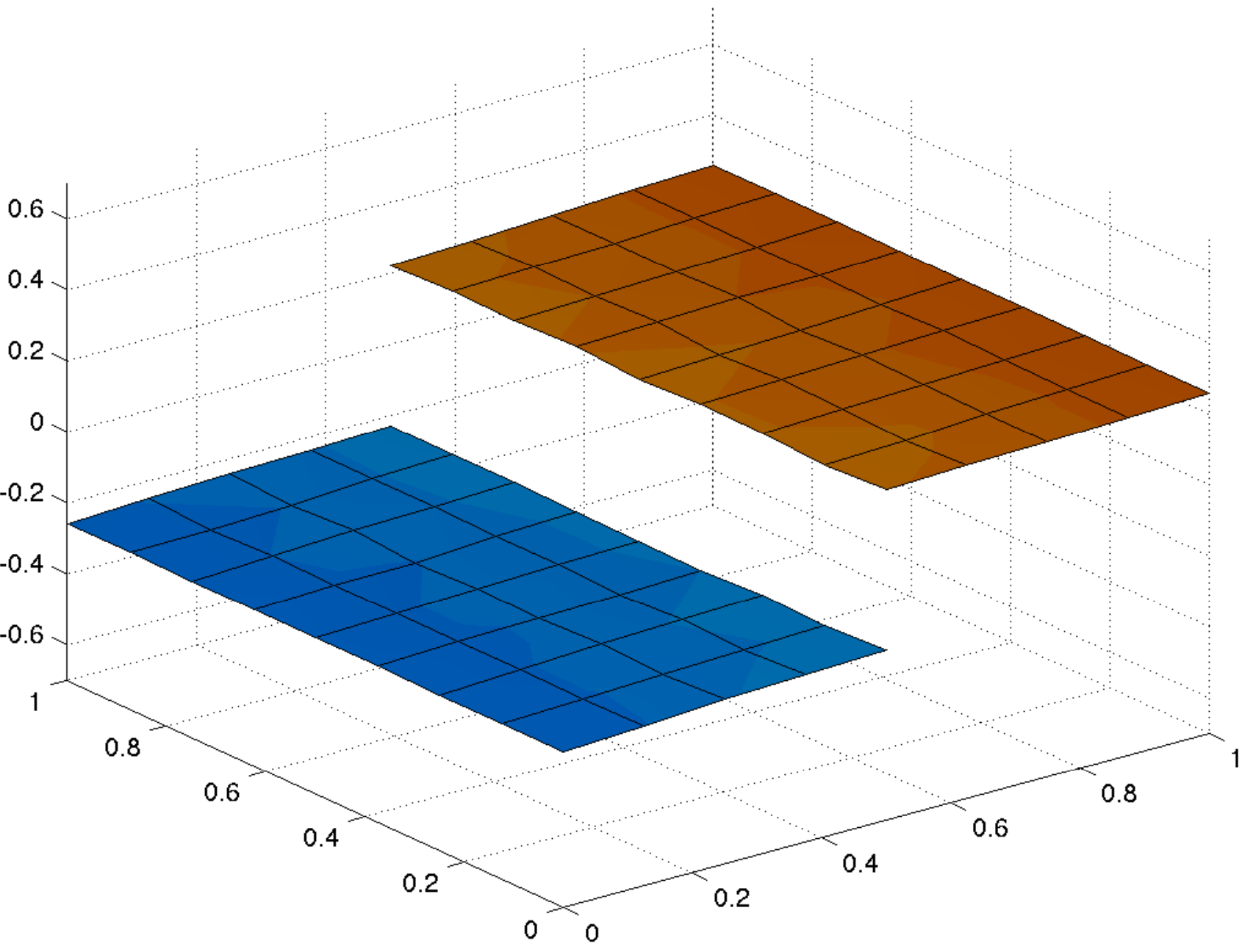}
  \includegraphics[width=.24\textwidth]{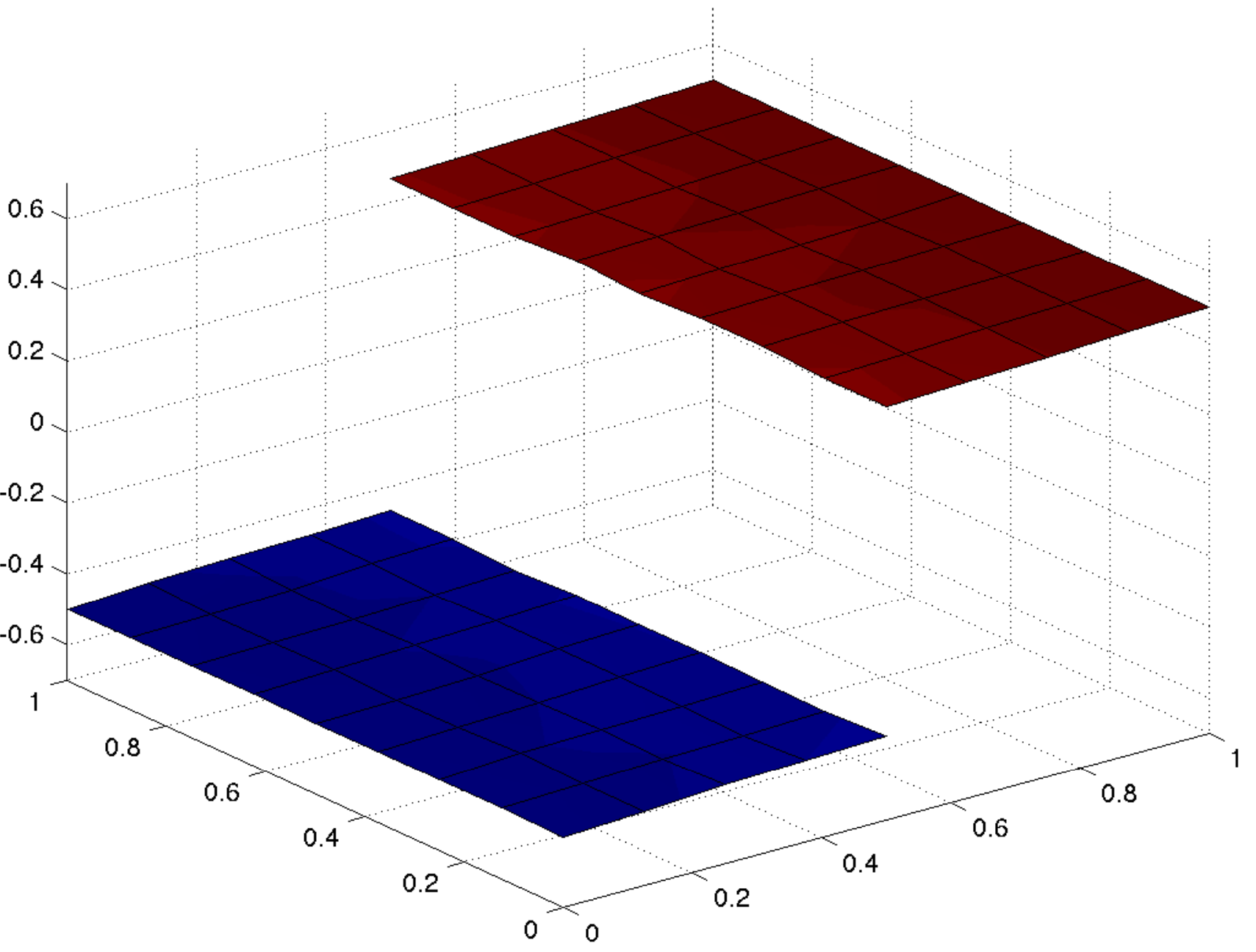}
\caption[Two dimensional cohesive crack evolution $R\le2\ell$ (1)]{The evolution of the quasistatic cohesive fracture for $R < 2\ell$ at time instances $t=0.04, 0.24, 0.26, 0.5$ 
with external displacement $\omega_1$.}\label{Noncohes2d_1}
\end{figure}

\begin{figure}[!ht]
\centering
  \includegraphics[width=.24\textwidth]{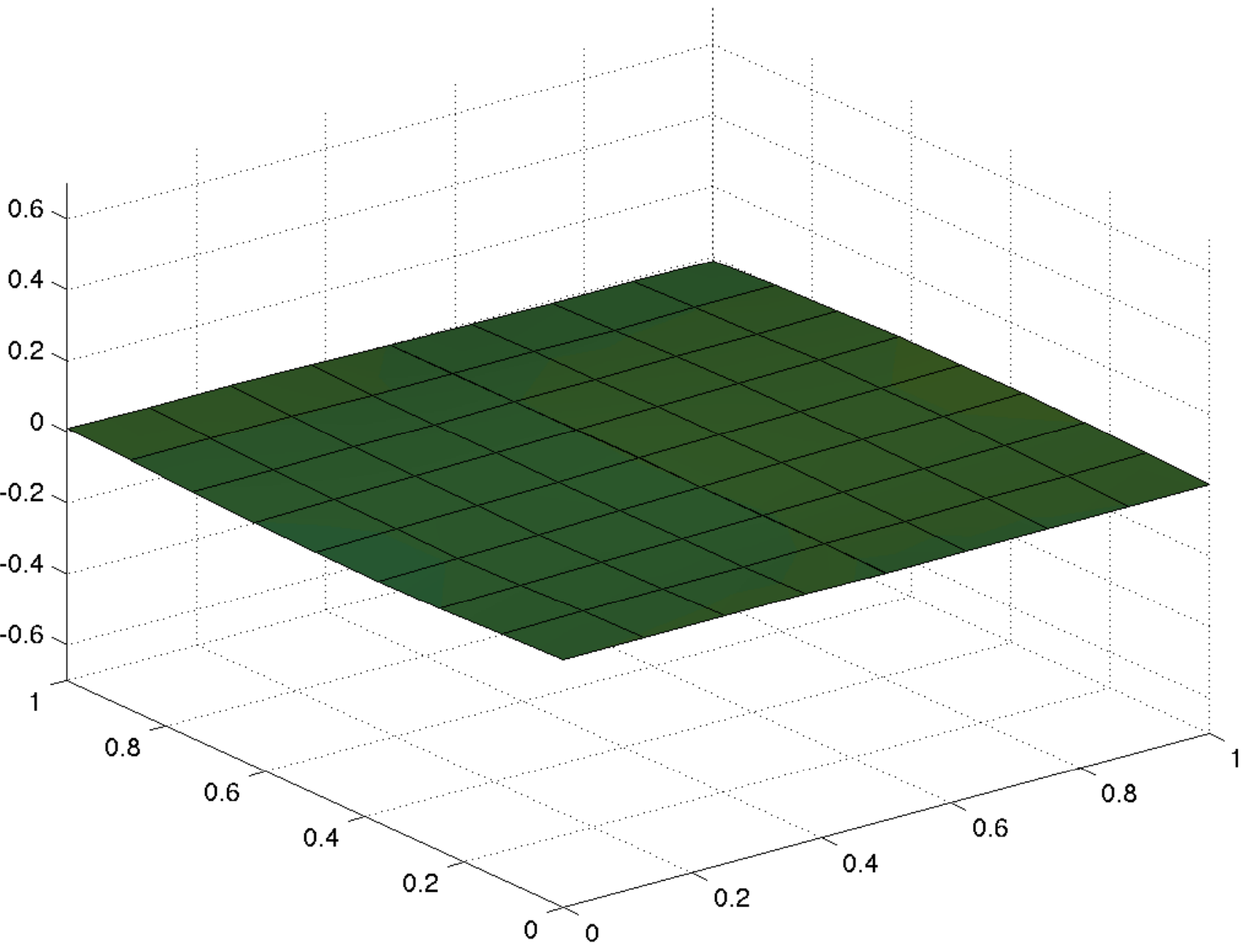}
  \includegraphics[width=.24\textwidth]{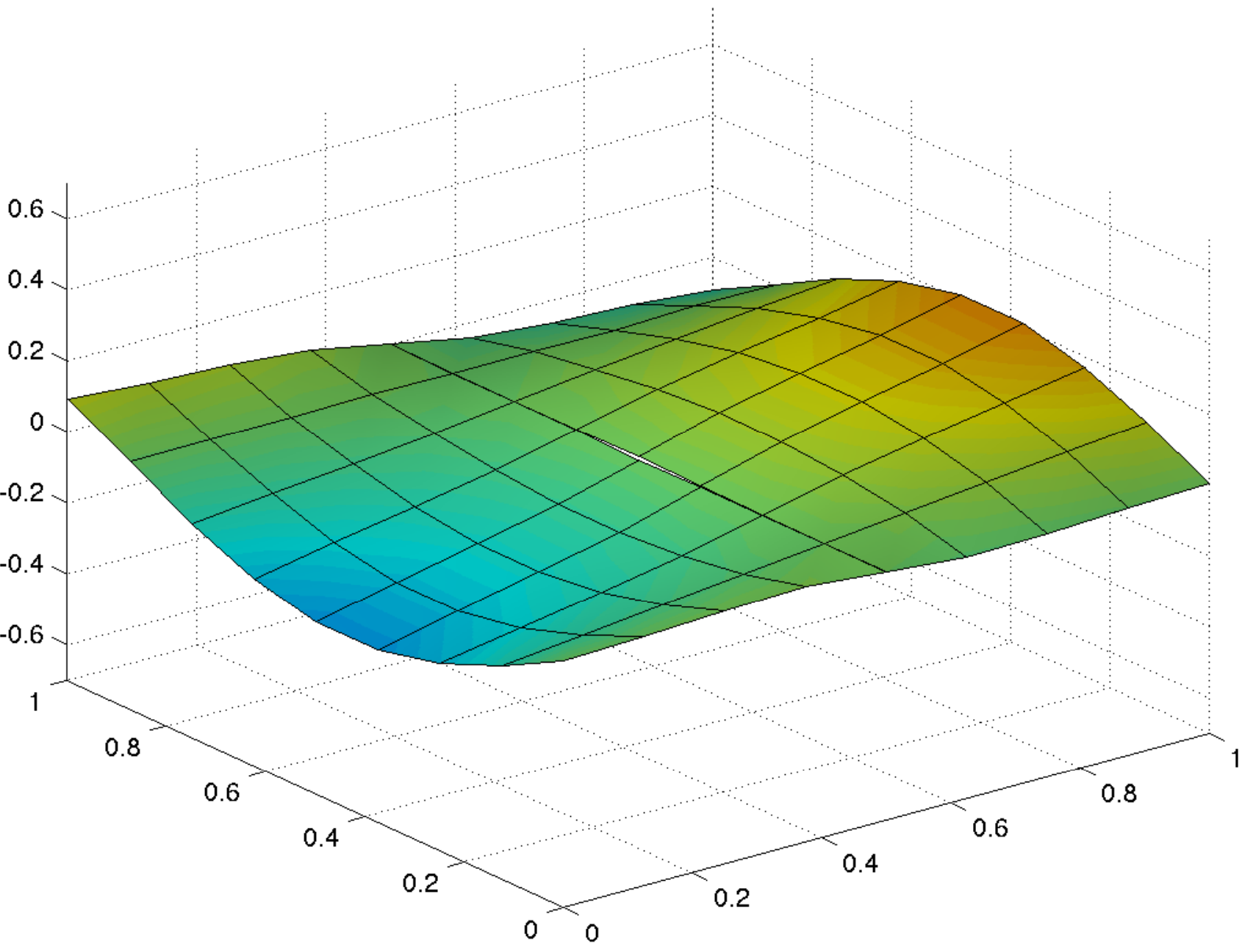}
 \includegraphics[width=.24\textwidth]{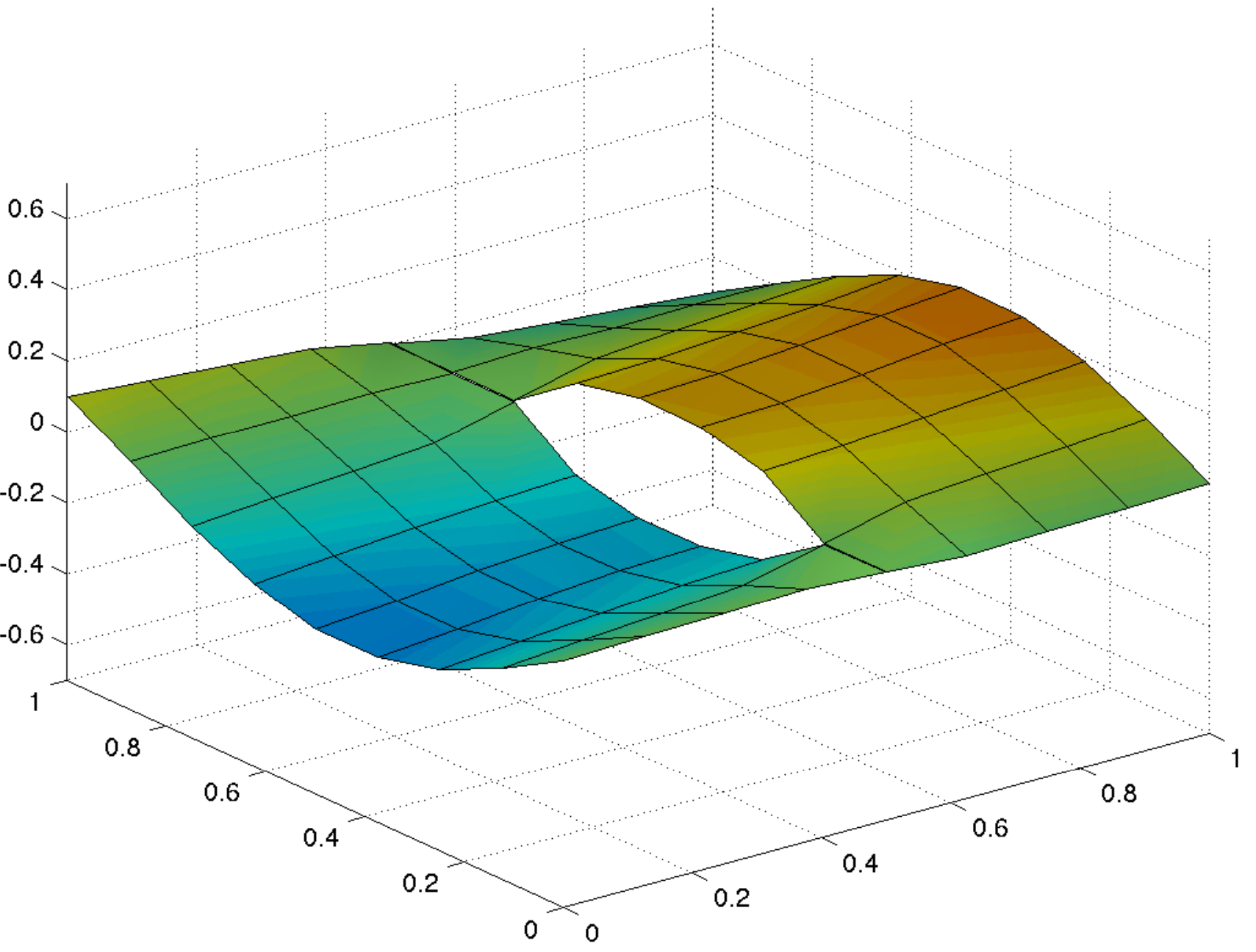}
  \includegraphics[width=.24\textwidth]{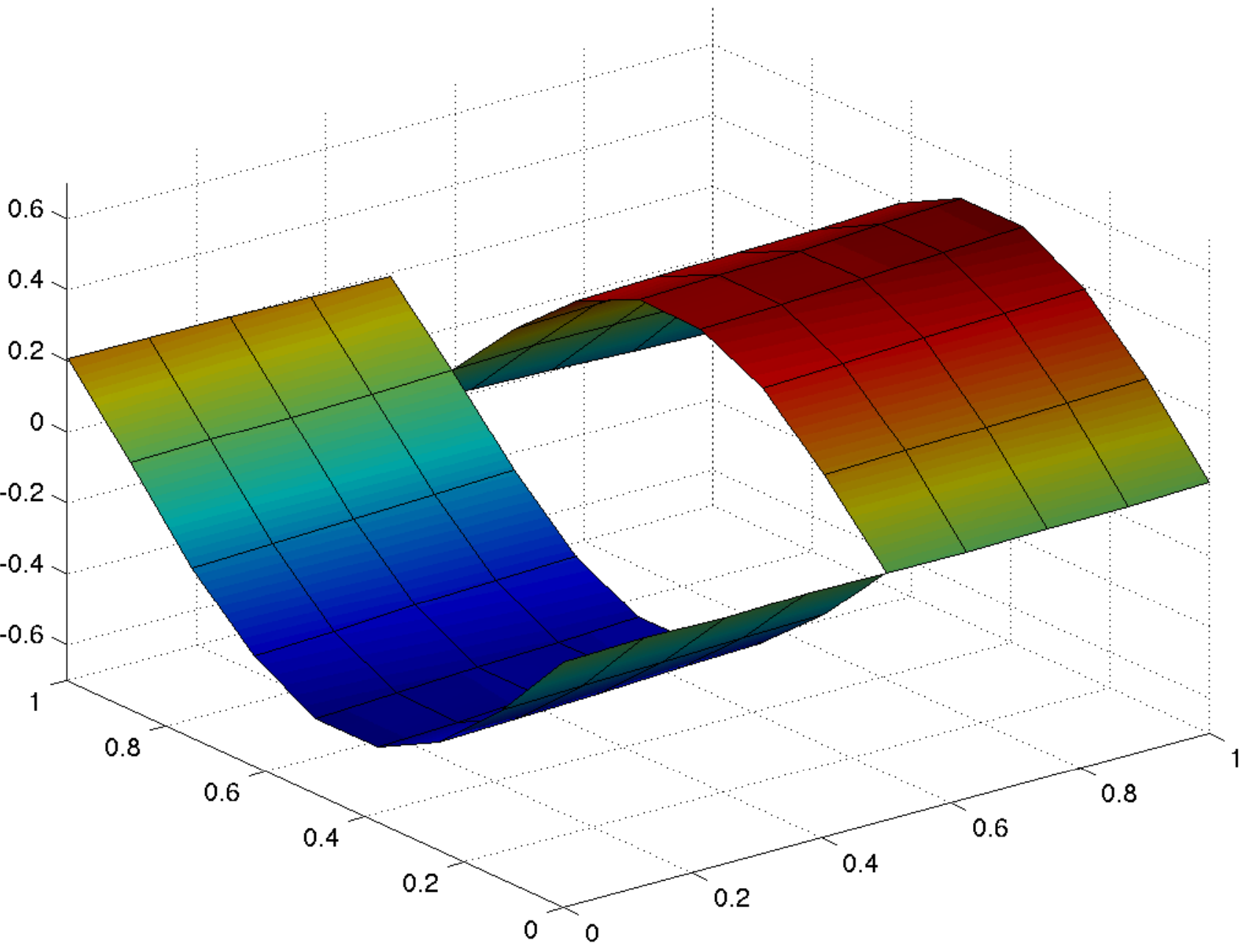}
\caption[Two dimensional cohesive crack evolution $R\le2\ell$ (2)]{The evolution of the quasistatic cohesive fracture for $R < 2\ell$ at time instances $t=0.02, 0.22, 0.24, 0.5$ with external displacement $\omega_2$.}\label{Noncohes2d_2}
\end{figure}

\section*{Acknowledgments}
Marco Artina and Massimo Fornasier acknowledge the financial support of the International Research Training Group IGDK 1754 ``Optimization and Numerical Analysis for Partial Differential Equation with Nonsmooth Structures" of the German Science Foundation.

Francesco Solombrino acknowledge the financial support of the ERC under Grant No. 290888 ``Quasistatic and Dynamic Evolution
Problems in Plasticity and Fracture" (P.I. Prof. G. Dal Maso).

\begin{thebibliography}{30}


\bibitem{Agostiniani} V.Agostiniani: 
 Second order approximations of quasistatic evolution problems in finite
dimension.
\textit{ Discrete Contin. Dyn. Syst. } 32/4 (2012), 1125--1167.

\bibitem{ARS} V.Agostiniani, R. Rossi and G. Savar\'e: 
Rate-independent limits of gradient flows:
a variational approach.
 (2016), in preparation.

\bibitem{AFS} M. Artina, M. Fornasier, F. Solombrino: 
Linearly constrained nonsmooth and nonconvex minimization.
\textit{SIAM Journal on Optimization} 23/3 (2013), 1904--1937.

\bibitem{A} S. Almi:
Energy release rate and quasistatic evolution via vanishing viscosity 
in a cohesive fracture model with an activation threshold.
\textit{ESAIM: Control Optim. Calc. Var.} (2016), published online. DOI: 10.1051/cocv/2016014.

\bibitem{Bar62} G.I. Barenblatt: The mathematical theory of
equilibrium cracks in brittle fracture. 
\textit{Adv. Appl. Mech.} \textbf{7} (1962), 55--129.

\bibitem{C} F. Cagnetti: A vanishing viscosity approach to fracture growth
in a cohesive zone model with prescribed crack path.
\textit{Math. Models Methods Appl. Sci. 18 (2008), no. 7, 1027--1071}. 

\bibitem{CT} F. Cagnetti, R. Toader:
Quasistatic crack evolution for a cohesive zone model 
with different response to loading and unloading:
a Young measures approach.
\textit{ESAIM Control Optim. Calc. Var. 17 (2011), 1--27}.

\bibitem{cava77} C. Castaing, M. Valadier:
Convex analysis and measurable multifunctions.
\textit{Lecture Notes in Mathematics},
580, Springer-Verlag, Berlin-New York, 1977.

\bibitem{DDMM} G. Dal Maso, A. DeSimone, M.G. Mora, M. Morini:
A vanishing viscosity approach to quasistatic evolution in plasticity with softening.
\textit{Arch. Ration. Mech. Anal.} 189 (2008), 469-544.

\bibitem{DM-Des-Sol}G. Dal Maso, A. De Simone, F. Solombrino: Quasistatic evolution for Cam-Clay plasticity: a weak formulation via viscoplastic regularization and time rescaling, \textit{Calc. Var. Partial Differential Equations\/} {\bf 40} (2011), 125-181.

\bibitem{DalFrToa} G. Dal Maso, G.A. Francfort,R. Toader:
Quasistatic crack growth in nonlinear elasticity.
\textit{Arch. Ration. Mech. Anal.}, 176/2 (2005), 165--225.

\bibitem{dmgipo09} G. Dal Maso, A. Giacomini, M. Ponsiglione:
A variational model for quasistatic crack growth in nonlinear elasticity: 
some qualitative properties of the solutions.
\textit{BUMI 9 (2009), Vol. 2, 371--390}.

\bibitem{DT02} G. Dal Maso, R. Toader:
A model for the quasistatic growth of brittle fractures:
existence and approximation results. 
\textit{Arch. Ration. Mech. Anal.} 162/2 (2002), 101--135.

\bibitem{DalToa02} G. Dal Maso, R. Toader:
A model for the quasistatic growth of brittle fractures
based on local minimization.
\textit{Math. Models Methods Appl. Sci.},
12/12 (2002), 1773--1799.

\bibitem{DMZ} G. Dal Maso, C. Zanini:
Quasistatic crack growth for a cohesive zone model 
with prescribed crack path.
\textit{Proc. Roy. Soc. Edinburgh Sect. A}, 
137A (2007), 253--279.

\bibitem{EM}
M.~A. Efendiev and A.~Mielke.
\newblock On the rate-independent limit of systems with dry friction and small
  viscosity.
\newblock {\em J. Convex Anal.}, 13(1) (2006), 151--167.

\bibitem{FG}
G.~A. Francfort and A.~Garroni.
\newblock A variational view of partial brittle damage evolution.
\newblock {\em Arch. Ration. Mech. Anal.}, 182(1) (2006),125--152.

\bibitem{Francfort-Larsen:2003}
 G.~A.~Francfort, C,~J.~Larsen. 
\newblock  Existence and convergence for quasistatic evolution in brittle fracture.
\newblock {\em Comm.\ Pure Appl.\ Math.\ }
\newblock {\bf 56} (2003), 1465--1500. 

\bibitem{FrMa98} G.A. Francfort, J.-J. Marigo: 
Revisiting brittle fracture as an energy minimization problem.
{\it J. Mech. Phys. Solids\/} {\bf 46} (1998), 1319-1342.

\bibitem{LT}
G.~Lazzaroni and R.~Toader.
\newblock Some remarks on the viscous approximation of crack growth.
\newblock {\em Discrete Contin. Dyn. Syst. Ser. S}, 6(1) (2013), 131--146.

\bibitem{KMZ}
D.~Knees, A.~Mielke, and C.~Zanini.
\newblock On the inviscid limit of a model for crack propagation.
\newblock {\em Math. Models Methods Appl. Sci.}, 18(9):1529--1569, 2008.

\bibitem{Neg-Knees}
D.~Knees and M.~Negri.
\newblock Convergence of alternate minimization schemes for phase field fracture and damage
\newblock {\em Preprint CVGMT},  2015.

\bibitem{KRZ}
D.~Knees, R.~Rossi, and C.~Zanini.
\newblock A vanishing viscosity approach to a rate-independent damage model.
\newblock {\em Math. Models Methods Appl. Sci.}, 23(4) (2013), 565--616.

\bibitem{Kne-Schr}
D.~Knees and A.~Schr{\"o}der.
\newblock Global spatial regularity for elasticity models with cracks, contact
  and other nonsmooth constraints.
\newblock {\em Math. Methods Appl. Sci.}, 35(15):1859--1884, 2012.

\bibitem{Kruz}
M.~Kru{\v{z}}{\'{\i}}k, C.~G. Panagiotopoulos, and T.~Roub{\'{\i}}{\v{c}}ek.
\newblock Quasistatic adhesive contact delaminating in mixed mode and its
  numerical treatment.
\newblock {\em Math. Mech. Solids}, 20(5) (2015), 582--599.

\bibitem{Mielke}
A.~Mielke.
\newblock Evolution of rate-independent systems.
\newblock In {\em Evolutionary equations. {V}ol. {II}}, Handb. Differ. Equ.,
  pages 461--559. Elsevier/North-Holland, Amsterdam, 2005.

\bibitem{MRS2}
A.~Mielke, R.~Rossi, and G.~Savar{\'e}.
\newblock Modeling solutions with jumps for rate-independent systems on metric
  spaces.
\newblock {\em Discrete Contin. Dyn. Syst.}, 25(2) (2009), 585--615.

\bibitem{MRoS}
A.~Mielke, R.~Rossi, and G.~Savar{\'e}.
\newblock B{V} solutions and viscosity approximations of rate-independent
  systems.
\newblock {\em ESAIM Control Optim. Calc. Var.}, 18(1) (2012), 36--80.

\bibitem{MRS}
A.~Mielke, T.~Roub{\'{\i}}{\v{c}}ek, and U.~Stefanelli.
\newblock {$\Gamma$}-limits and relaxations for rate-independent evolutionary
  problems.
\newblock {\em Calc. Var. Partial Differential Equations}, 31(3) (2008), 387--416.
  
\bibitem{Min-Sav}
L.~Minotti and G. Savar\'e.
\newblock Viscous corrections of the Time Incremental Minimization Scheme and Visco-Energetic Solutions to Rate-Independent Evolution Problems.
\newblock {\em Preprint CVGMT},  2016
 
\bibitem{Nardini} 
L.~Nardini. 
\newblock A note on the convergence of singularly perturbed second order potential-type equations.
\newblock {\em J. Dyn. Diff. Equat.} (2016), published online, 
DOI: 10.1007/s10884-015-9461-y.

\bibitem{negri13}
M.~Negri.
\newblock Quasi-static rate-independent evolutions: characterization,
  existence, approximation and application to fracture mechanics.
\newblock {\em ESAIM Control Optim. Calc. Var.}, 20(4) (2014) 983--1008.

\bibitem{Neg-Ort}
M.~Negri and C.~Ortner.
\newblock Quasi-static crack propagation by {G}riffith's criterion.
\newblock {\em Math. Models Methods Appl. Sci.}, 18(11):1895--1925, 2008.

\bibitem{quarteroni08} 
A.~Quarteroni and A.~Valli.
\newblock {\em Numerical approximation of partial differential equations},
  volume~23.
\newblock Springer Science \& Business Media, 2008.

\bibitem{Roc}
Ralph~Tyrrell Rockafellar.
\newblock {\em Convex Analysis}.
\newblock Princeton {U}niversity {P}ress, 1970.

\bibitem{Serfaty}
E.~Sandier and S.~Serfaty.
\newblock Gamma-convergence of gradient flows with applications to
  {G}inzburg-{L}andau.
\newblock {\em Comm. Pure Appl. Math.}, 57(12) (2004), 1627--1672.

\bibitem{Stefanelli}
U.~Stefanelli.
\newblock A variational characterization of rate-independent evolution.
\newblock {\em Math. Nachr.}, 282(11) (2009), 1492--1512.

\bibitem{Vodicka2014}
R.~Vodi{\v{c}}ka, V.~Manti{\v{c}}, and T.~Roub{\'i}{\v{c}}ek.
\newblock Energetic versus maximally-dissipative local solutions of a
  quasi-static rate-independent mixed-mode delamination model.
\newblock {\em Meccanica}, 49(12) (2014), 2933--2963.

\bibitem{zanini} C.Zanini: 
 Singular perturbations of finite dimensional gradient flows.
\textit{ Discrete Contin. Dyn. Syst. } 18/4 (2007), 657--675.

\bibitem{Z} W.P. Ziemer, Weakly Differentiable Functions, Springer-Verlag, New York, 1989.

\end {thebibliography}

\end{document}